\theoremstyle{plain}
\newtheorem{theorem}{Theorem}[section]
\newtheorem{lemma}[theorem]{Lemma}
\newtheorem{corollary}[theorem]{Corollary}
\newtheorem{conjecture}[theorem]{Conjecture}
\newtheorem{proposition}[theorem]{Proposition}
\newtheorem{prop}[theorem]{Proposition}
\newtheorem{prop-defn}[theorem]{Proposition-Definition}
\newtheorem{claim}[theorem]{Claim}
\newtheorem*{theorem:main}{Main Theorem} 
\theoremstyle{definition}
\newtheorem{defn}[theorem]{Definition}
\newtheorem{definition}[theorem]{Definition}
\newtheorem{remark}[theorem]{Remark}
\newtheorem{question}[theorem]{Question}
\newtheorem*{remark*}{Remark}
\newtheorem*{remarks*}{Remarks}
\newtheorem{example}[theorem]{Example}
\newcommand{\Mod}{\mathrm{Mod}}
\newcommand{\cl}[1]{\marginpar{\color{red}\tiny #1 --cl}}
\newcommand{\diam}{\mathrm{diam}}
\newcommand{\Teich}{\mathcal T}
\newcommand{\C}{\mathcal C}
\newcommand{\Aff}{\rm Aff}
\newcommand{\PGL}{\rm PGL}
\newcommand{\PML}{\mathcal PML}
\newcommand{\CX}{{\mathcal X}}
\newcommand{\CW}{{\mathcal W}}
\newcommand{\CB}{{\mathcal B}}
\newcommand{\CK}{{\mathcal K}}
\newcommand{\tree}{\mathrm{tree}}
\newcommand{\bTheta}{{\bf\Theta}}
\newcommand{\CP}{{\mathcal P}}
\newcommand{\Sat}{\mathrm{Sat}}
\newcommand{\bXi}{{\bf\Xi}}
\newcommand{\vtx}{\mathcal{V}}
\newcommand{\SL}{\rm SL}
\newcommand{\kvtx}{{\mathcal K}}
\DeclareMathOperator{\QI}{QI}
\DeclareMathOperator{\Isom}{Isom}
\newcommand{\fib}{{\mathrm{fib}}}
\newcommand{\Isomfib}{\mathrm{Isom_{fib}}}
\newcommand{\pow}{{\bf P}}
\newcommand{\duaug}[2]{{#1}^{+{#2}}}
\newcommand{\link}{\mathrm{Lk}}
\newcommand{\sat}{\mathrm{Sat}}
\newcommand{\nest}{\sqsubseteq}
\newcommand{\orth}{\bot}
\newcommand{\transverse}{\pitchfork}
\newcommand{\propnest}{\sqsubsetneq}
\newcommand{\sdl}{\sigma}
\theoremstyle{plain}
\newtheorem*{qi-theorem}{Theorem \ref{thm:qirigid intro}}
\title[Veech extensions II: Hierarchical hyperbolicity and QI-rigidity]{Extensions of Veech groups II:  Hierarchical hyperbolicity and quasi-isometric rigidity}
\author[Dowdall]{Spencer Dowdall}
	\address{Department of Mathematics, Vanderbilt University, Nashville, TN}
	\email{spencer.dowdall@vanderbilt.edu}
\author[Durham]{Matthew G. Durham}
	\address{Department of Mathematics, University of California, Riverside, CA}
	\email{mdurham@ucr.edu}
\author[Leininger]{Christopher J. Leininger}
	\address{Department of Mathematics, Rice University, Houston, TX}
	\email{cjl12@rice.edu}
\author[Sisto]{Alessandro Sisto}
	\address{Department of Mathematics, Heriot-Watt University, Edinburgh, UK}
	\email{a.sisto@hw.ac.uk}
\begin{document}

\begin{abstract} 
We show that for any lattice Veech group in the mapping class group $\Mod(S)$ of a closed surface $S$, the associated $\pi_1S$--extension group is a hierarchically hyperbolic group.  As a consequence, we prove that any such extension group is quasi-isometrically rigid.
\end{abstract}

\maketitle


\section{Introduction}

This paper studies geometric properties of surface group extensions and how these relate to their defining subgroups of mapping class groups.
Let $S$ be a closed, connected, oriented surface of genus at least $2$.
Recall that a $\pi_1 S$--extension of a group $G$ is a short exact sequence of the form
\[1 \to \pi_1 S \to \Gamma \to G\to 1. \]
Such  extensions are in bijective correspondence with monodromy homomorphisms from $G$ to the extended mapping class group $\Mod^\pm(S) \cong \mathrm{Out}(\pi_1 S)$ of the surface.  Alternatively, 
these groups $\Gamma$ are precisely the fundamental groups of $S$--bundles.

Many advances in the study of mapping class groups have been motivated by a longstanding but incomplete analogy between hyperbolic space $\mathbb{H}^n$ and the Teichm\"uller space $\Teich(S)$ of a surface.
In the theory of Kleinian groups, a discrete group of isometries of $\mathbb{H}^n$ is {convex cocompact} if it acts cocompactly on an invariant, convex subset. 
Farb and Mosher \cite{FM:CC} adapted this notion to mapping class groups by defining a subgroup $G\le \Mod^\pm(S)$ to be \emph{convex cocompact} if it acts cocompactly on a quasi-convex subset of $\Teich(S)$.
This has proven to be a fruitful concept with many interesting connections to, for example, the intrinsic geometry of the mapping class group \cite{DT15,BBKL-undistored_purely_pA}, and its actions on the curve complex and the boundary of Teichm\"uller space \cite{KL:CC}.
Most importantly, the work of Farb--Mosher \cite{FM:CC} and Hamenst\"adt \cite{hamenstadt:WHSG} remarkably shows that an extension $\Gamma$ as above is word hyperbolic if and only if the associated monodromy $G\to \Mod^\pm(S)$ has finite kernel and convex cocompact image (see also \cite{MjSardar}).

For Kleinian groups, convex cocompactness is a special case of a more prevalent phenomenon called \emph{geometric finiteness}, which roughly amounts to acting cocompactly on a convex subset minus horoballs invariant by parabolic subgroups. 
In \cite{Mosher:Problems}, Mosher suggested
this notion should have an analogous framework in mapping class groups that would extend the geometric connection with surface bundles to a larger class of examples.
The prototypical candidates for geometric finiteness are the \emph{lattice Veech subgroups}; these are special punctured-surface subgroups of $\Mod(S)$ that arise naturally in the context of Teichm\"uller dynamics and whose corresponding $S$--bundles are amenable to study via techniques from flat geometry.  

Our prequel paper \cite{DDLSI} initiated an analysis of the $\pi_1 S$--extensions associated to lattice Veech subgroups, with the main result being that each such extension $\Gamma$ admits an action on a hyperbolic space $\hat E$ that captures much of the geometry of $\Gamma$.
Building on that work, the first main result of this paper is the following, which provides a concrete answer to \cite[Problem 6.2]{Mosher:Problems} for lattice Veech groups.

\begin{theorem} \label{T:main simple} For any lattice Veech subgroup $G < \Mod(S)$, the associated  $\pi_1S$--extension group $\Gamma$ of $G$ is a hierarchically hyperbolic group.
\end{theorem}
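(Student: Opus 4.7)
The plan is to construct an HHG structure on $\Gamma$ whose top-level hyperbolic space is the space $\hat{E}$ from the prequel \cite{DDLSI}, with lower strata coming from HHG structures on peripheral subgroups associated to the cusps of $G$. A lattice Veech group $G$ is, up to finite index, the fundamental group of a finite-area hyperbolic $2$-orbifold, so it is relatively hyperbolic with virtually cyclic cusp subgroups $\langle T \rangle$ generated by Dehn multitwists. Pulling these back to $\Gamma$ yields peripheral subgroups of the form $H = \pi_1 S \rtimes \langle T \rangle$, each the fundamental group of the mapping torus of a Dehn multitwist. Such mapping tori are graph manifolds, and their fundamental groups carry canonical HHG structures assembled from their JSJ decompositions.

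With this input, I would set up the proposed index set $\mathfrak{S}$ as follows: a single $\sqsubseteq$-maximal element $U$ with $\mathcal{C}(U) = \hat{E}$, together with, for each coset $gH$ of each peripheral subgroup $H$, a copy of the intrinsic HHG domain-set of $H$, with every such copy nested below $U$. Domains from different cosets are declared pairwise transverse, in analogy with distinct horoballs in a relatively hyperbolic group. The projection to $\mathcal{C}(U)$ is inherited from the $\Gamma$-action on $\hat{E}$ produced in \cite{DDLSI}. Projections to lower-level domains are built by first coarsely projecting onto the appropriate coset $gH$ (using its quasiconvexity in $\Gamma$, a further input from the prequel) and then composing with the internal HHG projections of the peripheral piece. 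A combination theorem in the spirit of Berlyne--Russell or Behrstock--Hagen--Sisto then packages the top-level action on $\hat{E}$ together with the peripheral HHG structures into a single, coherent HHG structure on $\Gamma$.

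The main obstacle will be verifying the HHS axioms — in particular Consistency, Bounded Geodesic Image, and the Distance Formula — at the interface between $\hat{E}$ and the peripheral pieces. Intuitively these should reduce to two facts: that $\hat{E}$ behaves like the result of coning off the cosets of the $H$'s, so that portions of words in $\Gamma$ venturing deep into a peripheral coset become invisible in $\hat{E}$; and that distinct cosets of peripheral subgroups have bounded coarse intersection, so their contributions to the distance formula do not interfere. Making these precise requires a delicate quantitative study of how quasigeodesics in $\Gamma$ enter, traverse, and leave neighborhoods of peripheral cosets, combining the flat-geometric analysis of the Veech extension developed in \cite{DDLSI} with the intrinsic graph-manifold geometry of each $H$. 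Once the HHS structure is established and shown to be $\Gamma$-equivariant with finitely many orbits of domains, it automatically upgrades to an HHG structure on $\Gamma$.
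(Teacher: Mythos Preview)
Your architecture has a genuine gap at exactly the point you flag as delicate. You assert that distinct cosets of the peripheral graph-manifold subgroups $H$ have bounded coarse intersection, and this is false. Each coset $gH$ corresponds to a horocyclic boundary bundle $\partial\mathcal{B}_\alpha\subset\bar E$, and for $\alpha\neq\beta$ both $\partial\mathcal{B}_\alpha$ and $\partial\mathcal{B}_\beta$ lie within bounded distance of any fiber $E_X$ over a point $X$ near the geodesic in $\bar D$ joining the two horocycles. Their coarse intersection therefore contains an entire copy of $\widetilde S\simeq\mathbb{H}^2$ and is unbounded. In particular $\Gamma$ is \emph{not} relatively hyperbolic relative to the $H_\alpha$, so the combination theorems you invoke do not apply as stated. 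Relatedly, $\hat E$ is \emph{not} obtained by coning off the full graph-manifold cosets: it is quasi-isometric to the Cayley graph of $\Gamma$ coned off along cosets of the \emph{vertex} (Seifert-piece) subgroups, a much finer collection. The Bass--Serre trees $T_\alpha$ survive as unbounded subspaces of $\hat E$, so nesting the tree domain of $H_\alpha$ below $\hat E$ would require a bounded $\rho$-set in $\hat E$ that does not exist.

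The paper instead works at the level of individual Seifert pieces $\bTheta^v$, indexed by vertices $v$ of all the trees $T_\alpha$, and applies the combinatorial HHS criterion of \cite{comb_HHS}: it builds a flag complex whose augmented links recover $\hat E$, the quasi-trees $\bXi^v$, and the quasi-lines $\mathcal K^v$. The analogue of your bounded-coarse-intersection step is Proposition~\ref{prop:R-projections}, which shows that $\bTheta^v$ and $\bTheta^w$ have mutually bounded projections \emph{unless} $d_{\tree}(v,w)\le 1$. Establishing this requires the flat-geometric window and bridge lemmas of \S\ref{S:projections}, and this is precisely where the interaction between Seifert pieces from \emph{different} graph manifolds (different directions $\alpha$) is controlled---an interaction your peripheral-by-peripheral decomposition never confronts.
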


Hierarchical hyperbolicity means that in fact all the geometry of $\Gamma$ is robustly encoded by hyperbolic spaces. This is exactly the sort of relaxed hyperbolicity for $\pi_1 S$--extensions that one hopes should follow from a good definition of geometric finiteness in $\Mod(S)$. Thus Theorem~\ref{T:main simple} suggests a possible general theory of geometric finiteness, which we expound upon in \S\ref{section:musings-on-geom-finte} below.

Hierarchical hyperbolicity has many strong consequences, some of which are detailed in \S\ref{subsec:HHSery} below. 
It also enables, via tools from \cite{BHS:quasiflats}, the proof of our second main result, 
which answers \cite[Problem 5.4]{Mosher:Problems}:

\begin{theorem} \label{T:qir simple}
For any lattice Veech group $G < \Mod(S)$, the associated $\pi_1S$--extension group $\Gamma$ of $G$ is quasi-isometrically rigid.
\end{theorem}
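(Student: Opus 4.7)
The strategy is to leverage the hierarchical hyperbolicity from Theorem~\ref{T:main simple} together with the quasiflats machinery of Behrstock--Hagen--Sisto~\cite{BHS:quasiflats}. The first step is to unpack the HHG structure from Theorem~\ref{T:main simple}: the top-level hyperbolic space can be taken to be the space $\hat E$ produced in~\cite{DDLSI}, and the maximal standard product regions correspond to the parabolic subgroups of the Veech lattice $G$. Concretely, each parabolic of $G$ is virtually cyclic, generated by a multi-twist about a multicurve $\alpha \subset S$, and its preimage in $\Gamma$ contains a virtually $\mathbb{Z}^2$ subgroup generated by this multi-twist together with a component of $\alpha$ viewed as an element of $\pi_1 S$. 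These virtually $\mathbb{Z}^2$ subgroups are the top-rank orthogonal pairs in the hierarchy. (When $G$ is cocompact, $\Gamma$ is already word-hyperbolic, so we focus on the cusped case where these flats are actually present.)

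Next, apply the quasiflats theorem of~\cite{BHS:quasiflats}: every top-dimensional quasiflat in $\Gamma$ lies in a uniformly bounded neighborhood of a finite union of standard $2$--orthants. Consequently, any self-quasi-isometry $\Phi \colon \Gamma \to \Gamma$ coarsely permutes the family of maximal standard product regions, and hence the associated HHG domains of top rank. Combined with coarse preservation of gates and hierarchy paths, this upgrades to a combinatorial automorphism of the HHG index structure. In particular, $\Phi$ descends to a quasi-isometry $\hat\Phi$ of the top-level hyperbolic space $\hat E$ that is compatible with the partition of $\Gamma$ into cosets of the parabolic preimages.

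The final step is to pin down $\hat\Phi$ using rigidity of the pieces. The $\Gamma$--action on $\hat E$ from~\cite{DDLSI} identifies the stabilizers of maximal product regions with the preimages of cusp subgroups of $G$ and provides control over the boundary of $\hat E$; this should force $\hat\Phi$ to be at bounded distance from translation by a fixed element of $\Gamma$, while rigidity of the cusped Fuchsian lattice $G < \PSL_2(\mathbb{R})$ constrains the induced quasi-isometry on the quotient structure. Together, these imply that every self-QI of $\Gamma$ is at bounded distance from left multiplication by an element of $\Gamma$; a standard argument then promotes this to full quasi-isometric rigidity, yielding for any finitely generated group $\Lambda$ quasi-isometric to $\Gamma$ a finite-kernel homomorphism onto a finite-index subgroup of $\Gamma$. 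The main obstacle is the last step --- passing from coarse combinatorial preservation of the HHG index structure to genuine realization by an actual group element --- which will require exploiting the fine structure of $\hat E$ established in~\cite{DDLSI} in tandem with Fuchsian rigidity of $G$ in order to rule out exotic HHG automorphisms that permute the product regions without coming from $\Gamma$.
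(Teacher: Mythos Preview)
Your first two steps are in the right spirit: the paper does use the HHG structure and the quasiflats theorem of \cite{BHS:quasiflats} to show that any self-quasi-isometry of $\bar E$ coarsely permutes the maximal $2$--flats, and then upgrades this to coarse preservation of the peripheral pieces $\partial\mathcal B_\alpha$ and of the individual fibers $E_X$.

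The genuine gap is in your third step. Passing to the top-level space $\hat E$ and invoking Fuchsian rigidity of $G$ cannot close the argument. The space $\hat E$ is obtained by collapsing each horoball bundle $\mathcal B_\alpha$ to a tree, so an induced quasi-isometry $\hat\Phi$ of $\hat E$ forgets precisely the information you need: where the original quasi-isometry moves points \emph{within} the fibers. Likewise, Fuchsian rigidity of $G$ constrains only the base direction of the bundle $\bar E \to \bar D$, not the $\pi_1 S$--direction. There is no reason for $\hat\Phi$ to be bounded distance from a $\Gamma$--translation; indeed, the conclusion you aim for (that every self-QI is bounded distance from left multiplication by some $\gamma\in\Gamma$) is slightly false, since $\Gamma$ only has finite index in $\QI(\Gamma)$.

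The paper's route through this obstacle is entirely different: rather than descending to $\hat E$, it \emph{restricts to a single fiber} $E_0$, which is the universal cover of the flat surface $(S,q)$. The preservation of strip bundles forces the induced self-quasi-isometry of $E_0$ to coarsely preserve the family of strips in each periodic direction. One then invokes flat-geometry rigidity (the analysis of boundary maps and nonsingular geodesics from \cite{BankLein} and the Current Support Theorem of \cite{DELS}) to promote this to an \emph{affine} homeomorphism of $E_0$. This produces a homomorphism $\QI(\bar E)\to\Aff(E_0)$, which is shown to be an isomorphism by explicitly building an isometry of $\bar E$ from any affine map. The finite-index statement then follows because $\Gamma$ has finite index in $\Aff(E_0)$. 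The flat/affine rigidity of the fiber, not hyperbolic or Fuchsian rigidity of the base or of $\hat E$, is the missing idea.
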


In \cite{Mosher:Problems}, Mosher in fact suggests an alternate approach to proving quasi-isometric rigidity that culminates in the formulation of Problem 5.4 of \cite{Mosher:Problems} as an equivalent condition in this case. Both our proof and this alternate approach share a common key step of showing that quasi-isometries are coarsely fiber-preserving; for this we use tools from hierarchical hyperbolicity (see Proposition~\ref{prop:strip_bijection} below), whereas Mosher's approach uses ideas from coarse algebraic topology appealing to the fact that $G$ is virtually free (see \cite{FarbMosher-abelian}). In \S\ref{S:Mosher proof} we give a rough sketch that carries out Mosher's approach, drawing partly from his unpublished results in \cite{Mosher-fiber}, and leading to an alternate proof of Theorem~\ref{T:qir simple}.


It is our hope that the framework we develop for proving quasi-isometric rigidity for extension groups via hierarchical hyperbolicity will be applicable to a wider class of geometrically finite subgroups (\S\ref{section:musings-on-geom-finte}), including those which may not be virtually free.

The rest of this introduction gives a more in-depth treatment of these results while elaborating on the concepts of, and connections between, hierarchical hyperbolicity, extensions of Veech groups, quasi-isometric rigidity, and geometric finiteness.

\subsection{Hierarchical hyperbolicity}\label{subsec:HHSery}

The notion of hierarchical hyperbolicity was defined by Behrstock, Hagen, and Sisto \cite{BHS:HHS1} and motivated by the seminal work of Masur and Minsky \cite{MM:CC2}.
In short, it provides a framework and toolkit for understanding the coarse geometry of a space/group in terms of interrelated  hyperbolic pieces.
More precisely, a {\em hierarchically hyperbolic space (HHS)} structure on a metric space $X$ is a collection of hyperbolic spaces $\{\C(W)\}_{W \in \mathfrak S}$,
arranged in a hierarchical fashion, in which any pair are nested $\nest$, orthogonal $\orth$, or transverse $\transverse$,
along with Lipschitz projections to and between these spaces that together capture the coarse geometry of $X$.
A {\em hierarchically hyperbolic group (HHG)} is then an HHS structure on a group that is equivariant with respect to an appropriate action  on the union of hyperbolic spaces $\C(W)$.
See \S\ref{S:combinatorial HHS} for details or \cite{BHS:HHS1,BHS:HHS2,HHS_survey} for many examples and further discussion.

Showing that a space/group is a hierarchically hyperbolic gives access to several results regarding, for example, a coarse median structure and quadratic isoperimetric function \cite{Bow:rigidity_MCG,Bow:coarse_median}, asymptotic dimension \cite{BHS:asdim}, stable and quasiconvex subsets and subgroups \cite{ABD:stable, RST18}, quasiflats \cite{BHS:quasiflats}, bordifications and automorphisms \cite{DHS}, and quasi-isometric embeddings of nilpotent groups \cite{BHS:HHS1}.  In particular, the following is an immediate consequence of Theorem \ref{T:main simple}.

\begin{corollary}\label{cor:HHS consequences}
Let $G < \Mod(S)$ be any lattice Veech group and $\Gamma$ the associated $\pi_1S$--extension group.  Then:
\begin{enumerate}
\item $\Gamma$ has quadratic Dehn function \cite{Bow:coarse_median}.
\item\label{item:univ_acyl_action} $\Gamma$ is acylindrically hyperbolic and, moreover, its action on the $\nest$--maximal hyperbolic space in the hierarchy is a universal acylindrical action \cite{ABD:stable}.
\item $\Gamma$ is semihyperbolic and thus has solvable conjugacy problem \cite{DMS:stabcube, HHP:helly}.
\end{enumerate}
\end{corollary}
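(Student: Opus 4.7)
The plan is straightforward: Theorem~\ref{T:main simple} supplies an HHG structure on $\Gamma$, and each of the three items then follows by direct application of an existing general result in the HHG literature, namely the very citation attached to each item. Beyond Theorem~\ref{T:main simple}, no additional structural work on $\Gamma$ itself should be required.

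For (1), I would simply invoke \cite{Bow:coarse_median}: every HHG carries a coarse median structure, and any finitely generated group admitting a coarse median structure has a quadratic isoperimetric function, so the Dehn function of $\Gamma$ is quadratic. For (3), I would cite \cite{DMS:stabcube, HHP:helly}, which together establish that every HHG is semihyperbolic, and then invoke the classical fact that semihyperbolic groups have solvable conjugacy problem.

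Item (2) requires marginally more care. The main result of \cite{ABD:stable} asserts that, for any HHG, the action on the $\nest$-maximal hyperbolic space $\C(S_{\max})$ is universal among acylindrical actions on hyperbolic spaces. To deduce that $\Gamma$ is acylindrically hyperbolic in the usual sense, one needs the $\nest$-maximal action to be nonelementary. I would verify this by choosing two independent pseudo-Anosov elements in the lattice Veech group $G < \Mod(S)$, lifting them to $\Gamma$, and checking that their images act as independent loxodromic isometries on $\C(S_{\max})$. This should follow by identifying $\C(S_{\max})$ with (or relating it coarsely to) the $\Gamma$-hyperbolic space $\hat{E}$ constructed in the prequel paper \cite{DDLSI}, on which lifts of independent pseudo-Anosovs in $G$ are already known to act as independent loxodromics.

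The only non-cosmetic step is this nonelementarity check in (2); I expect it to be routine once the $\nest$-maximal space in the HHG structure produced by Theorem~\ref{T:main simple} is identified with (or shown quasi-isometric to) the hyperbolic space $\hat{E}$. All other content of the corollary is a direct invocation of off-the-shelf HHG theorems.
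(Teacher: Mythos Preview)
Your proposal is correct and matches the paper's approach: the paper states the corollary as an immediate consequence of Theorem~\ref{T:main simple} together with the cited references, with no further argument. Your extra care in (2) about verifying nonelementarity is well-placed; the paper handles this implicitly by noting (in the sentence following the corollary) that the $\nest$-maximal hyperbolic space is exactly $\hat E$, on which \cite{DDLSI} already exhibits independent loxodromics, so your anticipated identification is precisely what the paper does.
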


As discussed in \S\ref{sec:intro-HHG_on_Gamma} below, further information about $\Gamma$ can be gleaned from the specific HHG structure constructed in proving  Theorem~\ref{T:main simple}. We note that the {$\nest$--maximal hyperbolic space} of this structure, and thus the universal acylindrical action indicated in Corollary~\ref{cor:HHS consequences}(\ref{item:univ_acyl_action}), is simply 
the space $\hat E$ from \cite{DDLSI}.

\subsection{The HHG structure on $\Gamma$}
\label{sec:intro-HHG_on_Gamma}
In order to describe the HHG structure more precisely and explain its connection to quasi-isometric rigidity in Theorem \ref{T:qir simple}, we must first recall some of the structure 
of Veech groups and their extensions.
Let $G < \Mod(S)$ be a lattice Veech group and $\Gamma = \Gamma_G$ the associated extension group. 
First note that (up to finite index) $\Gamma$ is naturally the fundamental group of an $S$--bundle $\bar E/\Gamma$ over a compact surface with boundary (see \S\ref{sec:setup} for details and notation).  Each boundary component of $\bar E / \Gamma$ is virtually the mapping torus of a multi-twist on $S$, and is thus a {\em graph manifold}:  the tori in the JSJ decomposition are suspensions of the multi-twist curves.

Graph manifolds admit HHS structures \cite{BHS:HHS2} where the maximal hyperbolic space is the Bass--Serre tree dual to the JSJ decomposition, and all other hyperbolic spaces are either quasi-lines or quasi-trees (obtained by coning off the boundaries of the universal covers of the base orbifolds of the Seifert pieces).
The stabilizers of the vertices of the Bass--Serre trees are called \emph{vertex subgroups}, and are precisely the fundamental groups of the Seifert pieces of the JSJ decomposition.
We let $\vtx$ denote the disjoint union of the vertices of all Bass--Serre trees associated to the boundary components of the universal cover $\bar E$ of this $S$--bundle.  Given $v,w \in \vtx$, we say that these vertices are {\em adjacent} if they are connected by an edge in the same Bass-Serre tree.
 
The HHG structure on the extension group $\Gamma$ may now be described as follows:

\begin{theorem} \label{T:main full} Suppose $G<\Mod(S)$ is a lattice Veech group with extension group $\Gamma$ and let $\Upsilon_1,\ldots,\Upsilon_k < \Gamma$ be representatives of the conjugacy classes of vertex subgroups. 
Then $\Gamma$ admits an HHG structure with the following set of hyperbolic spaces and relations among them (ignoring those of diameter $\leq 2$):
\begin{enumerate}
\item The maximal hyperbolic space $\hat E$ is quasi-isometric to the Cayley graph of $\Gamma$ coned off along the cosets of $\Upsilon_1,\ldots,\Upsilon_k$ \cite{DDLSI}.
\item There is a quasi-tree $v^{qt}$ and a quasi-line $v^{ql}$, for each $v \in \vtx$, and:
\begin{enumerate}
\item For all $v \in \vtx$, $v^{qt} \orth v^{ql}$.
\item For all $v,w \in \vtx$, if $v$ and $w$ are adjacent, then $w^{ql}\orth v^{ql}$ and $w^{ql}\nest v^{qt}$.
\item All other pairs are transverse.
\end{enumerate}
\end{enumerate}
\end{theorem}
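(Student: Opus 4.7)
The plan is to construct the HHG structure through a combinatorial HHS recognition theorem of the type developed in Section~\ref{S:combinatorial HHS}, combining the hyperbolic space $\hat E$ from \cite{DDLSI} with the HHG structures on graph manifold groups \cite{BHS:HHS2} that govern the vertex subgroups $\Upsilon_v$.  The essential structural input is that each boundary component of $\bar E/\Gamma$ is a graph manifold: each Seifert piece contributes a quasi-tree $v^{qt}$, arising from coning off the base orbifold along its boundary curves, together with a quasi-line $v^{ql}$ arising from the Seifert fiber direction; adjacent Seifert pieces share JSJ tori whose $\mathbb Z^2$--fundamental groups encode the orthogonality and nesting relations among the various $v^{ql}$ and $v^{qt}$.

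First, specify the candidate index set $\mathfrak S$ consisting of a $\nest$--maximal domain corresponding to $\hat E$ together with symbols $v^{qt}$ and $v^{ql}$ for each $v\in \vtx$, and declare the relations as in the statement.  The associated hyperbolic spaces and projections should then be defined in the obvious way: the maximal domain is assigned $\hat E$ with the natural projection from $\Gamma$; and for each $v\in \vtx$, the quasi-tree and quasi-line are inherited from the graph manifold HHG structure on the stabilizer $\Upsilon_v$, accessed by first closest-point projecting $\Gamma$ to the corresponding coset and then composing with the internal graph-manifold projection.  The $\Gamma$--action on $\vtx$ ensures $\Gamma$--equivariance of the entire assignment.

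The bulk of the proof is then the verification of the (combinatorial) HHS axioms.  Finite complexity follows from the finite number of $\Gamma$--orbits on $\vtx$ and the finite complexity of graph manifold HHGs; hyperbolicity of each constituent space comes from \cite{DDLSI} for $\hat E$ and from the graph manifold analysis for $v^{qt}$ and $v^{ql}$.  The orthogonality and nesting among domains within a single boundary component are inherited directly from the JSJ decomposition of that graph manifold, while transversality between domains associated to non-adjacent vertices, either within the same Bass--Serre tree or across distinct boundary components, must be deduced from the fact that such pieces interact only coarsely, through their identification inside $\hat E$.

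The main obstacle, and the heart of the proof, should be establishing the bounded geodesic image and large-link type axioms: namely, that a large projection to any $v^{qt}$ or $v^{ql}$ forces a nearby traversal of the coset of $\Upsilon_v$ and hence records bounded diameter in $\hat E$.  This is where the fine geometric control of $\Gamma$ provided by \cite{DDLSI}, in particular the analysis of how $\Gamma$--geodesics interact with the horoball-like neighborhoods of the $\Upsilon_v$--cosets, should feed directly into the combinatorial HHS axioms and promote the candidate structure into an honest HHG structure on $\Gamma$.
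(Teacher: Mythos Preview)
Your broad strategy---combinatorial HHS criterion, $\hat E$ as the maximal space, and quasi-tree/quasi-line pairs indexed by $\vtx$ coming from the Seifert pieces---matches the paper's.  However, two of your proposed shortcuts would not work as stated, and the paper takes substantially different routes at those points.

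First, you propose to inherit the quasi-line $v^{ql}$ from the existing graph manifold HHG structure on $\Upsilon_v$.  This does not give a $\Gamma$--equivariant family of quasi-lines, so at best one would obtain an HHS rather than an HHG; the paper notes this explicitly (see the remark following Proposition~\ref{prop:KL}).  Instead the paper constructs each $\mathcal K^v$ as a word metric coming from a homogeneous quasimorphism $\psi^v\colon \Gamma^v\to\mathbb R$ that vanishes on the adjacent Seifert-fiber generators $g_w$ (Lemma~\ref{L:quasimorphism}), following ideas of Hagen--Russell--Sisto--Spriano.  This is what makes the collection $\{\mathcal K^v\}$ genuinely $\Gamma$--equivariant.

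Second, your proposed projections---closest-point project to the coset of $\Upsilon_v$, then apply the internal graph manifold projection---do not come with the required bounded-projection properties between non-adjacent vertex pieces.  The paper instead builds projections $\Lambda^v,\xi^v$ via a fiberwise \emph{window map} $\Pi^v$ (\S\ref{S:projections}) using the flat geometry of $E_X$: the key Window and Bridge Lemmas (Lemmas~\ref{lem:window}--\ref{lem:bridge}, Corollaries~\ref{cor:bridge same}--\ref{cor:bridge diff}, Lemma~\ref{lem:crossing spines}) show that the window of one spine into another is at most a saddle connection or two, which is what yields Proposition~\ref{prop:R-projections}.  This proposition is the engine behind the quasi-isometric embedding of links in complements of saturations (Lemma~\ref{lem:retractions}), which is the hard axiom (item~\eqref{item:chhs_delta}) of the combinatorial HHS criterion---not the BGI/large-link axioms you named, which are not directly checked in this framework.
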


This description of the HHG structure readily leads to further consequences for $\Gamma$.
For example, the maximal number of infinite-diameter pairwise orthogonal hyperbolic spaces is evidently $2$. In view of \cite{BHS:HHS1,BHS:quasiflats}, 
we thus see that $\Gamma$ is as ``close to hyperbolic'' as possible in that its quasi-flats are at worst $2$--dimensional:

\begin{corollary} 
\label{cor:top-dim-flats}
Each top-dimensional quasi-flat in $\Gamma$ has dimension $2$ and is contained in a finite-radius neighborhood of finitely many cosets of vertex subgroups.
\end{corollary}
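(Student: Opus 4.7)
The plan is to invoke the main quasi-flats theorem of Behrstock--Hagen--Sisto \cite{BHS:quasiflats}, which for any hierarchically hyperbolic group says (a) the top dimension of a quasi-flat equals the \emph{orthogonal complexity} of the structure---the maximal size of a set of pairwise orthogonal hyperbolic spaces of infinite diameter---and (b) every top-dimensional quasi-flat lies in a finite-radius neighborhood of a finite union of standard orthants. Applied to the HHG structure on $\Gamma$ given by Theorem~\ref{T:main full}, this will yield the desired statement once we (i) show the orthogonal complexity is $2$ and (ii) identify the relevant standard product regions with cosets of vertex subgroups.

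For (i), the orthogonality relations listed in Theorem~\ref{T:main full}(2) are exactly $v^{qt}\orth v^{ql}$ for each $v\in\vtx$, and $v^{ql}\orth w^{ql}$ for each pair of adjacent $v,w\in\vtx$; all other pairs of distinct infinite-diameter hyperbolic spaces are transverse or nested. From this I would argue that no three hyperbolic spaces are pairwise orthogonal: a triple $\{u^{ql},v^{ql},w^{ql}\}$ would require three pairwise-adjacent vertices in a single Bass--Serre tree, contradicting the fact that trees contain no $3$--cycles; any triple involving some $v^{qt}$ would need a second space orthogonal to $v^{qt}$, and by inspection the only such space is $v^{ql}$ itself (with $w^{ql}$ for adjacent $w$ being nested into $v^{qt}$ rather than orthogonal, and all other $v'^{qt}$ being transverse). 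Hence the orthogonal complexity is $2$, and top-dimensional quasi-flats have dimension $2$.

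For (ii), I would identify the standard product region associated to each of the two types of maximal orthogonal pair. For the pair $\{v^{qt},v^{ql}\}$, the product region is coarsely the coset in $\Gamma$ of the vertex subgroup $\Upsilon_i$ stabilizing $v$: this is the fundamental group of a Seifert piece, and its two coarse factors---the (cone-off of the) base orbifold and the Seifert fiber---are precisely encoded by $v^{qt}$ and $v^{ql}$. For an adjacent pair $\{v^{ql},w^{ql}\}$, the corresponding product region is coarsely a JSJ torus in the graph-manifold boundary, which is contained in the vertex subgroup cosets at either end of the separating edge. In both cases the associated maximal standard orthants are contained in cosets of vertex subgroups, and \cite{BHS:quasiflats} then gives that each top-dimensional quasi-flat lies in a finite-radius neighborhood of finitely many such cosets.

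The main obstacle is the product-region identification in step (ii): one must verify that the standard product regions of the HHG structure actually match the vertex-subgroup cosets and JSJ tori, which requires reaching into the specific construction used to prove Theorem~\ref{T:main full} rather than using only its stated output. Everything else is a bookkeeping exercise in the orthogonality relations and a direct appeal to \cite{BHS:quasiflats}.
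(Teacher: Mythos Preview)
Your proposal is correct and follows the same approach the paper sketches: the paper simply notes that the maximal number of pairwise orthogonal infinite-diameter hyperbolic spaces is evidently $2$ and cites \cite{BHS:HHS1,BHS:quasiflats}, without spelling out the orthogonality casework or the identification of product regions. Your step (i) makes the ``evidently'' explicit, and your step (ii) correctly identifies the two types of maximal orthogonal pair with the vertex spaces $\bTheta^v$ and the edge pieces $\bTheta^v\cap\bTheta^w$; the paper later carries out precisely this identification (see Proposition~\ref{prop:HHS_properties}\eqref{item:flats_are_flats}, where the standard flats are matched with strip bundles contained in the $\bTheta^v$), confirming that the obstacle you flag is indeed resolved by the explicit construction of the HHG structure.
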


We note that quasiflats will be crucial for our proof of quasi-isometric rigidity, and we remark that the analogous statement for graph manifolds is due to Kapovich--Leeb \cite{KL:qi_decomposition}.

Recall that an element of a group is a \emph{generalized loxodromic} if it acts loxodromically under some acylindrical action on a hyperbolic space, and that a \emph{universal acylindrical action} on a hyperbolic space is one in which every generalized loxodromic acts loxodromically \cite{ABD:stable}.
It is shown in \cite{S-Morse-hypemb} that a generalized loxodromic element $g$ of a finitely generated group is necessarily \emph{Morse}, meaning that in any finite-valence Cayley graph for the group, any $(K,C)$--quasi-geodesic with endpoints in the cyclic subgroup $\langle g \rangle$ stays within controlled distance $M = M(K,C)$ of $\langle g \rangle$.  While being Morse is, in general, strictly weaker than being generalized loxodromic, these conditions are in fact equivalent in HHGs  \cite[Theorem B]{ABD:stable}. 

In the case of our extension group $\Gamma$, it follows from  Corollary~\ref{cor:HHS consequences}(\ref{item:univ_acyl_action}) that the generalized loxodromics and Morse elements are precisely those elements acting loxodromically on $\hat E$. In \cite[Theorem 1.1]{DDLSI} we characterized these elements in terms of the vertex subgroups of $\Gamma$, thus yielding the following:

\begin{corollary}
Let $\Gamma$ be a lattice Veech group extension with vertex subgroups $\Upsilon_1,\dots,\Upsilon_k$ as in Theorem~\ref{T:main full}. The following are equivalent for an infinite order element $\gamma\in \Gamma$:
\begin{itemize}
\item $\gamma$ is not conjugate into any of the vertex subgroups $\Upsilon_i$
\item $\gamma$ is a generalized loxodromic element of $\Gamma$
\item $\gamma$ is a Morse element of $\Gamma$.
\end{itemize}
\end{corollary}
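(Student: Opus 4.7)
The plan is to deduce the three equivalences by chaining together the universal acylindrical action from Corollary~\ref{cor:HHS consequences}(\ref{item:univ_acyl_action}), the loxodromic characterization from \cite[Theorem 1.1]{DDLSI}, and the equivalence of Morse and generalized loxodromic elements in HHGs from \cite[Theorem B]{ABD:stable}. None of these steps requires new work: the corollary is genuinely an assemblage of prior results now made available by Theorem~\ref{T:main simple}.

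First, since Theorem~\ref{T:main simple} endows $\Gamma$ with the structure of an HHG, I would invoke \cite[Theorem B]{ABD:stable}, which says that in any HHG the generalized loxodromic elements are precisely the Morse elements; this gives the equivalence of the second and third bullets. Next, Corollary~\ref{cor:HHS consequences}(\ref{item:univ_acyl_action}) tells us that $\Gamma \curvearrowright \hat E$ is a universal acylindrical action, so by definition an infinite-order element $\gamma \in \Gamma$ is generalized loxodromic if and only if $\gamma$ acts loxodromically on $\hat E$.

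Finally, I would appeal to \cite[Theorem 1.1]{DDLSI}, which characterizes the loxodromic isometries of $\hat E$ as exactly the infinite-order elements of $\Gamma$ not conjugate into any vertex subgroup $\Upsilon_i$. One direction is transparent from Theorem~\ref{T:main full}(1): since $\hat E$ is quasi-isometric to the Cayley graph of $\Gamma$ coned off along the cosets of the $\Upsilon_i$, every element conjugate into some $\Upsilon_i$ has bounded orbit and is therefore elliptic; the converse is the substantive statement of \cite[Theorem 1.1]{DDLSI}. Chaining the three equivalences above yields the claim.

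There is no real obstacle here; the corollary is formal given the inputs. The only point to verify is that the vertex subgroups $\Upsilon_1,\ldots,\Upsilon_k$ appearing in Theorem~\ref{T:main full} are the same conjugacy-class representatives used in \cite{DDLSI}, which is immediate from the common setup described in \S\ref{sec:intro-HHG_on_Gamma}.
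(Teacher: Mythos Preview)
Your proposal is correct and follows essentially the same route as the paper: the paragraph immediately preceding the corollary already assembles the same three ingredients---\cite[Theorem~B]{ABD:stable} for Morse $\Leftrightarrow$ generalized loxodromic in HHGs, Corollary~\ref{cor:HHS consequences}(\ref{item:univ_acyl_action}) to identify generalized loxodromics with elements acting loxodromically on $\hat E$, and \cite[Theorem~1.1]{DDLSI} for the characterization in terms of vertex subgroups. The only minor point worth making explicit is that ``universal'' gives one direction (generalized loxodromic $\Rightarrow$ loxodromic on $\hat E$), while the converse follows immediately since any element acting loxodromically in an acylindrical action is by definition generalized loxodromic.
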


\subsection{Quasi-isometric rigidity}

To state our rigidity theorem, first recall that $\Gamma$ is (up to finite index) the fundamental group of an $S$-bundle $\widebar{E}/\Gamma$ over a compact surface with boundary. Here $\bar E$ is a $\Gamma$--invariant truncation of the universal $\tilde S$--bundle over the Teichm\"uller disk stabilized by the Veech group $G$. In particular, $\bar E$ is quasi-isometric to $\Gamma$. Let $\Isom(\widebar{E})$ and $\QI(\widebar{E})$ denote the isometry and quasi-isometry groups of $\widebar{E}$, respectively, and let $\Isomfib(\widebar{E}) \leq \Isom(\widebar{E})$ denote the subgroup of isometries that map fibers to fibers.  

\begin{theorem}\label{thm:qirigid intro}
There is an allowable truncation $\bar E$ of $E$ such that the natural homomorphisms $\Isomfib(\bar E) \to \Isom(\bar E) \to \QI(\bar E) \cong \QI(\Gamma)$ are all isomorphisms, and $\Gamma  \le \Isom(\bar E) \cong \QI(\Gamma)$ has finite index.
\end{theorem}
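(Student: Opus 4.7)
The plan is to take an arbitrary quasi-isometry $\phi\colon \bar E\to\bar E$ and produce a unique fiber-preserving isometry $\psi$ of $\bar E$ within bounded distance of $\phi$. Given this, injectivity of $\Isom(\bar E)\to\QI(\bar E)$ follows from cocompactness of the $\Gamma$-action on $\bar E$ (no non-trivial isometry can stay within bounded distance of the identity); moreover, any isometry of $\bar E$ must then agree, up to bounded distance, with some fiber-preserving isometry from the construction, and hence already be fiber-preserving, establishing $\Isomfib(\bar E)=\Isom(\bar E)$. The finite-index statement $[\Isom(\bar E):\Gamma]<\infty$ will follow from standard arguments showing that $\Isom(\bar E)$ projects to a group commensurable with the lattice $G$ in $\PSL_2(\mathbb{R})$, with vertical stabilizers controlled by the cocompact $\pi_1 S$-action on fibers.

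The construction of $\psi$ proceeds in two phases. First, we use quasi-flats to detect the vertex coset structure of $\Gamma$: by Corollary~\ref{cor:top-dim-flats}, the top-dimensional quasi-flats in $\bar E$ are precisely bounded neighborhoods of finite unions of vertex subgroup cosets, so $\phi$ induces a bijection on the family of vertex cosets (up to finite Hausdorff distance) that respects the adjacency and orthogonality relations of Theorem~\ref{T:main full}(2). Each vertex coset is quasi-isometric to a graph-manifold group, so the Kapovich--Leeb quasi-isometric rigidity theorem \cite{KL:qi_decomposition} allows us to modify $\phi$ within bounded distance so that its restriction to each vertex coset is a genuine isometry of that piece.

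In the second phase, we assemble these piecewise isometries into a global isometry of $\bar E$. Passing to the $\nest$-maximal hyperbolic space $\hat E$ from \cite{DDLSI}, $\phi$ descends to a quasi-isometry $\hat\phi$ of $\hat E$ that is compatible with its $\Gamma$-equivariant relationship to the Teichm\"uller disk, on which the Veech group $G$ acts as a non-uniform lattice. The coarse combinatorics of $\hat E$---adjacencies of vertex cosets and the parabolic cusp structure coming from the multi-twist subgroups---are rigid enough to pin down a compatible isometry $\alpha$ of the base. Lifting $\alpha$ through the bundle structure and matching this lift to the Kapovich--Leeb isometries on each vertex coset produces $\psi$. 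Fiber-preservation is automatic from the construction, since $\alpha$ is an isometry of the base and its chosen lift respects the bundle projection.

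The main obstacle is the \emph{matching problem} at the interfaces between vertex pieces: the Kapovich--Leeb isometries on adjacent vertex cosets must agree along their common edge pieces (the suspensions of multi-twist curves appearing as tori in the JSJ decomposition of each boundary component), and the lift of $\alpha$ must simultaneously match all of them. This forces a careful choice of the ``allowable truncation'' $\bar E$ which normalizes the horoball depths and edge-piece widths, so that the coarse data from $\hat\phi$ and the Kapovich--Leeb pieces uniquely determine $\psi$. Once the truncation is calibrated and the matching is arranged, uniqueness of $\psi$ follows from the cocompactness-based injectivity noted above, and all the claimed isomorphism and finite-index statements fall into place.
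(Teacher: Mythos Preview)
Your proposal has a genuine gap in phase one that undermines the whole strategy. You claim that ``the Kapovich--Leeb quasi-isometric rigidity theorem allows us to modify $\phi$ within bounded distance so that its restriction to each vertex coset is a genuine isometry of that piece.'' This is not what Kapovich--Leeb proves. Their theorem says that a quasi-isometry of a (non-geometric) graph manifold group coarsely preserves the Seifert pieces; it does \emph{not} produce an isometry. Indeed, Seifert pieces are virtually $F_n\times\mathbb{Z}$ or $\pi_1(\Sigma)\times\mathbb{Z}$, and these groups have enormous quasi-isometry groups that are in no sense close to isometry groups of any fixed model. So phase one produces, at best, the statement that $\phi$ coarsely permutes the $\bTheta^v$---which is essentially the content of Lemma~\ref{lem:flats_to_flats} and Proposition~\ref{prop:strip_bijection}---but gives you no isometries to glue. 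The ``matching problem'' you flag in phase two is therefore not a technicality to be handled by calibrating the truncation: it is the entire content of the theorem, and you have not proposed a mechanism to solve it.

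The paper's approach is fundamentally different and exploits structure that your proposal ignores: the \emph{flat/affine geometry of the fibers}. Rather than trying to rigidify the graph-manifold pieces, the paper shows (Proposition~\ref{prop:strip_bijection}) that $\phi$ coarsely preserves fibers and strips, then restricts to a single fiber $E_0$ to get a quasi-isometry $\nu_\phi$ that coarsely permutes strips in periodic directions. The key step (Proposition~\ref{P:qi to affine}, via Lemmas~\ref{L:affine on triangles}--\ref{L:conj isom action} and the Current Support Theorem of \cite{DELS}) is that any such $\nu_\phi$ is bounded distance from a unique \emph{affine} homeomorphism of $(E_0,q)$. This is where the Veech-group hypothesis does real work: the affine structure rigidifies what coarse geometry alone cannot. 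The isometry of $\bar E$ is then built directly from this affine map (Lemma~\ref{lem:affine_to_isometry_of_E}), bypassing any piecewise matching.
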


This is an analog, and indeed was motivated by, Farb and Mosher's \cite{FM:SbF} theorem 
that in the case of a surface group extension $\Gamma_H$ associated to a Schottky subgroup $H$ of $\Mod(S)$, the natural homomorphism $\Gamma_H\to \QI(\Gamma_H)$ is injective with finite cokernel.
This rigidity also leads to the following strong \emph{algebraic} consequence:

\begin{corollary}
\label{cor:alg_qi-rigid_intro}
If $H$ is any finitely generated group quasi-isometric to $\Gamma$, then $H$ and $\Gamma$ are weakly commensurable.
\end{corollary}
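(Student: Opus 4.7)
The plan is to follow the classical template whereby strong QI-rigidity of the form in Theorem~\ref{thm:qirigid intro} automatically yields algebraic rigidity, as in Farb--Mosher's treatment of Schottky extensions. Let $H$ be a finitely generated group quasi-isometric to $\Gamma$, fix a quasi-isometry $q \colon H \to \Gamma$ with quasi-inverse $\bar q$, and recall $\Gamma$ is quasi-isometric to $\bar E$. Conjugating the left action of $H$ on itself by $q$ and $\bar q$ produces a cobounded and metrically proper quasi-action of $H$ on $\bar E$. The key input of Theorem~\ref{thm:qirigid intro} is that $\Isom(\bar E) \cong \QI(\bar E) \cong \QI(\Gamma)$, which collapses this quasi-action into a genuine homomorphism
\[
\rho \colon H \longrightarrow \Isom(\bar E),
\]
whose image consists of honest isometries sitting at bounded distance from the original quasi-action.

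The first step is to show that $\ker\rho$ is finite. Elements of the kernel are exactly those $h \in H$ whose quasi-action on $\bar E$ is at bounded distance from the identity, with uniform constants depending only on $q$ and $\bar q$; by the properness of the quasi-action (inherited from the properness of left multiplication on $H$), only finitely many such $h$ exist.

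The second, more delicate step is to show that $\rho(H)$ has finite index in $\Isom(\bar E)$. Because the isometric action of $\rho(H)$ sits at bounded Hausdorff distance from the cobounded quasi-action of $H$, the subgroup $\rho(H)$ acts cocompactly on $\bar E$ by isometries. Since $\Gamma \le \Isom(\bar E)$ is already finite index (hence cocompact), the group $\Isom(\bar E)$ is abstractly virtually $\Gamma$ and essentially discrete; any subgroup acting cocompactly on $\bar E$ then has finite index in $\Isom(\bar E)$. Consequently $\rho(H) \cap \Gamma$ has finite index in both $\rho(H)$ and $\Gamma$.

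Combining the two steps, setting $\Gamma_0 := \rho(H) \cap \Gamma$ and $H_0 := \rho^{-1}(\Gamma_0)$ yields finite-index subgroups $H_0 \le H$ and $\Gamma_0 \le \Gamma$ together with a finite normal subgroup $\ker\rho \trianglelefteq H_0$ and an isomorphism $H_0/\ker\rho \cong \Gamma_0$, which is precisely weak commensurability. I expect the main obstacle to be confirming Step~2 cleanly: one must verify that cocompactness is genuinely inherited by the honest isometric action of $\rho(H)$ from the cobounded quasi-action of $H$ (despite the bounded error introduced by the isomorphism with $\QI(\bar E)$), and then argue that cocompact subgroups of the essentially discrete group $\Isom(\bar E)$ are of finite index; both require that $\bar E$ be proper and that the discrete structure of $\Isom(\bar E)$ guaranteed by Theorem~\ref{thm:qirigid intro} be exploited carefully.
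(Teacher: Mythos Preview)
Your approach is essentially the same as the paper's, and the overall structure is correct. However, the technical obstacle you flag for Step~2 is equally present in Step~1, and you gloss over it there. When you write that kernel elements are those $h$ whose quasi-action is ``at bounded distance from the identity, with uniform constants depending only on $q$ and $\bar q$,'' the uniformity is exactly the issue: knowing that $[\phi_h]$ is trivial in $\QI(\bar E)$ only tells you $\phi_h$ is at \emph{some finite} distance from the identity, not that this distance is bounded independently of $h$. Without uniformity, properness of the quasi-action does not bound the kernel, and likewise coboundedness of the quasi-action does not transfer to cocompactness of the isometric action $\rho(H)$.

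The paper isolates this as a separate lemma (Lemma~\ref{lem:qi-uniformly_close_to_id}): for every $K$ there exists $R'$ such that any $(K,K)$--quasi-isometry of $\bar E$ at finite distance from the identity is in fact within distance $R'$ of it. The proof uses the specific structure of $\bar E$ rather than anything general: one first applies a Morse-type argument (Lemma~\ref{lem:hyper fact}) to the induced self-map of the hyperbolic base $\bar D$ to get a uniform bound there, and then applies the same argument fiberwise to each $E_X$ (using the uniform constants from Lemma~\ref{L:fiber to fiber qi}). Once this lemma is in hand, both of your steps go through exactly as you outline.
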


In the statement, recall that two groups $H_1,H_2$ are \emph{weakly commensurable} if there are finite normal subgroups $N_i\lhd H_i$ so that the quotients $H_i/N_i$ have a pair of finite-index subgroups that are isomorphic to each other.

\subsection{Motivation and Geometric Finiteness}
\label{section:musings-on-geom-finte}

Before outlining the paper and providing some ideas about the proofs, we provide some speculative discussion.
For Kleinian groups---that is, discrete groups of isometries of hyperbolic $3$--space---the notion of geometric finiteness is important in the deformation theory of hyperbolic $3$--manifolds by the work of Ahlfors \cite{Ahlfors} and Greenberg \cite{Greenberg}.
While the definition has many formulations (see \cite{Marden,Maskit:bdy,thurston:GT,Bow:GF}), roughly speaking a group is geometrically finite if it acts cocompactly on a convex subset of hyperbolic $3$--space minus a collection of horoballs that are invariant by parabolic subgroups.  When there are no parabolic subgroups, geometric finiteness reduces to convex cocompactness: a cocompact action on a convex subset of hyperbolic $3$--space.

While there is no deformation theory for subgroups of mapping class groups,
Farb and Mosher \cite{FM:CC} introduced a notion of convex cocompactness for $G < \Mod(S)$ in terms of the action on Teichm\"uller space $\Teich(S)$. 
Their definition requires that $G$ acts cocompactly on a quasi-convex subset for the Teichm\"uller metric, while Kent and Leininger later proved a variety of equivalent formulations analogous to the Kleinian setting \cite{KL:survey,KL:CC,KL:uniform}. Farb and Mosher proved that convex cocompactness is equivalent to hyperbolicity of the associated extension group $\Gamma_G$ (with monodromy given by inclusion) when $G$ is virtually free. This equivalence was later proven in general by Hamenst\"adt \cite{hamenstadt:WHSG} (see also Mj--Sardar \cite{MjSardar}), though at the moment the only known examples are virtually free.

The coarse nature of Farb and Mosher's formulation reflects the fact that the Teichm\"uller metric is far less well-behaved than that of hyperbolic $3$--space. Quasi-convexity in the definition is meant to help with the lack of nice {\em local} behavior of the Teichm\"uller metric. It also helps with the {\em global} lack of Gromov hyperbolicity (see Masur--Wolf \cite{masur:NH}), as cocompactness of the action ensures that the quasi-convex subset in the definition is Gromov hyperbolic (see Kent--Leininger \cite{KL:CC}, Minsky \cite{Minsky:quasiproj}, and Rafi \cite{rafi:HT}).  

The inclusion of reducible/parabolic mapping classes in a subgroup $G< \Mod(S)$ brings the {\em thin parts} of $\Teich(S)$ into consideration; these subspaces contain higher rank quasi-flats and even exhibit aspects of positive curvature (see Minsky~\cite{Minsky:product}).  This is a main reason why extending the notion of convex cocompactness to geometric finiteness is complicated. These complications are somewhat mitigated in the case of lattice Veech groups. Such subgroups are stabilizers of isometrically and totally geodesically embedded hyperbolic planes, called {\em Teichm\"uller disks}, that have finite area quotients.  Thus, the intrinsic hyperbolic geometry agrees with the extrinsic Teichm\"uller geometry, and as a group of isometries of the hyperbolic plane, a lattice Veech group {\em is} geometrically finite. This is why these subgroups serve as a test case for geometric finiteness in the mapping class group.   This is also why a {\em subgroup} of a Veech group is convex cocompact in $\Mod(S)$ if and only if it is convex cocompact as a group of isometries of the hyperbolic plane (which also happens if and only if it is finitely generated and contains no parabolic elements).

The action of $\Mod(S)$ on the {\em curve graph}, which is Gromov hyperbolic by work of Masur--Minsky \cite{MM:CC1}, provides an additional model for these considerations. Specifically, convex cocompactness is equivalent to the orbit map to the curve graph $\C(S)$ being a quasi-isometric embedding with respect to the word metric from a finite generating set (see Kent--Leininger \cite{KL:CC} and Hamenst\"adt \cite{hamenstadt:WHSG}). 
Viewing geometric finiteness as a kind of ``relative convex cocompactness'' for Kleinian groups suggests an interesting connection with the curve complex formulation.
The connection is best illustrated by the following theorem of Tang \cite{tang2019affine}. 

\begin{theorem}[Tang] \label{T:Tang} For any lattice Veech group $G < \Mod(S)$ stabilizing a Teichm\"uller disk $D \subset \Teich(S)$, there is a $G$--equivariant quasi-isometric embedding $D^{el} \to \C(S)$,
where $D^{el}$ is the path metric space obtained from $D$ by coning off the $G$--invariant family of horoballs in which $D$ ventures into the thin parts of $\Teich(S)$.
\end{theorem}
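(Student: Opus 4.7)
The plan is to construct a $G$--equivariant coarse map $\pi\colon D \to \C(S)$ via systoles, verify that it is coarsely Lipschitz and factors through the electrified disk $D^{el}$, and finally derive a quasi-isometric lower bound from the Gromov hyperbolicity of both sides together with the cocompact $G$--action on $D^{el}$.

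For the construction, I would set $\pi(X)$ to be a shortest simple closed curve on the Riemann surface underlying $X \in D$. Since any two curves of near-minimal extremal length must be disjoint, this assignment is coarsely well-defined and $G$--equivariant; standard extremal-length estimates show that the Teichm\"uller metric uniformly controls the logarithmic change in extremal length of any fixed curve, which implies $\pi$ is coarsely Lipschitz from $(D, d_\Teich)$ to $\C(S)$. Next, each horoball $H \subset D$ is preserved by a parabolic subgroup of $G$ which, for a lattice Veech group, is generated up to finite index by a multi-twist about a multicurve $\mu_H$---the core curves of the cylinder decomposition in the parabolic direction. Deep inside $H$, the components of $\mu_H$ all have extremal length tending to zero, so $\pi(X)$ must be a component of $\mu_H$ for $X$ sufficiently deep. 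Since a multicurve has diameter $\leq 1$ in $\C(S)$, the image $\pi(H)$ is bounded, and hence $\pi$ descends to a coarsely Lipschitz, $G$--equivariant map $\bar\pi\colon D^{el} \to \C(S)$.

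For the lower bound, the key observations are that $D^{el}$ is Gromov hyperbolic (coning off a $G$--invariant family of pairwise disjoint horoballs in $\mathbb{H}^2$ yields a hyperbolic space) and that $G$ acts properly cocompactly on $D^{el}$, the quotient being a closed orbifold obtained by collapsing the finitely many cusps of the finite-area orbifold $G \backslash D$. Since $\C(S)$ is hyperbolic by Masur--Minsky, verifying that $\bar\pi$ is a quasi-isometric embedding reduces to showing that the image of any geodesic in $D^{el}$ is a uniform quasi-geodesic in $\C(S)$. I would exploit the fact that every infinite-order non-parabolic element $g \in G$ acts on $S$ as an affine pseudo-Anosov---its action on $D$ being a hyperbolic isometry---and hence acts loxodromically on $\C(S)$ with positive asymptotic translation length by Masur--Minsky. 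Combined with cocompactness, this yields a uniform positive displacement rate along any geodesic in $D^{el}$ whose endpoints are related by such a $g$.

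The main obstacle is promoting the pointwise loxodromic behavior of non-parabolic elements into a genuinely uniform lower bound that survives the cone-off. Concretely, one must rule out the possibility that a geodesic in $D^{el}$ threading through many horoball cone points has image in $\C(S)$ that backtracks or stalls. The resolution should come from the observation that distinct horoballs in $D$ correspond to distinct parabolic fixed points on $\partial D$, hence to distinct cylinder directions on the flat structure and thus to distinct multicurves $\mu_H$; these remain pairwise far apart in $\C(S)$ along any deep horoball sequence, and a combinatorial bookkeeping argument---perhaps via subsurface projections to the components of each $\mu_H$ or a direct application of the Masur--Minsky bounded geodesic image theorem---should then yield the desired linear lower bound on $d_{\C(S)}$.
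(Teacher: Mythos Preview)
The paper does not prove Theorem~\ref{T:Tang}; it is quoted as a result of Tang \cite{tang2019affine} and used only as motivation in \S\ref{section:musings-on-geom-finte}. So there is no proof in the paper to compare against, and your proposal is an attempt to reconstruct Tang's argument rather than to match anything the authors wrote.

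That said, your outline has a real error and an unresolved gap. The claim that $G$ acts \emph{properly} cocompactly on $D^{el}$ is false: coning off each horoball produces a cone point fixed by the entire parabolic subgroup stabilizing that horoball, so point stabilizers are infinite and the action is only cobounded, not proper. This matters because the Svarc--Milnor style reasoning you gesture at (cocompactness plus loxodromics giving a uniform lower bound) does not go through without properness; you cannot conclude that $D^{el}$ is quasi-isometric to a Cayley graph of $G$, and the ``uniform positive displacement rate'' argument you sketch has no mechanism to produce uniformity over all geodesics rather than just periodic ones.

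The standard route---and essentially what Tang does---bypasses this by using that Teichm\"uller geodesics in $D$ project via the systole map to \emph{unparameterized} quasi-geodesics in $\C(S)$ (a consequence of Masur--Minsky), and then arguing that the reparameterization is controlled precisely by the time the geodesic spends in the thick part of $D$, i.e., outside the horoballs. This directly gives the two-sided estimate between $d_{D^{el}}$ and $d_{\C(S)}$ without ever invoking properness of the $G$--action. Your final paragraph correctly identifies the obstacle but the proposed fix via bounded geodesic image or subsurface projections, while plausible in spirit, is not developed enough to close the gap; the cleaner path is the reparameterized-quasi-geodesic argument.
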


Farb \cite{Farb:rel} showed that non-cocompact lattices in the group of isometries of hyperbolic space are {\em relatively hyperbolic} relative to the parabolic subgroups.  For Veech groups, the space $D^{el}$ is quasi-isometric to the (hyperbolic) coned off Cayley graph, illustrating (part of) the relative hyperbolicity of $G$.  We thus propose a kind of ``qualified'' notion of geometric finiteness with this in mind:

\begin{definition}[Parabolic geometric finiteness]
 \label{D:par geom finite} A finitely generated subgroup $G< \Mod(S)$ is {\em parabolically geometrically finite} if $G$ is relatively hyperbolic, relative to a (possibly trivial) collection of subgroups $\mathcal H = \{H_1,\ldots,H_k\}$, and
\begin{enumerate}
\item $H_i$ contains a finite index, abelian subgroup consisting entirely of multitwists, for each $1 \leq i \leq k$; and
\item the coned off Cayley graph $G$--equivariantly and quasi-isometrically embeds into $\C(S)$.
\end{enumerate}
\end{definition} 

When $\mathcal H = \{ \{id\} \}$, we note that the condition is equivalent to $G$ being convex cocompact.  By Theorem~\ref{T:Tang}, lattice Veech groups are parabolically geometrically finite.  In fact, Tang's result is more general and implies that any finitely generated Veech group satisfies this definition.  These examples are all virtually free, but other examples include the combination subgroups of Leininger--Reid \cite{Leininger-Reid}, which are isomorphic to fundamental groups of closed surfaces of higher genus, and free products of higher rank abelian groups constructed by Loa \cite{Loa}.

In view of Theorem \ref{T:main simple}, one might formulate the following.

\begin{conjecture}
 Let $G< \Mod(S)$ be parabolically geometrically finite. Then the $\pi_1S$--extension group $\Gamma$ of $G$ is a hierarchically hyperbolic group.
\end{conjecture}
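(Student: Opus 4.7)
The plan is to generalize the strategy behind Theorems~\ref{T:main simple} and \ref{T:main full} by replacing the geometric input coming from the Teichm\"uller disk stabilized by the Veech group with the more flexible input supplied by Definition~\ref{D:par geom finite}. First, I would construct the candidate $\nest$--maximal hyperbolic space $\hat E$ analogous to the one in \cite{DDLSI}: the Cayley graph of $\Gamma$ coned off along the cosets of the $\pi_1 S$--extensions $\Upsilon_1,\ldots,\Upsilon_k$ of $H_1,\ldots,H_k$. Hyperbolicity of $\hat E$ should follow by combining the $G$--equivariant quasi-isometric embedding of the coned-off Cayley graph of $G$ into $\C(S)$ (condition (2) of Definition~\ref{D:par geom finite}) with the convex-cocompactness machinery of Farb--Mosher and Hamenst\"adt, suitably upgraded to handle the relative setting; in spirit this is a ``relative convex cocompactness'' version of their theorem, with the peripheral cosets replacing the role played by reducible/periodic elements.

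Next, I would analyze each peripheral subgroup $\Upsilon_i$ on its own. Since a finite-index subgroup of $H_i$ is abelian of some rank $r_i$ and consists of multitwists, $\Upsilon_i$ is the fundamental group of a higher-dimensional analog of the graph manifolds appearing as the boundary pieces in Theorem~\ref{T:main full}: an iterated $S$--bundle over a compact aspherical base with $\mathbb Z^{r_i}$--monodromy by commuting Dehn multitwists. The structural step is to give each $\Upsilon_i$ its own HHG structure generalizing the graph manifold picture of \cite{BHS:HHS2}, with Bass--Serre-type trees dual to the splittings of $\Upsilon_i$ along the annular subsurfaces carrying the multitwist curves, and with quasi-tree / quasi-line data attached to the vertices of these trees exactly as in Theorem~\ref{T:main full}(2).

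The third step is to assemble the peripheral HHG structures and the space $\hat E$ into a single HHG structure on $\Gamma$. The most efficient route is the combinatorial HHS framework of \S\ref{S:combinatorial HHS}: build a $\Gamma$--equivariant simplicial complex whose maximal simplices index the ``product regions'' coming from the peripherals, use the coned-off Cayley graph / curve graph embedding to encode the transverse interactions between distinct peripheral cosets, check the link, nesting, and orthogonality axioms from the preceding step, and apply the combinatorial criterion to extract an HHG structure on $\Gamma$. Equivariance should be automatic since every hyperbolic space and projection is built from $\Gamma$--invariant data.

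The hard part, I expect, is this third step, and in particular verifying the projection axioms (bounded geodesic image, consistency, uniqueness, the distance formula) that glue the peripheral hierarchies to each other and to $\hat E$. In the lattice Veech case, the flat geometry of the Teichm\"uller disk provides explicit horoballs and explicit coordinates that organize the interactions between distinct cusps, and the prequel paper \cite{DDLSI} leverages this heavily; without a Teichm\"uller disk, one must extract the same coarse organization purely from the quasi-isometric embedding into $\C(S)$ plus relative hyperbolicity of $G$. A secondary difficulty is that when some $H_i$ has rank $\geq 2$ the peripheral extension $\Upsilon_i$ is no longer a $3$--manifold group, so its HHG structure must be built by hand rather than imported from \cite{BHS:HHS2}; verifying orthogonality among the many commuting multitwist factors while preserving transversality between distinct peripherals is where the bulk of the bookkeeping will lie.
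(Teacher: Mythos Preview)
The statement you are attempting to prove is labeled in the paper as a \emph{Conjecture}, not a theorem, and the paper contains no proof of it. The paper proves only the special case of lattice Veech groups (Theorem~\ref{T:main simple}); the conjecture is then formulated in \S\ref{section:musings-on-geom-finte} as a proposed generalization, explicitly left open. So there is no ``paper's own proof'' to compare against.

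Your proposal is not a proof but a research outline, and you seem aware of this: you write that hyperbolicity of $\hat E$ ``should follow,'' that one ``would'' build peripheral HHG structures, and that the gluing step is ``the hard part, I expect.'' Each of these is a genuine open problem. In particular: (i) the relative version of the Farb--Mosher/Hamenst\"adt theorem you invoke for hyperbolicity of the coned-off extension does not exist in the literature and would itself be a substantial result; (ii) when $H_i$ has rank $\geq 2$, the extension $\Upsilon_i$ is not covered by the graph-manifold HHS results of \cite{BHS:HHS2}, and producing an HHG structure on such groups is nontrivial (though see Russell's subsequent work \cite{russell2021extensions} on multicurve stabilizers, mentioned in the paper, which handles a related case); (iii) the projection estimates that, in the Veech case, come from the explicit flat geometry of \S\ref{S:projections} (windows, bridges, the quasimorphism construction) have no obvious replacement when $G$ is an arbitrary parabolically geometrically finite subgroup. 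Your outline identifies the right architecture, but filling in any one of these three steps would constitute a new theorem.
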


We view Definition~\ref{D:par geom finite} as only a qualified formulation 
because there are many subgroups of $\Mod(S)$ that are not  relatively hyperbolic but are nevertheless candidates for being geometrically finite in some sense. It is possible that there are different types of geometric finiteness for subgroups of mapping class groups, with Definition~\ref{D:par geom finite} being among the most restrictive. Other notions might include an HHS structure on the subgroup which is compatible with the ambient one on $\Mod(S)$ (e.g., hierarchical quasiconvexity \cite{BHS:HHS2}).
From this perspective, some candidate subgroups that may be considered geometrically finite include:

\begin{itemize}
\item the whole group $\Mod(S)$;
\item multi-curve stabilizers;
\item the right-angled Artin subgroups of mapping class groups constructed in \cite{CLM:raag_in_MCG, K:raag_in_MCG, runnels2020effective};
\item free and amalgamated products of other examples.
\end{itemize}

\begin{question} For each example group $G\le \Mod(S)$ above, is the associated extension $\Gamma_G$ a hierarchically hyperbolic group?
\end{question}

We note that the answer is `yes'  for the first example, since the extension group is the mapping class group of the surface $S$ with a puncture.  Moreover, since our work on this subject first appeared, Russell \cite{russell2021extensions} addressed the second example by proving  extensions of multicurve stabilizers are hierarchically hyperbolic groups.

\bigskip

\subsection{Outline and proofs} \label{subsec:outline}
Let us briefly outline the paper and comment on the main structure of the proofs. 
In \S\ref{sec:setup}  we review necessary background material and introduce the objects and notation that will be used throughout the paper.  In particular, we define the spaces $E$ and $\bar E$, the latter being a quasi-isometric model for the Veech group extension $\Gamma$, as well as the hyperbolic collapsed space $\hat E$.  All of these were constructed in \cite{DDLSI}. 

In \S\S\ref{S:projections}--\ref{S:combinatorial HHS} we prove that the extension group $\Gamma$ is hierarchically hyperbolic by utilizing a combinatorial criterion from \cite{comb_HHS}. Besides hyperbolicity of $\hat E$, the other hard part of the criterion is an analogue of Bowditch's fineness condition from the context of relative hyperbolicity. Its geometric interpretation is roughly that two cosets of vertex subgroups as above have bounded coarse intersection, aside from the ``obvious'' exception when the cosets correspond to vertices of the same Bass--Serre tree within distance 2 of each other. 
To this end, in \S\ref{S:projections} we associate to each vertex $v\in \vtx$ a {spine bundle} $\bTheta^v \subset \bar E$, which corresponds to a Seifert piece of the JSJ decomposition of the peripheral graph manifold, along with a pair of hyperbolic spaces $\mathcal K^v$ and $\bXi^v$ that will figure into the HHS structure on $\Gamma$. The space $\mathcal K^v$ is obtained via a quasimorphism constructed using the Seifert fibered structure following ideas in forthcoming work of the fourth author with Hagen, Russell, and Spriano \cite{SquidGames}, while $\bXi^v$ is coarsely obtained by coning off boundary components of the universal covers of the base $2$--orbifold of this Seifert fibered manifold. We then appeal to the flat geometry of the fibers of $E$ to construct and study certain projection maps
\begin{center}
\begin{tikzpicture}[scale = 1]
\node (E) at (0,1.1) {$\bar E$};
\node (theta) at (0,0) {$\bTheta^v$};
\node (K) at (-2.5,0) {$\mathcal{K}^v$};
\node (Xi) at (2.5,0) {$\bXi^v$};
\draw[->] (E) -- node[right] {$\Pi^v$} (theta);
\draw[->] (theta) -- node[midway,fill=white] {$\lambda^v$} (K);
\draw[->] (theta) -- node[midway,fill=white] {$i^v$} (Xi);
\path (E) edge[bend right=10,->] node[midway,fill=white] {$\Lambda^v$} (K);
\path (E) edge[bend left=10,->] node[midway,fill=white] {$\xi^v$} (Xi);
\end{tikzpicture}
\end{center}
and prove that various pairs of subspaces of $\bar E$ have bounded projection onto each other (Proposition \ref{prop:R-projections}).

In \S\ref{S:combinatorial HHS}, we begin assembling the combinatorial objects necessary to apply the HHG criterion from \cite{comb_HHS}, which involves both combinatorial and geometric aspects.  The first step involves the construction of a natural flag complex $\mathcal X$ containing the union of the Bass-Serre trees, together with appropriate ``subjoins''  with the union of all $\mathcal K^v$, over $v \in \vtx$.  Next, we use the geometry of $\bar E$ to construct a certain graph $\mathcal W$ whose vertices are maximal simplices of $\mathcal X$ and on which $\Gamma$ acts metrically properly and coboundedly.  The remainder of this section is devoted to verifying the necessary combinatorial conditions as well as translating the facts about $\mathcal K^v$ and $\bXi^v$ and the projections described above into proofs of the necessary geometric conditions.  We note that in the combinatorial HHG setup, the complex $\mathcal X$ comes with its own hierarchy projections between the induced hyperbolic spaces (Definitions~\ref{defn:orth}--\ref{defn:projections}), which may be different than the projections to $\mathcal K^v$ and $\bXi^v$.

In \S\ref{S:QI rigidity} we prove our QI-rigidity result Theorem \ref{thm:qirigid intro}.
The starting point is the hierarchical hyperbolicity of $\Gamma$ provided by Theorem \ref{T:main full}, as it gives access to the results and arguments in \cite{BHS:quasiflats} about the preservation of quasi-isometrically embedded flats.
Every collection of pairwise orthogonal hyperbolic spaces in an HHG determines a natural product subspace, with the maximal \emph{standard} quasi-isometrically embedded flats (or orthants) 
arising inside such subspaces as products of quasi-lines in a maximal collection of pairwise orthogonal hyperbolic spaces of the HHG.  
Theorem A of \cite{BHS:quasiflats} states that a quasi-isometry of an HHS preserves the structure of its quasi-flats and takes any maximal quasi-flat within bounded Hausdorff distance of the union of standard maximal orthants. 
The maximal quasi-flats in the HHG structure on $\widebar{E}$, namely the $2$--dimensional flats indicated in Corollary~\ref{cor:top-dim-flats}, are encoded by certain \emph{strip bundles} that, roughly, correspond to flats in the peripheral graph manifolds.  We use the preservation of the maximal quasi-flats to derive coarse preservation of these strip bundles, which we then upgrade to coarse preservation of the fibers (\S\ref{sec:HHS-quasi-flats}). By using tools of flat geometry from \cite{BankLein,DELS}, we then show any quasi-isometry induces an \emph{affine} homeomorphism of any fiber to itself (\S\S\ref{sec:QI_of_barE_to_QI_of_E_0}--\ref{sec:qis_to_affine}) and moreover that this assignment is injective (\S\ref{sec:injectivity_of_A}). Finally, we show this association is an isomorphism by proving (\S\ref{sec:affine_to_isometry}) that every affine homeomorphism of a fiber induces an isometry and hence quasi-isometry of $\bar E$. Quasi-isometric rigidity and its algebraic consequence Corollary~\ref{cor:alg_qi-rigid_intro} are then easily obtained in \S\ref{sec:finish_proof_of_rigidity}.

\subsection*{Acknowledgments} The authors would like to thank MSRI and its Fall 2016 program on {\em Geometric Group Theory}, where this work began. We also gratefully acknowledge NSF grants DMS 1107452, 1107263, 1107367 (the GEAR Network) for supporting travel related to this project.
Dowdall was partially supported by NSF grants DMS-1711089 and DMS-2005368. Durham was partially supported by NSF grant DMS-1906487.  Leininger was partially supported by NSF grants DMS-1510034, DMS-1811518, and DMS-2106419.
Sisto  was partially supported by the Swiss National  Science Foundation (grant \#182186).  The authors would like to thank the anonymous referee for their very helpful comments on the first version of this paper.

\section{Setup: The groups and spaces}
\label{sec:setup}
 
Here we briefly recall the basic set up from \cite{DDLSI} which we will use throughout the remainder of the paper.  We refer the reader to Sections 2 and 3 of that paper for details and precise references.

\subsection{Flat metrics and Veech groups} \label{S:flat metrics Veech groups}
Fix a closed surface of genus at least $2$, a complex structures $X_0$ (viewed as a point in the Teichm\"uller space $\Teich(S)$), and a nonzero holomorphic quadratic differential $q$ on $(S,X_0)$.  Integrating a square root of $q$ determines preferred coordinates on $(S,X_0)$ for $q$ which defines a translation structure (in the complement of the isolated zeros of $q$).  We also write $q$ for the associated flat metric defined by the half-translation structure (though the metric only determines the half-translation structure or quadratic differential up to a complex scalar multiple).  This metric is a non-positively curved Euclidean cone metric, with cone singularities at the zeros of $q$. 
The orbit of $(X_0,q)$ under the natural $\SL_2(\mathbb R)$ action on quadratic differentials projects to a {\em Teichm\"uller disk}, $D = D_q \subset \Teich(S)$, which we equip with its Poincar\'e metric $\rho$. The circle at infinity of $D$ is naturally identified with the projective space of directions, $\mathbb P^1(q)$, in the tangent space of any nonsingular point of $q$.  For $\alpha \in \mathbb P^1(q)$, we write $\mathcal F(\alpha)$ for the singular foliation by geodesics in direction $\alpha$.

We assume that the associated Veech group $G = G_q$ is a lattice---recall that $G$ can be viewed as the stabilizer in the mapping class group of $S$ of $D$ as well as the affine group of $q$, and the lattice assumption is equivalent to requiring the quotient orbifold $D/G$ to have finite $\rho$--area.  The parabolic fixed points in the circle at infinity form a subset we denote $\CP \subset \mathbb P^1(q)$.  This subset corresponds precisely to the completely periodic directions for the flat metric $q$; that is, the directions $\alpha$ for which the foliation $\mathcal F(\alpha)$ decomposes $S$ into cylinders foliated by $q$--geodesic core circles.   The boundaries of these cylinders are $q$--saddle connections ($q$--geodesic segments connecting pairs of cone points, with no cone points in their interior), and by the Veech Dichotomy, every saddle connection is in a direction in $\CP$.  We let $\{B_\alpha\}_{\alpha \in \CP}$ denote any $G$--invariant, $1$--separated set of horoballs in $D$ and let
\[ \bar D  = D \smallsetminus \bigcup_{\alpha \in \CP} {\rm{int}}(B_\alpha)\]
be the $G$--invariant subspace obtained by removing these horoballs.  We write $\bar \rho$ for the induced path metric on $\bar D$.
Finally, we let
\[ p \colon D \to \hat D\]
be the $G$--equivariant quotient obtained by collapsing each horoball $\CB_\alpha$ to a point, for $\alpha \in \CP$.  There is a natural path metric $\hat \rho$ on $\hat D$ so that $p$ is $1$--Lipschitz and is a local isometry at every point not in one of the horoballs.

We will also make use of the closest point projection to the horoball
\[ c_\alpha \colon D \to B_\alpha\]
for each $\alpha \in \CP$.

\subsection{The bundles $E$ and $\bar E$.} \label{S:E and bar E}
For each point $X \in D$, we let $q_X$ denote the associated flat metric or quadratic differential (defined up to scalar multiplication) on $S$.  The space of interest $E$ is a bundle over $D$,
\[ \pi \colon E \to D, \]
for which the fiber $E_X$ over $X \in D$ is naturally identified with the universal cover $\widetilde S$ of $S$, equipped with the pull-back complex structure $X$ and quadratic differential/flat metric $q_X$.  We write $\CB_\alpha = \pi^{-1}(B_\alpha)$ for $\alpha \in \CP$.

For any $X,Y \in D$, the Teichm\"uller map between these complex structures has initial and terminal quadratic differentials $q_X$ and $q_Y$ (up to scalar multiple) and this map lifts to a canonical affine map between the fibers $f_{Y,X} \colon E_X \to E_Y$.  These maps satisfy $f_{Z,X} = f_{Z,Y} f_{Y,X}$  for all $X,Y, Z \in D$, and for any $X \in D$, assemble to a map $f_X \colon E \to E_X$ defined by $f_X(y) = f_{X,\pi(y)}(y)$.  Moreover, for any $X,Y \in D$, $f_{Y,X}$ is $e^{\rho(X,Y)}$--bi-Lipschitz.  We use the maps $f_{X,X_0}$ to identify $\mathbb P^1(q) \cong \mathbb P^1(q_X)$ for all $X \in D$.

The fiber over $X_0$ is denoted $E_0 = E_{X_0}$ and the maps $f_0 = f_{X_0} \colon E \to E_0$ and $\pi \colon E \to D$ are projections on the factors in a product structure $E \cong D \times E_0 \cong D \times \widetilde S$.  For $x \in E$, we write $D_x = f_{\pi(x)}^{-1}(x)$, which is just the slice $D \times \{f_0(x)\}$ in the product structure.  The affine maps $f_{Y,X}$ sends the cone points $\Sigma_X$ of $E_X$ to the cone points $\Sigma_Y$ of $E_Y$.  Consequently, the union of all singular points
\[ \Sigma = \bigcup_{X \in D} \Sigma_X \]
is a locally finite union of disks $D_x$, one for each $x \in\Sigma_0 = \Sigma_{X_0}$.

We give the space $E$ a singular Riemannian metric $d$ which is the flat metric on each fiber $E_X$ and the Poincar\'e metric on each disk$D_x$ so that at each smooth point of intersection, the tangent planes are orthogonal.  The singular locus of this metric is precisely $\Sigma$.  Each disk $D_x$ is isometrically embedded since $\pi$ is a $1$--Lipschitz map, and hence restricts to an isometry $\pi|_{D_x} \colon D_x \to D$.  The metric on $E \smallsetminus \Sigma$ is in fact a locally homogeneous metric, modeled on a four-dimensional, Thurston-type geometry; see \cite[\S5]{DDLSI}.

The extension group $\Gamma$ acts on $E$ by bundle maps with the kernel $\pi_1 S  < \Gamma$ of the projection to $G$ acting trivially on $D$ and by covering transformation on each fiber $E_X$.  We set $\bar E = \pi^{-1}(\bar D) \subset E$, and write $\bar \pi \colon \bar E \to \bar D$.  When convenient to do so, we put ``bars" over objects associated to $\bar D$ or $\bar E$, e.g.~$\bar D_x = D_x \cap \bar E$, $\bar p \colon \bar D \to \hat D$, etc.  In particular, we write $\bar d$ for the induced path metric on $\bar E \subset E$, induced from the metric on $E$ described above.

For any $\alpha \in \CP$, the closest point projection $c_\alpha \colon D \to B_\alpha$ has a useful ``lift" $f_\alpha \colon E \to \CB_\alpha$, defined by
\[ f_\alpha(x) = f_{c_\alpha(\pi(x))}(x),\]
for any $x \in \bar E$.  That is, $f_\alpha$ maps each fiber $E_X$ via the map $f_{Y,X}$ to $E_Y$, where $Y = c_\alpha(X)$ is the image of the closest point projection to $B_\alpha$ of $X$ in $D$.

\subsection{The hyperbolic space $\hat E$}\label{subsec:hatE}
The quotient $p \colon D \to \hat D$ is the descent of a quotient $P \colon E \to \hat E$ which we now describe.  First, for each $\alpha \in \CP$, the foliation $\mathcal F(\alpha)$ lifts to a foliation on $E_0$ in direction $\alpha$, and hence on any fiber $E_X$ by push-forward via the map $f_{X,X_0}$, also in direction $\alpha$ (via the identification $\mathbb P^1(q) \cong \mathbb P^1(q_X)$).  There is a natural transverse measure coming from the flat metric on $X$.  Given $\alpha \in \CP$, we fix some $X_\alpha \in \partial B_\alpha$ and let $T_\alpha$ be the dual simplicial $\mathbb R$--tree to this measured foliation in direction $\alpha$ on $E_{X_\alpha}$, and we let $t_\alpha \colon E \to T_\alpha$ be the composition of the leafspace projection $E_{X_\alpha} \to T_\alpha$ with the map $f_{X_\alpha} \colon E \to E_{X_\alpha}$.

Now we define $P \colon E \to \hat E$ to be the quotient space obtained by collapsing the subset $\CB_\alpha$ to $T_\alpha$ via $t_\alpha|_{\CB_\alpha}$ for each $\alpha \in \CP$.  We also write $\bar P = P|_{\bar E} \colon \bar E \to \hat E$.  The maps $P$ and $\bar P$ descend to the maps $p$ and $\bar p$, and the map $\pi$ determines maps $\hat \pi$ and $\bar \pi$, which all fit into the following commutative diagram.
\[
\begin{tikzcd}[row sep=6]
                 & E \ar[dr,  "P"] \ar[dd, near start, "\pi"] \\
\bar E \arrow[ur, hook] \ar[dd,  "\bar \pi"] \ar[rr,  near start,  "\bar P" below, crossing over] &&  \hat E \ar[dd,  "\hat \pi"]\\
                 & D \ar[dr,  "p"] \\
\bar D \arrow[ur, hook] \ar[rr,  "\bar p"] &&  \hat D.\\
\end{tikzcd}
\]
A metric $\hat d$ on $\hat E$ is determined by $\bar d$ on $\bar E$ and the map $\bar P$.  The main facts about this metric are summarized in the following theorem; see \cite[Theorem 1.1, Lemma 3.2]{DDLSI}.
\begin{theorem} \label{T:quotient metric on E hat}
There is a Gromov hyperbolic path metric $\hat d$ on $\hat E$ so that $\bar P \colon \bar E \to \hat E$ is $1$--Lipschitz and is a local isometry at every point $x \in \bar E - \partial \bar E$.  Furthermore, for every $\alpha\in \CP$, 
\begin{itemize}
\item The induced path metric on $P(\partial \CB_\alpha) = T_\alpha$ is the $\mathbb{R}$--tree metric determined by the transverse measure on the foliation of $E_{X_\alpha}$ in direction $\alpha$.
\item The subspace topology on $T_\alpha \subset \hat E$ agrees with the $\mathbb{R}$--tree topology on $T_\alpha$.
\end{itemize}
\end{theorem}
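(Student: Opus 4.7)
The statement combines results from the prequel \cite{DDLSI}, so my plan is to describe how I would assemble the three items. Begin by defining $\hat d$ as a quotient path metric: for a path $\hat\gamma\colon [0,1]\to \hat E$, declare its length to be the infimum over decompositions $0=t_0<\cdots<t_n=1$ of $\sum_i \ell(\hat\gamma|_{[t_{i-1},t_i]})$, where each piece $\hat\gamma|_{[t_{i-1},t_i]}$ is either (a) a path in some $T_\alpha$, with length measured by the $\mathbb{R}$--tree metric induced by the transverse measure of $\mathcal F(\alpha)$ on $E_{X_\alpha}$, or (b) the image under $\bar P$ of a path in $\bar E$, with length measured by $\bar d$. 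Set $\hat d$ to be the resulting path pseudometric. Since $\bar P$ is surjective and collapses each $\CB_\alpha$ via the tree projection $t_\alpha$, and since $t_\alpha|_{E_{X_\alpha}}$ is $1$--Lipschitz onto $T_\alpha$ by construction of the dual $\mathbb{R}$--tree, the map $\bar P$ is $1$--Lipschitz by definition of $\hat d$. The second bullet point then follows tautologically from the definition, and the third follows because each tree $T_\alpha$ is closed as the image of $\partial \CB_\alpha$ and path connected, so its path-metric subspace topology is standard.

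Next, I would verify the local isometry statement. A point $x \in \bar E - \partial \bar E$ has a neighborhood $U \subset \bar E$ that is disjoint from $\bigcup_\alpha \partial \CB_\alpha$, so $\bar P|_U$ is injective. Shrinking $U$ further if necessary, any short path in $\hat E$ between two points of $\bar P(U)$ would have to exit $\bar P(U)$ before it could use any ``shortcut'' through a $T_\alpha$; the resulting length comparison shows $\hat d(\bar P(x),\bar P(y)) = \bar d(x,y)$ for $x,y$ sufficiently close in $U$. This is essentially the content of \cite[Lemma 3.2]{DDLSI}.

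The main obstacle is proving that $\hat d$ is Gromov hyperbolic, which is the substance of \cite[Theorem 1.1]{DDLSI}. My approach would be to use the fibration $\hat\pi\colon \hat E \to \hat D$ over the base $\hat D$, which is itself Gromov hyperbolic since $D\cong \mathbb H^2$ is hyperbolic and coning off the $G$--invariant horoballs $\{B_\alpha\}$ preserves hyperbolicity (as $G$ is a lattice in $\mathrm{Isom}(\mathbb H^2)$). I would then aim to verify a Mj--Sardar / Bestvina--Mess style combination criterion: namely, a flaring condition stating that the fiber distances grow exponentially along bi-infinite geodesics in $\hat D$. Away from the cusps, flaring is provided by the Teichm\"uller maps $f_{Y,X}$, which are $e^{\rho(X,Y)}$--bi-Lipschitz. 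Inside a cusp region $\CB_\alpha$, the collapse to $T_\alpha$ precisely kills the non-expanding leaf direction of the foliation $\mathcal F(\alpha)$ while keeping the transverse direction, which is exactly where the exponential stretching occurs. Combining these two phenomena, together with a ladders-are-quasi-geodesics argument, should produce the thin triangles condition.

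The hardest part will be the cusp interface, where one needs to show that concatenating a ``horizontal'' geodesic in $\hat D$ (passing through a tree point) with fiber motion does not permit inefficient shortcuts; this requires careful analysis of how the affine maps $f_\alpha$ interact with the foliation $\mathcal F(\alpha)$ and the complementary foliation, and it is here that the detailed flat geometry of $q$ and the periodicity of $\mathcal F(\alpha)$ for $\alpha\in \CP$ enter the argument in an essential way.
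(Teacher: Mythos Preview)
The paper does not prove this theorem; it simply cites \cite[Theorem~1.1, Lemma~3.2]{DDLSI} and records the statement for later use. Your proposal correctly identifies this and goes further by sketching how the prequel's proof might be reconstructed.

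Your outline is broadly reasonable: the quotient path metric is defined essentially as you describe, the $1$--Lipschitz and local isometry claims are the content of \cite[Lemma~3.2]{DDLSI}, and the hyperbolicity is indeed the substance of \cite[Theorem~1.1]{DDLSI}. One caution about your approach to hyperbolicity: the map $\hat\pi\colon \hat E\to \hat D$ is not a metric bundle in the Mj--Sardar sense, since the fibers over the cone points of $\hat D$ are the trees $T_\alpha$ rather than copies of $\widetilde S$, so a direct appeal to their combination theorem would not apply. You acknowledge this difficulty at the cusp interface, but the actual argument in \cite{DDLSI} handles it by a more hands-on analysis of preferred paths and a direct verification of a thin-triangles or Bowditch-type subquadratic criterion rather than by invoking an off-the-shelf combination theorem. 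Your flaring intuition---that collapsing $\CB_\alpha$ to $T_\alpha$ kills exactly the non-expanding direction---is the right geometric picture, but turning it into a proof requires the bespoke machinery developed in the prequel.
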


\begin{remark} The underlying simplicial tree $T_\alpha$ is precisely the Bass-Serre tree dual to the splitting of $\pi_1S$ defined by the cores of the cylinders of $\mathcal F(\alpha)$ on $S$.
\end{remark}

For each $x \in E$, we denote the image of $D_x$ in $\hat E$ by $\hat D_x$, which is obtained by collapsing $\CB_\alpha \cap D_x$ to a point, for each $\alpha \in \CP$.  Consequently, $\hat \pi|_{\hat D_x} \colon \hat D_x \to \hat D$ is a bijection, and so each $\hat D_x$, with its path metric, is isometric to $\hat D$ and isometrically embedded in $\hat E$.
We call objects in $E$, $\bar E$, and $\hat E$ {\em vertical} if they are contained in a fiber of $\pi$, $\bar \pi$, or $\hat \pi$, respectively, and {\em horizontal} if they are contained in $D_x$, $\bar D_x$, or $\hat D_{x}$, for some $x \in E,\bar E$.

\subsection{Vertices, spines, and spine bundles}
We will write $\vtx \subset \hat E$ for the union over all $\alpha \in \CP$ of all vertices of $T_\alpha$.  We will simultaneously view $\vtx$ as both a subset of $\hat E$ and abstractly as an indexing set that will be used in sections \S\S\ref{S:projections}--\ref{S:combinatorial HHS} to develop an HHS structure on $\bar E$.  Since each vertex belongs to a unique tree, and since the trees are indexed by $\alpha \in \CP$, we obtain a map $\alpha \colon \vtx \to \CP$ so that $v$ is a vertex of $T_{\alpha(v)}$.
For convenience, we also write $B_v = B_{\alpha(v)}$, $\partial B_v = \partial B_{\alpha(v)}$, etc for each $v \in \vtx$, and write $c_v = c_{\alpha(v)}$ for the $\rho$--closest point projection $D \to B_v$. 

For $v,w \in \vtx$, we write $v \parallel w$ if $\alpha(v) = \alpha(w)$. Then define  $d_{\tree}(v,w) \in \mathbb Z_{\geq 0} \cup \{\infty\}$ to be the combinatorial (integer valued) distance in the simplicial tree $T_{\alpha(v)}= T_{\alpha(w)}$ when $v \parallel w$  (as opposed to the distance from the $\mathbb R$--tree metric) and to equal $\infty$ when $v\not\parallel w$.

Given $\alpha \in \CP$, $X \in D$, and $v \in T_\alpha$, the {\em $v$--spine} in $E_X$ is the subspace
\[  \theta^v_X = (P \circ f_{X_\alpha,X})^{-1}(v) = t_\alpha^{-1}(v) \cap E_X.\]
The $v$--spine $\theta^v_X$ is the union of the saddle connections on the fiber $E_X$ in direction $\alpha$ that project to $v$ by $t_\alpha$.
When $d_\tree(v,w) = 1$ (and hence $v,w$ are adjacent in the same tree $T_\alpha$) there is a unique component of $E_X \smallsetminus (\theta^v_X \cup \theta^w_X)$ whose closure is an infinite strip, $\mathbb R \times [a,b]$, that covers a maximal cylinder in the quotient $E_X/\pi_1S = (S,X,q_X)$ in the direction $\alpha$.  We let $\bTheta^v_X$ be the union of $\theta^v_X$ and all such strips defined by $w \in T_\alpha$ with $d_\tree(v,w) = 1$.    We call $\bTheta^v_X$ the {\em thickened $v$--spine in $E_X$}.  In the special case $X=X_0$, we write $\theta^v_0 = \theta^v_{X_0}$ and $\bTheta^v_0=\bTheta^v_{X_0}$.  Observe that the affine map $f_{Y,X}$ maps $\theta^v_X$ and $\bTheta^v_X$ to $\theta^v_Y$ and $\bTheta^v_Y$, respectively, for all $X,Y \in D$.
Finally, we write
\[ \theta^v = \bigcup_{X \in \partial B_v} \theta^v_X \quad \quad \bTheta^v = \bigcup_{X \in \partial B_v} \bTheta^v_X. \]
These spaces are bundles over $\partial B_v$ which we call, respectively, the {\em $v$--spine bundle} and the {\em thickened $v$--spine bundle}.

\subsection{Schematic of the space $\bar E$ and its important pieces.}

\begin{figure}[htb]
\includegraphics[width=\linewidth]{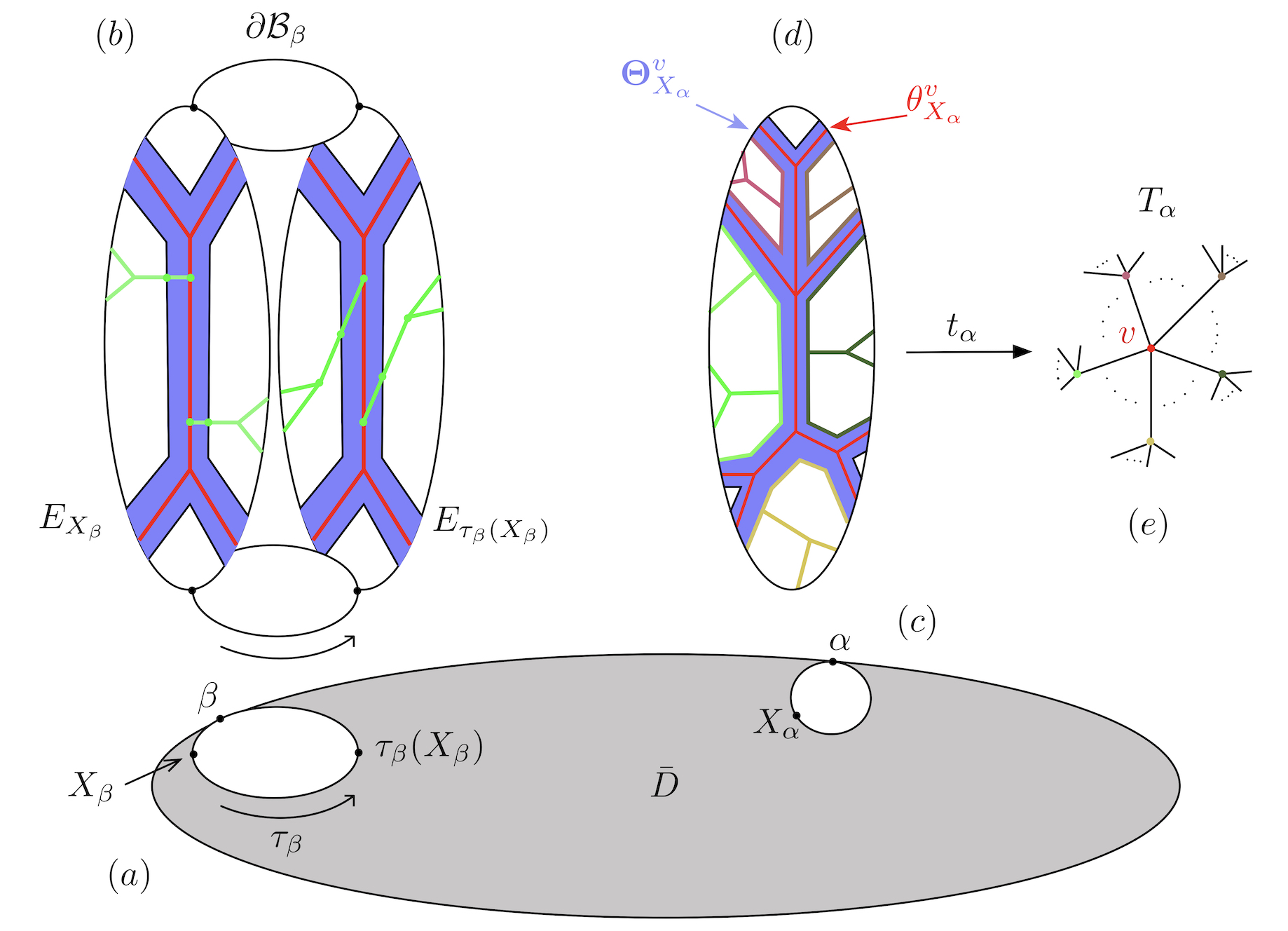}
\caption{A schematic of $\bar E$ and various key features of it.}
\label{fig:Veech bundle}
\end{figure}

Figure \ref{fig:Veech bundle} is a cartoon of the bundle $\bar E$ over the truncated Teichm\"uller disk $\bar D$.  We have tried to highlight some of the key features of $\bar E$ which are relevant to this paper.

\begin{itemize}
\item[(a)] The stabilizer of a horoball based at a point $\beta \in \mathcal P$ is virtually cyclic, generated by a multitwist $\tau_\beta$ acting as a parabolic on $D$.  The base point $X_\beta$ on the horocycle based at $\beta$ and its image are shown. 

\item[(b)] The bundle over the boundary horocycle based at $\beta$ is shown.  This is the universal cover, $\partial \mathcal B_{\beta}$, of a graph manifold which is the mapping torus of $\tau_\beta$.  Two fibers $E_{X_\beta}$ and $E_{\tau_\beta(X_\beta)}$ are shown with the effect on a part of a spine (in green) in some other direction illustrating the sheering in strips after applying $\tau_\beta$.

\item[(c)] This is another horoball in some direction $\alpha$, with the chosen basepoint $X_{\alpha}$ and its horocycle $\partial B_{\alpha}$.

\item[(d)] The spine $\theta^v_{X_{\alpha}}$ in direction $\alpha$ is shown in red, corresponding to a vertex $v \in T_\alpha$.  The thickened spine $\bTheta^v_{X_{\alpha}}$ is indicated in lavender.  Spines for vertices of $T_\alpha$ adjacent to $v$ meet $\bTheta^v_{X_{\alpha}}$ along lines in $\partial \bTheta^v_{X_\alpha}$ and are shown in various other colors.

\item[(e)] The restriction of $t_\alpha\colon E\to T_\alpha$ to $E_{X_\alpha}$  collapses each spine $\theta^w_{X_\alpha}$ or strip in direction $\alpha$ to the corresponding vertex $w$ or edge the Bass-Serre tree $T_\alpha$.  
The space  $\hat E$ is formed by collapsing $\mathcal B_\alpha$ to $T_\alpha$ via $t_\alpha$.
\end{itemize}

\subsection{Some technical lemmas and coarse geometry}

Here we briefly recall some basic facts about the setup above proved in \cite{DDLSI} as well as some useful coarse geometric facts.  The first fact is the following; see \cite[Lemma~3.4]{DDLSI}.

\begin{lemma}
\label{L:strip-and-saddle-bound}
There exists a constant $M > 0$ such that for each $v\in \vtx$ and $X\in \partial B_v$, every saddle connection in $\theta^v_X$ has length at most $M$ and every strip in $\bTheta^v_X$ has width at most $M$. In particular, for points $X\in \partial B_\alpha$, the saddle connections and strips of $E_X$ in direction $\alpha\in \CP$ have, respectively, uniformly bounded lengths and widths.
\end{lemma}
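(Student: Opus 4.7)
The plan is to first show that saddle connection lengths and strip widths in direction $\alpha$ are \emph{constant} along each horocycle $\partial B_\alpha$, and then upgrade the per-cusp bounds to a uniform bound using the lattice hypothesis.

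Recall that $\partial B_\alpha$ is preserved by the one-parameter unipotent subgroup $U_\alpha \le \SL_2(\mathbb R)$ of parabolics fixing $\alpha$, and that $U_\alpha$ acts transitively on $\partial B_\alpha$. For any $X, Y \in \partial B_\alpha$ we may thus write $Y = T_s X$ for some $T_s \in U_\alpha$, and the affine map $f_{Y,X} \colon E_X \to E_Y$ then has linear part $T_s$. In coordinates on $E_X$ where $\alpha$ is the horizontal direction, $T_s$ acts as the shear $(x,y) \mapsto (x+sy,y)$, which clearly preserves the length of every horizontal segment as well as the height of every horizontal strip. Since $f_{Y,X}$ carries $\theta^v_X$ and $\bTheta^v_X$ onto $\theta^v_Y$ and $\bTheta^v_Y$ (a fact already noted in the paper), it follows that the saddle connection lengths within $\theta^v_X$ and the strip widths within $\bTheta^v_X$ are independent of $X \in \partial B_\alpha$. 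Write $L(\alpha)$ and $W(\alpha)$ for the supremum of these two quantities over all $v \in T_\alpha$.

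I would next verify that $L(\alpha), W(\alpha) < \infty$ for each individual $\alpha \in \CP$. The union $\bigcup_{v \in T_\alpha} \theta^v_X$ is exactly the collection of direction-$\alpha$ saddle connections in $E_X$, all of which descend via the covering $E_X \to (S, q_X)$ to the finitely many direction-$\alpha$ saddle connections on the compact flat surface $(S, q_X)$; the analogous statement holds for the cylinders corresponding to strips in $\bTheta^v_X$. Thus only finitely many values of length or width are realized, so $L(\alpha), W(\alpha) < \infty$.

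Finally, for any $g \in G$ with $g\alpha = \beta$, the isometric action of $g$ on $E$ carries $\partial B_\alpha$ onto $\partial B_\beta$ and identifies direction-$\alpha$ saddle connections and strips in $E_X$ with the corresponding direction-$\beta$ objects in $E_{gX}$, so $L$ and $W$ are $G$-invariant functions on $\CP$. Since $G$ is a lattice, the quotient $\CP/G$ is finite, so $L$ and $W$ take only finitely many values, and we may define $M := \max\{\sup_\alpha L(\alpha), \sup_\alpha W(\alpha)\}$. The \emph{in particular} clause then follows because the full set of direction-$\alpha$ saddle connections (resp.\ strips) in $E_X$ is precisely $\bigcup_{v \in T_\alpha} \theta^v_X$ (resp.\ $\bigcup_{v} \bTheta^v_X$). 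I do not anticipate a serious obstacle beyond this bookkeeping; the statement is essentially a consequence of the shear-invariance of horizontal geometry along horocycles combined with the finiteness of cusps for the lattice $G$.
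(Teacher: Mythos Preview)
Your argument is correct and is the standard one: lengths and widths in direction $\alpha$ are constant along the horocycle $\partial B_\alpha$ (because the Teichm\"uller maps between fibers over that horocycle are shears fixing $\alpha$), they are finite at each cusp by compactness of $S$, and they are uniform over $\CP$ because $\CP/G$ is finite. The paper does not give its own proof of this lemma but cites \cite[Lemma~3.4]{DDLSI}, so there is no in-paper argument to compare against; what you wrote is almost certainly what appears there.

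One small imprecision: in the last step you invoke ``the isometric action of $g$ on $E$'' for $g\in G$, but $G$ itself does not act on $E$---only the extension $\Gamma$ does. The fix is immediate: pick any lift $\tilde g\in\Gamma$ of $g$; this $\tilde g$ is an isometry of $E$ carrying $\partial\mathcal B_\alpha$ onto $\partial\mathcal B_{g\alpha}$ and direction-$\alpha$ saddle connections isometrically onto direction-$g\alpha$ ones, so $L(g\alpha)=L(\alpha)$ and $W(g\alpha)=W(\alpha)$ as you want.
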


Every connected graph can be made into a geodesic metric space by locally isometrically identifying each edge with a unit interval.  We will need the following well-known result (for a proof of this version, see \cite[Proposition~2.1]{DDLSI}).

\begin{prop}\label{P:graph approximation}
Let $\Omega$ be a path metric space and $\Upsilon \subset \Omega$ an $R$--dense subset for some $R > 0$.  For any $R'> 3R$, consider a graph $\mathcal G$ with vertex set $\Upsilon$ such that:
  \begin{itemize}
   \item all pairs of elements of $\Upsilon$ within distance $3R$ are joined by an edge in $\mathcal G$,
   \item if an edge in $\mathcal G$ joins points $w,w'\in \Upsilon$, then $d_\Omega(w,w') \le R'$.
  \end{itemize}
 Then the inclusion of $\Upsilon$ into $\Omega$ extends to a quasi-isometry $\mathcal G \to \Omega$.
\end{prop}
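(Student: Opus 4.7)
The plan is to extend the inclusion $\Upsilon \hookrightarrow \Omega$ to a map $\Phi \colon \mathcal G \to \Omega$ by choosing, for each edge $e$ of $\mathcal G$ with endpoints $w, w'$, an $\epsilon$-almost-geodesic path in $\Omega$ from $w$ to $w'$ (this exists since $\Omega$ is a path metric space and $d_\Omega(w,w') \le R'$), and parametrizing $e$ at constant speed so that $\Phi$ maps it onto this path. It then suffices to verify three things: an upper Lipschitz-type bound on $\Phi$, a lower quasi-isometric bound, and coarse surjectivity.

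The upper bound is immediate: each edge of $\mathcal G$ maps to a path of $\Omega$-length at most $R' + \epsilon$, so concatenating along a shortest edge-path realizing $d_{\mathcal G}(w,w')$ between vertices $w,w' \in \Upsilon$ yields $d_\Omega(w,w') \le (R'+\epsilon)\, d_{\mathcal G}(w,w')$; this extends to arbitrary $x,y \in \mathcal G$ with a uniform additive error, since interior points of edges are within $\mathcal G$-distance $1/2$ of a vertex. For the lower bound, fix $w,w' \in \Upsilon$ and set $d = d_\Omega(w,w')$. I would pick an $\epsilon$-almost-geodesic $\gamma \colon [0,L] \to \Omega$ from $w$ to $w'$ with $L \le d + \epsilon$, sample it at times $0, R, 2R, \ldots$ to obtain $N = \lceil L/R \rceil + 1$ points with consecutive $\Omega$-distance at most $R$, and invoke $R$-density to replace each sample by a nearby point $w_i \in \Upsilon$ (setting $w_0 = w$, $w_N = w'$). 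Then $d_\Omega(w_{i-1}, w_i) \le 3R$ by the triangle inequality, so by the first hypothesis on $\mathcal G$ the consecutive $w_i$'s are joined by edges, giving $d_{\mathcal G}(w,w') \le N$, which is linear in $d$. Coarse surjectivity of $\Phi$ follows directly from the $R$-density of $\Upsilon$ in $\Omega$.

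Combining these bounds on vertices and then absorbing the $1/2$-slack at interior edge points into the additive constants produces the quasi-isometry. The argument is standard, and the only mild care required is in handling $\epsilon$-almost-geodesics in place of actual geodesics (which may not exist in a general path metric space) and in bookkeeping the constants; there is no substantive obstacle to overcome.
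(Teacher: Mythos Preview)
Your argument is correct and is the standard one. The paper does not actually give a proof of this proposition; it simply cites \cite[Proposition~2.1]{DDLSI} (the prequel paper) for this well-known result, so there is nothing to compare against. Your write-up supplies exactly the elementary argument one would expect: the Lipschitz upper bound comes from the edge condition $d_\Omega(w,w')\le R'$, the lower bound comes from sampling an $\epsilon$-almost-geodesic at spacing $R$ and using $R$-density plus the $3R$-edge hypothesis to build a short edge-path in $\mathcal G$, and coarse surjectivity is immediate from $R$-density. The only cosmetic issue is a minor off-by-one ambiguity in your count of sample points versus the index range $0,\dots,N$, but this does not affect the linear bound and is easily cleaned up.
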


The following criterion for a graph to be a quasi-tree is well-known, and an easy consequence of Manning's bottleneck criterion \cite{Man:bottleneck}. We include a proof for completeness.

\begin{prop} \label{prop:bottleneck}
 Let $X$ be a graph, and suppose that there exists a constant $B$ with the following property:  For each pair of vertices $w,w'$ there exists an edge path $\gamma(w,w')$ from $w$ to $w'$ so that for any vertex $v$ on $\gamma(w,w')$, any path from $w$ to $w'$ intersects the ball of radius $B$ around $v$. Then $X$ is quasi-isometric to a tree, with quasi-isometry constants depending on $B$ only.
\end{prop}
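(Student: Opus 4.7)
The plan is to derive this directly from Manning's bottleneck criterion \cite{Man:bottleneck}, which asserts that a geodesic metric space is quasi-isometric to a simplicial tree (with QI constants depending only on $\delta$) provided every geodesic $\eta$ between two points has the property that every path joining those points stays within $\delta$ of every point on $\eta$. I will verify this in $X$ with $\delta = 4B+1$.

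Fix vertices $w, w'$ of $X$, take an arc-length geodesic $\eta \colon [0, T] \to X$ from $w$ to $w'$ of length $T = d(w,w')$, and let $\gamma(w,w') = (v_0 = w, v_1, \ldots, v_\ell = w')$ be the edge path supplied by the hypothesis. Since $\eta$ is itself a $w$-$w'$ path, for each $i$ there is a point $u_i \in \eta$ with $d_X(u_i, v_i) \leq B$; we may take $u_0 = w$ and $u_\ell = w'$. Letting $t_i \in [0,T]$ be the $\eta$-parameter of $u_i$, since consecutive $v_i$ are adjacent and $\eta$ is a geodesic,
\[ |t_i - t_{i+1}| \;=\; d_X(u_i, u_{i+1}) \;\leq\; d_X(v_i, v_{i+1}) + 2B \;\leq\; 2B+1. \]

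The key observation is that while the sequence $(t_i)$ need not be monotonic in $i$, it starts at $0$, ends at $T$, and has consecutive differences at most $2B+1$. A discrete intermediate value argument -- take the largest $i^*$ with $t_{i^*} \leq t^*$ -- then gives, for every $t^* \in [0,T]$, an index $i$ with $|t_i - t^*| \leq 2B+1$. Hence for any point $z = \eta(t^*)$, we have $d_X(v_i, z) \leq B + (2B+1) = 3B+1$, so by the hypothesis any $w$-$w'$ path passes within $B$ of $v_i$ and thus within $4B + 1$ of $z$. This verifies the bottleneck property, and Manning's theorem yields the conclusion.

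The main subtlety -- and where a naive attempt may stall -- is that following $\gamma(w,w')$ and projecting to the geodesic $\eta$ need not yield a monotonic sequence along $\eta$, since one cannot rule out that widely separated vertices of $\gamma(w,w')$ project near the same point of $\eta$. The discrete IVT circumvents this because $(t_i)$ must still traverse $[0,T]$ from $0$ to $T$ with bounded jumps, which prevents any sub-interval longer than $2B+1$ from being entirely missed.
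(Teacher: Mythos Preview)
Your proof is correct and follows essentially the same strategy as the paper's: reduce to Manning's bottleneck criterion and verify it via a discrete intermediate value argument applied to the vertices of $\gamma(w,w')$. The paper differs only in cosmetic details: it checks the midpoint version of Manning's criterion (which is precisely \cite[Theorem~4.6]{Man:bottleneck}) rather than the all-points-on-a-geodesic version you verify, and it runs the IVT on the quantities $d_i = d(w,w_i)$ (with jumps $\le 1$) rather than on your geodesic parameters $t_i$ (with jumps $\le 2B+1$), yielding the slightly smaller constant $B' = 3B+1$ instead of your $4B+1$.
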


\begin{proof}
 We check that \cite[Theorem 4.6]{Man:bottleneck} applies; that is, we check the following property.  For any two vertices $w,w' \in X$, there is a midpoint $m(w,w')$ between $w$ and $w'$ so that any path from $w$ to $w'$ passes within distance $B'=B'(B)$ of $m(w,w')$.  (The uniformity in the quasi-isometry comes from the proof of Manning's theorem, see \cite[page 1170]{Man:bottleneck}.)
 
 Consider any geodesic $\alpha$ from $w$ to $w'$, and let $m=m(w,w')$ be its midpoint. We will show that $m$ lies within distance $2B+1$ of a vertex of $\gamma=\gamma(w,w')$, so that we can take $B'=3B+1$.
 
Indeed, suppose by contradiction that this is not the case. Let $w=w_0,\dots,w_n=w'$ be the vertices of $\gamma$ (in the order in which they appear along $\gamma$), and let $d_i=d(w,w_i)$, so that $|d_{i+1}-d_i|\leq 1$. Each $w_i$ lies within distance $B$ of some point $p_i$ on $\alpha$ which must satisfy $d(p_i,m)\geq B+1$. In particular, we have that every $d_i$ satisfies either $d_i \leq d(w,w')/2 -1$ or $d_i\geq d(w,w')/2 +1$. Since $d_0=0$ and $d_n=d(w,w')$, we cannot have $|d_{i+1}-d_i|\leq 1$ for all $0\leq i\leq n-1$, a contradiction.
\end{proof}

We end with a few definitions from coarse geometry which may not be completely standard, but will appear in the next two sections.
Given two metrics $d$ and $d'$ on a set $X$, we say that $d$ is {\em coarsely bounded} by $d'$ if there exists a monotone function $N \colon [0,\infty) \to [0,\infty)$ so that $d(x,y) \leq N(d'(x,y))$, for all $x,y \in X$.  If $d$ is coarsely bounded by $d'$ and $d'$ is coarsely bounded by $d$, we say that $d$ and $d'$ are {\em coarsely equivalent}.
An isometric action of a group $H$ on a metric space $Y$ is {\em metrically proper} if for any $R >0$ and any point $y \in Y$, there are at most finitely many elements $h \in H$ for which $h \cdot B(y,R) \cap B(y,R) \neq \emptyset$.  For proper geodesic spaces, this is equivalent to acting properly discontinuously.  If there exists $y,R$ so that $H \cdot B(y,R) = Y$, then we say that the action is cobounded, and for proper geodesic metric spaces this is equivalent to acting cocompactly.

\section{Projections and vertex spaces} \label{S:projections}

An HHS structure on a metric space consists of certain additional data, most importantly a collection of hyperbolic spaces together with projection maps to each space. For the HHS structure that we will build on (Cayley graphs of) $\Gamma$, the hyperbolic spaces will (up to quasi-isometry) be the space $\hat E$ from \cite{DDLSI} (see \S\ref{subsec:hatE}) and the spaces $\mathcal K^v$ and $\Xi^v$ introduced in this section, where $v$ varies over all vertices of the trees $T_\alpha$.
Morally, the projections will be given by the maps $\Lambda^v$ and $\xi^v$ that we study below. However, to prove hierarchical hyperbolicity we will use a criterion from \cite{comb_HHS} which does not require actually defining projections, but nevertheless provides them. Still, the maps $\Lambda^v$ and $\xi^v$ will play a crucial role in proving this criterion applies.

We will establish properties of $\Lambda^v$ and $\xi^v$ that are reminiscent of subsurface projections or of closest-point projections to peripheral sets in relatively hyperbolic spaces/groups; these are summarized in Proposition \ref{prop:R-projections}.
Essentially, these same properties would be needed if we wanted to construct an HHS structure on $\Gamma$ directly without using \cite{comb_HHS}.

From a technical point of view, we would like to draw attention to Lemma \ref{lem:window}, which is the crucial lemma that ensures that the projections behave as desired and that various subspaces have bounded projections. Roughly, the lemma says that closest-point projections to a spine do not vary much under affine deformations.

In what follows, we will write $d_{\bTheta^v}$ and $d_{\partial {\mathcal B}_\alpha}$ for the path metrics on $\bTheta^v$ and $\partial {\mathcal B}_\alpha$ induced from $\bar d$.  Using the map $f_\alpha \colon \bar E \to \partial {\mathcal B}_\alpha$, it is straightforward to see that $d_{\partial {\mathcal B}_\alpha}$ is uniformly coarsely equivalent to the subspace metric from $\bar d$: in fact, $\bar d \leq d_{\partial {\mathcal B}_\alpha} \leq e^{\bar d}\bar d$.  The same is true for $d_{\bTheta^v}$, which follows from the fact that the inclusion of $\bTheta^v$ into $\partial {\mathcal B}_\alpha$ is a quasi-isometric embedding with respect to the path metrics (see below).

Associated to each $v \in \vtx$ we will be considering two types of projections.  These projections have a single projection $\Pi^v \colon \bar{\Sigma} \to \bTheta^v$ as a common ingredient.  It is convenient to analyze $\Pi^v$  via an auxiliary map which serves as a kind of fiberwise closest point projection that survives affine deformations, and which we call the {\em window map}.  We describe the two types of projections restricted to $\bTheta^v$, as well as the target spaces of said projections, in \S\ref{S:Kapovich-Leeb distances} and \S\ref{subsec:coned-off}, where we also explain some of their basic features.  Next we define the window map and prove what is needed from it. Finally, we define $\Pi^v$ and prove the key properties of the associated projections.

\subsection{Quasimorphism distances} \label{S:Kapovich-Leeb distances}

For each $v \in \vtx$, we will use ideas from work-in-progress of the fourth author with Hagen, Russell, and Spriano \cite{SquidGames}  to define a map
\[\lambda^v \colon \bTheta^v \rightarrow \mathcal K^v,\]
where $\mathcal K^v$ is a discrete set quasi-isometric to $\mathbb R$.  The key properties of this map are given by the next proposition.
We note that the proposition and Lemma \ref{lem:retract_on_the_middle} can be used as black-boxes (in particular, the definitions of $\lambda^v$ and $\mathcal K^v$ are never used after we prove those results).

\begin{proposition}\label{prop:KL}
There exists $K_1>0$ such that, for each $v \in T_\alpha^{(0)} \subset \vtx$, there exist a space $\mathcal K^v$ that is $(K_1,K_1)$--quasi-isometric to $\mathbb R$ and a map $\lambda^v\colon \bTheta^v \rightarrow \mathcal K^v$ satisfying the following properties:
\begin{enumerate}
\item \label{item:lambda_lipschitz} $\lambda^v$ is $K_1$--coarsely Lipschitz with respect to the path metric on $\bTheta^v$.
\item \label{item:lambda_flowlines_bounded} For any $x \in \partial \bTheta^v$, if $\ell_{x,\alpha} = D_x \cap \partial \mathcal B_\alpha$ then $\lambda^v(\ell_{x,\alpha})$ is a set of diameter bounded by $K_1$.
\item\label{item:KL_qi} For any $v,w\in \vtx$ with $d_{\mathrm{tree}}(v,w)=1$, $\lambda^v\times\lambda^w\colon \bTheta^v\cap \bTheta^w \to \mathcal K^v\times \mathcal K^w$ is a $K_1$--coarsely surjective $(K_1,K_1)$--quasi-isometry with respect to the induced path metric on the domain.
\item\label{item:equivariance} (Equivariance) For any $g\in \Gamma$ and $v \in T_\alpha^{(0)}$ there is an isometry $g\colon\mathcal K^v\to\mathcal K^{gv}$ and for all $x\in \Theta^v$ we have $\lambda^{gv}(gx)=g\lambda^v(x)$.
\end{enumerate}
\end{proposition}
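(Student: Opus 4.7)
My plan is to construct $\mathcal{K}^v$ and $\lambda^v$ using the Seifert-fibered structure on the quotient $\bTheta^v / \mathrm{Stab}_\Gamma(v)$, which models a Seifert piece of the peripheral graph manifold $\partial \mathcal B_\alpha / \mathrm{Stab}_\Gamma(\mathcal B_\alpha)$. This peripheral manifold is the mapping torus of the multi-twist $T_\alpha$ acting on $S$, and the piece corresponding to $v$ is (virtually) a product of the base $2$-orbifold $O_v$ (a quotient of the spine $\theta^v$) with the mapping-torus circle $S^1$, since $T_\alpha$ restricts to the identity on spines; here the $S^1$ factor is parameterized by the horocycle $\partial B_\alpha$. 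The key geometric observation is that, using the product identification $\bar E \cong D \times E_0$, for any $z \in E_0$ on a spine the horocycle line $\partial B_\alpha \times \{z\} \subset \bar E$ is invariant under $\tau_\alpha$ (since $T_\alpha(z) = z$) and therefore descends to a closed Seifert fiber of the corresponding piece of the peripheral graph manifold.

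Following the forthcoming work \cite{SquidGames}, I will build an $\mathbb{R}$-valued quasimorphism $\mu^v\colon \mathrm{Stab}_\Gamma(v) \to \mathbb{R}$ from the central extension $1 \to Z_v \to \mathrm{Stab}_\Gamma(v) \to \pi_1(O_v) \to 1$, where $Z_v$ is the infinite cyclic central subgroup generated by the Seifert fiber class and $\pi_1(O_v)$ is hyperbolic. I take $\mathcal K^v$ to be the image of $\mu^v$ on a cocompact orbit, equivariantly metrized to be quasi-isometric to $\mathbb{R}$, and define $\lambda^v$ as a coarse orbit map: fix a basepoint $o \in \bTheta^v$, set $\lambda^v(g\cdot o) := \mu^v(g)$, and extend to all of $\bTheta^v$ using coboundedness of the $\mathrm{Stab}_\Gamma(v)$-action. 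The equivariance condition (\ref{item:equivariance}) is immediate from the naturality of this construction.

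Properties (\ref{item:lambda_lipschitz}) and (\ref{item:lambda_flowlines_bounded}) should then follow quickly. For (\ref{item:lambda_lipschitz}), bounded displacements in $\bTheta^v$ correspond (via coboundedness) to bounded-length group elements, which have uniformly bounded $\mu^v$-value. For (\ref{item:lambda_flowlines_bounded}), if $x \in \partial \bTheta^v$ then $x$ lies on an adjacent spine $\theta^w$, so $\ell_{x,\alpha}$ is a lift of a $w$-Seifert fiber by the observation above; since the $w$-Seifert fiber class projects trivially into $\pi_1(O_v)$, the quasimorphism $\mu^v$ is uniformly bounded along this line.

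The main obstacle is property (\ref{item:KL_qi}). For adjacent $v,w$, the intersection $\bTheta^v \cap \bTheta^w$ is a strip bundle whose quotient is a Nil-geometry cylinder piece of the JSJ decomposition separating the $v$- and $w$-Seifert pieces. I must verify that $\lambda^v \times \lambda^w$ captures two genuinely transverse directions on this piece: the $v$- and $w$-Seifert fiber directions span the quotient modulo the $\alpha$-direction (which is the cylinder piece's own intrinsic Seifert fiber and is mapped to bounded sets by both $\mu^v$ and $\mu^w$). Using Lemma~\ref{L:strip-and-saddle-bound} to uniformly bound strip widths, together with the fact that adjacent Seifert pieces have uniformly transverse fibrations on their common JSJ torus, I expect to obtain the desired coarsely surjective quasi-isometry. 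The technical heart is quantifying this transversality uniformly across all adjacent pairs, which requires careful bookkeeping of the parabolic shearing intrinsic to the Nil geometry of the cylinder pieces.
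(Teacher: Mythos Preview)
Your overall strategy---build a quasimorphism on $\mathrm{Stab}_\Gamma(v)$ from the Seifert structure, use it to define $\mathcal K^v$ as a quasi-line, and define $\lambda^v$ via an orbit map---is exactly the paper's approach. However, your justification of item~(\ref{item:lambda_flowlines_bounded}) contains a genuine error that propagates through the whole argument.

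You claim that for $x\in\partial\bTheta^v$ lying on an adjacent spine $\theta^w$, ``the $w$-Seifert fiber class projects trivially into $\pi_1(O_v)$.'' This is false. The element $g_w$ (the $w$-Seifert fiber, which stabilizes $\ell_{x,\alpha}$) lies in $\Gamma^v$, but under the Seifert projection $\nu^v\colon\Gamma^v\to\pi_1^{orb}(\mathcal O^v)$ it maps \emph{injectively} onto an infinite cyclic peripheral subgroup---a boundary curve of the base orbifold $\mathcal O^v$. So a generic quasimorphism arising from the central extension $1\to\langle g_v\rangle\to\Gamma^v\to\pi_1^{orb}(\mathcal O^v)\to 1$ has no reason to be bounded on $\langle g_w\rangle$, and item~(\ref{item:lambda_flowlines_bounded}) fails.

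The paper's fix is precisely to engineer the quasimorphism so that $\psi^v(g_w)=0$ for every adjacent $w$ while $\psi^v(g_v)\ne 0$. This is done in Lemma~\ref{L:quasimorphism}: start with the homomorphism $\phi^v\colon\Gamma^v\to\mathbb Z$ coming from the fibration over the circle, then subtract off correction terms $\eta_i\circ\nu^v$, where the $\eta_i$ are homogeneous quasimorphisms on $\pi_1^{orb}(\mathcal O^v)$ (produced via Hull--Osin / DGO, or alternatively Epstein--Fujiwara) that are calibrated on the boundary subgroups. Since $g_v\in\ker\nu^v$, the corrections do not disturb $\psi^v(g_v)$. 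Once you have this specific quasimorphism, item~(\ref{item:lambda_flowlines_bounded}) is immediate from $\psi^v(g_w^n)=0$, and your ``main obstacle'' item~(\ref{item:KL_qi}) becomes straightforward: on $\bTheta^v\cap\bTheta^w$ the $\mathbb Z^2$-orbit $(n,m)\mapsto g_v^n g_w^m x$ is a quasi-isometry to the domain, and since $\psi^v(g_v)=1$, $\psi^v(g_w)=0$ and symmetrically for $\psi^w$, the map $\lambda^v\times\lambda^w$ is coarsely $(n,m)\mapsto(n,m)$. No Nil-geometry bookkeeping is needed.
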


The sets $\ell_{x,\alpha}$ in item (2) are certain lines whose significance is explained below.

\begin{remark} An earlier version of this paper used work of Kapovich and Leeb to construct the spaces $\mathcal K^v$ and maps $\lambda^v$, resulting in a weaker version of this proposition which did not include the last, equivariance condition.  Consequently $\Gamma$ could only be shown to be an HHS, rather than an HHG.  The ideas from \cite{SquidGames} were crucial in this extension.
\end{remark}

To explain the proof of the proposition, it is useful to review some background on graph manifolds, which we do now.

\subsection*{Graph manifolds and trees}
Recall that a graph manifold is a $3$--manifold that contains a canonical finite union of tori (up to isotopy), so that cutting along the tori produces a disjoint union of Seifert fibered $3$--manifolds, called the {\em Seifert pieces}.  Seifert fibered $3$--manifolds are compact $3$--manifolds foliated by {\em circle leaves}; see \cite{JacoShalen}.

The universal cover of a graph manifold decomposes into a union of universal covers of the Seifert pieces glued together along $2$--planes (covering the tori).  The decomposition is dual to a tree, and the universal covers of the Seifert pieces are {\em the vertex spaces}.  For any Seifert fibered space, its universal cover is foliated by lines, the lift of the foliation by circles, and we refer to the leaves simply as {\em lines} in the universal cover.

\subsection*{Horocycles and bundles}
Next we describe the specific graph manifolds that are relevant for our purposes.

Let $G_\alpha < G$ denote the stabilizer of $B_\alpha$, for each $\alpha \in \mathcal P$.  This has a finite index cyclic  subgroup $G_\alpha^0$ generated by a multitwist, $\langle \tau_\alpha \rangle = G_\alpha^0 < G_\alpha$; see e.g.~\cite[\S2.9]{DDLSI}.  The preimage of $G_\alpha$ in $\Gamma$ is the $\pi_1S$--extension group $\Gamma_\alpha$ of $G_\alpha$, and we likewise denote by $\Gamma_\alpha^0 < \Gamma_\alpha$ the extension group of $G_\alpha^0$.   The action of $\Gamma_\alpha$ on $\partial \mathcal B_\alpha$ is cocompact, and $\partial \mathcal B_\alpha/\Gamma_\alpha$ has a finite sheeted (orbifold) covering by $\partial \mathcal B_\alpha/\Gamma_\alpha^0$, which is the graph manifold mentioned in the introduction.  

\begin{center}
\begin{figure}[htb]
\begin{tikzpicture}[scale = .7]
\draw[fill, opacity = .2, color=green] (0,0) -- (4,0) -- (4,2) -- (0,2) -- (0,0);
\draw[fill, opacity = .2, color=blue] (2,2) -- (2,4) -- (0,4) -- (0,2) -- (2,2);
\draw[ultra thick] (0,0) -- (4,0) (0,2) -- (4,2) (0,4) -- (2,4);
\draw[thin] (0,0) -- (0,4) (4,0) -- (4,2) (2,2) -- (2,4);
\draw[line width = 1pt,dashed] (0,1) -- (4,1);
\draw (0,2) -- (2,2);
\draw[line width = 1pt,dashed] (0,3) -- (2,3);
\draw[fill] (0,0) circle (.08);
\draw[fill] (4,0) circle (.08);
\draw[fill] (4,2) circle (.08);
\draw[fill] (2,2) circle (.08);
\draw[fill] (2,4) circle (.08);
\draw[fill] (0,4) circle (.08);
\draw[fill] (0,2) circle (.08);
\draw[fill] (2,0) circle (.08);
\node[left] at (0,1) {$1$};
\node[left] at (0,3) {$2$};
\node[right] at (4,1) {$1$};
\node[right] at (2,3) {$2$};
\node[above] at (1,4) {$3$};
\node[below] at (1,0) {$3$};
\node[below] at (3,0) {$4$};
\node[above] at (3,2) {$4$};
\end{tikzpicture}
\caption{The surface obtained by gluing sides of the ``L-shaped" polygon in pairs by translation according to the numbering has a decomposition into two cylinders (shaded blue and green) in the horizontal direction, $\alpha$; $\tau_\alpha$ is a twist in the bottom cylinder and a square of a twist in top cylinder.  The boundaries of these cylinders (drawn in bold) form spines for the complement of core curves (drawn as dotted lines).} \label{F:twisting and retraction}
\end{figure}
\end{center}

Consider the surface $S$ with the flat metric $q_{X_\alpha}$, so that $(S,X_\alpha,q_{X_\alpha}) = E_{X_\alpha}/\pi_1S$.  The multitwist $\tau_\alpha$ is an affine map that preserves the cylinders in direction $\alpha$, acting as a power of a Dehn twist in each cylinder and as the identity on their boundaries.  The union of the boundaries of the cylinders are spines (deformation retracts) for the subsurfaces that are the complements of the twisting curves (core curves of the cylinders).  Consequently, $\tau_\alpha$ is the identity on these spines.  The homeomorphism $\tau_\alpha$ induces a homeomorphism on the subsurface obtained by cutting open $S$ along a core curve of each cylinder.  Each such induced homeomorphism is the identity on the corresponding spine, and is thus isotopic to the identity relative to the spine; see Figure~\ref{F:twisting and retraction}.  The mapping torus of each subsurface is a product of the subsurface times a circle, and embeds in the the mapping torus $\partial \mathcal B_\alpha/\Gamma_\alpha^0$ of $\tau_\alpha$.  These sub-mapping tori are the Seifert pieces for the graph manifold structure on $\partial \mathcal B_\alpha/\Gamma_\alpha^0$.

The lifted graph manifold decomposition of $\partial \mathcal B_\alpha$ corresponds to $T_\alpha$.  That is, for each $v \in T_\alpha^{(0)}$, there is a vertex space contained in $\bTheta^v$ and containing $\theta^v$.  In fact, with respect to the covering group, $\bTheta^v$ is an invariant, bounded neighborhood of the vertex space and $\theta^v$ is an equivariant deformation retraction of that space.  We let $\Gamma^v < \Gamma_\alpha$ denote the stabilizer of $\bTheta^v$ in $\Gamma_\alpha$ and $\Gamma^{v0} < \Gamma_\alpha^0$ the stabilizer in $\Gamma_\alpha^0$.
The suspension flow on the mapping torus $\partial \mathcal B_\alpha/\Gamma_\alpha^0$ restricted to each quotient of the spine bundle, $\theta^v/\Gamma^{v0}$, defines circle leaves of the corresponding Seifert piece; that is, flow lines through any point on the $\theta^v/\Gamma^{v0}$ are precisely the circle leaves.  In the universal covering $\partial \mathcal B_\alpha$, the lifted flowline through a point $x \in \partial \mathcal B_\alpha$ is a lifted horocycle, $\ell_{x,\alpha} = D_x \cap \partial \mathcal B_\alpha$.  Thus, for any vertex $v$ and any $x \in \theta^v$, $\ell_{x,\alpha}$ is a line for the vertex space corresponding to $v$.  We note that not only does $\Gamma^{v0}$ preserve this set of lines, but so does $\Gamma^v$.

For any $x \in \theta^v$, the stabilizer in $\Gamma^{v0}$ of $\ell_{x,\alpha}$ is generated by a lift $g_v$ of $\tau_\alpha$.  Therefore, the quotient $\bTheta^v/\Gamma^{v0}$ is homeomorphic to a product, $\bTheta^v_X/\pi_1S^v \times S^1$, where $\pi_1S^v$ is the stabilizer of $v$ in $\pi_1S < \Gamma$ and $X \subset \partial B_\alpha$ is any point.  Indeed, there is a deformation retraction to $\theta^v/\Gamma^{v0} = \theta^v_X/\pi_1S^v \times S^1$.   If we do not care about the particular point $X$ over which we take the fiber, we simply write $S^v$ for the surface $\bTheta^v_X/\pi_1S^v$, so that $\bTheta^v/\Gamma^{v0} \cong S^v \times S^1$.  Since $E_X$ is a copy of the universal cover of $S$, we can consider $S^v$ as a subsurface of $S$ (embedded on the interior) and $\pi_1S^v$ is its fundamental group inside $\pi_1S$ (up to conjugacy).

The product structure $S^v \times S^1 \cong \bTheta^v/\Gamma^{v0}$ can be chosen so that $\bTheta^v/\Gamma^{v0} \to \bTheta^v/\Gamma$ is an orbifold cover sending circles to circles making $\bTheta^v/\Gamma^v$ into a Seifert fibered {\em orbifold} (some of the Seifert fibers may be part of the orbifold locus) that also (orbifold)-fibers over the circle (with finite order monodromy).  Write $\bTheta^v/\Gamma^v \to \mathcal O^v$ for the Seifert fibration to the quotient $2$--orbifold.  Further write
$$\nu^v \colon \Gamma^v \to \pi_1^{orb}(\mathcal O^v)$$
for the induced homomorphism of the Seifert fibration and
$$\phi^v \colon \Gamma^v \to \mathbb Z$$
for the induced homomorphism from the fibration over the circle.  Because $g_v$ acts as translation on the line $\ell_{x,\alpha}$ for $x \in \theta^v$, it represents a loop that traverses a circle in the Seifert fibration, which is thus also a suspension flowline for the fibration over the circle.  Thus we have $\nu^v(g_v) = 0$ and $\phi^v(g_v) \neq 0$.  To complete the picture, we note that restricting $S^v \times S^1 \cong \bTheta^v/\Gamma^{v0} \to \bTheta^v/\Gamma^v \to \mathcal O^v$ to $S^v$ defines an orbifold covering $S^v \to \mathcal O^v$.

Finally, note that for any $w$ adjacent to $v$ in $T_\alpha$, $\langle g_v \rangle \times \langle g_w \rangle \cong \mathbb Z^2$ has finite index in $\Gamma^v \cap \Gamma^w$.  Viewing $\langle g_w \rangle < \Gamma^v$, we note that $\nu^v|_{\langle g_w \rangle}$ is an isomorphism onto an infinite cyclic subgroup of $\pi_1^{orb} \mathcal O^v$.  In fact, the image $\nu^v(\langle g_w \rangle)$ is (a conjugate of a power of) the fundamental group of a boundary component of $\mathcal O^v$.

\begin{remark}
One caveat about the lines for the vertex spaces: flowlines through points {\em not} on a spine are {\em not} lines of any vertex space.  In fact, they are not even uniformly close to lines for any vertex space.
\end{remark}

\subsection*{Constructing the map}
Here we define $\mathcal K^v$ and $\lambda^v$ and prove the main properties we will need about them.  We require a little more set up first.  We choose representatives of the $\Gamma$--orbits of vertices, $\vtx_0 = \{v_1,\ldots,v_k\} \subset \vtx$.  For each $v \in \vtx_0$, choose a fundamental domain $\Delta^v$ for the action of $\Gamma^v$ on $\bTheta^v$.  We assume that $\Delta^v$ has compact, connected closure, that $g \Delta^v \cap \Delta^v = \emptyset$ for all $g \in \Gamma^v \setminus \{1\}$, and that $\bigcup_{g \in \Gamma^v} g \cdot \Delta^v= \bTheta^v$.  The set
\begin{equation} \label{E:finite gen set} 
\{g \in \Gamma^v \mid g \overline{\Delta}^v \cap \overline{\Delta}^v \neq \emptyset\}
\end{equation}
is a finite generating set for $\Gamma^v$.
The $\Gamma^v$--translates of $\Delta^v$ define a tiling of $\bTheta^v$, and the map sending every point of $g \Delta^v$ to $g \in \Gamma^v$  is a quasi-isometry by the Milnor-Schwarz Lemma.  We denote this map as $\widetilde \lambda^v \colon \bTheta^v \to \Gamma^v$.

We note that any word metric on $\Gamma^v$ defines a ``word metric" on each coset $g \Gamma^v$, for $g \in \Gamma$ (elements are distance $1$ if they differ by right multiplication by an element of the generating set).  We can push the tiling forward by $g$ to a $\Gamma^{gv} = g \Gamma^v g^{-1}$--invariant tiling of $\bTheta^{gv}$ (if $g \in \Gamma^v$, this is precisely the given tiling of $\bTheta^v$).  For any element $g' \in g\Gamma^v$, the map that sends every point in $g' \Delta^v$ to $g'$ defines a quasi-isometry $\widetilde \lambda^{gv} \colon \bTheta^{gv} \to g \Gamma^v$ which is $\Gamma^{gv}$--equivariant, with the same quasi-isometry constants.  
If $g' \in \Gamma$ and $x' \in \Delta^v$, then for all $g \in \Gamma$ 
\[ \widetilde \lambda^{gg'v}(gg'x') = gg' = g \widetilde \lambda^{g'v}(g'x').\]
On the other hand, any $w \in \Gamma \cdot v$ and $x \in \bTheta^w$ have the form $w= g'v$ and $x = g'x'$ for some $g' \in \Gamma$ and $x' \in \Delta^v$.
Thus, for any $g \in \Gamma$, the equation above becomes
\begin{equation} \label{E:equivariance unraveled} \widetilde \lambda^{gw} (gx) = \widetilde g\lambda^{w} (x) \end{equation}

Having carried out the construction above for each $v \in \vtx_0$ and each vertex in its orbit, we have maps $\widetilde \lambda^w$  from $\bTheta^w$ to a coset of a vertex stabilizer from $\vtx_0$ for every $w \in \vtx$, so that equation \eqref{E:equivariance unraveled} holds for every $x \in \bTheta^w$, and $g \in \Gamma$.

Next, recall that a {\em homogeneous quasimorphism} ({\em with deficiency $D$}) from a group $H$ to $\mathbb R$ is a map
\[ \psi \colon H \to \mathbb R \]
such that for all $h,h_1,h_2 \in H$ and $n \in \mathbb Z$ we have $\psi(h^n) = n \psi(h)$ and
\[ |\psi(h_1h_2) - \psi(h_1) - \psi(h_2)| \leq D. \]

\begin{lemma} \label{L:quasimorphism} For any $v \in \vtx$, there is a homogeneous quasimorphism $\psi^v \colon \Gamma^v \to \mathbb R$ such that $\psi^v(\langle g_v \rangle)$ is unbounded, and $\psi^v (g_w) = 0$ for any adjacent vertex $w \in \vtx$.
\end{lemma}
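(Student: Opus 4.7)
The plan is to construct $\psi^v$ explicitly by combining a Brooks-style quasimorphism on the free group $\pi_1 S^v$ with the natural $\mathbb{Z}$-valued homomorphism in the Seifert fiber direction, working within the finite-index product subgroup $\Gamma^{v0}$ and then extending to $\Gamma^v$. The key input is the product decomposition $\Gamma^{v0} \cong \pi_1 S^v \times \langle g_v\rangle$ established above, where $S^v$ is the orientable surface (with nonempty boundary) obtained as the quotient $\bTheta^v_X/\pi_1 S^v$, so that $\pi_1 S^v$ is a nonabelian free group.

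First, I would analyze $g_w$ for each vertex $w$ adjacent to $v$ in $T_\alpha$. Since both $g_w$ and $g_v$ lie in $\Gamma^{v0}$ and project to $\tau_\alpha \in G^0_\alpha$, the ratio $c_w := g_w g_v^{-1}$ lies in $\pi_1 S^v$, so $g_w = c_w g_v$ within the product structure. Moreover, $c_w$ is conjugate in $\pi_1 S^v$ to a peripheral element of $S^v$ associated to the core curve between the $v$- and $w$-spines: $g_v$ and $g_w$ are the two ``canonical'' lifts of $\tau_\alpha$ pinned along the two sides of the cylinder joining these spines, so their ratio records precisely one traversal of the boundary loop. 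Because $\Gamma^v/\pi_1 S^v$ is virtually cyclic and permutes the boundary components of $S^v$, there are only finitely many $\Gamma^v$-orbits of adjacent vertices; let $c_1,\dots,c_k\in \pi_1 S^v$ be representative peripheral elements for these orbits.

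Next, I would construct a homogeneous quasimorphism $\bar\psi\colon \pi_1 S^v\to \mathbb{R}$ satisfying $\bar\psi(c_j) = -1$ for every $j$. The $c_j$ are primitive elements of the free group $\pi_1 S^v$ that are pairwise non-conjugate and (by orientability of $S^v$) not conjugate to their inverses, so classical Brooks counting quasimorphisms, or equivalently the Bestvina--Fujiwara apparatus, produce such a $\bar\psi$. After averaging under the conjugation action of the finite group $\Gamma^v/\Gamma^{v0}$ on $\pi_1 S^v$, which permutes the boundary components of $S^v$ among themselves and thus respects the constraint $\bar\psi(c_j) = -1$, I may assume $\bar\psi$ is $\Gamma^v/\Gamma^{v0}$-invariant. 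I would then define $\psi^v_0\colon \Gamma^{v0}\to\mathbb{R}$ by $\psi^v_0(a g_v^n) := \bar\psi(a) + n$ for $a\in\pi_1 S^v$ and $n\in\mathbb{Z}$; this is a homogeneous quasimorphism (a sum of a quasimorphism on $\pi_1 S^v$ and a homomorphism on the central $\langle g_v\rangle$) and is $\Gamma^v$-conjugation invariant by construction.

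Finally, I would extend $\psi^v_0$ to a homogeneous quasimorphism $\psi^v$ on all of $\Gamma^v$ via the standard formula $\psi^v(g) := \psi^v_0(g^N)/N$ for $N = [\Gamma^v:\Gamma^{v0}]$, where the conjugation invariance established above ensures this is well-defined and yields a homogeneous quasimorphism. The required properties then follow by direct calculation: $\psi^v(g_v) = \psi^v_0(g_v) = 1 \neq 0$, so $\psi^v$ is unbounded on $\langle g_v\rangle$; and for each representative, $\psi^v(g_w) = \bar\psi(c_w) + 1 = -1 + 1 = 0$, with the remaining adjacent $g_w$ handled by conjugation-invariance (homogeneous quasimorphisms are class functions). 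The main obstacle is the construction of $\bar\psi$ in the third step: the boundary loops of $S^v$ satisfy a nontrivial relation in $H_1(S^v;\mathbb{Z})$ (their signed sum vanishes), so no homomorphism $\pi_1 S^v\to \mathbb{R}$ can simultaneously send all of them to $-1$, and one genuinely needs the extra flexibility afforded by nontrivial quasimorphisms on the free group together with the compatibility of the finite symmetry-averaging procedure.
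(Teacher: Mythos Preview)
Your approach is essentially correct and genuinely different from the paper's. The paper works directly on $\Gamma^v$ using the two structural homomorphisms $\phi^v\colon\Gamma^v\to\mathbb Z$ (from the fibration over $S^1$) and $\nu^v\colon\Gamma^v\to\pi_1^{\mathrm{orb}}\mathcal O^v$ (from the Seifert fibration), then invokes Hull--Osin/DGO to produce quasimorphisms $\eta_i$ on the orbifold group $\pi_1^{\mathrm{orb}}\mathcal O^v$ with prescribed values on the peripheral elements $\nu^v(g_{w_i})$, and takes the linear combination $\psi^v = s_0\phi^v - \sum_i s_i\,\eta_i\circ\nu^v$ so that the values at each $g_{w_j}$ cancel. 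Your route---building $\bar\psi$ on the free group $\pi_1 S^v$ via Brooks-type quasimorphisms, forming $\psi^v_0=\bar\psi+\text{(projection to }\langle g_v\rangle)$ on the product subgroup $\Gamma^{v0}$, and extending to $\Gamma^v$---is more elementary in its input (no relatively hyperbolic machinery) at the cost of an extra finite-index extension step. Both approaches have the same skeleton: combine a ``fiber-direction'' homomorphism with base-direction quasimorphisms to kill the $g_w$.

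Two points need tightening. First, you pick $c_1,\dots,c_k$ as representatives of the $\Gamma^v$--orbits of adjacent vertices, but the averaging argument only preserves the constraint $\bar\psi(c_j)=-1$ if $\bar\psi$ already takes the value $-1$ on \emph{every} boundary component of $S^v$ (i.e.\ on each $\pi_1 S^v$--conjugacy class of peripheral elements), since a single $\Gamma^v$--orbit may contain several boundary components and the average mixes their values. The fix is immediate: prescribe $\bar\psi=-1$ on one representative per boundary component (still finitely many, still pairwise non-conjugate and not conjugate to inverses), after which averaging over the finite outer action of $\Gamma^v$ permutes these among themselves and preserves $-1$. Second, the assertion that $\psi^v_0$ is $\Gamma^v$--conjugation invariant uses that $g_v$ is central in $\Gamma^v$ (so that the ``$+n$'' part survives); this holds because $\langle g_v\rangle=Z(\Gamma^{v0})$ is characteristic in the normal subgroup $\Gamma^{v0}\lhd\Gamma^v$, and since $G$ acts on $D$ by orientation-preserving isometries, conjugation in $G_\alpha$ fixes $\tau_\alpha$, forcing $\gamma g_v\gamma^{-1}=g_v$ rather than $g_v^{-1}$. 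You should make this explicit, as otherwise the extension step would fail (consider the infinite dihedral example).
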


\begin{proof} Let $w_1,\ldots,w_r$ be $\Gamma^v$--orbit representatives of the vertices adjacent to $v$.  Here $r$ is the number of boundary components of $\mathcal O^v$, so that $\nu^v(g_{w_1}),\ldots,\nu^v(g_{w_r})$ are peripheral loops around the $r$ distinct boundary components of $\mathcal O^v$.   Since $\pi_1^{orb}\mathcal O^v$ is the fundamental group of a hyperbolic $2$--orbifold with non-empty boundary, appealing to \cite[Theorem 4.2]{HullOsin}, which applies to $\pi^{orb}_1\mathcal O^v$ and its subgroups $\langle \nu^v(g_{w_i}) \rangle$ in view of \cite[Corollary 6.6, Theorem 6.8]{DGO},
one can find a homogeneous quasimorphism $\eta_i \colon \pi^{orb}_1\mathcal O^v \to \mathbb R$, for $i=1,\ldots,r$, such that $\eta_i(\nu^v(g_{w_i})) = 1$ and $\eta_i(\nu^v(g_{w_j})) = 0$ for $j \neq i$. (The construction of Epstein--Fujiwara \cite{EpsteinFujiwara} should also be applicable to construct such quasimorphisms).
  Set $s_0 = 1/\phi^v(g_v)$, and for each $i=1,\ldots,r$, set $s_i = s_0\phi^v(g_{w_i})$, and then define
\[ \psi^v = s_0\phi^v - \sum_{i=1}^r s_i \eta_i \circ \nu^v.\]
As a linear combination of homogeneous quasimorphisms, $\psi^v$ is a homogeneous quasimorphism.  Since $g_v \in \ker(\nu^v)$, it follows that $\eta_i \circ \nu^v(g_v) = 0$ for all $i$, hence $\psi^v(g_v) = s_0\phi^v(g_v) = \phi^v(g_v)/\phi^v(g_v) =1$.  On the other hand, for any $j = 1,\ldots,r$ we have
\[ \psi^v(g_{w_j}) = s_0\phi^v(g_{w_j}) - \sum_{i=1}^r s_i \eta_i (\nu^v(g_j)) = s_0\phi^v(g_{w_j}) - \sum_{i=1}^r s_0\phi^v(g_{w_i}) \delta_{ij} = 0,\]
proving the lemma. \end{proof}

According to \cite[Lemma~4.15]{ABO}, there is an (infinite) generating set for $\Gamma^v$ so that with respect to the resulting word metric, the quasimorphism $\psi^v \colon \Gamma^v \to \mathbb R$ from Lemma~\ref{L:quasimorphism} is a quasi-isometry.  For $v \in \vtx_0$, define $\mathcal K^v = \Gamma^v$ with this choice of word metric and let
\[ \lambda^v \colon \bTheta^v \to \mathcal K^v \]
simply be the map $\widetilde \lambda^v$ (followed by the identification of $\Gamma^v$ with $\mathcal K^v$).
For any $g \in \Gamma$, define $\mathcal K^{gv}$ to be the coset $g \Gamma^v$ with this generating set so that $\widetilde \lambda^{gv}$ defines a map
\[ \lambda^{gv} \colon \bTheta^{gv} \to \mathcal K^{gv}.\]
Carrying this out for every $v \in \vtx_0$,  \eqref{E:equivariance unraveled} implies
\begin{equation} \label{E:equivariance unraveled 2} \lambda^{gw} (g x) = g \lambda^w(x) \end{equation}
for all $w \in \vtx$ and $x \in \bTheta^w$, and $g \in \Gamma$. 

Before we proceed to the proof of Proposition~\ref{prop:KL}, observe that $\Gamma^{gv} = g \Gamma^v g^{-1}$ acts isometrically on $g \Gamma^v$ with respect to any generating set, and thus we can use this to define a generating set for the conjugate so that (any) orbit map is an isometry; in fact, this will just be a conjugate of the generating set for $\Gamma^v$.  In particular, when convenient we will identify $\mathcal K^{gv}$ isometrically with the conjugate $g \Gamma^v g^{-1}$ via such an orbit map.  Conjugating the quasimorphisms $\psi^v$ from the lemma, for $v \in \vtx_0$, we obtain uniform quasi-isometries
\begin{equation} \label{E:quasimorphism general} \psi^w \colon \mathcal K^w \to \mathbb R
\end{equation}
for all $w \in \vtx$, which for an appropriate choice of identification of $\mathcal K^w$ with a conjugate of some $\Gamma^v$, $v \in \vtx_0$, is a quasimorphism (with uniformly bounded deficiency).

\begin{proof}[Proof of Proposition~\ref{prop:KL}]  From the discussion above and Equation \eqref{E:equivariance unraveled 2}, we immediately see that item \eqref{item:equivariance} of the proposition holds.

Next, observe that by adding finitely many generators to the infinite generating set of $\Gamma^{v_0}$ for any $v_0 \in \vtx_0$, changes $\mathcal K^{v_0}$ by quasi-isometry.  On the other hand, the finite generating set described in Equation \eqref{E:finite gen set} for $v_0 \in \vtx_0$ makes $\widetilde \lambda^{v_0}$ a quasi-isometry.  Thus, adding these generators to the infinite generating set does not change the quasi-isometry type of $\mathcal K^{v_0}$, but clearly makes $\lambda^{v_0}$ coarsely Lipschitz.
Therefore, $\lambda^v$ is uniformly coarsely Lipschitz for all $v \in \vtx$, and hence item \eqref{item:lambda_lipschitz} holds for all $v \in \vtx$.

To prove item \eqref{item:lambda_flowlines_bounded}, let $v \in \vtx$ and $x \in \partial \bTheta^v$.  Then $x \in \theta^w$, for some $w \in \vtx$ adjacent to $v$.  As discussed above, we view $\mathcal K^v$ and $\mathcal K^w$ as conjugates $\Gamma^v$ and $\Gamma^w$ of groups $\Gamma^{v_0}$ and $\Gamma^{w_0}$, respectively, for $v_0,w_0 \in \vtx_0$, equipped with their conjugated infinite generating sets.  Let $\psi^v \colon \mathcal K^v \to \mathbb R$ and $\psi^w \colon \mathcal K^w \to \mathbb R$ be the associated uniform quasi-isometric homogeneous quasimorphisms.
The element $g_w \in \Gamma^v$ stabilizes $\ell_{x,\alpha}$ acting by translation on it, and by construction, $\psi^v(g_w) = \psi^v(g_w^n) = 0$ for all $n \in \mathbb Z$.  It follows that every orbit of $\langle g_w \rangle$ acting on $\mathcal K^v$ is uniformly bounded.  Indeed, if $D$ is the deficiency of $\psi^v$, then for any $g \in \mathcal K^v$, we have
\[ |\psi^v(g_w^ng) - \psi^v(g)|  =  |\psi^v(g_w^ng) - \psi^v(g) - \psi^v(g_w^n)| \leq D \]
and therefore $g_w^ng$ and $g$ are uniformly bounded distance apart in $\mathcal K^v$ (since $\psi^v$ is a uniform quasiisometry).

Now, since $g_w^nv = v$, by item \eqref{item:equivariance} of the proposition we have
\[ \lambda^v(g_w^nx) = \lambda^{g_w^nv}(g_w^n x) = g_w^n \lambda^v(x), \]
and since $g_w^n \lambda^v(x)$ is uniformly close to $\lambda^v(x)$, it follows that $\lambda^v$ sends the $\langle g_w \rangle$--orbit of $x$ to a uniformly bounded set.  Since this orbit is $R$--dense in $\ell_{x,\alpha}$ for some uniform $R >0$, and since $\lambda^v$ is uniformly coarsely Lipschitz (by item \eqref{item:lambda_lipschitz}) we see that $\lambda^v(\ell_{x,\alpha})$ has uniformly bounded diameter.  This proves item \eqref{item:lambda_flowlines_bounded}.

For item \eqref{item:KL_qi}, we continue with the assumptions on $v,w$ as above.  Note that since $\psi^v(g_v^n) = n$, using again the fact that $\psi^v$ is a uniform quasi-isometric homogeneous quasimorphism to $\mathbb R$, it follows that for any $x \in \bTheta^v$, the map $n \mapsto \lambda^v(g_v^nx)$ is a uniformly coarsely surjective, uniform quasiisometry $\mathbb Z \to \mathcal K^v$.  Since every orbit of $\langle g_w \rangle$ on $\mathcal K^v$ is uniformly bounded, it follows that for all $n,m \in \mathbb Z$, the two points
$\lambda^v(g_v^ng_w^mx) = g_w^m \lambda^v(g_v^nx)$ and $\lambda^v(g_v^n x)$
are uniformly close to each other.
Likewise, $\lambda^w(g_v^ng_w^mx)$ and $\lambda^w(g_w^mx)$ are also uniformly close to each other.
But this means that
\[ \lambda^v \times \lambda^w(g_v^ng_w^mx) \mbox{ and } (\lambda^v(g_v^nx),\lambda^w(g_w^mx)) \]
are uniformly close, and thus
\[ (n,m) \mapsto \lambda^v \times \lambda^w(g_v^ng_w^mx) \]
is a uniformly coarsely surjective, uniform quasiisometry $\mathbb Z^2 \to \mathcal K^v \times \mathcal K^w$.

On the other hand, the assignment $(n,m) \mapsto g_v^ng_w^m x$ defines a uniform quasiisometry $\mathbb Z^2 \to \bTheta^v \cap \bTheta^w$ since  $\langle g_w \rangle \times \langle g_v \rangle \cong \mathbb Z^2$ acts cocompactly on $\bTheta^v \cap \bTheta^w$ (with uniformity coming from the fact that there are only finitely many $\Gamma$--orbits of pairs $(v,w)$ of adjacent vertices).  Combining these two facts, together with the fact that $\lambda^v$ and $\lambda^w$ are uniformly coarsely Lipschitz, it follows that
\[ \lambda^v \times \lambda^w \colon \bTheta^v \cap \bTheta^w \to \mathcal K^v \times \mathcal K^w \]
is a uniformly coarsely surjective, uniform quasiisometry.  This proves  item \eqref{item:KL_qi}, and completes the proof of the proposition.
\end{proof}

\subsection{A technical lemma}

The goal of this subsection is to prove Lemma \ref{lem:retract_on_the_middle}, whose relevance will only be clear in \S\ref{S:combinatorial HHS}. We prove it here since we have now established the setup for its proof.

We recall that for each $v$, since $\bTheta^v/\Gamma^v$ is a Seifert fibered orbifold, we have have a $\Gamma^v$--equivariant, uniformly biLipschitz homeomorphism $\mu^v \times \rho^v \colon \bTheta^v \to \widetilde S^v \times \mathbb R$, where $\widetilde S^v$ is the (simply connected) surface-with-boundary $\bTheta^v_X \subset E_X$ for some $X \in \partial B_\alpha$ (and $\alpha = \alpha(v)$) and the slices $\{ x \} \times \mathbb R$ (more precisely, the level sets of $(\mu^v)^{-1}(x) \subset \bTheta^v$) are lines for $\bTheta^v$.  These lines project to circle fibers in $\bTheta^v/\Gamma^v$ and we may assume they contain all the lines $\ell_{x,v}$ for all $x \in \theta^v$.

\begin{lemma} \label{L:KL-like} The map $\mu^v \times \lambda^v \colon \bTheta^v \to \widetilde S^v \times \mathcal K^v$ is a uniform, $\Gamma^v$--equivariant quasi-isometry with uniformly dense image.  Moreover, the constant $K_1$ from Proposition~\ref{prop:KL} can be chosen so that for any $v \in \vtx$ and $s \in \mathcal K^v$, the subspace
\[ M(s)=(\lambda^v)^{-1}(N_{K_1}(s)) \subset \bTheta^v,\]
has the property that $\mu^v \times \lambda^v(M(s))$ has uniformly bounded Hausdorff distance to the slice $\widetilde S^v \times \{s\}$, and furthermore $M(s)$ nontrivially intersects every line of $\bTheta^v$.
\end{lemma}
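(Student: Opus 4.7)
My plan is to exploit the $\Gamma^v$--equivariant, uniform biLipschitz identification $\mu^v\times\rho^v\colon \bTheta^v\to \widetilde S^v\times \mathbb R$, whose line-fibers $(\mu^v)^{-1}(x)$ are exactly the Seifert fibers of $\bTheta^v$---these project to the $S^1$--factor in $\bTheta^v/\Gamma^{v0}\cong S^v\times S^1$---to reduce everything to understanding $\lambda^v$ restricted to each line. Granting such a reduction, the quasi-isometry claim for $F=\mu^v\times\lambda^v$ and the claims about $M(s)$ follow fairly directly.

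The key technical step is to show that for every line $\ell\subset \bTheta^v$, the restriction $\lambda^v|_\ell\colon \ell\to \mathcal K^v$ is a uniformly coarsely surjective, uniform quasi-isometry. The stabilizer $H_\ell<\Gamma^v$ is infinite cyclic and acts cocompactly on $\ell$, and a generator of $H_\ell$ is conjugate in $\Gamma^v$ to a nonzero power of the generic fiber class $g_v$ (all lines are generic Seifert fibers under the biLipschitz identification). Since $\psi^v$ is a homogeneous quasimorphism with $\psi^v(g_v)=1$, and homogeneous quasimorphisms are conjugation-invariant, this generator has nonzero $\psi^v$--value. Combined with $\psi^v\colon \mathcal K^v\to \mathbb R$ being a uniform quasi-isometric quasimorphism by \eqref{E:quasimorphism general}, the orbit map $H_\ell\to \mathcal K^v$ is a uniform quasi-isometric embedding; cocompactness of the $H_\ell$-action on $\ell$ together with Proposition~\ref{prop:KL}(\ref{item:lambda_lipschitz}) then upgrade $\lambda^v|_\ell$ to the desired uniform quasi-isometry.

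With this fiberwise fact in hand, the quasi-isometry assertion for $F$ is routine: coarse Lipschitzness follows from Proposition~\ref{prop:KL}(\ref{item:lambda_lipschitz}) and the biLipschitz identification, and $\Gamma^v$--equivariance from Proposition~\ref{prop:KL}(\ref{item:equivariance}). For coarse distance preservation, given $x,y\in \bTheta^v$ I connect them via an intermediate point $z$ with $\mu^v(z)=\mu^v(x)$ and $\rho^v(z)=\rho^v(y)$; the biLipschitz structure gives
\[
d_{\bTheta^v}(x,y)\asymp d_{\widetilde S^v}(\mu^v(x),\mu^v(y))+|\rho^v(x)-\rho^v(y)|,
\]
while the fiberwise quasi-isometry on $(\mu^v)^{-1}(\mu^v(x))$ converts $|\rho^v(x)-\rho^v(y)|$ into $d_{\mathcal K^v}(\lambda^v(x),\lambda^v(z))$, which differs from $d_{\mathcal K^v}(\lambda^v(x),\lambda^v(y))$ by at most a uniform multiple of $d_{\widetilde S^v}(\mu^v(x),\mu^v(y))$ via Proposition~\ref{prop:KL}(\ref{item:lambda_lipschitz}). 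Coarse density of $F$ on $\widetilde S^v\times \mathcal K^v$ follows by combining surjectivity of $\mu^v\times\rho^v$ with coarse surjectivity of $\lambda^v|_\ell$ on every line. Uniformity over $v\in \vtx$ holds because $\Gamma$ has only finitely many vertex orbits.

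For the statements about $M(s)$, enlarge $K_1$ if necessary to dominate the fiberwise coarse-surjection constants from the key step. Then $\mu^v\times\lambda^v(M(s))\subset \widetilde S^v\times N_{K_1}(s)$ is automatic and yields one Hausdorff inclusion at distance $\le K_1$. The reverse inclusion and the ``$M(s)$ meets every line'' assertion both reduce to exhibiting, for every line $\ell\subset \bTheta^v$, a point on $\ell$ with $\lambda^v$--image within $K_1$ of $s$, which is exactly uniform coarse surjectivity of $\lambda^v|_\ell$. The most delicate obstacle is the key fiberwise step at boundary lines of $\bTheta^v$, where two adjacent Seifert structures meet on a common torus; however, the lines of $\bTheta^v$ there lie in the $g_v$--direction rather than in the $g_w$--direction of the horocycles $\ell_{x,\alpha}$ (the latter being $\bTheta^w$--fibers, for which Proposition~\ref{prop:KL}(\ref{item:lambda_flowlines_bounded}) gives \emph{bounded} $\lambda^v$--image), so the $\psi^v(g_v)\neq 0$ argument still applies uniformly at the boundary.
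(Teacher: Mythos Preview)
Your proof is correct and follows essentially the same strategy as the paper: establish that $\lambda^v$ restricted to each line of $\bTheta^v$ is a uniform quasi-isometry onto $\mathcal K^v$ (via the $g_v$--action and the quasimorphism $\psi^v$), use an intermediate-point argument comparing $\mu^v\times\rho^v$ with $\mu^v\times\lambda^v$ to obtain the global quasi-isometry, and read off the $M(s)$ claims from fiberwise coarse surjectivity. The only cosmetic differences are that the paper cites the proof of Proposition~\ref{prop:KL}\eqref{item:KL_qi} directly for the fiberwise step---observing that $g_v$ itself lies in every line stabilizer, so your conjugacy detour is unnecessary---and organizes the lower-bound estimate via a case split on whether $K^4 d(\mu(x),\mu(y))$ or $d(\rho(x),\rho(y))$ dominates.
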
  We note that the intersection of $M(s)$ with each line of $\bTheta^v$ is necessarily a uniformly bounded diameter set by the uniform bounded Hausdorff distance condition.
 \begin{proof} All constants will be independent of the specific vertex $v$, so we drop it from the notation.  We write $d$ for all path-metric distances in what follows (the location of points will determine which metric is being used).  Products are given the $L^1$ metric for convenience.  We further let $K$ be the maximum of the coarse Lipschitz constants of $\mu,\rho,\lambda$ and the biLipschitz constant of $\mu \times \rho$, and assume, as we may, that $K \geq 2$.   From the proof of Proposition~\ref{prop:KL}\eqref{item:KL_qi}, if $x$ is any point of a line of $\bTheta$, then $n \mapsto \lambda^v(g_v^nx)$ is a uniformly coarsely surjective, uniform quasi-isometry from $\mathbb Z$ to $\mathcal K$.  Therefore, $\lambda = \lambda^v$ is a uniformly coarsely surjective, uniform quasi-isometry from any line of $\bTheta$ to $\mathcal K$.  We further assume that the coarse surjectivity constants and quasi-isometry constants are also all taken to be $K$.

Let $x,y \in \bTheta$ be any two points.
Since $\mu$ and $\lambda$ are  $K$--coarsely Lipschitz, $\mu \times \lambda$ is $(2K,2K)$--coarsely Lipschitz.  To prove the required uniform lower bound on $\mu \times \lambda$--distances, we note that since $\mu \times \rho$ is a $K$--biLipschitz homeomorphism, it suffices to uniformly coarsely bound $d(\mu \times \rho(x),\mu\times \rho(y))$ from above by $d(\mu \times \lambda(x),\mu \times \lambda(y))$.
For reasons that will become clear shortly, we observe that
\begin{eqnarray} \label{E:funny K4th}  \notag
d(\mu \times \rho(x),\mu \times \rho(y)) & = & d(\mu(x),\mu(y)) + d(\rho(x),\rho(y))\\ 
& \leq & \displaystyle{2 \max\left\{ K^4 d(\mu(x),\mu(y)), d(\rho(x),\rho(y))\right\}}.
\end{eqnarray}
If the maximum is realized by $K^4 d(\mu(x),\mu(y))$, then note that
\begin{eqnarray*} d(\mu \times \rho(x),\mu \times \rho(y)) & \leq & 2K^4 d (\mu(x),\mu(y)) \\
& \leq & 2K^4 d (\mu(x),\mu(y)) + 2K^4 d(\lambda(x),\lambda(y)) \\
&= &2K^4 d(\mu \times \lambda(x),\mu \times \lambda(y)),
\end{eqnarray*}
as required.  

We are left to consider the case that the maximum in \eqref{E:funny K4th} is realized by $d(\rho(x),\rho(y))$, which thus satisfies
\[ d(\rho(x),\rho(y)) \geq K^4 d(\mu(x),\mu(y)).\]
Let $z \in \bTheta$ be such that $\rho(z) = \rho(x)$ and $\mu(z) = \mu(y)$.  Since $\mu(z) = \mu(y)$, $z$ and $y$ lie on a line, and since the restriction of $\lambda$ to this line is a $(K,K)$--quasi-isometry, we have
\[ d(\lambda(z),\lambda(y)) \geq \tfrac1{K} d(\rho(z),\rho(y)) - K = \tfrac{1}K d(\rho(x),\rho(y)) - K.\]
Since $\lambda$ is $K$--coarsely Lipschitz and $\mu \times \rho$ is $K$--biLipschitz, we have
\begin{eqnarray*} 
d(\lambda(x),\lambda(z))& \leq & Kd(x,z) + K\\
& \leq & K^2 (d(\mu(x),\mu(z))+d(\rho(x),\rho(z))) + K \\
&=& K^2 d(\mu(x),\mu(y)) + K \\
&\leq &K^2 \left(\tfrac{1}{K^4} d (\rho(x),\rho(y))\right) + K\\
& = & \tfrac1{K^2} d(\rho(x),\rho(y)) + K
\end{eqnarray*}
Combining the previous two sets of inequalities and the triangle inequality, we have
\begin{eqnarray*}
d(\lambda(x),\lambda(y)) & \geq & d(\lambda(z),\lambda(y)) - d(\lambda(z),\lambda(x))\\
& \geq & \tfrac{1}K d(\rho(x),\rho(y)) - K - \left( \tfrac1{K^2} d(\rho(x),\rho(y)) + K \right)\\
& \geq & \tfrac{K-1}{K^2} d(\rho(x),\rho(y)) - 2K.
\end{eqnarray*}
Combining this inequality with \eqref{E:funny K4th} where we have assumed the maximum is realized by $d(\rho(x),\rho(y))$, we obtain
\begin{eqnarray*}
d(\mu \times \rho(x),\mu \times \rho(y)) & \leq & 2 d(\rho(x),\rho(y)) \\
& \leq & \tfrac{2K^2}{K-1} d(\lambda(x),\lambda(y)) + \tfrac{4K^3}{K-1}\\
& \leq & \tfrac{2K^2}{K-1} d(\mu \times \lambda(x),\mu \times \lambda(y)) +\tfrac{4K^3}{K-1}.
\end{eqnarray*}
which provides the required upper bound.  This completes the proof of the first claim of the lemma.  

For the second claim of the lemma, we now increase $K_1$ from Proposition~\ref{prop:KL} if necessary, so that $K_1 \geq K$.  Observe that
\begin{equation}\label{E:slice preserving-ish} (\mu \times \lambda)\circ (\mu \times \rho)^{-1}(x,t) = (x,\lambda((\mu \times \rho)^{-1}(x,t))).
\end{equation}
That is, $(\mu \times \lambda)\circ(\mu \times \rho)^{-1}$ sends the line $\{x \} \times \mathbb R$ to $\{x\} \times \mathcal K$, for any $x \in \widetilde S$.  As already noted at the start of the proof, restricting to this line, $\lambda$ is $K$--coarsely Lipschitz and $K$--coarsely onto.  Therefore, for any $s \in \mathcal K$ and $x \in \widetilde S$, there exists $t$ so that $\lambda((\mu \times \rho)^{-1}(x,t))$ is within $K_1 \geq K$ of $s$.
Thus, for any line of $\bTheta$, the $\lambda$--image nontrivially intersects $N_{K_1}(s)$, and hence this line nontrivially intersects $M(s)$.  By definition, $\mu \times \lambda$ maps $M(s)$ into $\widetilde S^v \times N_{K_1}(s)$, and by the previous sentence, every point of $\widetilde S \times \{s\}$ is within $K_1$ of some point of $\mu \times \lambda(M(s))$.  Thus, $\lambda \times \mu(M(s))$ has Hausdorff distance at most $K_1$ from $\widetilde S \times \{s\}$, as required.
\end{proof}

As mentioned above, the following technical lemma will be needed in \S\ref{S:combinatorial HHS}. In the statement $M(s)$ is as defined by Lemma~\ref{L:KL-like}.

\begin{lemma}\label{lem:retract_on_the_middle} 
   There is a function $N = N_{\mathcal K} \colon [0,\infty)\to[0,\infty)$ with the following property. Suppose that $v,v_1,v_2\in\vtx$ are so that $d_\tree(v,v_i)=1$. Then for each $s\in\mathcal K^v$ and $t_i\in\mathcal K^{v_i}$ we have
   $$\bar d(M(s)\cap M(t_1),M(s)\cap M(t_2))\leq N(\bar d(M(t_1),M(t_2))).$$
\end{lemma}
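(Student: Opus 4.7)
Let $R=\bar d(M(t_1),M(t_2))$ and fix $x_i\in M(t_i)$ with $\bar d(x_1,x_2)\le R+1$. The plan is to work inside $\bTheta^v$ via the QI product coordinates $\mu^v\times\lambda^v\colon \bTheta^v\to\widetilde S^v\times\mathcal K^v$ of Lemma~\ref{L:KL-like}. The overlap strip $\bTheta^v\cap\bTheta^{v_i}$ carries two uniform QIs to $\mathbb R^2$-spaces with matching $\lambda^v$ coordinate: $\mu^v\times\lambda^v$ (to $B^{v_i}\times\mathcal K^v$ with $B^{v_i}:=\mu^v(\bTheta^v\cap\bTheta^{v_i})\subset\widetilde S^v$ a boundary collar) and $\lambda^v\times\lambda^{v_i}$ (from Proposition~\ref{prop:KL}\eqref{item:KL_qi}). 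Their composition shows that the cross coordinates $\mu^v$ and $\lambda^{v_i}$ are QI-related on the strip; in particular, the $\mu^v$-image of $(\lambda^{v_i})^{-1}(N_{K_1}(t_i))$ is a bounded neighborhood of a single point $p_i\in B^{v_i}$ depending only on $t_i$, and $M(s)\cap M(t_i)$ is a uniformly bounded neighborhood of $(p_i,s)\in\widetilde S^v\times\mathcal K^v$. Since both sets lie over the common slice $\widetilde S^v\times\{s\}$, the QI $\mu^v\times\lambda^v$ yields $\bar d(M(s)\cap M(t_1),M(s)\cap M(t_2))\le C\,d_{\widetilde S^v}(p_1,p_2)+C$, and it suffices to bound $d_{\widetilde S^v}(p_1,p_2)$ as a function of $R$.

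Using the (exponential) coarse equivalence between $\bar d|_{\partial\mathcal B_\alpha}$ and $d_{\partial\mathcal B_\alpha}$ stated at the outset of the section, choose a path in $\partial\mathcal B_\alpha$ from $x_1$ to $x_2$ of length at most $R':=N_0(R)$. If $v_1=v_2$, then $\bTheta^{v_1}=\bTheta^{v_2}$ and the QI $\mu^{v_1}\times\lambda^{v_1}$ gives $d_{\mathcal K^{v_1}}(t_1,t_2)=O(R')$; translating via $\lambda^v\times\lambda^{v_1}$ on $\bTheta^v\cap\bTheta^{v_1}$ directly gives the conclusion. If $v_1\neq v_2$, then since $T_\alpha$ is a tree and both $v_i$ are adjacent to $v$, the vertices $v_1,v_2$ are non-adjacent with $v$ as their unique common neighbor, so any path in $\partial\mathcal B_\alpha$ from $\bTheta^{v_1}$ to $\bTheta^{v_2}$ must transit the vertex space $\bTheta^v$. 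Our path therefore contains transit points $z_i\in\bTheta^v\cap\bTheta^{v_i}$ with $d_{\partial\mathcal B_\alpha}(x_i,z_i)$ and $d_{\partial\mathcal B_\alpha}(z_1,z_2)$ both $\le R'$. The Lipschitz bound Proposition~\ref{prop:KL}\eqref{item:lambda_lipschitz}, combined with $\lambda^{v_i}(x_i)\in N_{K_1}(t_i)$, gives $d_{\mathcal K^{v_i}}(\lambda^{v_i}(z_i),t_i)=O(R')$, and the $\mu^v$--$\lambda^{v_i}$ relationship of the first paragraph then yields $d_{\widetilde S^v}(\mu^v(z_i),p_i)=O(R')$. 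Meanwhile, since the inclusion $\bTheta^v\hookrightarrow\partial\mathcal B_\alpha$ is a QI embedding, we have $d_{\bTheta^v}(z_1,z_2)=O(R')$, and applying $\mu^v\times\lambda^v$ gives $d_{\widetilde S^v}(\mu^v(z_1),\mu^v(z_2))=O(R')$. The triangle inequality in $\widetilde S^v$ now bounds $d_{\widetilde S^v}(p_1,p_2)=O(R')$, completing the argument.

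The main obstacle is the tree-of-spaces step in the case $v_1\neq v_2$: one must exploit the tree structure of $T_\alpha$ to produce the identifiable transit points $z_i$ through $\bTheta^v$ and control the deviation of $\lambda^{v_i}(z_i)$ from $t_i$ along them. A secondary feature is that translating from $\bar d$ to $d_{\partial\mathcal B_\alpha}$ in order to carry out the tree argument costs an exponential factor, which is precisely what forces the output function $N$ to be exponential rather than linear in its argument.
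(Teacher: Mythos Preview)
Your proof is correct, and the core insight---that under $\mu^v\times\lambda^v$ the set $M(t_i)\cap\bTheta^v$ coarsely collapses to a slice $\{p_i\}\times\mathcal K^v$, so that the problem reduces to bounding $d_{\widetilde S^v}(p_1,p_2)$---is exactly the paper's Claim. Where you diverge is in how you pass from the original nearest points $x_i\in M(t_i)$ down into the edge space $\bTheta^v\cap\bTheta^{v_i}$. The paper builds, for each $i$, an explicit coarse Lipschitz retraction $\bTheta^{v_i}\to\bTheta^v\cap\bTheta^{v_i}$ by composing $\mu^{v_i}\times\lambda^{v_i}$ with closest-point projection of $\widetilde S^{v_i}$ onto the relevant boundary component $\partial^v\widetilde S^{v_i}$; this retraction moves $M(t_i)$ into a bounded neighborhood of $M(t_i)\cap\bTheta^v$ and yields a clean two-sided estimate $d_0(M(s)\cap M(t_1),M(s)\cap M(t_2))\asymp d_0(M(t_1),M(t_2))$ in the path metric on $\bTheta^v\cup\bTheta^{v_1}\cup\bTheta^{v_2}$. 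You instead invoke the Bass--Serre separation: the edge strip $\bTheta^v\cap\bTheta^{v_i}$ disconnects $\partial\mathcal B_\alpha$, so a short path from $x_1$ to $x_2$ must contain transit points $z_i$ in these edge spaces, and the Lipschitz property of $\lambda^{v_i}$ controls how far $z_i$ drifts from the slice $M(t_i)$. Your route is perhaps more elementary (no retraction needs constructing) but gives only the one inequality the lemma asks for; the paper's retraction approach, while requiring slightly more setup, delivers the stronger linear two-sided comparison in the intrinsic path metric.
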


 \begin{proof}
It suffices to prove the lemma with $\bar d$ replaced by the path metric $d_{\partial \mathcal B_\alpha}$ on $\partial \mathcal B_\alpha$, since they are uniformly coarsely equivalent.  In fact, it will be convenient to consider the path metric $d_0$ on the union of the three vertex subspaces
\[ \Omega = \bTheta_v \cup \bTheta_{v_1} \cup \bTheta_{v_2},\]
which is also uniformly coarsely equivalent since each vertex space uniformly quasi-isometrically embeds in $\partial \mathcal B_\alpha$.  In this subspace, we will actually prove that the two distances are uniformly comparable.

Now, for each $i=1,2$ the uniform quasi-isometry  $\mu^{v_i} \times \lambda^{v_i} \colon \bTheta^{v_i} \to \widetilde S^{v_i} \times \mathcal K^{v_i}$ from Lemma~\ref{L:KL-like} maps the space $\bTheta^{v} \cap \bTheta^{v_i}$ within bounded Hausdorff distance of a subspace $\partial^v \widetilde S^{v_i} \times \mathcal K^{v_i}$, for a boundary component $\partial^v \widetilde S^{v_i}$  of $\widetilde S^{v_i}$.
Let $\eta \colon \widetilde S^{v_i} \to \partial^v \widetilde S^{v_i}$ be the closest point projection, and then set
\[ (\mu^{v_i} \times \lambda^{v_i})^{-1}  \circ (\eta \times id) \circ (\mu^{v_i} \times \lambda^{v_i} ) \colon \bTheta^{v_i} \to \bTheta^{v_i} \]
where $(\mu^{v_i} \times \lambda^{v_i} )^{-1}$ is a coarse inverse of $\mu^{v_i} \times \lambda^{v_i} $ with $\mu^{v_i} \circ (\mu^{v_i} \times \lambda^{v_i} )^{-1}(x,s) = x$ (c.f.~Equation~\eqref{E:slice preserving-ish}). This map is a 
uniformly coarsely Lipschitz, coarse retraction of $\bTheta^{v_i}$ onto $\bTheta^{v_i} \cap \bTheta^v$.  Moreover, this sends $M(t_i)$, which is uniformly close to the $(\mu^{v_i} \times \lambda^{v_i})^{-1}$--image of $\widetilde S^{v_i} \times \{t_i\}$, to a uniformly bounded neighborhood of $M(t_i) \cap \bTheta^v$.  Consequently, 
\begin{equation} \label{E:neighbor distances 1} d_0(M(t_1),M(t_2)) \asymp d_0( M(t_1) \cap \bTheta^v, M(t_2)  \cap \bTheta^v)
\end{equation}
with uniform constants.  

Next, observe that $M(t_i) \cap \bTheta^v \subset \bTheta^v \cap \bTheta^{v_i} \subset \bTheta^v$. 
\begin{claim} The quasi-isometry $\mu^v \times \lambda^v$ maps $M(t_i) \cap \bTheta^v$ within a uniformly bounded Hausdorff distance of the  slice $\{ z_i \} \times \mathcal K^v \subset \widetilde S^v \times \mathcal K^v$, for each $i=1,2$, where $(z_i,t_i')$ is a point in the $\mu^v \times \lambda^v$--image of $M(s) \cap M(t_i)$.
\end{claim}
Assuming the claim, we note then that
\begin{eqnarray*} 
d_0( M(t_1) \cap \bTheta^v, M(t_2)  \cap \bTheta^v) & \asymp & d_{\widetilde S^v \times \mathcal K^v} (\{z_1\} \times \mathcal K^v, \{z_2\} \times \mathcal K^v)\\
& = & d_{\widetilde S^v}(z_1,z_2)\\
 & \asymp & d_0(M(s) \cap M(t_1),M(s) \cap M(t_2))
\end{eqnarray*}
again with uniform constants.  Combining this coarse equation with \eqref{E:neighbor distances 1} we get the required uniform estimate
\[ d_0(M(s) \cap M(t_1),M(s) \cap M(t_2)) \asymp d_0(M(t_1),M(t_2)). \]

Fix $i=1$ or $2$ and we prove the claim.  Since $\lambda^v \times \lambda^{v_i}$ is $K_1$--coarsely surjective (Proposition~\ref{prop:KL}(\ref{item:KL_qi}), there exists some point $y_i \in \bTheta^v \cap \bTheta^{v_i}$ with $(\lambda^v(y_i),\lambda^{v_i}(y_i))$ within distance $K_1$ of $(s,t_i) \in \mathcal K^v \times \mathcal K^{v_i}$.  Therefore, $y_i \in M(s) \cap M(t_i)$ and we set $\mu^v\times \lambda^v(y_i) = (z_i,t_i')$.  

Next, we observe that $\lambda^{v_i}$ is uniformly coarsely constant on any line of $\bTheta^v$ contained in $\bTheta^v \cap \bTheta^{v_i}$ by Proposition~\ref{prop:KL}(\ref{item:lambda_flowlines_bounded}) and uniformly coarsely Lipschitz by Proposition~\ref{prop:KL}(\ref{item:lambda_lipschitz}). 
Hence, the line
\[ (\mu^v)^{-1}(z_i) = (\mu^v \times \rho^v)^{-1}(\{z_i\} \times \mathbb R)\]
of $\bTheta^v$ maps under $\lambda^v \times \lambda^{v_i}: \bTheta^v \cap \bTheta^{v_i} \to \mathcal K^v \times \mathcal K^{v_i}$ into a neighborhood of uniformly bounded radius of $\mathcal K^v\times \{t_i\}$. Therefore, any point in the image of the line in $\mathcal K^v \times \mathcal K^{v_i}$ lies uniformly close to a point in $\lambda^v \times \lambda^{v_i}(M(t_i)\cap \bTheta^v)$ by Proposition~\ref{prop:KL}(\ref{item:KL_qi}) (which guarantees that any point in $\mathcal K^v\times \{t_i\}$ is $K_1$--close to a point in the image of the subspace $\bTheta^v \cap \bTheta^{v_i}$). Therefore any point in the line lies uniformly close to some point in $M(t_i)\cap \bTheta^v$ since $\lambda^v \times \lambda^{v_i}$ is a uniform quasi-isometry again by Proposition~\ref{prop:KL}(\ref{item:KL_qi}). 

On the other hand, Lemma~\ref{L:KL-like} implies $\mu^{v_i} \times \lambda^{v_i}(M(t_i))$ is uniformly bounded Hausdorff distance from the slice $\widetilde S^{v_i} \times \{t_i\} \subset \widetilde S^{v_i} \times \mathcal K^{v_i}$.  Moreover, since $M(t_i)$ meets every line of $\bTheta^{v_i}$ (Lemma~\ref{L:KL-like} again), it follows that
\[ \mu^{v_i} \times \lambda^{v_i}(M(t_i) \cap \bTheta^v) \]
is uniformly bounded Hausdorff distance to the quasi-line $\partial^v \widetilde S^{v_i} \times \{t_i\}$ (see the proof of Lemma~\ref{L:KL-like}). In particular, $M(t_i) \cap \bTheta^v$ is itself a uniform quasi-line and consequently lies within a uniformly bounded neighborhood of the line $(\mu^v)^{-1}(z_i)$.  Since this line maps within  within a uniformly bounded Hausdorff distance of the slice $\{z_i\} \times \mathcal K^v$ in $\widetilde S^v \times \mathcal K^v$ by $\mu^v \times \lambda^v$, we see that $M(t_i) \cap \bTheta^v$ does as well
\end{proof}

\subsection{Coned-off surfaces} 
\label{subsec:coned-off}

For $v\in \vtx$, we define $\bXi^v$ to be the graph whose vertices are all $w\in\vtx$ so that $d_\tree(v,w)=1$, and with edges connecting the pairs $w,w'$ whenever $\bTheta^w\cap \bTheta^{w'}\neq \emptyset$.
As such, vertices $w\in \vtx$ are in bijective correspondence with the boundary components of $\bTheta^v$ and there is an ``inclusion'' map
\[i^v\colon \partial \bTheta^v \to \bXi^v\]
that sends any point $x \in \partial \bTheta^v$ to the vertex $w$ for which $x \in \bTheta^w$.
In light of the following lemma, we note that we could alternately define the edges of $\bXi^v$ in terms of subspaces lying within bounded distance of each other, and produce a space quasi-isometric to $\bXi^v$.

\begin{lemma}\label{lem:parameter_for_Xi}
There is a function $N = N_{\bXi} \colon[0,\infty)\to [0,\infty)$ so that whenever $v,w_1,w_2\in \vtx$ satisfy $\bar d(\bTheta^{w_1}\cap \bTheta^v,\bTheta^{w_2}\cap \bTheta^v)\leq r$,
the sets $\bTheta^{w_1}\cap \bTheta^v$ and $\bTheta^v\cap\bTheta^{w_2}$ may be connected via a concatenation of at most $N(r)$ paths, each of which is contained in a set of the form $\bTheta^v\cap \bTheta^{w}$.
\end{lemma}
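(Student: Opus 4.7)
The overall strategy is to translate the hypothesis into a short path inside $\bTheta^v$ and then decompose that path according to which strip it lies in at each moment; the resulting sequence of strips is exactly the chain we seek.

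\smallskip

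\emph{Step 1 (reduce to a path inside $\bTheta^v$).} As noted just before the statement of Proposition~\ref{prop:KL}, both inclusions $\bTheta^v \hookrightarrow \partial\mathcal{B}_\alpha$ and $\partial\mathcal{B}_\alpha \hookrightarrow \bar E$ are quasi-isometric embeddings in the path metrics (the first because $\bTheta^v$ corresponds to a Seifert subgroup in the graph manifold $\partial\mathcal{B}_\alpha/\Gamma_\alpha$, the second by the inequalities $\bar d \le d_{\partial\mathcal{B}_\alpha} \le e^{\bar d}\bar d$). Hence from $\bar d(\bTheta^{w_1}\cap\bTheta^v,\bTheta^{w_2}\cap\bTheta^v) \le r$ one obtains a function $f$ (depending only on the QI-constants, in particular independent of $v$ by cocompactness of the $\Gamma$-action) such that we may choose $p_i \in \bTheta^{w_i}\cap\bTheta^v$ and a path $\gamma \subset \bTheta^v$ from $p_1$ to $p_2$ of length $\le f(r)$.

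\smallskip

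\emph{Step 2 (decompose $\gamma$ by strips).} Recall that $\bTheta^v = \theta^v \cup \bigcup_{w} \mathrm{Strip}[v,w]$, with the union taken over $w$ with $d_\tree(v,w)=1$, and that each strip $\mathrm{Strip}[v,w]$ is contained in $\bTheta^v \cap \bTheta^w$. After an arbitrarily small perturbation (pushing $\gamma$ transversely off the codimension-one spine $\theta^v$ whenever it lies on it, using an adjacent strip), we may assume $\gamma$ spends all but finitely many moments in the interior of some strip. This decomposes $\gamma$ as a concatenation $\gamma_1\cdot\gamma_2\cdots\gamma_k$ where each $\gamma_j$ lies in a single closed strip $\mathrm{Strip}[v,u_j] \subset \bTheta^v \cap \bTheta^{u_j}$, with $u_0 = w_1$ and $u_k = w_2$. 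The endpoints between $\gamma_j$ and $\gamma_{j+1}$ lie on a saddle connection of $\theta^v$ shared by $\mathrm{Strip}[v,u_j]$ and $\mathrm{Strip}[v,u_{j+1}]$, so in particular this endpoint lies in $\bTheta^v \cap \bTheta^{u_j} \cap \bTheta^{u_{j+1}}$, giving a valid concatenation of paths each contained in a set $\bTheta^v \cap \bTheta^w$.

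\smallskip

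\emph{Step 3 (bound the number of pieces).} It remains to bound $k$ by a function of $r$. This is where cocompactness enters: since $\Gamma^v$ acts cocompactly on $\bTheta^v$ and permutes the strips, the quotient has only finitely many strips up to the $\Gamma^v$-action, so there exists $h>0$ (uniform in $v$, using that there are finitely many $\Gamma$-orbits of vertices) such that every strip in $\bTheta^v_X$ has width at least $h$ for every $X \in \partial B_{\alpha(v)}$; this follows from the shearing nature of the parabolic horocyclic flow along $\partial B_\alpha$ together with compactness of $\bTheta^v_X/\Gamma^{v,\mathrm{fib}}$ and the fact that $\bTheta^v$ has only finitely many strips modulo $\Gamma^v$. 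Equivalently, fix a compact fundamental domain $\Delta$ for $\Gamma^v \curvearrowright \bTheta^v$: the number of distinct $\Gamma^v$-translates of $\Delta$ meeting $\gamma$ is $\le f(r)/\mathrm{diam}(\Delta) + 1$, and each translate meets only a uniformly bounded number of strips. Either formulation yields a linear bound on the number of strip-transitions made by $\gamma$, and hence $k \le N(r)$ for a function $N=N_\bXi$ depending only on $r$.

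\smallskip

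\emph{Main obstacle.} The one nontrivial point is Step 3: strip-transitions are ``free'' in length (one crosses a one-dimensional saddle connection inside a two-dimensional fiber), so we cannot naively bound transitions by $\mathrm{length}(\gamma)/h$ via transverse displacement alone. One must instead use the $\Gamma^v$-cocompactness/local-finiteness formulation to argue that a path of bounded length can only visit boundedly many distinct strips; a rerouting/geodesic-straightening argument then converts the sequence of distinct strips visited into a chain of comparable length.
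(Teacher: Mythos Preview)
Your approach is essentially correct but takes a more roundabout route than the paper, and your Step~3 is left imprecise at exactly the point you yourself flag as the main obstacle. The paper sidesteps this obstacle by working in the one-dimensional spine $\theta^v$ rather than in $\bTheta^v$: it first replaces the points $p_i$ by cone points in $\theta^v \cap \bTheta^{w_i}$ (at cost $3M$, with $M$ from Lemma~\ref{L:strip-and-saddle-bound}), then uses that the path metric on $\theta^v$ is coarsely equivalent to $\bar d$ and biLipschitz to the $\ell^1$ metric on $\theta^v_{X_v}\times\mathbb R$. Since $\theta^v_{X_v}$ is a graph whose edge lengths are bounded below (cocompactness), a geodesic of bounded length in $\theta^v$ projects to a concatenation of boundedly many saddle connections, each of which lies in some $\bTheta^v\cap\bTheta^w$. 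In a graph, geodesics do not backtrack, so the counting is immediate and no loop-removal or rerouting is needed.

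Your route can be completed: the bound on the number of \emph{distinct} strips visited follows from local finiteness (the formula $f(r)/\mathrm{diam}(\Delta)+1$ you wrote is not quite right---one should say that a path of length $L$ is covered by $O(L)$ unit balls, each meeting boundedly many translates of $\Delta$, each translate meeting boundedly many strips), and then a loop-erasure on the sequence $u_0,\ldots,u_k$ of strips (whenever $u_i=u_j$ with $i<j$, delete $u_{i+1},\ldots,u_j$; consecutive survivors still intersect) produces a chain whose length is the number of distinct strips. But this is two extra steps the paper avoids by reducing to a graph.
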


\begin{proof}
If $\bar d(\bTheta^{w_1}\cap \bTheta^v,\bTheta^{w_2}\cap \bTheta^v)\leq r$, then there are cone points $p_i\in\bTheta^{w_i}\cap \theta^v$ within distance $r + 3M$, where $M$ is the bound on the width of a strip and length of a saddle connection from Lemma~\ref{L:strip-and-saddle-bound}. Since the path metric $d_{\theta^v}$
on $\theta^v$ is coarsely equivalent to the subspace metric, $d_{\theta^v}(p_1,p_2)$ is bounded in terms of $r$. The path metric on $\theta^v$ is biLipschitz equivalent to the $\ell^1$--metric on the product $\theta^v_{X_v}\times \mathbb R$. Since each edge of $\theta^v_{X_v}$ has definite length, there is a path from $p_1$ to $p_2$ in $\theta^v$ obtained by concatenating boundedly many (in terms of $r$) paths $\alpha_i$ with $f_{X_v}(\alpha_i)$ a saddle connection in $\theta^v_{X_v}$. Since each $\alpha_i$ is contained in some $\bTheta^{w}$, we are done.
\end{proof}

\begin{corollary}
\label{cor:inclusion_Lipschitz}
For any $x,y\in \partial \bTheta^v$, we have $d_{\bXi^v}(i^v(x), i^v(y))\le N(d_{\bTheta^v}(x,y))+1$.
\end{corollary}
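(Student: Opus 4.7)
The plan is to deduce the corollary more or less directly from Lemma~\ref{lem:parameter_for_Xi}, which is the substantive statement. Given $x,y \in \partial \bTheta^v$, I will set $w_1 = i^v(x)$ and $w_2 = i^v(y)$, so that $x \in \bTheta^{w_1}\cap \bTheta^v$ and $y \in \bTheta^{w_2}\cap \bTheta^v$. Since the path metric $d_{\bTheta^v}$ dominates the induced subspace metric from $\bar d$, we have
\[
\bar d\bigl(\bTheta^{w_1}\cap \bTheta^v,\ \bTheta^{w_2}\cap \bTheta^v\bigr)\ \le\ \bar d(x,y)\ \le\ d_{\bTheta^v}(x,y).
\]
Setting $r = d_{\bTheta^v}(x,y)$, I apply Lemma~\ref{lem:parameter_for_Xi} to produce a concatenation of at most $N(r)$ paths $\alpha_1,\dots,\alpha_k$ (so $k \le N(r)$) connecting the two intersection sets, where each $\alpha_i$ is contained in some $\bTheta^v \cap \bTheta^{u_i}$.

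The next step is to convert this concatenation into a walk in $\bXi^v$. Each $u_i$ satisfies $\bTheta^v \cap \bTheta^{u_i}\neq \emptyset$; since the thickened spine bundles $\bTheta^v$ and $\bTheta^{u_i}$ only share a strip when $u_i$ is adjacent to $v$ in the Bass--Serre tree $T_{\alpha(v)}$, each $u_i$ is indeed a vertex of $\bXi^v$. The shared endpoint of $\alpha_i$ and $\alpha_{i+1}$ lies in $\bTheta^{u_i}\cap \bTheta^{u_{i+1}}$, so by definition of $\bXi^v$ there is an edge between $u_i$ and $u_{i+1}$. Likewise, the starting point of $\alpha_1$ lies in $\bTheta^{w_1}\cap \bTheta^{u_1}$, so $w_1$ is either equal to $u_1$ or joined to it by an edge in $\bXi^v$, and the analogous statement holds for $w_2$ and $u_k$.

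Concatenating gives a walk $w_1, u_1, u_2, \dots, u_k, w_2$ in $\bXi^v$ of length at most $k+1 \le N(r)+1$, yielding
\[
d_{\bXi^v}(i^v(x),i^v(y))\ =\ d_{\bXi^v}(w_1,w_2)\ \le\ N\bigl(d_{\bTheta^v}(x,y)\bigr)+1,
\]
which is exactly the claim. There is no real obstacle here; the only point that requires a moment of thought is identifying the $u_i$ as genuine vertices of $\bXi^v$ (equivalently, verifying $d_\tree(v,u_i)=1$), which follows from the description of when two thickened spine bundles intersect.
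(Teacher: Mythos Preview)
Your proof is correct and follows essentially the same approach as the paper's. Both arguments reduce to Lemma~\ref{lem:parameter_for_Xi}, obtain a bounded concatenation of paths in sets of the form $\bTheta^v\cap\bTheta^{u_i}$, and read off an edge path in $\bXi^v$; the only cosmetic difference is that the paper absorbs the initial and terminal edges into the concatenation (writing $k\le N(r)+2$ paths with $w_1=w$, $w_k=w'$ and bounding $d_{\bXi^v}\le k-1$), whereas you keep $k\le N(r)$ and append the two extra edges explicitly.
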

\begin{proof}
Let $w = i^v(x)$ and $w' = i^v(y)$. Since $d_{\bTheta^v}$ and $\bar d$ are path metrics, we have $\bar d(x,y) \le d_{\bTheta^v}(x,y)$. By Lemma~\ref{lem:parameter_for_Xi}, $x$ and $y$ may be joined by a concatenation $\alpha_1\dotsb\alpha_k$ of $k \le N(d_{\bTheta^v}(x,y))+2$ paths $\alpha_j$ each of which lies in some $\bTheta^v\cap \bTheta^{w_j}$, and where $w = w_1$ and $w' =w_k$. For successive paths $\alpha_j,\alpha_{j+1}$, the vertices $w_j,w_{j+1}$ are adjacent in $\bXi^v$ by definition. Therefore $d_{\bXi^v}(w,w') \le k-1 \le N(d_{\bTheta^v}(x,y)) +1$.
\end{proof}

\begin{lemma}\label{lem:Xi_hyp}
Each $\bXi^v$ is uniformly quasi-isometric to a tree.  In particular, there exists $\delta > 0$ so that each $\bXi^v$ is $\delta$--hyperbolic. Moreover, $\bXi^v$ has at least two points at infinity.
\end{lemma}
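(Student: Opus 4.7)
The strategy is to recognize $\bXi^v$ as coarsely a coned-off model for the universal cover of the base orbifold $\mathcal{O}^v$, and then apply Manning's bottleneck criterion. Recall from the preceding discussion that $\bTheta^v/\Gamma^v$ is Seifert fibered over a hyperbolic $2$--orbifold $\mathcal{O}^v$ with nonempty boundary; in particular $\pi_1^{orb}(\mathcal{O}^v)$ is virtually free with cyclic boundary subgroups. By Lemma~\ref{L:KL-like}, $\bTheta^v$ is uniformly quasi-isometric to a product $\widetilde{S^v} \times \mathcal{K}^v$, where $\widetilde{S^v}$ is the universal cover of the compact surface-with-boundary $S^v$ and $\mathcal{K}^v$ is a quasi-line. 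The vertices of $\bXi^v$ correspond bijectively to the boundary components of $\widetilde{S^v}$, and by Lemma~\ref{lem:parameter_for_Xi} two vertices $w,w'$ are adjacent in $\bXi^v$ precisely when their boundary lines lie within uniformly bounded coarse distance through $\theta^v$ (equivalently, through the strips in $\bTheta^v$).

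To apply Proposition~\ref{prop:bottleneck}, I would proceed as follows. Given $w, w' \in \bXi^v$, consider a geodesic $\sigma$ in $\widetilde{S^v}$ between a closest pair of points on the boundary lines corresponding to $w$ and $w'$. The sequence of boundary lines that $\sigma$ comes close to, listed in order, gives a canonical edge path $\gamma(w,w')$ in $\bXi^v$, with adjacencies justified by Lemma~\ref{lem:parameter_for_Xi}. Since $\pi_1 S^v$ is a finitely generated free group ($S^v$ being compact with boundary), $\widetilde{S^v}$ is itself a uniform quasi-tree, so any alternative edge path $w = u_0, u_1, \dots, u_m = w'$ in $\bXi^v$ lifts via its witnesses $\bTheta^{u_i} \cap \bTheta^{u_{i+1}} \ne \emptyset$ to a path in $\widetilde{S^v}$ joining the two boundary lines, and this must stay within uniformly bounded distance of $\sigma$. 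In particular, the alternative path visits a boundary line within uniform distance of every line met by $\sigma$, yielding the bottleneck property for $\gamma(w,w')$. Uniformity of constants over $v \in \vtx$ is automatic since $\Gamma$ has only finitely many orbits on $\vtx$.

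For the final assertion, observe that $\pi_1^{orb}(\mathcal{O}^v)$ acts on $\bXi^v$ through the quotient $\nu^v\colon \Gamma^v \to \pi_1^{orb}(\mathcal{O}^v)$, since the fiber generator $g_v$ fixes each boundary line and so acts trivially on $\bXi^v$. Because $\mathcal{O}^v$ is a hyperbolic orbifold of negative Euler characteristic, $\pi_1^{orb}(\mathcal{O}^v)$ contains non-peripheral infinite-order elements, and any such element acts on the quasi-tree $\bXi^v$ with positive translation length, producing two distinct fixed points at infinity. The main technical obstacle is verifying the lifting step in the bottleneck argument: one must show that an arbitrary edge path in $\bXi^v$ traces, via its edge witnesses, a path in $\widetilde{S^v}$ of comparable coarse geometry, which requires carefully combining Lemma~\ref{L:KL-like} and Lemma~\ref{lem:parameter_for_Xi} to translate between the combinatorics of $\bXi^v$ and the metric geometry of $\bTheta^v$ (and hence of $\widetilde{S^v}$).
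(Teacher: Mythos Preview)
Your outline correctly invokes Proposition~\ref{prop:bottleneck}, and the intuition that $\bXi^v$ is a coned-off model for $\widetilde{S^v}$ is sound, but the lifting step you flag really is a gap as written, and the paper's argument avoids it by a more direct route.

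The paper identifies $\bXi^v$ with the intersection graph of strips in $\bTheta^v_0$ and exploits an explicit \emph{separation} property rather than the quasi-tree structure of $\widetilde{S^v}$. The geodesic in the spine $\theta^v_0$ from (a closest point on the strip for) $w$ to $w'$ is a concatenation of saddle connections $\sigma_1\cdots\sigma_n$; each $\sigma_i$ lies in exactly two strips $A_i^{\pm}$, and the interior of $A_i^+\cup A_i^-$ disconnects $\bTheta^v_0$. Any edge path in $\bXi^v$ from $w$ to $w'$ is a chain of pairwise intersecting strips, and such a chain must include a strip meeting $A_i^+\cup A_i^-$, hence passes through $star(w_i^+)\cup star(w_i^-)$ in $\bXi^v$. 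No lifting to $\widetilde{S^v}$ is needed. Your argument, by contrast, asserts the lifted path ``must stay within uniformly bounded distance of $\sigma$'', which is false as stated: the lifted path can wander arbitrarily far along the infinite boundary lines $u_i$. What the quasi-tree property of $\widetilde{S^v}$ actually gives is that the lifted path passes near each \emph{point} of $\sigma$, and translating that back into a statement about vertices of $\bXi^v$ requires arranging the lifted path to run inside strips and then invoking Corollary~\ref{cor:inclusion_Lipschitz}. This is fixable, but the paper's separation argument is shorter and sidesteps the issue entirely.

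For the final assertion the paper is again concrete: it exhibits an explicit bi-infinite chain of strips $A_i$, consecutive pairs sharing a saddle connection and with $A_{i-1},A_{i+2}$ on opposite sides of $A_i\cup A_{i+1}$, and uses the same separation idea to show this path is a quasi-geodesic line. Your loxodromic-element argument is plausible, but you would still need to verify that a non-peripheral element of $\pi_1^{orb}(\mathcal O^v)$ acts with unbounded orbits on $\bXi^v$; this is not automatic from the quasi-tree property alone, and in practice establishing it reduces to the same kind of strip-separation computation the paper carries out directly.
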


\begin{proof}
We appeal to Proposition \ref{prop:bottleneck} and show that for any vertices $w,w'$ of $\bXi^v$ there exists a path $\gamma(w,w')$ so that any path from $w$ to $w'$ passes within distance $3$ of each vertex of $\gamma(w,w')$.

First, note that $\bXi^v$ is isomorphic to the intersection graph of the collection of strips in $\bTheta_0^v$.  For each strip we have a vertex, and for each saddle connection of the spine $\theta_0^v$, there is an edge of $\bXi^v$ that connects the vertices corresponding to the strips that contain the saddle connection.  For each cone point in the spine $\theta_0^v$, there is also a complete graph on the vertices corresponding to strips that contain this cone point.  This accounts for all edges (because intersections of strips either arise along saddle connections or single cone points), and we note that the closure of each edge of the first type separates $\bXi^v$ into two components.  

Suppose $w,w' \in \bXi^v$ are two vertices, and let $x,x' \in \theta_0^v$ be points in the (boundaries of the) strips corresponding to $w$ and $w'$, respectively, that are closest in $\bTheta_0^v$, and consider the geodesic in $\theta_0^v$ connecting these points, which is a concatenation of saddle connections $\sdl_1 \sdl_2 \cdots \sdl_n$.  For each $1 \leq i \leq n$, let $w_i^\pm$ be the vertices corresponding to the two strips $A_i^\pm$ that intersect in the saddle connection $\sdl_i$. We can form an edge path $\gamma(w,w')$ in $\bXi^v$, containing $w,w'$, and the $w_i^+$ as vertices, since $A_i^+ \cap A_{i+1}^+ \neq \emptyset$. 
Observe that any path from $x$ to $x'$ must pass through the union $A_i^+ \cup A_i^-$, for each $i$, since $x$ and $x'$ lie in the closures of distinct components of $\bTheta_0^v \smallsetminus (A_i^+ \cup A_i^-)$.

Now let $w = w_0,w_1,\ldots,w_k = w'$ be the vertices of an edge path connecting $w$ to $w'$ in $\bXi^v$.  For any points in the strips corresponding to $w$ and $w'$, respectively, it is easy to construct a path in $\bTheta_0^v$ between these points that decomposes as a concatenation $\nu_1 \nu_2 \cdots \nu_k$ so that $\nu_j$ is contained entirely in the strip corresponding to $w_j$.  From the previous paragraph, this path must pass through $A_i^+ \cup A_i^-$, for each $i =1,\ldots,n$.  It follows that for each $1 \leq i \leq n$, the edge path must meet the union of the stars $star(w_i^+) \cup star(w_i^-)$.  Since these stars intersect, their union has diameter at most $3$, and we are done.

We now show that $\bXi^v$ contains a quasi-geodesic line. Consider strips $A_i$ of $\bTheta_0^v$, for $i\in\mathbb Z$, such that for all $i$ we have
\begin{itemize}
\item $A_i$ and $A_{i+1}$ share a saddle connection;
\item $A_{i-1}$ and $A_{i+2}$ lie on distinct components of the complement of the interior of $A_i\cup A_{i+1}$ in $\bTheta_0^v$.
\end{itemize}

The $A_i$ give a bi-infinite path in $\bXi^v$, and we now show that this path is a quasi-geodesic. Fix integers $m,n$ and consider a geodesic $\gamma$ in $\bXi^v$ from $A_m$ to $A_n$ (where we think of the strips themselves as vertices of $\bXi^v$ for convenience). Then for each $m< k<n-1$ we have that $\gamma$ needs to contain a vertex $v(k)$ which, regarded as a strip, intersects $A_k$ or $A_{k+1}$. Indeed, the interior of $A_k\cup A_{k+1}$ separates $A_m$ from $A_n$, and the sequence of vertices of $\gamma$ corresponds to a connected union of strips containing $A_m$ and $A_n$. Moreover, there is no strip intersecting both $A_k$ and $A_{k'}$ if $|k-k'|\geq 3$, and in particular we have $v(k)\neq v(k')$ if $|k-k'|\geq 4$. These observations imply that $\gamma$ contains at least $\lfloor(n-m-2)/4\rfloor$ vertices, so that geodesics connecting $A_m$ to $A_n$ have length comparable to $n-m$, and the $A_i$ form a quasi-geodesic line as required.
\end{proof}

\subsection{Windows and bridges}

Recall that $\Sigma \subset E$ and $\bar\Sigma \subset \bar E$ are the sets of all singular points in all fibers in $E$ and $\bar E$, respectively; see \S\ref{S:E and bar E}.  For $v\in \vtx$, consider the set $\bar{\Sigma}^v$  of points in $\bar{\Sigma}$ that are inside some $v$--spine, as well as those points $\bar{\Sigma}^{\notin v}$ that are outside every $v$--spine:
\[ \bar{\Sigma}^v = \bigcup_{X \in \bar D} (\theta^v_X\cap \Sigma)\qquad\text{and}\qquad \bar{\Sigma}^{\not\in v} = \bar{\Sigma} \setminus \bar{\Sigma}^v.\]

For each $Y\in \bar D$ we now define a \emph{window map} $\Pi^v_Y \colon \bar{\Sigma} \to  \pow(\partial \bTheta_Y^v)$ from cone points to the set of subsets of the boundary $\partial \bTheta_Y^v \subset \bTheta^v_Y$. There are two cases. Firstly,
\[\text{for $x\in \bar{\Sigma}^{\notin v}$,}\quad \Pi^v_Y(x) = \big\{ z \in \partial \bTheta^v_Y \mid [f_Y(x),z] \cap \bTheta^v_Y = \{z\} \big\}.\]
In words, $\Pi^v_Y(x)$ is the union of entrance points in $\bTheta^v_Y$ of any flat geodesic in $E_Y$ from $f_Y(x)$ to $\bTheta^v_Y$ (basically the closest point projection in $E_Y$), and we call it the \emph{window} for $x$ in $\bTheta^v_Y$.
Observe that for any $X,Y\in \bar D$ we have $f_{X,Y}(\Pi_Y^v(x)) = \Pi_X^v(x)$.  
The second case, that of $\bar{\Sigma}^{v}$, is handled slightly differently:
\begin{align*}
\text{for $x\in \bar{\Sigma}^v$,}\quad \Pi^v_Y(x) &= f_{Y,X}(\Pi^v_X(x)), \quad\text{where}
\quad X = c_v(\pi(x))\in \partial B_v \\
\text{and}\quad \Pi^v_X(x) &= \big\{z \in \partial \bTheta^v_X \mid z  \text{ is a closest cone point to } f_X(x)\in \theta^v_X\big\}.
\end{align*}
Thus affine invariance $\Pi^v_Z(x) = f_{Z,Y}(\Pi^v_Y(x))$ is built directly into the definition.

Now for any $Y \in \bar D$ and $x,y \in \bar{\Sigma}$, we define
\[ d_Y^v(x,y) = \diam(\Pi^v_Y(x) \cup \Pi^v_Y(y)),\]
where the distance is computed in the path metric on $E_Y$ (or equivalently on $\bTheta^v_Y$).
Finally, we extend window maps to arbitrary subsets by declaring
\[\text{for $U\subset \bar E$,}\quad \Pi^v_Y(U) = \Pi^v_Y(U\cap \bar{\Sigma}) = \bigcup_{x\in \bar{\Sigma}\cap U} \Pi^v_Y(x)\]
which has the same effect as defining $\Pi^v_Y(x) = \emptyset \subset \partial \bTheta^v_Y$ for $x\notin \bar{\Sigma}$.

\begin{lemma} \label{lem:different spine}
If $x,y \in \bar{\Sigma}^{\not \in v}$ satisfy $f_0(x) = f_0(y)$, then $\Pi^v_Y(x) = \Pi^v_Y(y)$ for all $Y\in \bar D$.
\end{lemma}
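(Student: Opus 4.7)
The proof should be essentially an unpacking of the definition of the window map in the case $x,y \in \bar{\Sigma}^{\notin v}$.

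The plan is as follows. First I would interpret the hypothesis $f(x) = f(y)$ as saying $x$ and $y$ lie on the same horizontal disk, equivalently $f_0(x) = f_0(y)$, or again equivalently $f_Y(x) = f_Y(y)$ for every $Y \in \bar D$. The second equivalence follows immediately from the cocycle relation $f_Y = f_{Y,Z}\circ f_Z$ coming from $f_{Z,X} = f_{Z,Y}f_{Y,X}$ for all $X,Y,Z\in D$: applying $f_{Y,0}$ to the equality $f_0(x) = f_0(y)$ gives $f_Y(x) = f_Y(y)$.

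Next, since both $x$ and $y$ belong to $\bar{\Sigma}^{\notin v}$, the window $\Pi^v_Y$ is defined at each of them by the first branch of the definition, namely
\[ \Pi^v_Y(x) = \big\{ z \in \partial \bTheta^v_Y \mid [f_Y(x),z] \cap \bTheta^v_Y = \{z\} \big\}, \]
and similarly for $y$. This description makes $\Pi^v_Y(x)$ depend only on the single point $f_Y(x) \in E_Y$ together with the fixed subspace $\bTheta^v_Y$. Therefore, substituting the equality $f_Y(x) = f_Y(y)$ established in the first step directly into this defining formula yields $\Pi^v_Y(x) = \Pi^v_Y(y)$, which is the desired conclusion.

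There is no real obstacle here; the content is entirely in recognizing that the first-case definition of the window depends on $x$ only through $f_Y(x)$, and that the hypothesis $f(x) = f(y)$ transfers to $f_Y(x) = f_Y(y)$ uniformly in $Y$ via the transitivity of the affine maps $f_{Y,X}$. The only small thing to remark on is that the hypothesis $x,y \in \bar{\Sigma}^{\notin v}$ is used to ensure we are in the first (rather than the second) branch of the definition of $\Pi^v_Y$; the second branch involves the additional step of projecting $\pi(x)$ to $\partial B_v$ via $c_v$, which would still be compatible with this argument but is not needed here.
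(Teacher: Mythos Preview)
Your proposal is correct and matches the paper's approach exactly; the paper's proof is the single sentence ``This is immediate since $\Pi^v_Y(x)$ is defined just in terms of $f_Y(x) = f_Y(y)$,'' and you have simply unpacked the two ingredients behind that sentence (the cocycle relation for the $f_Y$ and the fact that the first branch of the window definition depends on $x$ only through $f_Y(x)$).
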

Recall that $f_0 = f_{X_0}$.
\begin{proof}
This is immediate since $f_0(x) = f_0(y)$ if and only if $f_Y(x) = f_Y(y)$ for all $Y \in \bar D$, and $\Pi^v_Y(x)$ is defined just in terms of $f_Y(x) = f_Y(y)$.
\end{proof}

The following gives a counterpoint to Lemma~\ref{lem:different spine} for points in $\bar{\Sigma}^{v}$.

\begin{lemma} \label{lem:combo Lipschitz}
 There exists $K_2>0$ such that for any $v \in \vtx$ the following holds:  If $x, y \in \bar{\Sigma}^v$ satisfy $f_0(x) = f_0(y)$ and either
 \begin{enumerate}
 \item $x$ and $y$ are connected by a horizontal geodesic of length  $\leq 1$, or
 \item $x$ and $y$ are contained in $\partial \mathcal{B}_{w}$ for some $w \in \vtx$ with $\alpha(w) \neq \alpha(v)$,
 \end{enumerate}
 then $d_X^v(x,y) \leq K_2$, where $X = c_v(\pi(x))$.
 \end{lemma}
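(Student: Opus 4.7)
The hypothesis $f(x)=f(y)$ places $x$ and $y$ on the same vertical disk, so $f_Z(x)=f_Z(y)$ for every $Z\in\bar D$. Writing $X = c_v(\pi(x))$ and $X' = c_v(\pi(y))$, the affine map $f_{X,X'}$ sends $f_{X'}(y)=f_{X'}(x)$ to $f_X(x)\in\theta^v_X$, which is a cone point since $x\in\bar{\Sigma}^v$. Unpacking the definitions, $\Pi^v_X(x)$ is the set of cone points in $\partial\bTheta^v_X$ closest in $E_X$ to $p := f_X(x)$, while $\Pi^v_X(y) = f_{X,X'}(\Pi^v_{X'}(y))$ is the $f_{X,X'}$-image of the cone points of $\partial\bTheta^v_{X'}$ closest in $E_{X'}$ to $p':=f_{X'}(x)=f_{X'}(y)$. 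The strategy is to bound $\rho(X,X')$ uniformly and then use biLipschitz control of $f_{X,X'}$ together with the strip-width bound from Lemma~\ref{L:strip-and-saddle-bound}.

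The key geometric step is the uniform bound on $\rho(X,X')$. In case~(1), the horizontal geodesic of length $\leq 1$ projects isometrically to a path in $D$, so $\rho(\pi(x),\pi(y))\leq 1$; since $c_v$ is closest-point projection to the convex horoball $B_v$ it is $1$-Lipschitz, giving $\rho(X,X')\leq 1$. In case~(2), both $\pi(x)$ and $\pi(y)$ lie on the horocycle $\partial B_w$, and the hypothesis $\alpha(w)\ne\alpha(v)$ means $B_v$ and $B_w$ are disjoint horoballs based at distinct points at infinity. A direct computation in $\mathbb H^2$ (say in the upper half-plane model with $\alpha(v)=\infty$) shows that the closest-point projection of one horocycle onto another at hyperbolic distance $\geq 1$ from it has uniformly bounded image: in fact, using the standing $1$-separation assumption on the horoball family, one finds $\mathrm{diam}_\rho(c_v(\partial B_w))\leq 1/e$. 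Thus $\rho(X,X')\leq R_0$ in either case, for a uniform constant $R_0$.

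Since $f_{X,X'}$ is $e^{R_0}$-biLipschitz and bijectively carries cone points $\partial\bTheta^v_{X'}\to\partial\bTheta^v_X$ with $f_{X,X'}(p')=p$, the set $\Pi^v_X(y)$ is precisely the cone points of $\partial\bTheta^v_X$ that are ``closest'' to $p$ in the pushforward metric $(f_{X,X'})_* d_{E_{X'}}$, which lies within factor $e^{2R_0}$ of $d_{E_X}$. Now Lemma~\ref{L:strip-and-saddle-bound} gives distance at most $M$ from $p\in\theta^v_X$ to $\partial\bTheta^v_X$ (crossing some strip of width $\leq M$) and length at most $M$ for every saddle connection comprising $\partial\bTheta^v_X$, so there is a cone point of $\partial\bTheta^v_X$ within $E_X$-distance $2M$ of $p$. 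Consequently both $\Pi^v_X(x)$ and $\Pi^v_X(y)$ are contained in the $E_X$-ball of radius $2M e^{2R_0}$ about $p$, yielding the bound $d^v_X(x,y)\leq K_2 := 4M e^{2R_0}$. The main obstacle is case~(2) of the bound on $\rho(X,X')$: once one recognizes that closest-point projection of one horocycle onto a uniformly disjoint horocycle has uniformly bounded image in $\mathbb H^2$, the remainder reduces to tracking the $e^{R_0}$-biLipschitz distortion of $f_{X,X'}$ against the a priori closest-cone-point bound coming from the strip-width estimate.
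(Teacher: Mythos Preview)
Your proof is correct and follows essentially the same route as the paper: bound $\rho(X,X')$ uniformly (using that $c_v$ is $1$--Lipschitz in case~(1) and that the closest-point projection of one horocycle onto a disjoint horoball has bounded image in case~(2)), then use the $e^{R_0}$--biLipschitz control of $f_{X,X'}$ together with the strip/saddle-connection bound from Lemma~\ref{L:strip-and-saddle-bound} to conclude that both windows lie in a bounded ball about $p$. Your framing via the pushforward metric is a pleasant variant of the paper's more direct bookkeeping, and your $e^{2R_0}$ factor is a harmless overestimate (one factor of $e^{R_0}$ suffices), but the argument is the same.
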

 \begin{proof} 
Set $X = c_v(\pi(x))$ and $Y = c_v(\pi(y))$. Since $c_v\colon D \to \partial B_v$ is $1$--Lipschitz and $\diam(c_v(\partial B_w))$ is uniformly bounded for all such $w$, either condition (1) or (2) gives a uniform bound $K>0$ on the distance between $X$ and $Y$. Hence $f_{X,Y}$ is $e^{K}$--biLipschitz. The distance between $f_Y(y)\in \theta^v_Y$ and its closest cone points $\Pi^v_Y(y)$ in $\partial \bTheta^v_Y$ is also uniformly bounded by $2M$, by Lemma~\ref{L:strip-and-saddle-bound}. The same holds for the distance between $f_X(x)$ and $\Pi^v_X(x)$. It follows that $\Pi^v_X(y) = f_{X,Y}(\Pi^v_Y(y))$ lies within distance $2e^KM$ of $f_X(x) = f_{X,Y}(f_Y(y))$ and hence within distance $2e^KM  + 2M$ of $\Pi^v_X(x)$.
 \end{proof}

The next lemma explains that the image of $\Pi^v_Y$ is   not so far from being a point.
 
\begin{lemma}[Window lemma]\label{lem:window}
For any $v \in \vtx$, $Y\in \bar D$, and $x \in \bar{\Sigma}^{\not\in v}$, the window $\Pi^v_Y(x) \subset \partial \bTheta^v_Y$ is either a cone point or a single saddle connection.
\end{lemma}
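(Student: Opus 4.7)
The strategy is to split into two cases based on whether $p := f_Y(x)$ lies in $\bTheta^v_Y$ or not, reducing the main case to a CAT(0) analysis on a half-plane of $E_Y$ together with a local angular computation at cone points of its boundary.

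First, if $p \in \bTheta^v_Y$, then since $x \in \bar{\Sigma}^{\not\in v}$ is a cone point outside every $v$-spine, and the only cone points of $\bTheta^v_Y$ lie on $\theta^v_Y$ or $\partial\bTheta^v_Y$ (the strips being flat and smooth in their interiors), we must have $p \in \partial\bTheta^v_Y$. The requirement $p \in [p,z] \cap \bTheta^v_Y = \{z\}$ then forces $z = p$, so $\Pi^v_Y(x) = \{p\}$ is a single cone point.

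Otherwise, let $U$ be the component of $E_Y \setminus \bTheta^v_Y$ containing $p$. Each component of $\partial\bTheta^v_Y$ is a biinfinite flat geodesic contained in $\theta^{v'}_Y$ for a neighbor $v'$ of $v$ in $T_\alpha$, and since $E_Y$ is a CAT(0) plane, any such line separates $E_Y$ into two half-planes with $\bTheta^v_Y$ on one side. Thus $\bar U$ is a closed convex half-plane bounded by a single such line $L$. Convexity of $\bar U$ gives $[p,z] \subseteq \bar U$ for every $z \in L$, and $\bar U \cap \bTheta^v_Y = L$ yields $[p,z] \cap \bTheta^v_Y = [p,z] \cap L$; meanwhile no $z \in \partial\bTheta^v_Y \setminus L$ can lie in the window (such a geodesic would have to cross $\bTheta^v_Y$). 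Hence $\Pi^v_Y(x) \subseteq L$, with $z \in \Pi^v_Y(x)$ iff $[p,z] \cap L = \{z\}$. The key input is the following angular computation: at each cone point $c \in L$ of cone angle $\theta_c = k\pi$ with $k \geq 3$, the $V$-side angle of $L$ at $c$ is exactly $\pi$ while the $U$-side angle is $(k-1)\pi \geq 2\pi$. Indeed, complete periodicity of $\alpha$ forces all $k$ sectors between consecutive $\alpha$-directions at $c$ to be occupied by distinct flat strips of angular width $\pi$; precisely one of these strips---the one joining $v$ and $v'$---lies in $\bTheta^v_Y$, and the other $k-1$ strips lie on the $U$-side.

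To conclude, let $z_0 \in L$ denote the unique closest-point projection of $p$ to $L$; then $z_0 \in \Pi^v_Y(x)$. For any $z \in L$ separated from $z_0$ by a cone point $c$, develop a neighborhood of $c$ in $\bar U$ into a flat wedge of angle $(k-1)\pi \geq 2\pi$ bounded by the two saddle connections of $L$ meeting at $c$. The closest-point condition on $z_0$ constrains the angular position $\phi_p$ of $p$ in this wedge so that the angular separation across $c$ to $z$ satisfies $|\phi_p - \phi_z| \geq \pi$, and standard flat-wedge geometry then forces the unique CAT(0) geodesic $[p,z]$ in the wedge (equivalently in $\bar U$) to pass through $c$. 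Consequently $[p,z] \cap L$ contains the entire subsegment $[c,z] \subset L$, strictly larger than $\{z\}$, so $z \notin \Pi^v_Y(x)$. This confines $\Pi^v_Y(x)$ to the closure of the single saddle connection of $L$ containing $z_0$, and a continuity argument identifies $\Pi^v_Y(x)$ as either this entire closed saddle connection or the single cone point $\{z_0\}$ (the latter being the degenerate case where $z_0$ is a cone point with $p$ positioned symmetrically between the two adjacent saddle connections). The main technical obstacle is carefully verifying the angular inequality $|\phi_p - \phi_z| \geq \pi$, which requires a case analysis on the angular position of $p$; this is elementary plane geometry once the cone-angle computation above is established.
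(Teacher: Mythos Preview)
Your approach shares the paper's key input --- the $U$-side angle at each cone point of $L$ is $(k-1)\pi \geq 2\pi$ --- but anchors the argument at the closest-point projection $z_0$ rather than at the first intersection of a geodesic from $p$ to a cone point on $L$. When $z_0$ lies in the interior of a saddle connection your outline is sound (the deferred angular inequality can be obtained by CAT(0) triangle comparison in the triangle $p,z_0,c$, using that the angle at the smooth point $z_0$ is exactly $\pi/2$, which forces the comparison angle at $c$, and hence $\phi_p$, to be $\leq\pi/2$). However, your handling of the case where $z_0$ is itself a cone point is incorrect: you assert the window is then $\{z_0\}$, but the closest-point condition only forces both angles at $z_0$ between $[z_0,p]$ and the two rays of $L$ to be $\geq\pi/2$, not $\geq\pi$. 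If one of them lies in $[\pi/2,\pi)$ --- e.g.\ when $k=3$ and $[z_0,p]$ makes angles $3\pi/4$ and $5\pi/4$ with the two adjacent saddle connections --- your inequality $|\phi_p-\phi_z|\geq\pi$ fails at $c=z_0$ for $z$ on the $3\pi/4$-side, and the window is the full adjacent saddle connection on that side rather than $\{z_0\}$.

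The paper avoids this case distinction entirely. It takes a geodesic from $p=f_Y(x)$ to an arbitrary cone point on $L$ and lets $q$ be the first point where it meets $L$ (necessarily a cone point). The incoming segment splits the $U$-side angle at $q$ (total $\geq 2\pi$) into two parts, so by pigeonhole one part is $\geq\pi$, bounding one side of the window immediately. If the other part is $<\pi$, the paper passes to the adjacent cone point $q'$ on that side and observes that the pigeonhole at $q'$ must fall on the far side (else the geodesic $[p,q]$ would pass through $q'$, contradicting the choice of $q$). This two-step pigeonhole-plus-iteration is exactly what your argument is missing in the cone-point case; you can repair your proof by running the same iteration starting from $z_0$, but as written your case division does not establish the lemma.
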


\begin{proof} 
Recall that each cone point in the flat surface $E_Y$ has total angle at least $3\pi$, and that $E_Y$ is a unique geodesic space in which a concatenation of saddle connections is geodesic if and only if successive saddle connections subtend an angle of at least $\pi$ on each side.

If $f_Y(x)\in \partial \bTheta^v_Y$ then clearly $\Pi^v_Y(x)$ is the cone point $f_Y(x)$ itself. So suppose $f_Y(x)\notin \bTheta^v_Y$ and let $\ell$ be the component of $\partial \bTheta^v_Y$ separating $f_Y(x)$ from $\theta^v_Y$ in $E_Y$, so that $\Pi^v_Y(x) \subset \ell$. Take any flat geodesic $[f_Y(x), z]$ in $E_Y$ from $f_Y(x)$ to a cone point $z\in \ell$.  The geodesic $[f_Y(x), z]$ is a concatenation of saddle connections and first meets $\ell$ in some cone point $p$. Since the total cone angle at $p$ is at least $3\pi$ and the angle at $p$ along the side of $\ell$ containing $\bTheta^v$ is exactly $\pi$, the last saddle connection $\delta$ in the geodesic $[f_Y(x), p] \subset [f_Y(x),z]$ must make an angle of at least  $\pi$ with one of the two halves of $\ell$ determined by $p$. It follows that concatenating $[f_Y(x),p]$ with that half of $\ell$ gives an infinite geodesic ray in $E_Y$. 
Hence, by uniqueness of geodesics, the geodesic from $f_Y(x)$ to any cone point on that side of $p$ evidently passes through $p$.

\begin{figure}
\label{fig:windows}
\begin{tikzpicture}[scale=1.5,inner sep=0pt, dot/.style={fill=black,circle,minimum size=4pt}]
\pgfmathsetmacro\sw{.5};
\pgfmathsetmacro\sh{4};
\coordinate (origin) at (0,-2);
\draw [fill,color=blue,opacity=.3] (origin) rectangle node[opacity=1,color=black,rotate=90] {\bf The strip between $\theta^v_Y$ and $f_Y(x)$} ++(\sw,\sh);
\draw[thick] (origin) -- ++(0,\sh);
\draw[thick] (origin) ++(\sw,0) -- ++(0,\sh);
\foreach \x in {.15, .3, .65, .9}
{
  \path (origin) ++(0,\x*\sh) node[dot] {};
  \draw[thick] (origin) ++(-1.2*\sw,\x*\sh) node[dot] {} -- ++(1.2*\sw,0);
  \draw[thick] (origin) ++(-1.2*\sw,\x*\sh) node[dot] {} -- ++(140:.8*\sw);
  \draw[thick] (origin) ++(-1.2*\sw,\x*\sh) node[dot] {} -- ++(235:.6*\sw);
}
\path (origin) ++(2.6*\sw,.6*\sh) coordinate (pt);
\path (pt) node[dot] {};
\draw[ultra thick,red] (origin) ++(\sw,.5*\sh) -- node (spot) {} ++(0,.2*\sh);
\path (origin) ++(-2*\sw,.5*\sh) node {The $v$ spine, $\theta^v_Y$};
\draw[thick,red,<-] (spot) -- ++(20:2.5*\sw) node[above right,color=black] {The window $\Pi^v_Y(x)$};
\draw[thick] (origin) ++(\sw,.5*\sh) node[below right] {\hspace{5pt}$\ge \pi$} -- (pt);
\draw[thick] (origin) ++(\sw,.7*\sh) node[above right] {\hspace{5pt}$\ge \pi$} -- (pt);
\foreach \x in {.1, .2, .5, .7, .85, 1}
{
  \path (origin) ++(\sw,\x*\sh) node[dot] {};
}
\draw[thick] (pt) -- ++(15:.8*\sw) node[dot] {}
                        -- ++(-12:\sw) node[dot] {}
                        -- ++(-45:\sw*1.2) node[dot] {}
                        -- ++(30:\sw) node[dot] {} node[below right] {$f_Y(x)$};
\end{tikzpicture}
\caption{The window (shown in red) for $x$ in the thickend spine $\bTheta^v_Y$} \label{F:window}
\end{figure}

If both angles between $\delta$ and $\ell$ at $p$ are at least $\pi$, then any geodesic from $f_Y(x)$ to $\ell$ passes through $p$. Hence $p$ is the unique point in $\partial \bTheta^v_Y$ closest to $f_Y(x)$ and $\Pi^v_Y(x) = \{p\}$ is a cone point as required.  Otherwise, consider the flat geodesic from $f_Y(x)$ to the adjacent cone point $p'$ on the other side of $p$ along $\ell$. The last saddle connection of this geodesic must also make an angle with $\ell$ of at least $\pi$ on one side.  This cannot be the side containing $p$, or else the geodesic from $f_Y(x)$ to $p$ would pass through $p'$ contradicting our choice of $p$. Hence any geodesic from $f_Y(x)$ to a cone point on the opposite side of $p'$ must pass through $p'$. Therefore $\Pi^v_Y(x)$ is the saddle connection between $p$ and $p'$, and we are done.  See Figure~\ref{F:window}.
\end{proof}

The following lemma gives us partial control over the window for points in adjacent vertex spaces in the same Bass--Serre tree.

\begin{lemma}[Bridge lemma] \label{lem:bridge}
For any $v, w \in \vtx$ with $d_{\tree}(v,w) = 1$, any $Y \in \bar D$, and any component $U \subset E_Y \smallsetminus \theta_Y^w$ not containing $\theta^v_Y$, there exists a (possibly degenerate) saddle connection $\delta_U \subset \partial \bTheta^v_Y$ with the following property: Every $x \in \bar{\Sigma}$ with $f_Y(x) \in \overline{U}$ satisfies $\Pi^v_Y(x) \subset \delta_U$.
 \end{lemma}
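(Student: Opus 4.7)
The plan is to locate a bottleneck on the boundary component of $\bTheta^v_Y$ adjacent to $\theta^w_Y$ through which all windows from $\overline{U}$ must funnel.

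First I would argue that every window $\Pi^v_Y(x)$ for $q=f_Y(x)\in\overline{U}$ lies in the single boundary line $\ell:=\ell_{w,v}\subset\partial\bTheta^v_Y$, namely the boundary of the strip $S_{v,w}$ on the $w$-side. Since $U$ is the component of $E_Y\smallsetminus\theta^w_Y$ not containing $\theta^v_Y$, any flat geodesic from $q$ to $\bTheta^v_Y$ must cross $\theta^w_Y$ before entering $\bTheta^v_Y$, and the only way this can happen without first traversing $\bTheta^v_Y$ is through the strip $S_{v,w}$, whose $w$-side boundary is $\ell$; thus the first-entry point is forced to lie on $\ell$.

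Next I would identify $\delta_U$ via the tree structure of $\theta^w_Y$. Regarding $\theta^w_Y$ as an embedded tree in $E_Y$, the subset $\ell$ is a specific bi-infinite path in this tree, and $\partial U\cap\theta^w_Y$ is a connected subgraph. Take the unique tree-geodesic in $\theta^w_Y$ from $\partial U$ to $\ell$ and let $c\in\ell$ be the cone point where it meets $\ell$; then $\delta_U$ will be either the singleton $\{c\}$ or, in the relevant degenerate case, the saddle connection of $\ell$ adjacent to $c$ on the side reachable from $U$.

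The crux is a cone-angle analysis in the spirit of Lemma~\ref{lem:window}. At any cone point $p$ of $\ell$, the angle on the $\bTheta^v_Y$ side is exactly $\pi$ (since $\ell$ bounds the flat strip $S_{v,w}$ in a straight line), while the remaining angle of at least $2\pi$ on the $\theta^w_Y$ side is partitioned into sectors by the saddle connections of $\theta^w_Y$ emanating from $p$. Only those sectors belonging to the component $U$ are accessible to incoming flat geodesics from $\overline U$. I would show that this accessibility forces any such geodesic to approach $\ell$ at (or immediately adjacent to) $c$: sectors at cone points of $\ell$ other than $c$ either lie in a different component of $E_Y\smallsetminus \theta^w_Y$ or are separated from $U$ by the intervening portion of $\theta^w_Y$, so a geodesic from $\overline U$ cannot enter them. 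Inside the $c$-sector accessible to $U$, the same dichotomy as in the Window Lemma now applies at $c$: either the incoming geodesic segment makes angle $\geq\pi$ with $\ell$ on both sides (in which case every geodesic from $\overline U$ to $\ell$ passes through $c$ and $\delta_U=\{c\}$), or on only one side (in which case the window extends to the neighboring cone point of $\ell$ on that side, and $\delta_U$ is that saddle connection).

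The main obstacle will be rigorously verifying the sector-accessibility claim: showing that no flat geodesic from $\overline U$ can reach $\ell$ outside the sectors at $c$. This amounts to tracking how a flat geodesic interacts with the embedded tree $\theta^w_Y$, using repeatedly that a geodesic bending at a cone point must have angle $\geq\pi$ on both sides, and that the tree-geodesic in $\theta^w_Y$ from $\partial U$ to $\ell$ passes through every sector of $\theta^w_Y$ separating $U$ from the remainder of $\ell$.
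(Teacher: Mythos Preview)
Your identification of the key objects is correct and matches the paper: the windows do lie on the line $\ell$, and the bottleneck is determined by how $\partial U\cap\theta^w_Y$ relates to $\ell$ inside the tree $\theta^w_Y$. However, your plan is more complicated than necessary and has a case-analysis gap.

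The paper's proof is much shorter. Setting $\gamma_U=\overline U\cap\theta^w_Y$ (your $\partial U\cap\theta^w_Y$) and $\gamma_W=\overline W\cap\theta^w_Y=\ell$, both are bi-infinite geodesics in $\theta^w_Y$. The paper defines $\delta_U$ directly: if $\gamma_U\cap\gamma_W\neq\emptyset$, then $\delta_U$ is this intersection (a point or a single saddle connection, since two distinct strips in the same direction can only meet this way); otherwise $\delta_U$ is the endpoint on $\gamma_W$ of the unique tree-geodesic between them. Your dichotomy ``$\{c\}$ or the saddle connection adjacent to $c$'' does not cleanly capture the case where $\gamma_U$ and $\gamma_W$ overlap in a saddle connection --- there your ``unique tree-geodesic from $\partial U$ to $\ell$'' is degenerate and $c$ is not well-defined.

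More importantly, the sector-accessibility argument you flag as the ``main obstacle'' is entirely avoidable. The point is simply that $\theta^w_Y$ is convex in $E_Y$: each complementary sector at a cone point of the spine has angle exactly $\pi$, so any tree-path in $\theta^w_Y$ makes angle $\geq\pi$ on both sides at every turn and is therefore already a flat geodesic. Hence any flat geodesic from $f_Y(x)\in\overline U$ to a point of $\bTheta^v_Y\subset\overline W$ must cross both $\gamma_U$ and $\gamma_W$, and the segment between those crossings lies in $\theta^w_Y$ and is the tree-geodesic there, which passes through $\delta_U$. That immediately gives $\Pi^v_Y(x)\subset\delta_U$. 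Your angle analysis would eventually recover this convexity, but there is no need to track sectors at individual cone points or to invoke the Window Lemma dichotomy --- that dichotomy concerns the shape of a \emph{single} window, whereas here you need a uniform bottleneck for \emph{all} $x$ with $f_Y(x)\in\overline U$, and one application of tree convexity gives it.
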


We call $\delta_U$ the {\em bridge} for $U$ in $E_Y$.  It is clear from the construction in the proof below that $f_{Z,Y}(\delta_U)$ is the bridge for $f_{Z,Y}(U)$, for any $Z \in \bar D$.

\begin{proof}
Let $U$ be as in the statement and $W$ be the component of $E_Y \smallsetminus \theta^w_Y$ containing $\theta^v_Y$.  Let $\gamma_U = \overline{U} \cap \theta^w_Y$ and $\gamma_W = \overline{W} \cap \theta^w_Y \subset \partial \bTheta^v_Y$, which are both bi-infinite flat geodesics in $\theta^w_Y$.

If $\gamma_U \cap \gamma_W = \emptyset$, then there is a unique geodesic between them in $\theta^w_Y$, and we take $\delta_U$ to be the endpoint of this geodesic which lies along $\gamma_W$.
On the other hand, if $\gamma_U \cap \gamma_W \neq \emptyset$, then their intersection is contained in the boundary of a strip along $\theta^v_Y$ and another along $\theta^w_Y$.  Two distinct strips in the same direction that intersect do so in either a single point or a single saddle connection, and hence $\gamma_U \cap \gamma_W$ is a point or single saddle connection, and we call $\delta_U$. See Figure~\ref{F:bridges}.

\begin{figure}[htb]
\label{fig:bridges}
\begin{tikzpicture}[scale=1.2,inner sep=0pt, dot/.style={fill=black,circle,minimum size=3pt}]
\coordinate (origin) at (0,0);
\pgfmathsetmacro\sw{.5};
\pgfmathsetmacro\sh{3};
\fill[color=blue,opacity=.3] (origin) rectangle ++(\sw,\sh);

\path (origin) ++(0,.5*\sh) coordinate (v) node[dot]{};
\draw[thick] (v) -- ++(-90:.4*\sh) coordinate (v1) node[dot]{};
\draw[thick] (v) -- ++(180:.8) coordinate (v2) node[dot]{};
\draw[thick] (v) -- ++(90:.35*\sh) coordinate (v3) node[dot]{};
\draw[thick] (v1) -- ++(-90:.1*\sh) coordinate (v11);
\draw[thick] (v1) -- ++(210:.5);
\draw[thick] (v3) -- ++(90:.15*\sh);
\draw[thick] (v3) -- ++(180:.5) coordinate (v31) node[dot]{};
\draw[thick] (v3) -- ++(180:1) coordinate (v32) node[dot]{};
\draw[thick] (v3) -- ++(180:1.5);
\draw[thick] (v31) -- ++(135:.6);
\draw[thick] (v32) -- ++(230:.6);
\draw[thick] (v2) -- ++(180:.5) coordinate (v21) node[dot]{};
\draw[thick] (v2) -- ++(180:1);
\draw[thick] (v21) -- ++(140:.5);
\draw[thick] (v2) -- ++(230:.5);

\path (origin) ++(\sw,.7*\sh) coordinate (w) node[dot]{};
\draw[thick] (w) -- ++(-90:.3*\sh) coordinate (w1) node[dot]{};
\draw[thick] (w) -- ++(-90:.5*\sh) coordinate (w2) node[dot]{};
\draw[thick] (w) -- ++(-90:.7*\sh);
\draw[thick] (w2) -- ++(-10:.7) coordinate (w21) node[dot]{};
\draw[thick] (w21) -- ++(-40:.7) coordinate (w211);
\draw[thick] (w1) -- ++(-5:1) coordinate (w11) node[dot]{};
\draw[thick] (w11) -- ++(-20:.8) coordinate (w111);
\draw[thick] (w11) -- ++(30:.8);
\draw[thick] (w) -- ++(90:.3*\sh);
\draw[thick] (w) -- ++(10:1.1) coordinate (w3) node[dot]{};
\draw[thick] (w3) -- ++(-30:1.1) coordinate (w31);
\draw[thick] (w3) -- ++(80:.7) coordinate (w32);

\filldraw[opacity=.2,color=teal] (w3) -- (w31) to[out=10,in=20] (w32) -- (w3);
\path (w21) ++(15:.6) node {$U'$};
\filldraw[opacity=.2,color=green] (w1) -- (w2) -- (w21) -- (w211) to[out=0,in=280] (w111) -- (w11) -- (w1);
\path (w3) ++(20:.5) node {$U$};

\draw[color=red,ultra thick] (w1) -- node[label={[label distance=1]0:{$\delta_{U'}$}}] {} (w2);
\filldraw[color=red] (w1) circle[radius=.06];
\filldraw[color=red] (w2) circle[radius=.06];
\filldraw[color=red] (w) circle[radius=.06] node[label={[label distance=2.8]65:{$\delta_{U}$}}] {};

\path (w1) ++(35:.8) node {$\theta_Y^{w}$};
\path (v) ++(235:1) node {$\theta^v_Y$};
\end{tikzpicture}

\caption{Examples of bridges (in red), and the proof of Lemma \ref{lem:bridge}.} \label{F:bridges}
\end{figure}

Now consider any point $x \in \bar{\Sigma}$ with $f_Y(x) \in \overline{U}$. Observe that $x\in \bar{\Sigma}^{\notin v}$ so that $\Pi^v_Y(x)$ falls under the first case of the window definition.
Further, any flat geodesic from $f_Y(x)$ to $\bTheta^v_Y$ must pass through both $\gamma_U$ and $\gamma_W$, and hence must pass through $\delta_U \subset \partial \bTheta^v_Y$.  It follows that $\Pi^v_Y(x) \subset \delta_U$, as required.  
\end{proof}

The following is an easy consequence of the previous lemma.
  
\begin{corollary}\label{cor:bridge same}
For any $v, w \in \vtx$ with $2 \leq d_{\tree}(v,w) < \infty$ and $Y \in \bar D$, there exists a (possibly degenerate) saddle connection $\delta^v_Y(w) \subset \partial \bTheta^v_Y$ so that if $x \in \bar{\Sigma}$ has $f_Y(x) \in \bTheta^w_Y$, then $\Pi^v_Y(x) \subset \delta^v_Y(w)$. In particular, $\Pi^v_Y(\bTheta^w)\subset \delta^v_Y(w)$.
\end{corollary}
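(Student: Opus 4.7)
The idea is to reduce to the Bridge lemma (Lemma~\ref{lem:bridge}) by cutting along the vertex adjacent to $v$ on the combinatorial geodesic toward $w$. Since $2 \le d_\tree(v,w) < \infty$, both $v$ and $w$ lie in the same Bass--Serre tree $T_\alpha$ (where $\alpha = \alpha(v) = \alpha(w)$). Let $w' \in T_\alpha$ denote the unique vertex adjacent to $v$ on the geodesic from $v$ to $w$; in particular $d_\tree(v,w') = 1$ and $w' \neq w$.

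Next I would identify the correct component of $E_Y \setminus \theta^{w'}_Y$. The components of $E_Y \setminus \theta^{w'}_Y$ correspond bijectively to the edges of $T_\alpha$ incident to $w'$, or equivalently to the maximal subtrees of $T_\alpha \setminus \{w'\}$. Let $U$ denote the component corresponding to the subtree containing $w$; since $w'$ separates $v$ from $w$ in $T_\alpha$, the component $U$ does not contain $\theta^v_Y$. I then claim that $\bTheta^w_Y \subset \overline{U}$: the spine $\theta^w_Y$ itself lies in $U$ (since $w$ is in the corresponding subtree), while each strip in $\bTheta^w_Y$ joins $\theta^w_Y$ to the spine $\theta^u_Y$ of some vertex $u$ adjacent to $w$. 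If $u = w'$ (which happens precisely when $d_\tree(v,w) = 2$), that strip lies in $U$ with closure meeting $\theta^{w'}_Y \subset \partial U$; for every other adjacent $u$, the vertex $u$ lies in the same subtree as $w$ relative to $w'$, so the strip lies entirely in $U$. In all cases the strip is contained in $\overline{U}$.

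Now the Bridge lemma, applied to the pair $(v, w')$ (noting $d_\tree(v, w') = 1$) and to the component $U$, produces a (possibly degenerate) saddle connection $\delta_U \subset \partial \bTheta^v_Y$ such that every $x \in \bar{\Sigma}$ with $f_Y(x) \in \overline{U}$ satisfies $\Pi^v_Y(x) \subset \delta_U$. Setting $\delta^v_Y(w) := \delta_U$, any $x \in \bar{\Sigma}$ with $f_Y(x) \in \bTheta^w_Y$ has $f_Y(x) \in \overline{U}$ by the previous paragraph, whence $\Pi^v_Y(x) \subset \delta^v_Y(w)$. The final assertion $\Pi^v_Y(\bTheta^w) \subset \delta^v_Y(w)$ follows because the affine maps $f_{Y,X}$ preserve thickened spines, so for any $x \in \bar{\Sigma} \cap \bTheta^w$, writing $x \in \bTheta^w_X$ with $X = \pi(x) \in \partial B_w$, we have $f_Y(x) = f_{Y,X}(x) \in \bTheta^w_Y$, and then $\Pi^v_Y(x) \subset \delta^v_Y(w)$ by what was just shown.

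The only non-routine step is the verification that $\bTheta^w_Y \subset \overline{U}$, which is a combinatorial check about how the strips comprising the thickened spine sit relative to the components of $E_Y \setminus \theta^{w'}_Y$; the two cases $d_\tree(v,w) = 2$ and $d_\tree(v,w) > 2$ must be distinguished, since only in the former does $\bTheta^w_Y$ actually touch $\theta^{w'}_Y = \partial U$.
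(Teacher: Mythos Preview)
Your proof is correct and follows exactly the same approach as the paper: choose the vertex $u$ (your $w'$) between $v$ and $w$ with $d_{\tree}(u,v)=1$, take the component $U$ of $E_Y\setminus\theta^u_Y$ whose closure contains $\bTheta^w_Y$, and apply the Bridge lemma. You have simply spelled out in detail the verification that $\bTheta^w_Y\subset\overline{U}$ and the final ``in particular'' assertion, both of which the paper leaves implicit.
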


\begin{proof}
There exists $u \in T_{\alpha(v)}^{(0)}$ between $v$ and $w$ with $d_{\tree}(u,v) = 1$, and a component $U  \subset E_Y \setminus \theta^u_Y$ whose closure contains $\bTheta^w_Y$.  Setting $\delta^v_Y(w) = \delta_U$ and applying Lemma \ref{lem:bridge} completes the proof.
\end{proof}

The next corollary is similar. 

\begin{corollary}\label{cor:bridge diff}
If $v, w \in \vtx$ satisfy $d_{\tree}(v,w) = \infty$ and $\theta^w_0 \cap \theta^v_0 = \emptyset$, then for each $Y\in \bar D$ there is a connected union $\delta^v_Y(w) \subset \partial \bTheta^v_Y$ of at most two saddle connections such that $\Pi^v_Y(x) \subset \delta^v_Y(w)$ for all $x\in \bar{\Sigma}^w$. In particular $\Pi^v_Y(\theta^w)\subset \delta^v_Y(w)$.
\end{corollary}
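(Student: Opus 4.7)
The plan is to reduce to Lemma~\ref{lem:bridge} applied to $v$ together with a specific neighbor $u_0$ of $v$ in $T_{\alpha(v)}$.

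First, I would use affine invariance of the spines under the maps $f_{Y,0}$ to promote the hypothesis $\theta^v_0 \cap \theta^w_0 = \emptyset$ to $\theta^v_Y \cap \theta^w_Y = \emptyset$ for every $Y \in \bar D$. In particular, every $x \in \bar\Sigma^w$ must satisfy $x \notin \bar\Sigma^v$, so the first clause of the window definition applies and $x \in \bar\Sigma^{\notin v}$. Since $\theta^w_Y$ is connected and disjoint from $\theta^v_Y$, it lies in a single component of $E_Y \setminus \theta^v_Y$, and this component is bordered at $\partial\bTheta^v_Y$ by a unique boundary spine $\theta^{u_0}_Y$ with $d_{\tree}(v, u_0) = 1$; explicitly, $\theta^w_Y$ sits inside the closure of the strip of $\bTheta^v_Y$ between $\theta^v_Y$ and $\theta^{u_0}_Y$ together with the complementary component of $E_Y \setminus \bTheta^v_Y$ on the far side of $\theta^{u_0}_Y$.

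Next, I would verify that $\Pi^v_Y(\bar\Sigma^w) \subset \theta^{u_0}_Y$ by case analysis on the location of $f_Y(x)$: when $f_Y(x)$ lies outside $\bTheta^v_Y$, any flat geodesic from $f_Y(x)$ to $\theta^v_Y$ must first cross $\theta^{u_0}_Y$, placing the window there; when $f_Y(x)$ lies in the strict interior of the strip, the window is empty since geodesics from $f_Y(x)$ to $\partial \bTheta^v_Y$ automatically meet $\bTheta^v_Y$ at their starting point; and when $f_Y(x) \in \theta^{u_0}_Y$, the window is the singleton $\{f_Y(x)\} \subset \theta^{u_0}_Y$.

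Now I invoke Lemma~\ref{lem:bridge} for the adjacent pair $v, u_0$: for each component $U$ of $E_Y \setminus \theta^{u_0}_Y$ not containing $\theta^v_Y$, there is a single saddle-connection bridge $\delta_U \subset \theta^{u_0}_Y$ trapping the windows $\Pi^v_Y(y)$ of all cone points $y$ with $f_Y(y) \in \overline U$. The main obstacle, and final step, is to show that the bridges $\delta_U$ over components $U$ met by $\theta^w_Y$ collectively lie in a connected union of at most two saddle connections of $\theta^{u_0}_Y$. Since $\alpha(w) \ne \alpha(v)$, the spine $\theta^w_Y$ can only meet $\theta^{u_0}_Y$ at isolated cone points. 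Using the connectedness of $\theta^w_Y$ together with the tree structure of $\theta^{u_0}_Y$ inside the plane $E_Y$, one argues that the cone points at which $\theta^w_Y$ transitions between distinct non-$v$ components concentrate at (at most) a single vertex $p \in \theta^{u_0}_Y$, so the implicated bridges all lie in the at most two saddle connections of $\theta^{u_0}_Y$ incident to $p$. Setting $\delta^v_Y(w)$ equal to this connected union (or just $\delta_U$ when a single component suffices) completes the proof.
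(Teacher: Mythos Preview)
Your overall strategy matches the paper's: pick the unique neighbor $u_0$ of $v$ with $\theta^{u_0}_Y$ separating $\theta^w_Y$ from $\theta^v_Y$, apply the Bridge Lemma to the pair $(v,u_0)$, and then argue that the relevant bridges $\delta_U$ assemble into at most two saddle connections. The problem is in this last step.

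First a minor correction: since $\alpha(u_0)=\alpha(v)\neq\alpha(w)$, the intersection $\theta^w_Y\cap\theta^{u_0}_Y$ is not merely ``isolated cone points'' but \emph{at most one point} (and it need not be a cone point---it may lie in the interior of a saddle connection of $\theta^{u_0}_Y$). This is what actually forces the concentration you want. Your middle case ($f_Y(x)$ in the open strip) is vacuous since cone points never lie in strip interiors, but harmless.

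The real gap is your conclusion that ``the implicated bridges all lie in the at most two saddle connections of $\theta^{u_0}_Y$ incident to $p$.'' The bridges $\delta_U$ do not live near $p$: by construction in the Bridge Lemma they lie on the line $\gamma_W=\overline W\cap\theta^{u_0}_Y\subset\partial\bTheta^v_Y$ (the boundary of the strip between $\theta^v_Y$ and $\theta^{u_0}_Y$), and $p$ need not lie on $\gamma_W$ at all---it can sit deep inside the tree $\theta^{u_0}_Y$, far from $\partial\bTheta^v_Y$. So ``saddle connections of $\theta^{u_0}_Y$ incident to $p$'' are generally the wrong objects.

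What is missing is a common point \emph{on $\gamma_W$} that all relevant $\delta_U$ contain. The paper obtains this in one line: pick a cone point $p\in\theta^{u_0}_Y$ lying in $\overline{U}$ for every component $U$ of $E_Y\setminus\theta^{u_0}_Y$ meeting $\theta^w_Y$ (such $p$ exists from the single intersection point), and apply the Bridge Lemma to $p$ itself to get $\Pi^v_Y(p)\subset\delta_U$ for every such $U$. Since each $\delta_U$ is a (possibly degenerate) saddle connection on the line $\gamma_W$ and they all contain the fixed window $\Pi^v_Y(p)$, their union is a connected set of at most two saddle connections. Equivalently, you could take the nearest-point projection of $p$ onto $\gamma_W$ inside the tree $\theta^{u_0}_Y$ and verify that this projected point lies in every $\delta_U$; either way, the anchor must be produced on $\gamma_W$, not at $p$.
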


\begin{proof}
From the hypotheses, $\theta^w_Y$ is contained in some component $W \subset E_Y \smallsetminus \theta^v_Y$.   Let $u \in \vtx$ be such that $d_{\tree}(v,u) = 1$ and $\theta^u_Y \subset W$.  Since $\alpha(u) = \alpha(v) \neq \alpha(w)$,  $\theta^u_Y$ and $\theta^w_Y$ can intersect in at most one point. If $\theta^u_Y\cap \theta^w_Y= \emptyset$, then $\theta^w_Y$ is contained in a component $U$ of $E_Y\setminus \theta^u_Y$ disjoint from $\theta^v_Y$; thus $\Pi^v_Y(\theta^w_Y)$ is contained in the bridge $\delta_U$ for $U$ by Lemma~\ref{lem:bridge}. 
Otherwise $\theta^u_Y\cap \theta^w_Y \ne \emptyset$, and we claim there is a cone point $p\in \theta^u_Y$ such that $p\in \overline{U}$ for every component $U$ of $E_Y\setminus \theta^u_Y$ that intersects  $\theta^w_Y$. Indeed, if $\theta^u_Y\cap \theta^w_Y$ is a cone point, we take $p$ to be this intersection point, and if not  $\theta^u_Y\cap \theta^w_Y$ is an interior point of a saddle connection of $\theta^u_Y$ and we may take $p$ to be either of its endpoints.
For each component $U \subset E_Y\setminus \theta^u_Y$ that intersects $\theta^w_Y$ we then have $\Pi^v_Y(p) \subset \delta_U$, where $\delta_U$ is the bridge for $U$.  Since $f_Y(\bar{\Sigma}^w)\subset \theta^w_Y$ is contained in the union of the closures of such $U$, it follows that $\Pi^v_Y(\bar{\Sigma}^w)$ is contained in a union of saddle connections along $\partial \bTheta^v_Y$, all of which contain $\Pi^v_Y(p)$, and hence is a connected union of at most two saddle connections.  This completes the proof.
\end{proof}

The final case to consider is that of spines in different directions that intersect:

\begin{lemma}\label{lem:crossing spines}
There exists $K_3>0$ such that if $v, w \in \vtx$ with $d_{\tree}(v,w) = \infty$ and $\theta^w_0 \cap \theta^v_0 \neq \emptyset$, then $\diam (\Pi^v_Y(\theta^w))  < K_3$ for all $Y \in c_v(\partial B_w)$.
 \end{lemma}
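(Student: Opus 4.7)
The plan is to reduce to a single fiber via bounded distortion of affine maps, and then to exploit the fact that at this fiber the directions $\alpha(v)$ and $\alpha(w)$ are essentially perpendicular in the flat metric $q_{Y}$.

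First, I would observe that $c_v(\partial B_w)$ has uniformly bounded diameter in $\partial B_v$---a standard consequence of the $1$--separation of the horoballs $\{B_\alpha\}$ in the Poincar\'e disk $D$.  Hence the affine maps $f_{Y,Y'}$ between fibers over any two points of $c_v(\partial B_w)$ have uniformly bounded bi-Lipschitz constants, and it suffices to prove the bound at a canonical point $Y_* = c_v(X_*) \in c_v(\partial B_w)$, where $X_* \in \partial B_w$ is the closest point to $\alpha(v)$.  The geodesic through $Y_*$ and $X_*$ is then the common perpendicular to $\partial B_v$ and $\partial B_w$ and extends in $D$ to the complete geodesic from $\alpha(v)$ to $\alpha(w)$.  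Along this Teichm\"uller geodesic, the two eigendirections of the diagonal flow in the flat metric $q_{Y_*}$ are precisely $\alpha(v)$ and $\alpha(w)$, so these two directions must be perpendicular in $q_{Y_*}$.

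Next, for each cone point $x \in \theta^w \cap \bar{\Sigma}$, I would analyze $\Pi^v_{Y_*}(x)$ via the window definition in two cases.  If $x \in \bar{\Sigma}^v$, then $f_0(x) \in \theta^v_0 \cap \theta^w_0$; writing $Z = c_v(\pi(x)) \in c_v(\partial B_w)$, the second-case window definition gives $\Pi^v_{Y_*}(x) = f_{Y_*,Z}(\Pi^v_Z(x))$ with $f_Z(x) \in \theta^v_Z$.  Lemma~\ref{L:strip-and-saddle-bound} bounds the saddle connections and strip widths in $\bTheta^v_Z$ by $M$, so $\Pi^v_Z(x)$ is within distance $2M$ of $f_Z(x)$, and bounded distortion of $f_{Y_*,Z}$ keeps $\Pi^v_{Y_*}(x)$ within uniformly bounded distance of $f_{Y_*}(x) \in \theta^v_{Y_*} \cap \theta^w_{Y_*}$.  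If instead $x \notin \bar{\Sigma}^v$, then $f_{Y_*}(x)$ lies in some complementary component $U$ of $E_{Y_*} \setminus \theta^v_{Y_*}$; choosing $u \in \vtx$ with $d_{\tree}(v,u) = 1$ and $\theta^u_{Y_*} \subset \overline{U}$ and applying Lemma~\ref{lem:bridge} confines $\Pi^v_{Y_*}(x)$ to the bridge saddle connection $\delta_U \subset \partial \bTheta^v_{Y_*}$.

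The main obstacle is showing that the two case analyses combine into a single uniform bound: both the window points near $f_{Y_*,0}(\theta^v_0 \cap \theta^w_0)$ arising from the first case and the bridges $\delta_U$ arising from the second must cluster in a uniformly bounded region of $\partial \bTheta^v_{Y_*}$.  I expect this to follow from the perpendicularity established above: because $\theta^w_{Y_*}$ runs in direction $\alpha(w)$ perpendicular to the spine $\theta^v_{Y_*}$ and each strip of $\bTheta^v_{Y_*}$ has width at most $M$ transverse to $\alpha(v)$, the drift of $\theta^w_{Y_*}$ relative to any fixed intersection cone point $p \in \theta^v_{Y_*} \cap \theta^w_{Y_*}$ (which exists by the hypothesis $\theta^v_0 \cap \theta^w_0 \ne \emptyset$) is controlled by the bounded saddle connection lengths in $\theta^v_{Y_*}$.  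This simultaneously bounds the diameter of $f_{Y_*,0}(\theta^v_0 \cap \theta^w_0)$ and the spread of the bridges $\delta_U$, producing the universal constant $K_3$.
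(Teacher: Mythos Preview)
Your overall strategy matches the paper's: reduce to fibers over $c_v(\partial B_w)$ using bounded distortion, exploit that $\alpha(v)$ and $\alpha(w)$ are (nearly) perpendicular there, handle the two window cases separately, and combine near the unique intersection point $x_0=\theta^v_{Y}\cap\theta^w_{Y}$.

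There is, however, a wrinkle in your second case. Your invocation of the Bridge Lemma is misphrased: Lemma~\ref{lem:bridge} concerns components of $E_{Y_*}\setminus\theta^u_{Y_*}$ (with $d_\tree(v,u)=1$) that are separated from $\theta^v_{Y_*}$, not components of $E_{Y_*}\setminus\theta^v_{Y_*}$. Even after repairing this, different cone points of $\theta^w_{Y_*}$ may lie in different such components $U'$, yielding different bridges $\delta_{U'}$, and the Bridge Lemma gives no a~priori relationship between these bridges and the perpendicularity you want to invoke. Your final paragraph acknowledges this as the main obstacle but does not actually close it: the spread of bridges is controlled by the combinatorics of how $\theta^u_{Y_*}$ separates $\theta^w_{Y_*}$, which is not the same as bounding the drift of $\theta^w_{Y_*}$ across the strips of $\bTheta^v_{Y_*}$.

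The paper sidesteps this by avoiding the Bridge Lemma entirely in this case. Instead it uses the Window Lemma directly: for each cone point $f_Y(x)$ of $\theta^w_Y$ lying outside $\bTheta^v_Y$, the geodesic in $\theta^w_Y$ from $f_Y(x)$ to $x_0$ crosses $\partial\bTheta^v_Y$ at a definite point $p_i$, so $p_i\in\Pi^v_Y(x)$ and Lemma~\ref{lem:window} forces $\Pi^v_Y(x)$ into the one or two saddle connections of $\partial\bTheta^v_Y$ containing $p_i$. Now perpendicularity kicks in cleanly: each $p_i$ lies on a segment of $\theta^w_Y$ crossing a strip of $\bTheta^v_Y$ essentially orthogonally, hence within the strip width $M$ of $x_0$. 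This directly bounds all windows near $x_0$ without having to control a family of bridges.
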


\begin{figure}[htb]
\usetikzlibrary{decorations.pathreplacing}
\begin{tikzpicture}[scale=1.8,inner sep=0pt, dot/.style={fill=black,circle,minimum size=3pt},rotate=0]

\pgfmathsetmacro\spokes{5};
\pgfmathsetmacro\spokeslessone{\spokes-1};
\pgfmathsetmacro\angle{360/\spokes};
\pgfmathsetmacro\rad{2};
\pgfmathsetmacro\inrad{.6*\rad};
\pgfmathsetmacro\SpineAngleOffset{\angle/10};
\pgfmathsetmacro\WindowAngleOffset{\angle/4};

\coordinate (origin) at (0,0);
\path (origin) node[dot]{};
\foreach \i in {1,...,\spokes}
{
  \path (origin) ++(\i*\angle:\rad) coordinate (v\i);
  \draw[thick,dashed] (origin) -- (v\i);

  \path (origin) ++(\i*\angle+\SpineAngleOffset:\rad) coordinate (sr\i);
  \path (origin) ++(\i*\angle+\angle-\SpineAngleOffset:\rad) coordinate (sl\i);
  \path (origin) ++(\i*\angle+.5*\angle-\WindowAngleOffset:\inrad) coordinate (mr\i);
  \path (origin) ++(\i*\angle+.5*\angle+\WindowAngleOffset:\inrad) coordinate (ml\i);
  \ifthenelse{\isodd{\i}}
  {
    \pgfmathsetmacro\dist{.75*\rad}
  }{
    \pgfmathsetmacro\dist{.48*\rad}
  };
  \path (origin) ++(\i*\angle+.5*\angle:\dist) coordinate (w\i);
  \path (origin) ++(\i*\angle+.39*\angle:.92*\dist) node {$x_\i$};
  \path (origin) ++(\i*\angle+.33*\angle:\rad) coordinate (b\i);
  \path (origin) ++(\i*\angle+.57*\angle:\rad) node {$W_\i$};
  \path (origin) ++(\i*\angle+.66*\angle:\rad) coordinate (c\i);
  \draw[ultra thick, blue!60!black] (origin) -- (w\i);
  \draw[thick] (b\i) -- (w\i) -- (c\i);
  \ifthenelse{\isodd{\i}}
  {
    \path (mr\i) -- coordinate[midway](m\i) (ml\i);
  }
  {
    \path (w\i) coordinate (m\i) node[dot,red]{};
  }

  \fill[color=blue,opacity=.2] (origin) ++(\i*\angle+\angle:\rad) -- (origin) -- (v\i) -- (sr\i) -- (mr\i) -- (m\i) -- (ml\i) -- (sl\i) -- cycle;
  \draw[thick] (sr\i) -- (mr\i) (ml\i) -- (sl\i);
  \draw[ultra thick, red] (mr\i) -- (m\i) -- (ml\i);
  \filldraw[color=red] (mr\i) circle[radius=.04] (ml\i) circle[radius=.04];
  \ifthenelse{\isodd{\i}}
  {\filldraw[color=blue!60!black] (w\i) circle[radius=0.04];}
  {\filldraw[color=red] (m\i) circle[radius=0.04];}
}

\path (b\spokes) node[above right] {$\theta^w_Y$};
\path (origin) ++(\spokeslessone*\angle+.72*\angle-\angle:.33*\rad) node[color=blue!60!black] {$W_0$};
\draw [decorate,decoration={brace,amplitude=5pt,mirror,raise=4pt},yshift=0pt] (sl\spokeslessone) -- (sr\spokes) node [black,midway,xshift=0.8cm] {\footnotesize $\bTheta^v_Y$};

\end{tikzpicture}
\caption{The proof of Lemma~\ref{lem:crossing spines}: The arcs shown in red are the segments $\delta_i$, which contain $\Pi^v_Y(x)$ for all $x\in \theta^w$ with $f_Y(x)$ in the subgraph $W_i$ of $\theta^w_Y$.}
\label{fig:for_crossing_spines}
\end{figure}

\begin{proof} Let $x_0 = \theta^v_Y \cap \theta^w_Y$ be the unique intersection point of the spines.  Let $W_0$ be the smallest subgraph of $\theta^w_Y$ containing $\theta^w_Y \cap \bTheta^v_Y$ and let $W_1, \dots, W_k$ be the closures of the components of $\theta^w_Y \setminus W_0$, so that $\theta^w_Y = W_0 \cup \dots \cup W_k$. See Figure~\ref{fig:for_crossing_spines}. For $1 \leq i \leq k$, let  $x_i \in W_i$ be the closest (cone) point to $x_0$. Then define $p_i$ to be the intersection of the geodesic $[x_0, x_i]\subset\theta^w_Y$ with $\partial\bTheta^v_Y$ and let $\delta_i\subset \partial \bTheta^v_Y$ be the segment consisting of the ($1$ or $2$) saddle connections along $\partial \bTheta^v_Y$ that contain $p_i$. Define $\delta^v_Y(w) = \delta_1\cup \dots \cup \delta_k$, and note that $\delta^v_X(w) = f_{X,Y}(\delta^v_Y(w))$ and $\theta^w_Y\cap \partial \bTheta^v_Y\subset \delta^v_Y(w)$ by construction.

For any $x\in \bar{\Sigma}$ with $f_Y(x) \in W_i$, where $i =1, \dots, k$, the flat geodesic from $f_Y(x)$ to $x_0$ first intersects $\bTheta^v_Y$ at $p_i$; therefore $p_i\in \Pi^v_Y(x)$ by definition of the window. By Lemma~\ref{lem:window}, it follows that $\Pi^v_Y(x)\subset \delta_i$. The union $\cup_{i=1}^k W_i$ contains every cone point of $\theta^w_Y$ except possibly $x_0$. Thus we have proven $\Pi^v_Y(\theta^w\cap \bar{\Sigma}^{\notin v}) = \Pi^v_Y(\theta^w_Y\cap \bar{\Sigma}^{\notin v}) \subset \delta^v_Y(w)$. 

Let $Z\in D$ be the closest point on $\partial B_v$ to $\partial B_w$, thus $Z\in \partial B_v$ lies on the unique hyperbolic geodesic that intersects $\partial B_v$ and $\partial B_w$ orthogonally. In $E_Z$ the directions $\alpha(v)$ and $\alpha(w)$ are perpendicular. Therefore the cone points of $\partial \bTheta^v_Z$ that are closest to $\theta^v_Z\cap \theta^w_Z$ all lie in $\delta^v_Z(w)$, since they must be endpoints of saddle connections along $\partial \bTheta^v_Z$ that intersect $\theta^w_Z$.
More generally, for any $Y\in c_v(\partial B_w)$, the directions $\alpha(v)$ and $\alpha(w)$ are nearly perpendicular and thus we have
\begin{equation*}
\label{eqn:perpendicular-crossing-spines}
\Pi^v_Y(\theta^w\cap \bar{\Sigma}^{\notin v}) \subset \delta^v_Y(w) \subset B_K(\theta^v_Y\cap \theta^w_Y).
\end{equation*}
for some uniform constant $K > 0$ that depends only on the length of $c_v(\partial B_w)$ and the maximum over $\partial B_v$ of the length/width of any saddle connection/strip in the $\alpha(v)$ direction (Lemma~\ref{L:strip-and-saddle-bound}). 
Now, for any $x\in \theta^w \cap \bar{\Sigma}^v$,  the point $X = c_v(\pi(x))$ lies in $c_v(\partial B_w)$ and we have that $f_X(x) = \theta^v_X\cap \theta^w_X$. The above equation shows there are cone points of $\partial \bTheta^v_X$ within $K$ of $f_X(x)$; hence $\Pi^v_X(x)=\Pi^v_X(f_X(x))$ lies in $B_K(\theta^v_X\cap \theta^w_X)$ by definition. 
Using the fact that the length of $c_v(\partial B_w)$ is uniformly bounded, we see that the map $f_{Y,X}$ is uniformly biLipschitz and therefore that $\Pi^v_Y(x) = f_{Y,X}(\Pi^v_X(x))$ lies within bounded distance of $\theta^v_Y\cap \theta^w_Y = f_{Y,X}(\theta^v_X\cap \theta^w_X)$. Combining this with the above finding that $\Pi^v_Y(\theta^w\cap \bar{\Sigma}^{\notin v})$ lies within bounded distance of $\theta^v_Y\cap \theta^w_Y$, we finally conclude that $\diam(\Pi^v_Y(\theta^w))$ is uniformly bounded.
\end{proof}

\subsection{Projections}
Here we define projections $\Lambda^v \colon \bar{\Sigma} \to \pow(\mathcal K^v)$ and $\xi^v \colon \bar{\Sigma} \to \pow(\bXi^v)$.
 In preparation, we first define $\Pi^v \colon \bar{\Sigma} \to \pow(\partial \bTheta^v)$ by
\[ \Pi^v(x) = \bigcup_{Y \in \partial B_v}\Pi^v_Y(x),\quad\text{for any $v\in \vtx$ and $x\in \bar{\Sigma}$}.\]
In words, $\Pi^v(x)$ consists of the $v$--windows of $x$ in all fibers over $\partial B_v$. As before, we extend to arbitrary subsets $U\subset \bar E$ by setting $\Pi^v(U) = \Pi^v(U\cap \bar{\Sigma})$.

Now, for each $v\in \vtx$ our projections are defined as the compositions
\[ \Lambda^v  = \lambda^v\circ \Pi^v \quad \mbox{ and } \quad \xi^v(x) = i^v\circ \Pi^v.\]
A useful observation is that for any two vertices $v,w \in \vtx$ with $d_\tree(v,w) = 1$ and $X \in \bar D$, we have
\begin{equation} \label{E:an obvious but useful fact}
\xi^v(\theta^w_X) = \xi^v(\theta^w)=w \in \bXi^v.
\end{equation}
Mirroring the notation $d_Y^v(x,y)$ above, for $x,y\in \bar{\Sigma}$ we define
\[d_{\mathcal{K}^v}(x,y) = \diam(\Lambda^v(x)\cup \Lambda^v(y))\quad\text{and}\quad d_{\bXi^v}(x,y) = \diam(\xi^v(x)\cup \xi^v(y)).\]

\begin{lemma} \label{L:bounded is bounded}
There is a function $N'\colon [0,\infty)\to [0,\infty)$ such that for all $v\in \vtx$, $X\in \partial B_v$, and $x,y\in \bar{\Sigma}$ the quantities $d_{\mathcal{K}^v}(x,y)$ and $d_{\bXi^v}(x,y)$ are at most $N'(d_X^v(x,y))$.
\end{lemma}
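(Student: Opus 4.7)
The plan is to bound the two quantities separately, in each case controlling both the ``horizontal'' behavior of the projections within each fiber $E_Y$ and their ``vertical'' variation as $Y$ ranges over $\partial B_v$. Two uniform features of windows underpin the argument: by Lemma~\ref{lem:window} together with Lemma~\ref{L:strip-and-saddle-bound}, each nonempty window $\Pi^v_Y(z)$ is either a single cone point or a saddle connection of length at most $M$, and in either case contains at least one cone point of $\partial \bTheta^v_Y$. Moreover, for any $X,Y\in \partial B_v$ and any $z\in \bar{\Sigma}$, the affine map $f_{Y,X}$ satisfies $\Pi^v_Y(z) = f_{Y,X}(\Pi^v_X(z))$ and carries each boundary component of $\partial \bTheta^v_X$ to the corresponding boundary component of $\partial \bTheta^v_Y$.

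For the $\bXi^v$ bound, I would use that $i^v$ depends only on which component of $\partial \bTheta^v$ contains a given point (up to the finite multiplicity coming from cone points shared by two adjacent strips). Since $f_{Y,X}$ preserves these components, we have $\xi^v(z) = i^v(\Pi^v_X(z))$ for any $X\in \partial B_v$ with $\Pi^v_X(z)\neq\emptyset$. Picking cone points $a\in\Pi^v_X(x)$ and $b\in \Pi^v_X(y)$, the $\bTheta^v_X$-path metric dominates the $\bTheta^v$-path metric on the fiber, so $d_{\bTheta^v}(a,b)\leq d_X^v(x,y)$; Corollary~\ref{cor:inclusion_Lipschitz} then yields $d_{\bXi^v}(x,y)\leq N_{\bXi}(d_X^v(x,y))+1$.

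For the $\mathcal K^v$ bound, the first step is to show $\lambda^v(\Pi^v(z))$ has diameter bounded by a universal constant. Fixing a cone point $z'\in\Pi^v_X(z)$, the set $\{f_{Y,X}(z'):Y\in \partial B_v\}$ lies in the horocyclic line $\ell_{z',\alpha(v)}=D_{z'}\cap \partial\mathcal{B}_{\alpha(v)}$, which is sent to a set of diameter $\leq K_1$ in $\mathcal K^v$ by Proposition~\ref{prop:KL}(\ref{item:lambda_flowlines_bounded}). Combining this with the $K_1$-coarse Lipschitz condition of Proposition~\ref{prop:KL}(\ref{item:lambda_lipschitz}) and the uniform $M$-bound on $\diam_{\bTheta^v_Y}\Pi^v_Y(z)$ controls the variation of $\lambda^v$ within each fiberwise window, producing the desired uniform bound on $\diam\lambda^v(\Pi^v(z))$. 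Finally, for cone points $a\in \Pi^v_X(x)$, $b\in \Pi^v_X(y)$ as above, the coarse Lipschitz condition bounds $d_{\mathcal K^v}(\lambda^v(a),\lambda^v(b))$ linearly in $d_X^v(x,y)$, and combining with the uniform diameter bounds on $\lambda^v(\Pi^v(x))$ and $\lambda^v(\Pi^v(y))$ yields the required $N'$ (taking the maximum with the $\bXi^v$ bound).

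The main subtlety is that the shears $f_{Y,X}$ for $Y\in\partial B_v$ are generally not uniformly biLipschitz on $\bTheta^v$, so a naive fiberwise comparison would not give a uniform bound. The argument circumvents this by only comparing different fibers at cone points, whose $f_{Y,X}$-orbits lie in the horocyclic lines that Proposition~\ref{prop:KL}(\ref{item:lambda_flowlines_bounded}) is specifically designed to control.
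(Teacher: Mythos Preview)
Your approach is essentially the same as the paper's, though the paper packages the argument more uniformly via a ``Smear'' operation: given $U\subset \partial\bTheta^v_Y$, one sets $\mathrm{Smear}(U)=\bigcup_{Z\in\partial B_v} f_{Z,Y}(U)$, notes that $i^v(U)=i^v(\mathrm{Smear}(U))$ (since boundary components are preserved by the $f_{Z,Y}$), and that $\lambda^v(U)$ and $\lambda^v(\mathrm{Smear}(U))$ are within Hausdorff distance $2K_1$ (since for \emph{every} $p\in U$, $\mathrm{Smear}(\{p\})=\ell_{p,\alpha(v)}$ has $\lambda^v$-image of diameter at most $K_1$). Taking $U=\Pi^v_X(x)\cup\Pi^v_X(y)$ and using $\Pi^v(z)=\mathrm{Smear}(\Pi^v_X(z))$, the result then follows directly from the coarse Lipschitz properties of $\lambda^v$ and $i^v$, with no need to analyze individual windows or single out cone points.

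There is one genuine slip in your write-up: the claimed uniform $M$-bound on $\diam_{\bTheta^v_Y}\Pi^v_Y(z)$ for all $Y\in\partial B_v$ is not correct when $z\in\bar\Sigma^v$. In that case $\Pi^v_Y(z)=f_{Y,X}(\Pi^v_X(z))$ with $X=c_v(\pi(z))$, and $\Pi^v_X(z)$ may contain closest cone points lying on distinct boundary components of $\partial\bTheta^v_X$. Since the shears $f_{Y,X}$ for $Y\in\partial B_v$ translate different $\alpha(v)$-lines by different amounts, $\diam\Pi^v_Y(z)$ is not uniformly bounded over $Y$. (Note that Lemma~\ref{lem:window} only applies to $z\in\bar\Sigma^{\notin v}$, so your opening sentence overstates its scope.) The fix is exactly the paper's: apply the horocyclic-line bound of Proposition~\ref{prop:KL}(\ref{item:lambda_flowlines_bounded}) to \emph{each} point of $\Pi^v_X(z)$ rather than a single chosen cone point, and then use that $\lambda^v$ is coarsely Lipschitz on $\bTheta^v$ to control $\lambda^v(\Pi^v_X(z))$ itself. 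Your restriction to cone points is also unnecessary, since Proposition~\ref{prop:KL}(\ref{item:lambda_flowlines_bounded}) is stated for all $x\in\partial\bTheta^v$.
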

\begin{proof}
For $Y\in \partial B_v$, let us compare the images of some subset $U \subset \partial \bTheta^v_Y$ and
\[\mathrm{Smear}(U) = \bigcup_{Z\in \partial B_v} f_{Z,Y}(U) \subset \partial \bTheta^v\]
under the maps $\lambda^v,i^v$. 
Since the boundary components of $\bTheta^v$ are preserved by the maps $f_{Z,Y}$, the images $i^v(U)=i^v(\mathrm{Smear}(U))$ are exactly the same. 
Moreover, for each $x\in \partial \bTheta^v_Y$ we have $\lambda^v(\mathrm{Smear}(\{x\})) = \lambda^v(\ell_{x,\alpha(v)})$, and so by Proposition~\ref{prop:KL}(\ref{item:lambda_flowlines_bounded}) this is a set with diameter at most $K_1$.
Therefore $\lambda^v(U)$ and $\lambda^v(\mathrm{Smear}(U))$ have Hausdorff distance at most $2K_1$.

Now, let $x,y\in \bar{\Sigma}$ and $X\in \partial B_v$ be as in the statement. Set $U = \Pi^v_X(x)\cup \Pi^v_X(y)$ so that  $\diam(U) = d^v_X(x,y)$. Since $\Pi^v(x) = \mathrm{Smear}(\Pi^v_X(x))$ and similarly for $\Pi^v(y)$, we see that
\[d_{\mathcal{K}^v}(x,y) = \diam(\lambda^v(\mathrm{Smear}(U))) \quad\text{and}\quad d_{\bXi^v}(x,y) = \diam(i^v(\mathrm{Smear}(U))).\]
Since $\lambda^v$ is coarsely $K_1$--Lipschitz by Proposition~\ref{prop:KL}\ref{item:lambda_lipschitz}, the preceding paragraph and the triangle inequality shows that
\[ d_{\mathcal{K}^v}(x,y) = \diam(\lambda^v(\mathrm{Smear}(U))) \leq \diam(\lambda^v(U)) + 2K_1 \leq (K_1d_X^v(x,y) + K_1) + 2K_1.\]
Similarly, by Corollary~\ref{cor:inclusion_Lipschitz} we have that
\[ \diam_{\bXi^v}(x,y) = \diam(i^v(U))\le N(d_X^v(x,y))+1.\]
Setting $N'(t) = \max\{N(t)+1,K_1t+3K_1\}$ completes the proof.
\end{proof}

 \begin{proposition} \label{prop:R-projections}
 There exists $K_4>0$ so that for any $v \in \vtx$:
 \begin{enumerate}
 \item $\Lambda^v$ and $\xi^v$ are $K_4$--coarsely Lipschitz;
 \item For any $w \in \vtx$, we have
 \begin{enumerate}
 \item $\diam \left(\Lambda^{v}(\bTheta^w)\right) < K_4$, unless $d_{\tree}(v,w) \leq 1$;
 \item $\diam \left(\xi^v(\bTheta^w)\right) < K_4$ unless $w = v$. 
 \end{enumerate}
 \end{enumerate}
 \end{proposition}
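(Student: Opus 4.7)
The plan is to exploit that both $\Lambda^v$ and $\xi^v$ factor through the window map $\Pi^v$, with the postcompositions $\lambda^v\colon \bTheta^v \to \mathcal{K}^v$ and $i^v\colon \partial\bTheta^v\to\bXi^v$ each uniformly coarsely Lipschitz (Proposition~\ref{prop:KL} and Corollary~\ref{cor:inclusion_Lipschitz}, respectively). Moreover, Lemma~\ref{L:bounded is bounded} reduces bounding $d_{\mathcal K^v}$ and $d_{\bXi^v}$ to bounding $d^v_X$ in some single fiber $X\in\partial B_v$, so each conclusion reduces to controlling the window map $\Pi^v$ itself.

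For part~(1), I would invoke a combinatorial path decomposition from \cite{DDLSI} asserting that any two points $x,y\in\bar\Sigma$ are joined by a concatenation of $O(\bar d(x,y))$ pieces, each either (i) a short horizontal geodesic segment or (ii) a vertical saddle connection of uniformly bounded length; it then suffices to show each piece changes $d^v_X$ by a universal constant. Horizontal pieces split according to whether the endpoints lie in $\bar\Sigma^v$ or in $\bar\Sigma^{\not\in v}$---a dichotomy preserved under horizontal motion since $\bar\Sigma^v$ is determined by $f$--images---and these are handled by Lemma~\ref{lem:combo Lipschitz}(1) and Lemma~\ref{lem:different spine} (the latter forcing the windows to coincide on the nose). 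For a vertical piece in a fiber $E_Z$, one chooses $X = c_v(\pi(x))\in\partial B_v$; subdividing the saddle connection at any intersection with $\theta^v_X$ (at which the corresponding endpoint lies in $\bar\Sigma^v$ and the previous bounds apply) reduces the analysis to the subcase where both endpoints lie on a single component of $E_X\smallsetminus\theta^v_X$, where the window lemma (Lemma~\ref{lem:window}) together with the $1$--Lipschitz property of closest-point projection to each boundary geodesic of $\partial\bTheta^v_X$ gives the required bound in terms of the $f_{X,Z}$--image of the saddle connection.

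For part~(2)(a), assume $d_\tree(v,w)\ge 2$ and fix any $Y\in\partial B_v$. I would bound $\Pi^v_Y(\bTheta^w\cap\bar\Sigma^{\not\in v})$ and then apply Lemma~\ref{L:bounded is bounded}. When $\alpha(v)=\alpha(w)$ (so that $d_\tree(v,w)<\infty$), Corollary~\ref{cor:bridge same} confines the image to the single saddle connection $\delta^v_Y(w)$, of uniformly bounded length by Lemma~\ref{L:strip-and-saddle-bound}. When $\alpha(v)\ne \alpha(w)$: if the spines are disjoint, Corollary~\ref{cor:bridge diff} provides the bound; if the spines meet, Lemma~\ref{lem:crossing spines} applies to some $Y\in c_v(\partial B_w)$ and the uniformly bounded length of $c_v(\partial B_w)$ transfers the bound to any $Y\in\partial B_v$. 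The remaining points of $\bTheta^w\cap\bar\Sigma^v$ form a bounded subset of $\theta^v_X$ (since $d_\tree(v,w)\ge 2$ forces $\theta^v_X\cap\bTheta^w_X$ to be bounded) and their windows are directly controlled from the definition of $\Pi^v_X$ on $\bar\Sigma^v$.

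For part~(2)(b), the same arguments bound $\xi^v(\bTheta^w)$ whenever $d_\tree(v,w)\ge 2$ (via coarse Lipschitzness of $i^v$), so the only remaining case is $d_\tree(v,w)=1$. For $x\in \bTheta^w\cap\bar\Sigma^{\not\in v}$, the bridge lemma (Lemma~\ref{lem:bridge}) applied to the component of $E_Y\smallsetminus\theta^w_Y$ containing $f_Y(x)$ confines $\Pi^v_Y(x)$ to a single saddle connection lying in the shared boundary $\theta^v_Y\cap\theta^w_Y$, on which $i^v$ is constantly $w$; points in $\bTheta^w\cap\bar\Sigma^v$ lie along this same shared boundary and likewise project to $w$. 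The primary obstacle throughout the proof is the vertical sub-case of~(1), which requires careful tracking of how windows on the various boundary components of $\partial\bTheta^v_X$ behave under small motions in the fiber; once settled, all remaining cases follow routinely from the bridge-type lemmas already established.
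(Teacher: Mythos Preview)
Your handling of the vertical sub-case of part~(1) has a genuine gap. You set $X = c_v(\pi(x)) \in \partial B_v$ and claim the window distance $d^v_X(x,y)$ is controlled ``in terms of the $f_{X,Z}$--image of the saddle connection.'' But nothing bounds the length of that image: the fiber $Z = \pi(x)$ can be arbitrarily far from $\partial B_v$, so $f_{X,Z}$ has biLipschitz constant $e^{\rho(X,Z)}$ which is unbounded. The $1$--Lipschitz projection argument therefore gives no uniform control. The paper's key move here is different: the saddle connection $\delta$ lies in some spine $\theta^w_Z$, and its bounded length forces $Z$ to be close to $\partial B_w$ (not $\partial B_v$). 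One projects to $X = c_w(Z) \in \partial B_w$, where $f_{X,Z}$ \emph{is} uniformly biLipschitz, reduces to $x',y' \in \theta^w$ via bounded horizontal segments, and then case-splits on $d_{\tree}(v,w)$ using the bridge-type results (Corollaries~\ref{cor:bridge same}, \ref{cor:bridge diff}, Lemma~\ref{lem:crossing spines}) for $d_{\tree}>1$, direct estimates for $v=w$, and a short window argument for $d_{\tree}=1$. Your ``subdivision at intersections with $\theta^v_X$'' does not help either: such intersection points are generically not cone points (a saddle connection has no cone points in its interior), so they are not in $\bar\Sigma$ and $\Pi^v$ is not defined there.

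For part~(2) your outline is roughly in the right spirit but more convoluted than necessary, and the $d_{\tree}(v,w)=1$ case of~(2)(b) misapplies the bridge lemma: points of $\bTheta^w\cap\bar\Sigma^{\notin v}$ need not lie in a component of $E_Y\smallsetminus\theta^w_Y$ disjoint from $\theta^v_Y$ (they can lie in the strip between $\theta^v_Y$ and $\theta^w_Y$, or on $\theta^w_Y$ itself). The paper instead first reduces $\bTheta^w$ to $\theta^w$ using part~(1) (since $\bTheta^w\cap\bar\Sigma$ lies in a bounded neighborhood of $\theta^w\cap\bar\Sigma$), and then for $d_{\tree}(v,w)=1$ simply invokes equation~\eqref{E:an obvious but useful fact}, which says $\xi^v(\theta^w)=\{w\}$ directly. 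Your split into $\bar\Sigma^{\notin v}$ and $\bar\Sigma^v$ pieces for~(2)(a) is unnecessary for the same reason: once reduced to $\theta^w$, Corollaries~\ref{cor:bridge same}, \ref{cor:bridge diff} and Lemma~\ref{lem:crossing spines} already bound $\diam(\Pi^v_X(\theta^w))$ in all relevant cases.
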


 \begin{proof}  For part (1), we first observe that by \cite[Lemma~3.5]{DDLSI}, there exists an $R >0$ so that any pair of points $x,y\in \bar{\Sigma}$ may be connected by a path of length at most $R \bar d(x,y)$ that is a concatenation of at most $R\bar d(x,y)+1$ pieces, each of which is either a saddle connection of length at most $R$ in a vertical vertical fiber, or a horizontal geodesic segment in $\bar E$.
By the triangle inequality, it thus suffices to assume that $x$ and $y$ are the endpoints of either a horizontal geodesic or a vertical saddle connection of length at most $R$. Appealing to Lemma~\ref{L:bounded is bounded}, it further suffices to show that $d_X^v(x,y)$ is linearly bounded by $\bar{d}(x,y)$ for some $X\in \partial B_v$. Lemmas \ref{lem:different spine} and \ref{lem:combo Lipschitz}(1) handle the horizontal segment case, since we are free to subdivide such a path into $\lceil\bar{d}(x,y)\rceil$ segments of length at most $1$.
 
For the vertical segment case we assume $x$ and $y$ lie in the same fiber $E_Y$ and differ by a saddle connection $\delta$ of length at most $R$. Let $\theta^w_Y$, where $w\in \vtx$, be the spine containing $\delta$. The fact that $\delta$ is bounded means that $Y = \pi(x)=\pi(y)$ is bounded distance from the horocycle $\partial B_w$. Let $X = c_w(Y)\in \partial B_w$ and let $\delta' = f_{X,Y}(\delta)$ be the saddle connection in $\theta^w_X$ connecting $x'=f_{X,Y}(x)$ and $y' = f_{X,Y}(y)$. By the triangle inequality and the first part above about bounded length horizontal segments, it suffices to work with the points $x',y'\in \theta^w$.  There are three cases to consider: Firstly, if $v = w$, then $x',y'\in \theta^v_X$ so that $\Pi^v_X(x')$ and $\Pi^v_X(y')$ choose the closest cone points in $\partial \bTheta^v_X$ to $x'$ and $y'$, respectively. Since $x'$ and $y'$ are close, so are $\Pi^v_X(x')$ and $\Pi^v_X(y')$. Secondly, if $d_\tree(v,w) > 1$, then Corollaries \ref{cor:bridge same} and \ref{cor:bridge diff}, and Lemma \ref{lem:crossing spines} give a uniform bound on $d^v_Z(x',y') \le \diam(\Pi^v_Z(\theta^w))$ for any point $Z\in c_v(\partial B_w)$. Finally, if $d_\tree(v,w) = 1$ then $\theta^w_X$ and $\theta^v_X$ are adjacent non-crossing spines in $E_X$. Since $\theta^w_X$ is totally geodesic, if follows that $\Pi^v_X(x')$ and $\Pi^v_X(y')$ are either equal or connected by a single edge of $\partial \bTheta^v_X$. But this saddle connection has uniformly bounded length,  since $X\in \partial B_v$, which completes the proof of (1).

For (2), first recall that strips/saddle connections in the $\alpha(w)$ direction have uniformly bounded width/length over $\partial B_w$ (Lemma~\ref{L:strip-and-saddle-bound}). Therefore $\bTheta^w\cap \bar{\Sigma}$ is contained in a bounded neighborhood of $\theta^w\cap \bar{\Sigma}$. By part (1) it thus suffices to bound $\diam(\Lambda^v(\theta^w))$ and $\diam(\xi^v(\theta^w))$. When $d_{\tree}(v,w) \ge 2$, Corollaries \ref{cor:bridge same} and \ref{cor:bridge diff}, and Lemma \ref{lem:crossing spines}, imply that there exists $X \in \partial B_v$ so that $\Pi^v_X(\theta^w)$ has bounded diameter in $\bTheta^v_X$. Appealing to Lemma~\ref{L:bounded is bounded} now bounds $\diam(\Lambda^v(\theta^w))$ and $\diam(\xi^v(\theta^w))$ in these cases. For the remaining case $d_\tree(v,w)=1$ of 2(b), we note that $\xi^v(\theta^w)$ is a single point by \eqref{E:an obvious but useful fact}, and thus 2(b) follows.
\end{proof}

\section{Hierarchical hyperbolicity of $\Gamma$} \label{S:combinatorial HHS}


\newcommand{\vertexedges}{
\draw[\edgetype] (\xcoordmain,\ycoordmain) -- (\xcoordmain+\vvc,\ycoordmain+\vvb) ;
\draw[\edgetype] (\xcoordmain,\ycoordmain) -- (\xcoordmain+\vva,\ycoordmain);
\draw[\edgetype] (\xcoordmain,\ycoordmain) -- (\xcoordmain + \vvd,\ycoordmain+\vve);
}

\newcommand{\vblownup}[9]{
\def\thecolor{#1}
\def\xcoordmain{#2}
\def\ycoordmain{#3}
\def\vaname{#4}
\def\vbname{#5}
\def\vcname{#6}
\def\vdname{#7}
\def\addedges{#8}
\def\edgetype{#9}
\addedges
\node[draw,fill=\thecolor,circle] at (\xcoordmain,\ycoordmain) {\tiny \vaname};
\node[draw,fill=\thecolor,circle] at   (\xcoordmain+\vvc,\ycoordmain+\vvb) {\tiny \vbname};
\node[draw,fill=\thecolor,circle] at (\xcoordmain+\vva,\ycoordmain) {\tiny \vcname};
\node[draw,fill=\thecolor,circle] at (\xcoordmain + \vvd,\ycoordmain+\vve) {\tiny \vdname};
}

\newcommand{\cone}[5]{
\def\edgetype{#1}
\def\xfrom{#2}
\def\yfrom{#3}
\def\xto{#4}
\def\yto{#5}

\draw[\edgetype] (\xfrom,\yfrom) -- (\xto,\yto);
\draw[\edgetype] (\xfrom,\yfrom) -- (\xto+\vvc,\yto+\vvb);
\draw[\edgetype] (\xfrom,\yfrom) -- (\xto+\vva,\yto);
\draw[\edgetype] (\xfrom,\yfrom) -- (\xto+\vvd,\yto+\vve);
}

\newcommand{\join}[5]{
\def\typeedge{#1}
\def\xxfrom{#2}
\def\yyfrom{#3}
\def\xxto{#4}
\def\yyto{#5}

\cone{\typeedge}{\xxfrom}{\yyfrom}{\xxto}{\yyto}
\cone{\typeedge}{\xxfrom+\vvc}{\yyfrom+\vvb}{\xxto}{\yyto}
\cone{\typeedge}{\xxfrom+\vva}{\yyfrom}{\xxto}{\yyto}
\cone{\typeedge}{\xxfrom\vvd}{\yyfrom+\vve}{\xxto}{\yyto}
}

In this section we complete the proof that $\Gamma$ is hierarchically hyperbolic. We will use a criterion from \cite{comb_HHS}, which we now briefly discuss.
For further information and heuristic discussion of this approach to hierarchical hyperbolicity, we refer the reader to \cite[\S1.5, ``User's guide and a simple example'']{comb_HHS}.

Consider a simplicial complex $\CX$ and a graph $\CW$ whose vertex set is the set of maximal simplices of $\CX$.  
The pair $(\CX,\CW)$ is called a \emph{combinatorial HHS} if it satisfies the requirements listed in Definition~\ref{defn:combinatorial_HHS} below, and \cite[Theorem 1.18]{comb_HHS} guarantees that in this case $\CW$ is an HHS. The main requirement is along the lines of: $\CX$ is hyperbolic, and links of simplices of $\CX$ are also hyperbolic. However, this is rarely the case because co-dimension--1 faces of maximal simplices have discrete links. To rectify this, additional edges (coming from $\CW$) should be added to $\CX$ and its links as detailed in Definition~\ref{defn:X_graph}. 
In our case, after adding these edges, $\CX$ will be quasi-isometric to $\hat E$, and each other link will be quasi-isometric to either a point or to one of the spaces $\mathcal K^v$ or $\bXi^v$  introduced in \S\ref{S:projections}.

There are two natural situations where such pairs arise that the reader might want to keep in mind. First, consider a group $H$ acting on a simplicial complex $\CX$ so that there is one orbit of maximal simplices, and those have trivial stabilizers. In this case, we take $\CW$ to be (a graph isomorphic to) a Cayley graph of $H$. (More generally, if the action is cocompact with finite stabilizers of maximal simplices, then the appropriate $\CW$ is quasi-isometric to a Cayley graph.) For the second situation, $\CX$ is the curve graph of a surface; then maximal simplices are pants decompositions of the surface and $\CW$ can be taken to be the pants graph. We will use this as a working example below, when we get into the details.

Most of the work carried out in \S\ref{S:projections} will be used (as a black-box) to prove that, roughly, links are quasi-isometrically embedded in a space obtained by removing all the ``obvious'' vertices that provide shortcuts between vertices of the link. This can be seen as an analogue of Bowditch's fineness condition in the context of relative hyperbolicity.

This section is organized as follows. In \S\ref{subsec:setup} we list all the relevant definitions and results from \cite{comb_HHS}, and we illustrate them using pants graphs. In \S\ref{subsec:our_comb_HHS} we construct the relevant combinatorial HHS for our purposes. In \S\ref{subsec:links_list} we analyze all the various links and related combinatorial objects; 
we note that most of the work done in \S\ref{S:projections} is used here to prove Lemma \ref{lem:retractions}. 
At that point, essentially only one property of combinatorial HHSs will be left to be checked, and we do so in \S\ref{subsec:final_proof}.

\subsection{Basic definitions}\label{subsec:setup}

We start by recalling some basic combinatorial definitions and constructions. Let $\CX$ be a flag simplicial 
complex.   

\begin{defn}[Join, link, star]\label{defn:join_link_star}
Given disjoint simplices $\Delta,\Delta'$ of $\CX$, the \emph{join} is denoted $\Delta\star\Delta'$ and is the simplex spanned by 
$\Delta^{(0)}\cup\Delta'^{(0)}$, if it exists.  More generally, if $K,L$ are disjoint induced subcomplexes of 
$\CX$ such that every vertex of $K$ is adjacent to every vertex of $L$, then the join
$K\star L$ is the induced subcomplex with vertex set $K^{(0)}\cup L^{(0)}$.  

For each simplex $\Delta$, the \emph{link} $\link(\Delta)$ is the union of 
all simplices $\Delta'$ of $\CX$ such that $\Delta'\cap\Delta=\emptyset$ and $\Delta'\star\Delta$ is a simplex of $\CX$.  
The \emph{star} of $\Delta$ is $Star(\Delta)=\link(\Delta)\star\Delta$, i.e. the union of all simplices of $\CX$ that contain 
$\Delta$.

We emphasize that $\emptyset$ is a simplex of $\CX$, whose link is all of $\CX$ and whose star is all of $\CX$.  
\end{defn}

\begin{defn}[$X$--graph, $W$--augmented dual complex]\label{defn:X_graph}
An \emph{$\CX$--graph} is any graph $\CW$ 
whose vertex set is the set of maximal simplices of $\CX$ (those not contained in any larger simplex).

For a flag complex $\CX$ and an $\CX$--graph $\CW$, the \emph{$\CW$--augmented dual graph} 
$\duaug{\CX}{\CW}$ is the graph defined as follows:
\begin{itemize}
     \item the $0$--skeleton of $\duaug{\CX}{\CW}$ is $\CX^{(0)}$;
     \item if $v,w\in \CX^{(0)}$ are adjacent in $\CX$, then they are adjacent in $\duaug{\CX}{\CW}$; 
	 \item if two vertices in $\CW$ are adjacent, then we consider 
	 $\sigma,\rho$, the associated maximal simplices of $\CX$, and 
	 in $\duaug{\CX}{\CW}$ we connect each vertex of $\sigma$ to each vertex 
	 of $\rho$.
\end{itemize}
We equip $\CW$ with the usual path-metric, in which each edge has unit length, and do the same for $\duaug{\CX}{\CW}$.  Observe that the $1$--skeleton of $X$ is a subgraph $\CX^{(1)} \subset \duaug{\CX}{\CW}$.
\end{defn}

We provide a running example to illustrate the various definitions in a familiar situation.  This example will not be used in the sequel.
\begin{example} \label{Ex:curve/pants graph 1} If $\CX$ is the curve complex of the surface $S$, then an example of the an $\CX$--graph, $\CW$, is the pants graph, since a maximal simplex is precisely a pants decomposition.  The $\CW$--augmented dual graph can be thought of as adding to the curve graph, $\CX^{(0)}$ an edge between any two curves that fill a one-holed torus or four-holed sphere and intersect once or twice, respectively: indeed, these subsurfaces are precisely those where an {\em elementary move} happens as in the definition of adjacency in the pants graph.
\end{example}

\begin{defn}[Equivalent simplices, saturation]\label{defn:simplex_equivalence}
For $\Delta,\Delta'$ simplices of $\CX$, we write $\Delta\sim\Delta'$ to mean
$\link(\Delta)=\link(\Delta')$. We denote by $[\Delta]$ the equivalence class of $\Delta$.  Let $\sat(\Delta)$ denote 
the set of vertices $v\in \CX$ for which there exists a simplex $\Delta'$ of $\CX$ such that $v\in\Delta'$ and 
$\Delta'\sim\Delta$, i.e. $$\sat(\Delta)=\left(\bigcup_{\Delta'\in[\Delta]}\Delta'\right)^{(0)}.$$
We denote by $\mathfrak S$ the set of $\sim$--classes of non-maximal simplices in $\CX$.
\end{defn}

\begin{defn}[Complement, link subgraph]\label{defn:complement}
Let $\CW$ be an $\CX$--graph.  For each simplex $\Delta$ of $\CX$, let
$Y_\Delta$ be the subgraph of $\duaug{\CX}{\CW}$ induced by the set
of vertices $(\duaug{\CX}{\CW})^{(0)}-\sat(\Delta)$.

Let $\mathcal C(\Delta)$ be the full subgraph of $Y_\Delta$ spanned by $\link(\Delta)^{(0)}$.  Note that $\mathcal 
C(\Delta)=\mathcal C(\Delta')$ whenever $\Delta\sim\Delta'$.  (We emphasize 
that we are taking links in $\CX$, not in $\CX^{+\CW}$, and then considering the 
subgraphs of $Y_\Delta$ induced by those links.)
\end{defn}

We now pause and continue with the illustrative example.
\begin{example} \label{Ex:curve/pants graph 2} Let $\CX$ and $\CW$ be as in Example~\ref{Ex:curve/pants graph 1}.  A simplex $\Delta$ is a multicurve which determines two (open) subsurfaces $U = U(\Delta),U' = U'(\Delta) \subset S$, where $U$ is the union of the complementary components of the multicurve that are not a pair of pants, and $U' = S - \overline{U}$.  Note that $\partial U \subset \Delta$ is a submulticurve and that $\Delta - \partial U$ is a pants decomposition of $U'$.  A simplex $\Delta'$ is equivalent to $\Delta$ if it defines the same subsurfaces.  Thus $\sat(\Delta)$ consists of $\partial U(\Delta)$ together with all essential curves in $U'(\Delta)$, while $\mathcal C(\Delta)$ is the join of graphs quasi-isometric to curve graphs of the components of $U(\Delta)$.  For components of $U(\Delta)$ which are one-holed tori or four-holed spheres, the  corresponding subgraphs are isometric to their curve graphs (since the extra edges in $\duaug{\CX}{\CW}$ precisely give edges for these curve graphs).
\end{example}

\begin{defn}[Nesting]\label{defn:nest}
 Let $\CX$ be a simplicial complex.  Let $\Delta,\Delta'$ be non-maximal simplices of $\CX$.  Then we write $[\Delta]\nest[\Delta']$ if $\link(\Delta)\subseteq\link(\Delta')$.
\end{defn}
We note that if $\Delta' \subset \Delta$, then $[\Delta] \nest [\Delta']$.  Also, for Example~\ref{Ex:curve/pants graph 1},\ref{Ex:curve/pants graph 2}, $[\Delta] \nest [\Delta']$ if and only if $U(\Delta) \subset U(\Delta')$.

Finally, we are ready for the main definition:

\begin{defn}[Combinatorial HHS]\label{defn:combinatorial_HHS}
 A \emph{combinatorial HHS} $(\CX, \CW)$ consists of a flag simplicial 
 complex $\CX$ and an $\CX$--graph $\CW$ satisfying the following conditions for some $n \in \mathbb N$ and $\delta \geq 1$:
 \begin{enumerate}
  \item \label{item:chhs_flag}any chain $[\Delta_1]\propnest[\Delta_2]\propnest\dots$ has length at most $n$;
    \item \label{item:chhs_delta} for each non-maximal simplex $\Delta$, the subgraph 
$\mathcal C(\Delta)$ is 
$\delta$--hyperbolic and $(\delta,\delta)$--quasi-isometrically embedded in 
$Y_\Delta$;
\item \label{item:chhs_join}Whenever $\Delta$ and $\Delta'$ are non-maximal simplices for which there exists 
a non-maximal simplex $\Gamma$ such that $[\Gamma]\nest[\Delta]$, $[\Gamma]\nest[\Delta']$, and $\diam(\mathcal C 
(\Gamma))\geq \delta$, then there exists a 
simplex $\Pi$ in the link of $\Delta'$ such that $[\Delta'\star\Pi]\nest [\Delta]$ and all $[\Gamma]$ as above satisfy $[\Gamma]\nest[\Delta'\star\Pi]$;
\item \label{item:C_0=C} if $v,w$ are distinct non-adjacent vertices of $\link(\Delta)$, for some simplex $\Delta$ of $\CX$, contained in 
$\CW$--adjacent maximal simplices, then they are contained in $\CW$--adjacent simplices of the form $\Delta\star\Delta'$.
\end{enumerate}
\end{defn}

We will see below that combinatorial HHSs give HHSs. The reader not interested in the explicit description of the HHS structure can skip the following two definitions.

\begin{defn}[Orthogonality, transversality]\label{defn:orth}
Let $\CX$ be a simplicial complex.  Let $\Delta,\Delta'$ be non-maximal simplices of $\CX$.  Then we write $[\Delta]\orth[\Delta']$ if $\link(\Delta')\subseteq \link(\link(\Delta))$.
If $[\Delta]$ and $[\Delta']$ are not $\orth$--related or $\nest$--related, we write $[\Delta]\transverse[\Delta']$.
\end{defn}

\begin{defn}[Projections]\label{defn:projections}
Let $(\mathcal X, \mathcal W)$ be a combinatorial HHS.  

Fix $[\Delta]\in\mathfrak S$ and define a map $\pi_{[\Delta]}\colon \CW\to \pow({\mathcal C([\Delta])})$ as 
follows.  First let $p\colon Y_\Delta\to \pow({\mathcal C([\Delta])})$ be the coarse closest-point projection, i.e. 
$$p(x)= \big\{y\in\mathcal C([\Delta]) \mid d_{Y_\Delta}(x,y)\leq d_{Y_\Delta}(x,\mathcal C([\Delta]))+1\big\}.$$

Suppose that $w\in \CW^{(0)}$, so $w$ corresponds to a unique simplex $\Delta_w$ of $\CX$.
 Define $$\pi_{[\Delta]}(w)=p(\Delta_w\cap Y_\Delta).$$

We have thus defined  $\pi_{[\Delta]}\colon \CW^{(0)}\to \pow({\mathcal C([\Delta])})$.   If $v,w\in \CW^{(0)}$ are joined by an edge $e$ of $\CW$, 
then $\Delta_v,\Delta_w$ are joined by edges in $\duaug{\CX}{\CW}$, and we let 
$\pi_{[\Delta]}(e)=\pi_{[\Delta]}(v)\cup\pi_{[\Delta]}(w)$.

Now let $[\Delta],[\Delta']\in\mathfrak S$ satisfy $[\Delta]\transverse[\Delta']$ or $[\Delta']\propnest [\Delta]$. 
Let $$\rho^{[\Delta']}_{[\Delta]}=p( 
\sat(\Delta')\cap Y_\Delta).$$

Let $[\Delta]\propnest 
[\Delta']$. Let $\rho^{[\Delta']}_{[\Delta]}\colon \mathcal C([\Delta'])\to 
\mathcal C([\Delta])$ be the restriction of $p$ to $\mathcal C([\Delta'])\cap 
Y_\Delta$, and 
$\emptyset$ otherwise.
\end{defn}

The next theorem from \cite{comb_HHS} provides the criteria we will use to prove that $\Gamma$ is a hierarchically hyperbolic group.

Given a combinatorial HHS $(\CX, \CW)$, we denote $\mathfrak S$ the set as in Definition~\ref{defn:simplex_equivalence}, endowed with nesting and orthogonality 
relations as in Definitions~\ref{defn:nest} and~\ref{defn:orth}. Also, we associated to $\mathfrak S$ the hyperbolic spaces as in 
Definition~\ref{defn:combinatorial_HHS}, and define projections as in Definition~\ref{defn:projections}. 

\begin{theorem}{\cite[Theorem 1.18, Remark 1.19]{comb_HHS}}\label{thm:hhs_links}
Let $(\CX, \CW)$ be a combinatorial HHS. Then $(\CW,\mathfrak S)$ is a hierarchically hyperbolic space.

Moreover, if a group $G$ acts by simplicial automorphisms on $\CX$ with finitely many orbits of links of simplices, and the resulting $G$-action on maximal simplices extends to a metrically proper cobounded action on $\CW$, then $G$ acts metrically properly and coboundedly by HHS automorphisms on $(\CW,\mathfrak S)$, and is therefore a hierarchically hyperbolic group.
\end{theorem}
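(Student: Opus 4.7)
The plan is to verify each of the HHS axioms of \cite{BHS:HHS2} in turn, using the hyperbolic spaces $\mathcal{C}([\Delta])$ and projections $\pi_{[\Delta]}, \rho^{[\Delta']}_{[\Delta]}$ from Definitions~\ref{defn:orth}--\ref{defn:projections}. Finite complexity is immediate from hypothesis~(\ref{item:chhs_flag}) of Definition~\ref{defn:combinatorial_HHS}, and hyperbolicity of each $\mathcal{C}([\Delta])$ follows from hypothesis~(\ref{item:chhs_delta}). That projections $\pi_{[\Delta]}$ to $\mathcal{C}([\Delta])$ are coarsely Lipschitz reduces to showing that $W$--adjacent maximal simplices $\sigma, \rho$ have nearby projections: by definition of $\duaug{X}{W}$, every vertex of $\sigma$ is adjacent in $\duaug{X}{W}$ to every vertex of $\rho$, so $\sigma\cap Y_\Delta$ and $\rho\cap Y_\Delta$ are at $\duaug{X}{W}$--distance at most $1$; combined with the quasi-isometric embedding in hypothesis~(\ref{item:chhs_delta}), this yields the coarse Lipschitz bound. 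The relative projections $\rho^{[\Delta']}_{[\Delta]}$ are well-defined because the closest-point projection $p$ to a quasi-convex subset of a hyperbolic space is coarsely well-defined.

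Next I would verify the \emph{consistency} and \emph{bounded geodesic image} axioms. For consistency between transverse $[\Delta]\transverse[\Delta']$, the point is that if a maximal simplex $\sigma$ has projection to $\mathcal{C}([\Delta])$ far from $\rho^{[\Delta']}_{[\Delta]} = p(\sat(\Delta')\cap Y_\Delta)$, then any geodesic in $Y_\Delta$ from $\sigma$ toward $\mathcal{C}([\Delta'])$ must stay bounded away from $\sat(\Delta')$, forcing the projection to $\mathcal{C}([\Delta'])$ to remain close to $\rho^{[\Delta]}_{[\Delta']}$; this is a standard hyperbolic-geometry argument using the quasi-isometric embedding of $\mathcal{C}([\Delta])$ into $Y_\Delta$. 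Bounded geodesic image for nested pairs is similar. The nesting-vs-orthogonality dichotomy needed for consistency axioms comes directly from Definitions~\ref{defn:nest} and \ref{defn:orth}.

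The main obstacle, as usual for HHS verifications, will be \emph{partial realization} and the \emph{uniqueness axiom}. Partial realization is precisely where hypothesis~(\ref{item:chhs_join}) of Definition~\ref{defn:combinatorial_HHS} enters: given pairwise orthogonal simplices $[\Delta_1],\dots,[\Delta_k]$ and points $p_i\in \mathcal{C}([\Delta_i])$, one iteratively applies the hypothesis to produce a simplex $[\Delta_1\star \Pi_2\star\cdots\star\Pi_k]$ nested into all $[\Delta_i]$, which extends to a maximal simplex whose projections realize the $p_i$. Hypothesis~(\ref{item:C_0=C}) will be crucial here to guarantee that this combinatorial simplex actually lifts to a vertex of $W$ (as opposed to just of $\duaug{X}{W}$), by ensuring that consecutive choices can be connected through maximal simplices sharing the prescribed face. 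For the uniqueness axiom, the plan is to argue by induction on complexity: if two maximal simplices $\sigma,\tau$ have uniformly close projections to every $\mathcal{C}([\Delta])$, then in particular their $X^{(0)}$--distance in $\duaug{X}{W}$ is bounded, and one propagates this to a bound on $W$--distance via a hierarchy/realization argument based on hypothesis~(\ref{item:C_0=C}).

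Finally, the group-equivariant ``moreover'' statement will be essentially formal: a simplicial automorphism of $X$ induces a bijection on $\sim$--classes preserving $\nest,\orth,\transverse$, and acts isometrically on each $Y_\Delta$ and $\mathcal{C}([\Delta])$, taking projections to projections equivariantly; the hypothesis that $G$ has finitely many orbits of links ensures the HHS structure has only finitely many $G$--orbits of $\mathfrak S_W$, so combined with the proper cobounded action on $W$ one obtains that $G$ is a hierarchically hyperbolic group in the sense of \cite{BHS:HHS2}.
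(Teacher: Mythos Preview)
This theorem is not proved in the present paper: it is quoted verbatim from \cite[Theorem~1.18, Remark~1.19]{comb_HHS} and used as a black box. There is therefore no ``paper's own proof'' to compare against; the authors simply invoke the result and then spend \S\ref{S:combinatorial HHS} verifying that their particular pair $(\CX,\CW)$ satisfies the hypotheses of Definition~\ref{defn:combinatorial_HHS}.

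Your sketch is a reasonable high-level outline of how the argument in \cite{comb_HHS} proceeds, and you have correctly identified where each of the four combinatorial hypotheses enters. A few cautions if you intend this as an actual proof rather than a plan. First, the consistency and bounded geodesic image arguments you describe as ``standard hyperbolic-geometry arguments'' are more delicate than you suggest: the spaces $Y_\Delta$ need not be hyperbolic (only $\mathcal C(\Delta)$ is), so one cannot simply appeal to closest-point projections in a hyperbolic space---one has to work carefully with the quasi-isometric embedding of $\mathcal C(\Delta)$ in $Y_\Delta$ and use the combinatorics of saturations. Second, your partial realization argument glosses over the container axiom (existence of orthogonal complements), which in this framework must also be extracted from hypothesis~(\ref{item:chhs_join}). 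Third, the uniqueness argument is the most substantial part of \cite{comb_HHS} and is not a straightforward induction; it requires building explicit hierarchy paths in $W$ using hypothesis~(\ref{item:C_0=C}) in an essential and repeated way. As written, your proposal is closer to a table of contents than a proof, which is appropriate given that the full argument occupies a significant portion of \cite{comb_HHS}.
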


\subsection{Combinatorial HHS structure}\label{subsec:our_comb_HHS}
 We now define a flag simplicial complex $\CX$. The vertex set is $\CX^{(0)}= \vtx \sqcup \kvtx$, where
  \[ \kvtx = \bigsqcup_{v \in \vtx} \mathcal K^v. \]
Given a vertex $s \in \kvtx$, let $v(s) \in \vtx$ be the unique vertex with $s \in \CK^v$.  We also write $\alpha(s)= \alpha(v(s))$.

There are 3 types of edges (see Figure~\ref{fig:blowup}):
 \begin{enumerate}
 \item $v,w \in \vtx$ are connected by an edge if and only if $d_{\tree}(v,w) = 1$.
 \item $s,t \in \kvtx$ are connected by an edge if and only if $d_{\tree}(v(s),v(t))=1$.
\item  $s \in \kvtx$ and $w \in \vtx$ are connected by an edge if and only if $d_{\tree}(v(s),w) \leq 1$.
 \end{enumerate}
We declare $\CX$ to be the flag simplicial complex with the 1-skeleton defined above.   

\begin{figure}[h]
\begin{tikzpicture}[scale=.7,inner sep=0pt, dot/.style={fill=black,circle,minimum size=3pt}]

\coordinate (origin) at (0,0);

\pgfmathsetmacro\vva{1.5};
\pgfmathsetmacro\vvc{1};
\pgfmathsetmacro\vve{-.55};
\pgfmathsetmacro\vvb{.55};
\pgfmathsetmacro\vvd{1};

\join{thin}{0}{0}{0}{3}
\join{thin}{0}{0}{-2}{-2}
\join{thin}{0}{0}{2}{-2}

\vblownup{lime}{0}{3}{$v_1$}{$t_1$}{$t_2$}{$t_3$}{\vertexedges}{ultra thick}
\vblownup{yellow}{0}{0}{$v_2$}{$s_1$}{$s_2$}{$s_3$}{\vertexedges}{ultra thick}
\vblownup{pink}{-2}{-2}{$v_3$}{\phantom{$t_1$}}{\phantom{$t_2$}}{\phantom{$t_3$}}{\vertexedges}{ultra thick}
\vblownup{cyan}{2}{-2}{$v_4$}{\phantom{$t_1$}}{\phantom{$t_2$}}{\phantom{$t_3$}}{\vertexedges}{ultra thick}

\draw[thick,->>] (3,1.5) .. controls (5,2)  .. (7,1.5);

\draw[ultra thick] (9,0) -- (9,3);
\draw[ultra thick] (9,0) -- (11,-2);
\draw[ultra thick] (9,0) -- (7,-2);

\node[draw,fill=lime,circle] at (9,3) {\small $v_1$};
\node[draw,fill=yellow,circle] at (9,0) {\small $v_2$};
\node[draw,fill=pink,circle] at (7,-2) {\small $v_3$};
\node[draw,fill=cyan,circle] at (11,-2) {\small $v_4$};


\node at (-1.25,0) {$v(s_i) = $};
\node at (-1.25,3) {$v(t_i)=$};

\node at (5,2.4) {$Z$};



\end{tikzpicture}
\caption{The simplicial map $Z$ restricted to a part of $\CX$ (on the left) to a part of the union of trees $T_\alpha$ (on the right). 
Vertices in in $\CX$ are colored the same as their image vertices in $T_\alpha$.
}\label{fig:blowup}

\end{figure}

The map $\kvtx \to \vtx$ given by $s \mapsto v(s)$ and the identity $\vtx \to \vtx$ extends to a surjective simplicial map
 \[ Z \colon \CX \to \bigsqcup_{\alpha \in \CP} T_\alpha. \]
We note that we may view the union $\bigsqcup T_\alpha$ on the right as a subgraph of $\CX^{(1)}$, making $Z$ a retraction.

For any vertex $v$ in any tree $T_\alpha$, $Z^{-1}(v)$ is the join of $\{v\}$ and the set $\mathcal K^v$:
\begin{equation} \label{E: Z preimage of vertex} Z^{-1}(v) = \{v\} \star \mathcal K^v. \end{equation}
For any pair of adjacent vertices $v,w \in T_\alpha$ (so $d_{\tree}(v,w) = 1$), the preimage of the edge $[v,w] \subset T_\alpha$ is also a join:
\begin{equation} \label{E: Z preimage of edge} Z^{-1}([v,w]) = Z^{-1}(v) \star Z^{-1}(w) = (\{v\} \star \mathcal K^v) \star (\{w\} \star \mathcal K^w).\end{equation}

 \begin{lemma}\label{lem:max_simplex}
  The maximal simplices of $\CX$ are exactly the $3$--simplices with vertex set $\{s,v(s),t,v(t)\}$ where $s,t \in \kvtx$ and $d_{\tree}(v(s),v(t))=1$. In this case, we say that $(s,t)$ defines a maximal simplex, denoted $\sigma(s,t)$.
 \end{lemma}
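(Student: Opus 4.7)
The plan is straightforward: verify that each set $\{s,v(s),t,v(t)\}$ of the described form spans a $3$--simplex, then show that no proper clique in $\mathcal{X}^{(1)}$ is maximal unless it is of this form. Throughout, the key geometric fact is that each $T_\alpha$ is a tree, so it contains no triangles; this forces the strong bounds on clique sizes.

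\textbf{Step 1 (These sets span $3$--simplices).} Fix $s,t\in\kvtx$ with $d_{\tree}(v(s),v(t))=1$. One checks all six pairs among $\{s,v(s),t,v(t)\}$ are edges using the three edge types: $v(s) \sim v(t)$ via type (1); $s\sim t$ via type (2); $s\sim v(s)$ and $t\sim v(t)$ since $d_{\tree}(v(\cdot),v(\cdot))=0\le 1$ (type 3); and $s\sim v(t)$, $t\sim v(s)$ since $d_{\tree}(v(s),v(t))=1\le 1$ (type 3). Because $\mathcal{X}$ is a flag complex, this $4$--clique spans a $3$--simplex $\sigma(s,t)$.

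\textbf{Step 2 (Maximality).} Suppose some vertex $x\in \mathcal{X}^{(0)}\setminus\{s,v(s),t,v(t)\}$ is adjacent in $\mathcal{X}^{(1)}$ to all four of these. If $x=w\in\vtx$, then the edge conditions $w\sim v(s)$ and $w\sim v(t)$ force $d_{\tree}(w,v(s))=d_{\tree}(w,v(t))=1$; combined with $d_{\tree}(v(s),v(t))=1$, the vertices $w,v(s),v(t)$ form a triangle in a single tree $T_\alpha$, which is impossible unless $w\in\{v(s),v(t)\}$. If instead $x=r\in\kvtx$, the conditions $r\sim s$ and $r\sim t$ require $d_{\tree}(v(r),v(s))=d_{\tree}(v(r),v(t))=1$, giving the same forbidden triangle; hence $v(r)\in\{v(s),v(t)\}$. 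But then, say $v(r)=v(s)$ forces $d_{\tree}(v(r),v(s))=0\ne 1$, so $r\not\sim s$, a contradiction. Thus no such $x$ exists and $\sigma(s,t)$ is maximal.

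\textbf{Step 3 (Every maximal simplex has this form).} Let $C$ be any clique in $\mathcal{X}^{(1)}$, and write $V=C\cap \vtx$ and $S=C\cap\kvtx$. Any two vertices in $V$ lie at $d_{\tree}$--distance $1$, so $V$ spans a clique in one of the trees $T_\alpha$; since trees have no triangles, $|V|\le 2$. The same tree argument applied to $\{v(s):s\in S\}$ shows this set has at most two elements; and since distinct $s,s'\in S$ with $v(s)=v(s')$ would give $d_{\tree}(v(s),v(s'))=0$ and hence no edge, the map $s\mapsto v(s)$ is injective on $S$, so $|S|\le 2$. If $|S|=2$, say $S=\{s,t\}$ with $v(s)\ne v(t)$, then every $v\in V$ satisfies $d_{\tree}(v,v(s)),d_{\tree}(v,v(t))\le 1$; the tree-triangle obstruction forces $V\subseteq\{v(s),v(t)\}$, so $C\subseteq\{s,v(s),t,v(t)\}$, which is contained in a maximal simplex $\sigma(s,t)$. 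The remaining cases $|S|\le 1$ are handled by the same triangle argument and are all seen to be properly contained in some $\sigma(s,t)$ (e.g.\ extend by choosing a neighbor of $v(s)$ in the tree and a vertex in its $\mathcal{K}$--set). This completes the proof.

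The only subtle point is case analysis in Step 3 to ensure that every subclique actually extends to one of the form $\sigma(s,t)$; this uses that each $\mathcal{K}^v$ is nonempty and that every $v\in\vtx$ has a tree-neighbor (which holds because each Bass--Serre tree $T_\alpha$ has no leaves, being dual to the cylinder decomposition of $S$ in a completely periodic direction).
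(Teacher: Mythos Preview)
Your proof is correct. The underlying obstruction---that the Bass--Serre trees $T_\alpha$ contain no triangles---is the same as in the paper, but you package it differently. The paper argues via the simplicial retraction $Z\colon \CX\to \bigsqcup_\alpha T_\alpha$: since $Z$ is simplicial and simplices of a tree are vertices or edges, every simplex of $\CX$ lies in some $Z^{-1}(v)=\{v\}\star\CK^v$ or some $Z^{-1}([v,w])=(\{v\}\star\CK^v)\star(\{w\}\star\CK^w)$, and maximal simplices in the latter join are visibly of the form $\sigma(s,t)$. Your approach instead runs a direct clique analysis on $\CX^{(1)}$, repeatedly invoking the no-triangles fact by hand. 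The paper's route is shorter and more conceptual (the map $Z$ does the bookkeeping); yours is more elementary and self-contained, and makes explicit the auxiliary facts (nonemptiness of each $\CK^v$, no leaves in $T_\alpha$) that guarantee every proper clique genuinely extends.
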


 \begin{proof}
 Because the map $Z$ is simplicial, any simplex of $\CX$ is contained in $Z^{-1}(v)$ or $Z^{-1}([v,w])$ for some vertex $v$ in some $T_\alpha$ or some edge $[v,w]$ in some $T_\alpha$.  The lemma thus follows from \eqref{E: Z preimage of vertex} and \eqref{E: Z preimage of edge}.
 \end{proof}
 
Given a vertex $s \in \kvtx$, recall from Lemma~\ref{L:KL-like} that $M(s)=(\lambda^{v(s)})^{-1}(N_{K_1}(s)) \subset \bTheta^{v(s)}$, for $K_1$ as in Proposition \ref{prop:KL} (and Lemma~\ref{L:KL-like}).  Given a pair of vertices $(s,t)$ in $\kvtx$ that define a maximal simplex $\sigma(s,t)$, we will write
$M(s,t) = M(s) \cap M(t)$.
\begin{lemma}\label{lem:dense}
There exists $R>0$ with the following properties.
\begin{enumerate}
 \item \label{item:diam intersect} For any pair of adjacent vertices $s,t\in \kvtx$ (i.e., defining a maximal simplex $\sigma(s,t)$), $M(s,t)$ is a non-empty subset of diameter at most $R$.
 \item \label{item:dense bTheta} Given $v\in \vtx$, we have
 $$\bTheta^v=\bigcup_{v(s)=v,d_\tree(v(s),v(t))=1} M(s,t).$$
 \item \label{item:dense in KL} Fixing $s$, we have
 $$M(s)=\bigcup_{d_\tree(v(s),v(t))=1}M(s,t).$$
  \item \label{item:dense bar E} The collection of all $M(s,t)$ is $R$--dense in $\bar{E}$.
\end{enumerate}
\end{lemma}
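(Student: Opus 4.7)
\textbf{Proof plan for Lemma~\ref{lem:dense}.}
The strategy is to leverage the quasi-isometry $\lambda^v \times \lambda^w \colon \bTheta^v \cap \bTheta^w \to \mathcal K^v \times \mathcal K^w$ from Proposition~\ref{prop:KL}(\ref{item:KL_qi}) for parts (1)--(3), and to use cocompactness to reduce part (4) to part (2).

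For part (\ref{item:diam intersect}), the pair $(s,t)$ defining a maximal simplex has $v = v(s)$ and $w = v(t)$ adjacent in $T_{\alpha(v)}$. By Proposition~\ref{prop:KL}(\ref{item:KL_qi}) the map $\lambda^v\times\lambda^w$ on $\bTheta^v\cap \bTheta^w$ is a $(K_1,K_1)$--coarsely surjective quasi-isometry onto $\mathcal K^v\times \mathcal K^w$, so some point of $\bTheta^v\cap \bTheta^w$ maps within $K_1$ of $(s,t)$; this point lies in $M(s,t)$, giving non-emptiness. Since $M(s,t)\subset \bTheta^v\cap \bTheta^w$ is exactly the preimage of the bounded set $N_{K_1}(s)\times N_{K_1}(t)\subset \mathcal K^v\times \mathcal K^w$ under this quasi-isometry, its diameter in the induced path metric (which is uniformly coarsely equivalent to $\bar d$) is bounded by a uniform constant, giving $R$.

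For parts (\ref{item:dense bTheta}) and (\ref{item:dense in KL}), the key observation is that every point $x\in \bTheta^v$ lies in $\bTheta^v\cap \bTheta^w$ for some $w$ adjacent to $v$: indeed, every saddle connection in the spine $\theta^v$ bounds a strip to some adjacent spine, and every point of the thickened region $\bTheta^v\setminus \theta^v$ lies in an interior of a strip shared with $\bTheta^w$ for exactly one adjacent $w$. Given such $x$, set $s=\lambda^v(x)\in \mathcal K^v$ and $t=\lambda^w(x)\in \mathcal K^w$; then trivially $x\in M(s)\cap M(t)= M(s,t)$, and $(s,t)$ defines a maximal simplex since $d_\tree(v,w)=1$. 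This simultaneously proves (\ref{item:dense bTheta}) (writing any $x\in \bTheta^v$ as an element of some $M(s,t)$ with $v(s)=v$) and (\ref{item:dense in KL}) (taking $x\in M(s)\subset \bTheta^{v(s)}$, where now the vertex $s$ is prescribed and we choose $t$ by the same procedure); the reverse containment is immediate from the definition of $M(s,t)$.

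Part (\ref{item:dense bar E}) reduces to part (\ref{item:dense bTheta}) once we observe two facts. First, $\bigcup_{v\in T_\alpha^{(0)}} \bTheta^v = \partial \mathcal B_\alpha$ for each $\alpha\in\CP$: over any $X\in \partial B_\alpha$, the fiber $E_X$ is covered by the cylinders and spines in direction $\alpha$ (since $\alpha$ is completely periodic), and these are precisely the union of $\bTheta^v_X$ as $v$ ranges over $T_\alpha^{(0)}$. Second, $\bar E$ is within uniformly bounded distance of $\bigcup_{\alpha\in\CP}\partial \mathcal B_\alpha$: the action of $\Gamma$ on $\bar E$ is cocompact, equivalently, the $G$-action on $\bar D$ is cocompact, so every point of $\bar D$ lies within some uniform distance of $\bigcup_\alpha \partial B_\alpha$, and this bound lifts to $\bar E$ by moving horizontally within a disk $D_y$ (which is an isometric copy of $D$). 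Combining both facts with part (\ref{item:dense bTheta}) (and possibly enlarging $R$) shows that $\bigcup M(s,t)$ is $R$--dense in $\bar E$.

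I expect no serious obstacle; the main point is the slightly careful book-keeping in part (\ref{item:dense bTheta}) to verify that every point of $\bTheta^v$ really does lie in some $\bTheta^v\cap\bTheta^w$ with $w$ adjacent to $v$, which follows from the combinatorial structure of the thickened spine as spine plus strips to neighboring spines.
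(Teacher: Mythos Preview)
Your proposal is correct and follows essentially the same approach as the paper's proof: Proposition~\ref{prop:KL}(\ref{item:KL_qi}) handles part (1), the observation $\bTheta^v=\bigcup_{d_\tree(v,w)=1}\bTheta^v\cap\bTheta^w$ together with $x\in M(\lambda^v(x),\lambda^w(x))$ handles parts (2) and (3), and coarse density of the $\bTheta^v$ in $\bar E$ handles part (4). Your version supplies a bit more detail on why the $\bTheta^v$ are coarsely dense (via cocompactness and $\bigcup_v\bTheta^v=\partial\mathcal B_\alpha$), but the argument is the same.
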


\begin{proof}
 Item \eqref{item:diam intersect} follows from Proposition \ref{prop:KL}\eqref{item:KL_qi}. More precisely, the fact that $M(s,t)$ is non-empty follows from $K_1$-coarse-surjectivity of $\lambda^{v(s)}\times\lambda^{v(t)}$, while boundedness follows from the fact that said map is a quasi-isometry.
 
 In order to show item \eqref{item:dense bTheta}, notice that
 \[ \bTheta^v=\bigcup_{d_{\tree}(v,w)=1} \bTheta^v\cap\bTheta^w.\]
That is, every point of $\bTheta^v$ is also in $\bTheta^w$ for some $w$ adjacent to $v$. In view of this, we conclude by noticing that if $x\in \bTheta^v\cap\bTheta^w$, then $x\in M(\lambda^v(x), \lambda^w(x))$. Item \eqref{item:dense in KL} follows similarly.
 
 Finally, item \eqref{item:dense bar E} follows from item \eqref{item:dense bTheta} and the fact that the collection of all $\bTheta^v$ is coarsely dense in $\bar E$.
\end{proof}

Next we define a graph $\CW$ whose vertex set is the set of maximal simplices of $\CX$. We would like to just connect maximal simplices when the corresponding subsets $M(s,t)$ are close in $\bar E$ (first bullet below); however, in order to arrange item \eqref{item:C_0=C} of the definition of combinatorial HHS (and only for that reason) we need different closeness constants for different situations.  We fix $R$ as in Lemma \ref{lem:dense}, and moreover we require $R>K_1^2+K_1$, for $K_1$ as in Proposition~\ref{prop:KL} and Lemma~\ref{L:KL-like}.

Given maximal simplices $\sigma(s_1,t_1)$ and $\sigma(s_2,t_2)$, we declare them to be connected by an edge in $\CW$ if one of the following holds:
\begin{equation}\label{Eq:Def W-edges}
\begin{array}{l} \bullet \, \,  \bar d(M(s_1,t_1),M(s_2,t_2))\leq 10 R \hspace{5cm} \\
 \bullet \,\, s_1=s_2 \mbox{ and } \bar d(M(t_1),M(t_2))\leq 10 R \end{array}
 \end{equation}
  Here the the $\bar d$--distances are the {\em infimal} distances between the sets in $\bar E$ (as opposed to the diameter of the union).
  Note that since $M(s,t)=M(t,s)$, the second case also implicitly describes a ``symmetric case'' with $s_i$ and $t_i$ interchanged.

The following is immediate from Lemma \ref{lem:retract_on_the_middle}, setting $R'=\max\{10R,N_{\mathcal K}(10R)\}$.

\begin{lemma}\label{lem:connect_not_far}
 There exists $R'\geq 10R$ so then the following holds. If $s,t_1,t_2\in \kvtx$ are vertices with $s$ connected to both $t_i$ in $\CX$ and $\bar d(M(t_1),M(t_2))\leq 10 R$, then $\bar d(M(s,t_1),M(s,t_2))\leq R'$. In particular, whenever $\sigma(s_1,t_1)$ and $\sigma(s_2,t_2)$ are connected in $\CW$, we have $\bar d(M(s_1,t_1),M(s_2,t_2))\leq R'$.
\end{lemma}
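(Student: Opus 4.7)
The plan is to observe that the lemma is an essentially immediate consequence of Lemma~\ref{lem:retract_on_the_middle}, combined with the definition of edges in $\CW$ from \eqref{Eq:Def W-edges}.

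For the first assertion, suppose $s,t_1,t_2\in\kvtx$ are such that $s$ is connected in $\CX$ to both $t_i$, and $\bar d(M(t_1),M(t_2))\leq 10R$. The edge conditions in the definition of $\CX$ imply that $d_\tree(v(s),v(t_i))=1$ for $i=1,2$, so we may apply Lemma~\ref{lem:retract_on_the_middle} with $v=v(s)$ and $v_i=v(t_i)$ to obtain
\[\bar d\bigl(M(s)\cap M(t_1),\,M(s)\cap M(t_2)\bigr)\leq N_{\mathcal K}\bigl(\bar d(M(t_1),M(t_2))\bigr)\leq N_{\mathcal K}(10R).\]
Since $M(s,t_i)=M(s)\cap M(t_i)$ by definition, setting $R'=\max\{10R,\,N_{\mathcal K}(10R)\}$ gives the first claim.

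For the ``In particular'' assertion, I would consider the two clauses defining $\CW$-adjacency in \eqref{Eq:Def W-edges} separately. If $\sigma(s_1,t_1)$ and $\sigma(s_2,t_2)$ are connected via the first bullet, then $\bar d(M(s_1,t_1),M(s_2,t_2))\leq 10R\leq R'$ and we are done. If instead they are connected via the second bullet, say $s_1=s_2=:s$ with $\bar d(M(t_1),M(t_2))\leq 10R$, then since each $\sigma(s,t_i)$ is a maximal simplex the vertex $s$ is $\CX$-adjacent to both $t_1$ and $t_2$, so the first assertion of the lemma applies and yields $\bar d(M(s,t_1),M(s,t_2))\leq R'$. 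The symmetric variant (with the roles of $s_i$ and $t_i$ interchanged) is identical.

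I do not expect any obstacles here; the content of the lemma has been entirely packaged into Lemma~\ref{lem:retract_on_the_middle}, and what remains is only to match notation and unpack the two possible reasons that two maximal simplices can be $\CW$-adjacent.
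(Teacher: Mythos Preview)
Your proposal is correct and follows essentially the same approach as the paper: the paper simply states the lemma is ``immediate from Lemma~\ref{lem:retract_on_the_middle}, setting $R'=\max\{10R,N_{\mathcal K}(10R)\}$,'' and you have spelled out precisely those details along with the case analysis for the two bullets in \eqref{Eq:Def W-edges}.
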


\begin{lemma}\label{lem:W=barE}
 $\CW$ is quasi-isometric to $\bar E$, by mapping each $\sigma(s,t)$ to (any point in) $M(s,t)$. Moreover, the extension group $\Gamma$ acts by simplicial automorphisms on $\mathcal X$, induced by the existing action on $\mathcal V\subseteq \mathcal X^{(0)}$ and the action on $\mathcal K\subseteq \mathcal X^{(0)}$ as in Proposition \ref{prop:KL}\eqref{item:equivariance}. The resulting action on maximal simplices extends to a metrically proper cobounded action on $\CW$.
\end{lemma}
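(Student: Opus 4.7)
The plan is to prove the quasi-isometry by applying the graph approximation Proposition \ref{P:graph approximation}. For each maximal simplex $\sigma(s,t)$ choose a point $x(s,t) \in M(s,t)$ and let $\Upsilon \subset \bar E$ denote the resulting set. By Lemma \ref{lem:dense}\eqref{item:dense bar E} the collection $\{M(s,t)\}$ is $R$--dense in $\bar E$, and by Lemma \ref{lem:dense}\eqref{item:diam intersect} each $M(s,t)$ has diameter at most $R$, so $\Upsilon$ is $2R$--dense. I would then verify the two edge conditions of Proposition \ref{P:graph approximation} for the graph on $\Upsilon$ inherited from $\CW$: first, if $\bar d(x(s_1,t_1),x(s_2,t_2))\leq 6R$, then $\bar d(M(s_1,t_1),M(s_2,t_2))\leq 6R < 10R$, so $\sigma(s_1,t_1)$ and $\sigma(s_2,t_2)$ are joined in $\CW$ by the first bullet of \eqref{Eq:Def W-edges}; second, if they are joined in $\CW$, then Lemma \ref{lem:connect_not_far} gives $\bar d(M(s_1,t_1),M(s_2,t_2))\leq R'$, so $\bar d(x(s_1,t_1),x(s_2,t_2))\leq R'+2R$. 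Proposition \ref{P:graph approximation} then produces the desired quasi-isometry $\CW \to \bar E$.

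For the group action, I would define the $\Gamma$--action on $\CX$ by combining the natural action on $\vtx$ (inherited from the $\Gamma$--action on $\bar E$, which permutes horoballs and hence Bass--Serre trees) with the action on $\kvtx = \bigsqcup_{v\in\vtx}\mathcal K^v$ determined by the isometries $g\colon \mathcal K^v\to \mathcal K^{gv}$ from Proposition \ref{prop:KL}\eqref{item:equivariance}. This action is simplicial on $\CX$ because the edge relations depend only on $d_\tree$ and on the assignment $s\mapsto v(s)$, both of which are $\Gamma$--equivariant by construction; consequently $\Gamma$ permutes maximal simplices via $g\cdot \sigma(s,t)=\sigma(gs,gt)$, and there are finitely many $\Gamma$--orbits of simplex links since the peripheral structure of $\Gamma$ is finite.

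The equivariance $\lambda^{gv}(gx) = g\lambda^v(x)$ from Proposition \ref{prop:KL}\eqref{item:equivariance} implies $g\cdot M(s)=M(gs)$ and hence $g\cdot M(s,t)=M(gs,gt)$ for all $g\in \Gamma$. Since $\Gamma$ acts by isometries on $\bar E$, the $\bar d$--distances appearing in the definition of $\CW$--edges \eqref{Eq:Def W-edges} are preserved by $\Gamma$, so the action on vertices of $\CW$ is by graph automorphisms. Metric properness and coboundedness of this action on $\CW$ follow by transport through the quasi-isometry of the first paragraph, since the basepoint assignment $\sigma(s,t)\mapsto x(s,t)$ is coarsely $\Gamma$--equivariant (the discrepancy is bounded by the diameter $R$ of each $M(s,t)$), and the $\Gamma$--action on $\bar E$ is metrically proper and cocompact as $\bar E$ is quasi-isometric to $\Gamma$.

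The main technical input is the equivariance clause \eqref{item:equivariance} of Proposition \ref{prop:KL}, which was arranged precisely so that the window-type sets $M(s)\subset \bTheta^v$ transform correctly under $\Gamma$; with that in hand the argument becomes essentially bookkeeping together with a single application of the graph approximation criterion. The subtlest point to double-check is that the two styles of edges in \eqref{Eq:Def W-edges} are both covered by Lemma \ref{lem:connect_not_far}, so that the edge-length upper bound holds uniformly.
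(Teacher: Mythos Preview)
Your proof is correct and follows essentially the same approach as the paper's: apply Proposition~\ref{P:graph approximation} using Lemma~\ref{lem:dense}\eqref{item:dense bar E} and Lemma~\ref{lem:connect_not_far} for the quasi-isometry, then use the equivariance in Proposition~\ref{prop:KL}\eqref{item:equivariance} to get $g\cdot M(s,t)=M(gs,gt)$ and transport metric properness and coboundedness through the (coarsely equivariant) quasi-isometry. Your write-up simply makes explicit the details that the paper leaves to the reader; the only extraneous remark is the aside about finitely many orbits of links, which is not part of this lemma and is verified separately in the proof of Theorem~\ref{thm:E_is_chhs}.
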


\begin{proof}
 In view of Lemma \ref{lem:connect_not_far}, the first part follows by combining Lemma \ref{lem:dense}(\ref{item:dense bar E}) and Proposition \ref{P:graph approximation} (applied to any choice of a point in each $M(s,t)$). 
 
 It is immediate to check that the $\Gamma$-action defined on the $0$-skeleton of $\CX$ extends to an action on $\CX$.  That the resulting action on maximal simplices of $\CX$ (that is, the $0$-skeleton of $\CW$) extends to an action on $\CW$ follows from the equivariance property in Proposition~\ref{prop:KL}(\ref{item:equivariance}) and the definitions of the sets $M(s)$ and $M(s,t)$.
 
 Moreover, the quasi-isometry $\CW\to\bar E$ described in the statement is $\Gamma$-equivariant, so that the action of $\Gamma$ on $\CW$ is metrically proper and cobounded since the action of $\Gamma$ on $\bar E$ has these properties.
 \end{proof}

The goal for the remainder of this section is to prove the following.

\begin{theorem}\label{thm:E_is_chhs}
The pair $(\CX,\CW)$ is a combinatorial HHS. Moreover, there is an action of $\Gamma$ on $\CX$ satisfying the properties stated in Theorem \ref{thm:hhs_links}.
\end{theorem}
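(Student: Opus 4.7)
The plan is to verify the four conditions of Definition \ref{defn:combinatorial_HHS} for the pair $(\CX, \CW)$, at which point the conclusion follows from Theorem \ref{thm:hhs_links}. The group action requirement is already supplied by Lemma \ref{lem:W=barE}, and condition (1) (no infinite $\propnest$--chains) is immediate from Lemma \ref{lem:max_simplex}, since every simplex of $\CX$ has at most four vertices.

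The bulk of the argument is a careful case analysis of the equivalence classes $[\Delta]$ of non-maximal simplices of $\CX$. Using the map $Z\colon \CX \to \bigsqcup_\alpha T_\alpha$ and the description of maximal simplices in Lemma \ref{lem:max_simplex}, I expect the $\sim$--classes to fall into a short list of families parametrized essentially by either (i) the empty simplex, (ii) a tree vertex $v \in \vtx$, or (iii) an $s \in \kvtx$ lying over such a $v$. In each case I would identify $\mathcal C([\Delta])$ with one of the spaces constructed in \S\ref{S:projections}: case (i) with $\CX^{+\CW}$, which in turn is quasi-isometric to $\hat E$ via $\sigma(s,t) \mapsto M(s,t)$ by combining Lemma \ref{lem:W=barE} and Theorem \ref{T:quotient metric on E hat}; case (ii) with $\bXi^v$; and case (iii) with $\mathcal K^{v(s)}$.

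With this dictionary in place, condition (2) splits into two sub-claims. Hyperbolicity of $\mathcal C([\Delta])$ is supplied by Theorem \ref{T:quotient metric on E hat} in case (i), Lemma \ref{lem:Xi_hyp} in case (ii), and Proposition \ref{prop:KL} (a quasi-line) in case (iii). The quasi-isometric embedding $\mathcal C([\Delta]) \hookrightarrow Y_\Delta$ is the main technical step, the analogue of Bowditch's fineness condition in our setting; it should amount to a reformulation of Proposition \ref{prop:R-projections}, which says that the subsets coming from $\sat(\Delta)$ project to uniformly bounded sets under $\Lambda^v$ and $\xi^v$, thereby preventing shortcuts in $Y_\Delta$ between vertices of the link. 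I anticipate packaging this as Lipschitz coarse retractions of $Y_\Delta$ onto $\mathcal C([\Delta])$ built from the maps $\Lambda^v, \xi^v$, which should be the content of the outline's Lemma \ref{lem:retractions}. Condition (3) then reduces to a finite case check, using that $[\Delta]\orth[\Delta']$ only occurs between matched type-(ii)/type-(iii) pairs sharing a common underlying $v$.

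The main obstacle, flagged by the outline as ``essentially the one property left,'' is condition (4), and it is precisely what motivates the asymmetric second clause in the edge definition \eqref{Eq:Def W-edges} for $\CW$. The delicate case arises when $\Delta$ involves some $s \in \mathcal K^v$ and the two non-adjacent vertices of $\link(\Delta)$ are elements $t_1, t_2 \in \mathcal K^{w_0}$ for a common neighbor $w_0$ of $v$: the only candidates for replacement maximal simplices of the form $\Delta \star \Delta'$ are $\sigma(s,t_1)$ and $\sigma(s,t_2)$, but the first clause of \eqref{Eq:Def W-edges} is not directly available since $\bar d(M(s,t_1), M(s,t_2))$ need not satisfy the required $10R$ bound. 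However, the $\CW$--adjacency of the ambient maximal simplices forces $\bar d(M(t_1), M(t_2)) \le 10R$, and then the second clause of \eqref{Eq:Def W-edges}, with common $\kvtx$--vertex $s$, supplies the required $\CW$--edge between $\sigma(s,t_1)$ and $\sigma(s,t_2)$. Lemma \ref{lem:retract_on_the_middle}, whose proof was the technical heart of \S\ref{S:projections}, is the input that underlies this mechanism and simultaneously explains the choice of parameter $R'$ appearing in Lemma \ref{lem:connect_not_far}.
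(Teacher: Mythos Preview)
Your overall architecture matches the paper's proof exactly: classify the $\sim$--classes, identify each $\mathcal C(\Delta)$ with one of $\hat E$, $\bXi^v$, $\mathcal K^w$, or a bounded join, prove hyperbolicity from \S\ref{S:projections}, build Lipschitz retractions $Y_\Delta\to\mathcal C(\Delta)$ from $\Lambda^v,\xi^v$ for the quasi-isometric embedding, and then do case checks for \eqref{item:chhs_join} and \eqref{item:C_0=C}. Your treatment of \eqref{item:C_0=C} is close to the paper's and correctly identifies why the second bullet of \eqref{Eq:Def W-edges} exists.

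However, the classification you describe is wrong in a way that would derail the execution. There are nine types of non-empty non-maximal simplices, not three; six of them (the ones in Lemma~\ref{L:remaining simplices}) have links that are joins or points, so $\mathcal C(\Delta)$ has diameter at most $3$, and you must list them to handle \eqref{item:chhs_flag} and \eqref{item:chhs_join}. More importantly, your parametrization of the two unbounded non-maximal types is off. The $\bXi$--type is the edge $\{s,v(s)\}$, with $\mathcal C\cong\bXi^{v(s)}$; the $\mathcal K$--type is the $2$--simplex $\{s,v(s),w\}$, with $\mathcal C\cong\mathcal K^{w}$, parametrized by the \emph{neighbor} $w$, not by $s$ (see Lemma~\ref{L: KL simplex}). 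Your ``(ii) a tree vertex $v$'' and ``(iii) an $s\in\kvtx$'' do not correspond to these: the $0$--simplices $\{v\}$ and $\{s\}$ both have join links and bounded $\mathcal C$.

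Two further gaps. Your argument for \eqref{item:chhs_flag} via bounded dimension is insufficient; $[\Delta_1]\propnest[\Delta_2]$ is about strict containment of links, not faces, and the paper instead checks (Lemma~\ref{L: nice join properties }(a)) that two simplices of the same type never have strictly nested links, bounding chains by the number of types. And condition \eqref{item:chhs_join} is about common \emph{nesting}, not orthogonality; it requires the two-case argument of Lemma~\ref{L: nice join properties }(b), which splits on whether a $\bXi$--type link sits inside $\link(\Delta)\cap\link(\Delta')$.
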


\subsection{Simplices, links, and saturations}
\label{subsec:links_list}
Before giving the proof of Theorem~\ref{thm:E_is_chhs}, we begin by describing explicitly the kinds of simplices of $\CX$ that there are, explain what their links and saturations are, and observe some useful properties.

\begin{lemma} [Empty simplex] \label{L: empty simplex case} 
For the empty simplex, $\mathcal C(\emptyset)=\CX^{+\CW}$ is quasi-isometric to $\hat{E}$.
\end{lemma}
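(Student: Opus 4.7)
First, I identify $\mathcal C(\emptyset)$: since $\link(\emptyset)=\mathcal X$ and no non-empty simplex of $\mathcal X$ has link equal to all of $\mathcal X$ (a vertex is excluded from its own link), we have $\sat(\emptyset)=\emptyset$, so $Y_\emptyset=\mathcal X^{+\mathcal W}$ and $\mathcal C(\emptyset)=\mathcal X^{+\mathcal W}$.

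The plan is then to construct an explicit quasi-isometry $F\colon \mathcal X^{+\mathcal W}\to \hat E$ sending each $v\in\vtx$ to itself (viewed in $T_{\alpha(v)}\subset\hat E$) and each $s\in\kvtx$ to $v(s)$. Coarse Lipschitzness would be verified edge-by-edge. All $\mathcal X$-edges reduce (after the projection $s\mapsto v(s)$) to pairs of tree vertices with $d_\tree\leq 1$, which lie at uniformly bounded $\hat E$-distance by the strip-width bound of Lemma~\ref{L:strip-and-saddle-bound}. For a $\mathcal W$-augmented edge between vertices of $\mathcal W$-adjacent simplices $\sigma(s_1,t_1),\sigma(s_2,t_2)$, Lemma~\ref{lem:connect_not_far} gives $\bar d(M(s_1,t_1),M(s_2,t_2))\leq R'$; applying the $1$-Lipschitz projection $\bar P\colon \bar E\to\hat E$, together with the observation that $\bar P(\bTheta^{v(s_i)})$ is the star of $v(s_i)$ in $T_{\alpha(s_i)}$ (of uniformly bounded diameter by Lemma~\ref{L:strip-and-saddle-bound}), then yields the required uniform bound on the $\hat E$-distance between the endpoint images. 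Coarse density of $F$'s image follows from coarse density of $\vtx$ in $\hat E$, established in \cite{DDLSI}.

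The main step is the coarse co-Lipschitz estimate $d_{\mathcal X^{+\mathcal W}}(v,w)\lesssim \hat d(v,w)$ for $v,w\in\vtx$, which suffices because $\vtx$ is $1$-dense in $\mathcal X^{+\mathcal W}$ (each $s\in\kvtx$ is adjacent to $v(s)$). I would invoke the combinatorial-path construction of \cite{DDLSI} to connect $v$ to $w$ in $\hat E$ by a concatenation of horizontal jumps of total length $\leq C\hat d(v,w)$, where each jump lifts to a geodesic of length $L_i$ in some $\bar D_{z_i}$ for a cone point $z_i\in\bar\Sigma$. Because cone points lie on spines in every completely periodic direction (by the Veech dichotomy and the cylinder decomposition of $E_{X_\alpha}$), each intermediate endpoint $u_i=\bar P(\ell_{z_i,\alpha})$ is automatically a tree vertex, and the lifted jump runs from some $y_{i-1}\in \ell_{z_i,\alpha_{i-1}}\cap\theta^{u_{i-1}}$ to $y_i\in \ell_{z_i,\alpha_i}\cap \theta^{u_i}$.

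To connect $u_{i-1}$ to $u_i$ in $\mathcal X^{+\mathcal W}$ at bounded cost per unit of $L_i$, I would subdivide the lifted jump into at most $\lceil L_i/R\rceil$ pieces of $\bar d$-length at most $R$ and use the $R$-density of the sets $M(s,t)$ in $\bar E$ (Lemma~\ref{lem:dense}(\ref{item:dense bar E})) to pick intermediate $M_{i,j}=M(s_{i,j},t_{i,j})$ near each subdivision point. By Lemma~\ref{lem:dense}(\ref{item:dense bTheta}) the first and last can be arranged to contain $y_{i-1},y_i$ with $v(s_{i,0})=u_{i-1}$ and $v(s_{i,\lceil L_i/R\rceil})=u_i$. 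Consecutive $M_{i,j},M_{i,j+1}$ then lie within $\bar d$-distance $\leq 3R\leq 10R$, so their simplices are $\mathcal W$-adjacent via the first case of \eqref{Eq:Def W-edges}, producing a $\mathcal W$-augmented edge between the associated vertices of $\mathcal X^{+\mathcal W}$. Summing over all jumps yields $d_{\mathcal X^{+\mathcal W}}(v,w)\leq \sum_i\lceil L_i/R\rceil\lesssim \hat d(v,w)$, completing the argument. The main subtlety will be the subdivision step for long jumps, together with the Veech-dichotomy observation that aligns vertex labels at jump endpoints.
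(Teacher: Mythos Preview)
Your proposal is correct and follows essentially the same approach as the paper: define the obvious map to $\hat E$ via $Z$, check Lipschitzness edge-by-edge using Lemma~\ref{lem:connect_not_far}, and for the reverse inequality invoke the combinatorial paths of \cite{DDLSI}. The only real difference is in the final step: the paper uses the bounded-length-jump variant of the combinatorial path lemma and then appeals to the already-established quasi-isometry $\CW\simeq\bar E$ (Lemma~\ref{lem:W=barE}) to convert a single bounded jump into a bounded $\CW$-path, whereas you subdivide arbitrary-length jumps and build the $\CW$-edges by hand from the $\bar d\le 10R$ criterion. Both work; the paper's route is slightly cleaner since it recycles Lemma~\ref{lem:W=barE}, while yours is more self-contained but essentially reproves the graph-approximation step.
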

\begin{proof} We define a map $Z' \colon \CX^{+\CW} \to \hat E$ that extends the (restricted) simplicial map $Z \colon \CX^{(1)} \to \bigsqcup T_\alpha$ already constructed above.   To do that, we must extend over each edge $e=[x,y]$ of $\CX^{+\CW}$ coming from the edge of $\CW$ connecting $\sigma(s_1,t_1)$ and $\sigma(s_2,t_2)$.  Since $Z(x),Z(y) \in \vtx$, and $\bar d(M(s_1,t_1),M(s_2,t_2))\leq R'$ (for $R'$ as in Lemma \ref{lem:connect_not_far}), we see that $d_{\hat E}(v,w)\leq R'$.  We can then define $Z'$ on $e$ to be a constant speed parameterization of a uniformly bounded length path from $Z(x)$ to $Z(y)$.  It follows that $Z'$ is Lipschitz.

The union of the trees $\bigsqcup T_\alpha$ is $R_0$--dense for some $R_0 >0$ by \cite[Lemma~3.6]{DDLSI}, so it suffices to find a one-sided inverse to $Z'$, from $\bigsqcup T_\alpha$ to $\CX^{+\CW}$, and show that with respect to the subspace metric from $\hat E$, it is coarsely Lipschitz.  As already noted, $Z$ restricts to a retraction of $\CX^{(1)}$ onto $\bigsqcup T_\alpha \subset \CX^{(1)} \subset \CX^{+\CW}$, which is thus the required one-sided inverse.  All that remains is to show that it is coarsely Lipschitz.

According to \cite[Lemma~3.8]{DDLSI}, any $v\in T_\alpha,w\in T_\beta$ are connected by a {\em combinatorial path} of length comparable to $\hat{d}(v,w)$.  Such a path is the concatenation of {\em horizontal jumps}, each of which is the $\bar P$--image in $\hat E$ of a geodesic in $\bar D_z$, for some $z \in \bar \Sigma$, that connects two components of $\partial \bar D_z$ and whose interior is disjoint from $\partial \bar D_z$.  From that same lemma, we may assume each horizontal jump has length uniformly bounded above and below, and thus has total number of jumps bounded in terms of $\hat{d}(v,w)$.
Therefore, we can reduce to the case that $v,w$ are joined by a single horizontal jump of bounded length. Such a horizontal jump can also be regarded as a path in $\bar E$ connecting $\bTheta^v$ to $\bTheta^w$. Hence, in view of Lemma \ref{lem:dense}\eqref{item:dense bTheta}, there are $M(s_1,t_1)\subseteq \bTheta^v$ and $M(s_2,t_2)\subseteq \bTheta^w$ within uniformly bounded distance of each other in $\bar E$. Lemma \ref{lem:W=barE} implies that there exists a path in $\CW$ of uniformly bounded length from $\sigma(s_1,t_1)$ to $\sigma(s_2,t_2)$, which can be easily turned into a path of uniformly bounded length from $v$ to $w$ in $\CX^{+\CW}$, as required.
\end{proof}

There is an important type of $1$--dimensional simplex, which we call a {\em $\bXi$--type simplex}, due to the following lemma.  See Figure~\ref{F:link-sat-Xi}.  Given $w \in \vtx$, set
\begin{equation} \label{E:Xi link} \link_{\bXi}(w) = \bigcup_{d_{\tree}(u,w) = 1} Z^{-1}(u) = \bigcup_{d_{\tree}(u,w) = 1} \{u\} \star \CK^u.
\end{equation}

\begin{lemma} [$\bXi$--type simplex]  \label{L: Xi simplex}
Let $\Delta$ be a $1$--simplex of $\CX$ with vertices $s,v(s)$, for $s \in \kvtx$. Then
\[ \link(\Delta) = \link_{\bXi}(v(s)) \quad \mbox{ and } \quad \Sat(\Delta)=\{v(s)\}\cup \CK^{v(s)}.\]
Moreover, $\mathcal C(\Delta)$ is quasi-isometric to $\bXi^{v(s)}$, via a quasi-isometry which is the identity on $\vtx \cap \mathcal C(\Delta)$ and maps $t$ to $v(t)$ for $t \in \kvtx \cap \C(\Delta)$.
\end{lemma}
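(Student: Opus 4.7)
The plan is to prove the three claims in order. Parts (1) and (2) amount to careful combinatorial bookkeeping from the definitions of $\CX$, while part (3) requires genuine geometric input from Section~\ref{S:projections}.

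For \textbf{parts (1)--(2)}, I would first enumerate $\link(\Delta)^{(0)}$ using the three edge types of $\CX$: a vertex $w \in \vtx$ is $\CX$--adjacent to both $s$ and $v(s)$ exactly when $d_\tree(w, v(s)) = 1$, while $t \in \kvtx$ is adjacent to both exactly when $d_\tree(v(t), v(s)) = 1$. This matches the vertex set of $\link_\bXi(v(s))$ from~\eqref{E:Xi link}; moreover, since distinct tree-neighbors of $v(s)$ lie at tree-distance~$2$, the induced edges on $\link(\Delta)$ are precisely the type~(3) edges joining each such $w$ to every $t \in \CK^w$, in agreement with the join structure of $\link_\bXi(v(s))$. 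For~(2), I note that each edge $\{t, v(s)\}$ with $t \in \CK^{v(s)}$ satisfies $v(t) = v(s)$, so the computation of~(1) gives $\link(\{t, v(s)\}) = \link(\Delta)$ verbatim, establishing $\{v(s)\} \cup \CK^{v(s)} \subseteq \Sat(\Delta)$. For the reverse, if $\Delta' \sim \Delta$ contained a vertex $u \in \vtx\setminus\{v(s)\}$ or $r \in \kvtx$ with $v(r) \neq v(s)$, then the containment $\link(\Delta) \subseteq \link(\{u\})$ (resp.\ $\link(\{r\})$) would force every tree-neighbor of $v(s)$ in $T_{\alpha(v(s))}$ to be tree-adjacent to $u$ (resp.\ to $v(r)$), which is impossible since $v(s)$ has more than two neighbors in its Bass--Serre tree.

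For \textbf{part (3)}, I define $\phi\colon \mathcal{C}(\Delta)^{(0)} \to \bXi^{v(s)}$ by $\phi|_{\vtx} = \mathrm{id}$ and $\phi(t) = v(t)$. This is surjective; each fiber $\phi^{-1}(w) = \{w\}\cup (\CK^w \cap \link(\Delta))$ has $\mathcal{C}(\Delta)$--diameter at most~$2$ via the type~(3) edges joining $w$ to each $t \in \CK^w$. By the edge analysis in~(1), the only $\CX$--edges inside $\link(\Delta)$ are precisely these type~(3) edges, so coarse Lipschitz-ness of $\phi$ reduces to analyzing $\CW$--edges. For a $\CW$--edge between $x \in \sigma(s_1, t_1)$ and $y \in \sigma(s_2, t_2)$, we have $\bar d(M(s_1, t_1), M(s_2, t_2)) \leq R'$ in $\bar E$, and both $\phi(x) \in \{v(s_1), v(t_1)\}$ and $\phi(y) \in \{v(s_2), v(t_2)\}$ are tree-adjacent to $v(s)$. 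I plan to use the product decomposition $\bTheta^{\phi(x)} \cong \widetilde{S}^{\phi(x)} \times \mathcal{K}^{\phi(x)}$ from Lemma~\ref{L:KL-like} to upgrade this $\bar E$--closeness into closeness of intersections with $\bTheta^{v(s)}$, after which Lemma~\ref{lem:parameter_for_Xi} yields a uniformly bounded $\bXi^{v(s)}$--distance between $\phi(x)$ and $\phi(y)$. The reverse bound is direct: an edge in $\bXi^{v(s)}$ between $w_1, w_2$ means $\bTheta^{w_1} \cap \bTheta^{w_2} \neq \emptyset$, and Lemma~\ref{lem:dense} then furnishes $\CW$--adjacent maximal simplices containing $w_1$ and $w_2$, giving a bounded $\mathcal{C}(\Delta)$--distance.

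The \textbf{main obstacle} is the Lipschitz direction in~(3): the set $M(s_1, t_1)$ typically sits near the boundary of $\bTheta^{\phi(x)}$ facing $v(t_1)$, which may differ from the one facing $v(s)$, so $\bar E$--closeness of $M$--sets does not a priori translate to closeness within $\bTheta^{\phi(x)} \cap \bTheta^{v(s)}$. Resolving this will require extracting, from the proximity of $M(s_1, t_1)$ and $M(s_2, t_2)$, points or paths that actually hit the boundary component facing $v(s)$, likely using Proposition~\ref{prop:KL}\eqref{item:KL_qi} together with Lemma~\ref{L:KL-like}.
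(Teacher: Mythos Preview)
Your argument for parts (1)--(2) matches the paper's and is correct. For part (3), your overall structure is right and the reverse direction is exactly what the paper does. The difference is in how you handle the ``main obstacle'' you correctly identify in the Lipschitz direction.

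The paper resolves this obstacle much more directly than you propose, by exploiting the specific vertex $s$ from $\Delta$. After reducing (via bounded fibers) to $w, w' \in \vtx$ tree-adjacent to $v(s)$ and joined by a $\CX^{+\CW}$--edge, one has $\CW$--adjacent maximal simplices $\sigma(t,u)$, $\sigma(t',u')$ with $v(t)=w$, $v(t')=w'$, and in either case of the $\CW$--edge definition one gets $\bar d(M(t), M(t')) \le 10R$. Now the key point: since $v(s)$ is tree-adjacent to both $w=v(t)$ and $w'=v(t')$, the vertex $s$ is $\CX$--adjacent to both $t$ and $t'$, so Lemma~\ref{lem:connect_not_far} gives $\bar d(M(s,t), M(s,t')) \le R'$. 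As $M(s,t) \subseteq \bTheta^{v(s)} \cap \bTheta^{w}$ and $M(s,t') \subseteq \bTheta^{v(s)} \cap \bTheta^{w'}$, this is exactly the closeness you need to feed into Lemma~\ref{lem:parameter_for_Xi}.

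Your proposed route via the product decomposition of Lemma~\ref{L:KL-like} and Proposition~\ref{prop:KL}\eqref{item:KL_qi} would also work, but it essentially re-derives Lemma~\ref{lem:connect_not_far} (which rests on Lemma~\ref{lem:retract_on_the_middle}) inline. The paper's approach buys you a one-line resolution of the obstacle by remembering that $\Delta$ hands you a distinguished $s \in \CK^{v(s)}$ adjacent to everything in $\link(\Delta)$.
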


\begin{figure}[h]
\begin{tikzpicture}[scale=.7,inner sep=0pt, dot/.style={fill=black,circle,minimum size=3pt}]

\coordinate (origin) at (0,0);


\pgfmathsetmacro\vva{1.5};
\pgfmathsetmacro\vvc{1};
\pgfmathsetmacro\vve{-.55};
\pgfmathsetmacro\vvb{.55};
\pgfmathsetmacro\vvd{1};
\pgfmathsetmacro\yup{3};
\pgfmathsetmacro\ydown{-2.5};
\pgfmathsetmacro\xdown{1.5};

\join{thin}{0}{0}{0}{\yup}
\join{thin}{0}{0}{-\xdown}{\ydown}
\join{thin}{0}{0}{\xdown}{\ydown}

\vblownup{lime}{0}{\yup}{\phantom{$v_1$}}{\phantom{$t_1$}}{\phantom{$t_2$}}{\phantom{$t_3$}}{\vertexedges}{ultra thick}
\vblownup{yellow}{0}{0}{$\, \, v \, \, $}{\phantom{$s_1$}}{$\, \, s \,\, $}{\phantom{$s_3$}}{\vertexedges}{ultra thick}
\vblownup{pink}{-\xdown}{\ydown}{\phantom{$v_3$}}{\phantom{$t_1$}}{\phantom{$t_2$}}{\phantom{$t_3$}}{\vertexedges}{ultra thick}
\vblownup{cyan}{\xdown}{\ydown}{\phantom{$v_4$}}{\phantom{$t_1$}}{\phantom{$t_2$}}{\phantom{$t_3$}}{\vertexedges}{ultra thick}


\begin{scope}[shift={(6,0)}]

\join{dotted}{0}{0}{0}{\yup}
\join{dotted}{0}{0}{-\xdown}{\ydown}
\join{dotted}{0}{0}{\xdown}{\ydown}

\vblownup{lime}{0}{\yup}{\phantom{.}}{\phantom{.}}{\phantom{.}}{\phantom{.}}{\vertexedges}{dotted}
\vblownup{yellow}{0}{0}{$\, \, v \, \, $}{$s_1$}{$s_2$}{$s_3$}{\vertexedges}{dotted}
\vblownup{pink}{-\xdown}{\ydown}{\phantom{.}}{\phantom{.}}{\phantom{.}}{\phantom{.}}{\vertexedges}{dotted}
\vblownup{cyan}{\xdown}{\ydown}{\phantom{.}}{\phantom{.}}{\phantom{.}}{\phantom{.}}{\vertexedges}{dotted}

\node at (1,-4) {$\sat(\Delta)$};

\end{scope}

\begin{scope}[shift={(-6,0)}]

\join{dotted}{0}{0}{0}{\yup}
\join{dotted}{0}{0}{-\xdown}{\ydown}
\join{dotted}{0}{0}{\xdown}{\ydown}

\vblownup{lime}{0}{\yup}{\phantom{$v_1$}}{\phantom{$t_1$}}{\phantom{$t_2$}}{\phantom{$t_3$}}{\vertexedges}{ultra thick}
\vblownup{yellow}{0}{0}{\phantom{.}}{\phantom{.}}{\phantom{.}}{\phantom{.}}{\vertexedges}{dotted}
\vblownup{pink}{-\xdown}{\ydown}{\phantom{$v_3$}}{\phantom{$t_1$}}{\phantom{$t_2$}}{\phantom{$t_3$}}{\vertexedges}{ultra thick}
\vblownup{cyan}{\xdown}{\ydown}{\phantom{$v_4$}}{\phantom{$t_1$}}{\phantom{$t_2$}}{\phantom{$t_3$}}{\vertexedges}{ultra thick}

\node at (1,-4) {$\link(\Delta) = \link_{\bXi}(v)$};

\end{scope}

\end{tikzpicture}
\caption{$\bXi$--type simplex: Part of the link and saturation of a $1$--simplex $\Delta$ with vertices $s,v=v(s)$.
Again, vertices of a common color correspond to the same element of $\vtx$.
} \label{F:link-sat-Xi}
\end{figure}
\begin{proof} It is clear from the definitions that the link of $\Delta$ is as described. Also, any simplex with vertex set of the form $\{v(s),t\}$ for some $t \in \kvtx^{v(s)}$ has the same link as $\Delta$. Therefore, to prove that the saturation is as described we are left to show if a simplex $\Delta'$ has the same link as $\Delta$, then its vertex set is contained in the set we described. If $w \in \vtx$ is a vertex of $\Delta'$, then $d_{\tree}(v,w)=1$ for all neighbors $v$ of $v(s)$ in $T_{\alpha(v)}$. This implies $w=v(s)$. Similarly, if $t \in \kvtx$ is a vertex of $\Delta'$, then $v(t)=v(s)$, and we are done.

Let us show that the map given in the statement is coarsely Lipschitz. To do so, it suffices to consider $w \neq w'$ with $d_{\tree}(w, v(s))=d_{\tree}(w', v(s))=1$ and connected by an edge in $\CX^{+\CW}$, and show that they are connected by a bounded-length path in $\bXi^v$. We argue below that $\bar d(\bTheta^w\cap \bTheta^{v(s)},\bTheta^{w'}\cap \bTheta^{v(s)})\leq R'$. Once we do that, the existence of the required bounded-length path follows directly from Lemma \ref{lem:parameter_for_Xi}.
  
 Let us now prove the desired inequality. Notice that $w,w'$ cannot be connected by an edge of $\CX$ since they are both distance 1 from $v(s)$ in the tree $T_{\alpha(v)}$. Hence, $w$ and $w'$ are contained respectively in maximal simplices $\sigma(t,u)$ and $\sigma(t',u')$ connected by an edge in $\mathcal W$. Say, up to swapping $t$ with $u$ and/or $t'$ with $u'$, that $v(t)=w$ and $v(t')=w'$. In either case of the definition of the edges of $\CW$ we have $\bar d(M(t),M(t'))\leq 10 R$. Since $M(t,s)\subseteq \bTheta^w\cap \bTheta^{v(s)}$ and $M(t',s)\subseteq \bTheta^{w'}\cap \bTheta^{v(s)}$, using Lemma \ref{lem:connect_not_far} we get
  $$\bar d(\bTheta^w\cap \bTheta^{v(s)},\bTheta^{w'}\cap \bTheta^{v(s)})\leq \bar d(M(t,s),M(t',s))\leq R',$$
  as we wanted.
  
Conversely, if $w,w'$ as above are joined by an edge in $\bXi^{v(s)}$ we will now show that they are also connected by an edge in $\mathcal C(\Delta)$. By definition of $\bXi^{v(s)}$, we have
$\bTheta^w\cap\bTheta^{w'}\neq\emptyset$.
By Lemma~\ref{lem:dense}\eqref{item:dense bTheta}, There exists $t,u,t',u'$ with $v(t) = w$ and $v(t') = w'$, so that $M(t,u) \cap M(t',u') \neq \emptyset$.
In particular, $\bar d(M(t,u),M(t',u'))\leq 10 R$. 
This says that $\sigma(t,u)$ and $\sigma(t',u')$ are connected in $\CW$, and hence that $w$ and $w'$ are connected in $\CX^{+\CW}$, as required.
\end{proof}

There is also an important type of $2$--dimensional simplex, which we call a {\em $\mathcal K$--type simplex}, due to the next lemma. See Figure~\ref{F:link-sat-KL}.

\begin{lemma} [$\mathcal K$--type simplex]  \label{L: KL simplex}
Let $\Delta$ be a $2$--simplex of $\CX$ with vertices $s,v(s),w$, for $s \in \kvtx$ and $w \in \vtx$ with $d_{\tree}(w,v(s))=1$. Then 
\[ \link(\Delta) = \CK^w \quad \mbox{ and } \quad \Sat(\Delta) = \{w\} \cup \link_{\bXi}(w)^{(0)}.\]
Moreover, $\mathcal C(\Delta)$ is quasi-isometric to $\mathcal K^w$, the quasi-isometry being the identity at the level of vertices.
\end{lemma}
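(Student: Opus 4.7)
The plan is to follow the template of Lemma~\ref{L: Xi simplex}: first compute $\link(\Delta)$, then $\Sat(\Delta)$, and finally identify $\mathcal{C}(\Delta)$ via the identity on vertices. Setting $v = v(s)$ so $\Delta = \{s, v, w\}$ with $d_\tree(v,w) = 1$, the link computation is a case analysis on the type of a candidate $x \in \CX^{(0)} \setminus \Delta$ adjacent to each of $s, v, w$. A vertex $x \in \vtx$ would require $d_\tree(x, v) = d_\tree(x, w) = 1$, which is impossible in a tree since $v$ and $w$ are already adjacent. A vertex $t \in \kvtx$ must satisfy $d_\tree(v(t), v) = 1$ (from adjacency to $s$) and $d_\tree(v(t), w) \le 1$ (from adjacency to $w$); unless $v(t) = w$, combining these produces a tree-triangle. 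Hence $\link(\Delta) = \mathcal{K}^w$.

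For the saturation, the $\subseteq$ inclusion proceeds similarly: each vertex $a$ of a simplex $\Delta'$ with $\link(\Delta') = \mathcal{K}^w$ must be adjacent in $\CX$ to every $t' \in \mathcal{K}^w$. If $a \in \vtx$ this forces $d_\tree(a, w) \le 1$; if $a \in \kvtx$ it forces $d_\tree(v(a), w) = 1$. In either case $a \in \{w\} \cup \link_\bXi(w)^{(0)}$ by \eqref{E:Xi link}. For the $\supseteq$ inclusion, $w$ is a vertex of $\Delta$ itself, while for any $u \in \vtx$ with $d_\tree(u, w) = 1$ and any $t \in \mathcal{K}^u$, the analogous link computation applied to $\{t, u, w\}$ again yields $\mathcal{K}^w$, placing both $u$ and $t$ in $\Sat(\Delta)$.

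For the quasi-isometry, the key observation is that $\mathcal{K}^w$ is discrete as a subset of $\CX^{(0)}$: two vertices in $\mathcal{K}^w$ would only be connected by an edge of $\CX$ via edge type (2), which requires $d_\tree(v(s_1), v(s_2)) = 1 \ne 0$. Hence every edge of $\mathcal{C}(\Delta)$ comes from $\CW$-augmentation, and I verify that the identity on vertices is coarsely bi-Lipschitz. For the upper direction, an edge in $\mathcal{C}(\Delta)$ between $s, t \in \mathcal{K}^w$ arises from $\CW$-adjacent maximal simplices $\sigma(s, s')$ and $\sigma(t, t')$; Lemma~\ref{lem:connect_not_far} bounds $\bar d(M(s, s'), M(t, t')) \le R'$, and then coarse Lipschitzness of $\lambda^w$ (Proposition~\ref{prop:KL}\ref{item:lambda_lipschitz}), combined with $\lambda^w(M(s, s')) \subseteq N_{K_1}(s)$ and $\lambda^w(M(t, t')) \subseteq N_{K_1}(t)$, bounds $d_{\mathcal{K}^w}(s, t)$. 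For the lower direction, given $s, t$ close in $\mathcal{K}^w$, Lemma~\ref{L:KL-like} implies that $M(s)$ and $M(t)$ lie near the slices $\widetilde S^w \times \{s\}$ and $\widetilde S^w \times \{t\}$ under the quasi-isometry $\mu^w \times \lambda^w$, so are uniformly close in $\bTheta^w$ and hence in $\bar E$; fixing any $u' \in \mathcal{K}^u$ for some $u \in \vtx$ with $d_\tree(u, w) = 1$, the second bullet of \eqref{Eq:Def W-edges} then produces a $\CW$-edge between $\sigma(s, u')$ and $\sigma(t, u')$, yielding the desired edge in $\mathcal{C}(\Delta)$ since $\mathcal{K}^w \cap \Sat(\Delta) = \emptyset$.

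The main obstacle I expect is the quasi-isometry verification, which requires carefully matching the combinatorial structure of $\CW$-edges with the geometric structure encoded by the subsets $M(s)$ and the projection $\lambda^w$. This step draws on essentially all the machinery of \S\ref{S:projections}, particularly the slice description from Lemma~\ref{L:KL-like}. The link and saturation computations, by contrast, reduce to straightforward tree-theoretic arguments using the three edge types defining $\CX$.
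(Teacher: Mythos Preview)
Your proof is correct and follows essentially the same template as the paper's: the link and saturation computations match (the paper is terser, but the arguments are the same), and your upper direction for the quasi-isometry is identical. The one substantive difference is in the lower direction. You bound $\bar d(M(s),M(t))$ via Lemma~\ref{L:KL-like} and then invoke the second bullet of \eqref{Eq:Def W-edges}, whereas the paper applies Proposition~\ref{prop:KL}\eqref{item:KL_qi} directly to bound $\bar d(M(s',s),M(s',t))\le K_1^2+K_1$ for a fixed $s'\in\mathcal K^{v}$ and then invokes the first bullet. The paper's route is calibrated to the setup: the threshold $10R$ was chosen precisely because $R>K_1^2+K_1$, so that bound fits. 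Your route via Lemma~\ref{L:KL-like} produces a uniform bound on $\bar d(M(s),M(t))$, but that bound depends on the unspecified quasi-isometry constants of $\mu^w\times\lambda^w$, which were never arranged to sit below $10R$. This is a constant-matching wrinkle rather than a conceptual gap; it is fixed either by switching to Proposition~\ref{prop:KL}\eqref{item:KL_qi} as the paper does, or by enlarging $R$ at the outset to absorb the Lemma~\ref{L:KL-like} constants.
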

\begin{figure}[h]
\begin{tikzpicture}[scale=.7,inner sep=0pt, dot/.style={fill=black,circle,minimum size=3pt}]

\coordinate (origin) at (0,0);


\pgfmathsetmacro\vva{1.5};
\pgfmathsetmacro\vvc{1};
\pgfmathsetmacro\vve{-.55};
\pgfmathsetmacro\vvb{.55};
\pgfmathsetmacro\vvd{1};
\pgfmathsetmacro\yup{3};
\pgfmathsetmacro\ydown{-2.5};
\pgfmathsetmacro\xdown{1.5};

\join{thin}{0}{0}{0}{\yup}
\join{thin}{0}{0}{-\xdown}{\ydown}
\join{thin}{0}{0}{\xdown}{\ydown}

\vblownup{lime}{0}{\yup}{$\, \, v \, \,$}{\phantom{$t_1$}}{$\, \, s \, \, $}{\phantom{$t_3$}}{\vertexedges}{ultra thick}
\vblownup{yellow}{0}{0}{$\, \, w \, \, $}{\phantom{$s_1$}}{\phantom{$s_2$}}{\phantom{$s_3$}}{\vertexedges}{ultra thick}
\vblownup{pink}{-\xdown}{\ydown}{\phantom{$v_3$}}{\phantom{$t_1$}}{\phantom{$t_2$}}{\phantom{$t_3$}}{\vertexedges}{ultra thick}
\vblownup{cyan}{\xdown}{\ydown}{\phantom{$v_4$}}{\phantom{$t_1$}}{\phantom{$t_2$}}{\phantom{$t_3$}}{\vertexedges}{ultra thick}


\begin{scope}[shift={(6,0)}]

\join{dotted}{0}{0}{0}{\yup}
\join{dotted}{0}{0}{-\xdown}{\ydown}
\join{dotted}{0}{0}{\xdown}{\ydown}


\vblownup{lime}{0}{\yup}{\phantom{$v_1$}}{\phantom{$t_1$}}{\phantom{$t_2$}}{\phantom{$t_3$}}{\vertexedges}{dotted}
\vblownup{yellow}{0}{0}{\phantom{$\, \, w \, \, $}}{\phantom{.}}{\phantom{.}}{\phantom{.}}{\vertexedges}{dotted}
\vblownup{pink}{-\xdown}{\ydown}{\phantom{$v_1$}}{\phantom{$t_1$}}{\phantom{$t_2$}}{\phantom{$t_3$}}{\vertexedges}{dotted}
\vblownup{cyan}{\xdown}{\ydown}{\phantom{$v_1$}}{\phantom{$t_1$}}{\phantom{$t_2$}}{\phantom{$t_3$}}{\vertexedges}{dotted}

\node at (1,-4) {$\sat(\Delta)$};

\end{scope}

\begin{scope}[shift={(-6,0)}]

\join{dotted}{0}{0}{0}{\yup}
\join{dotted}{0}{0}{-\xdown}{\ydown}
\join{dotted}{0}{0}{\xdown}{\ydown}

\vblownup{lime}{0}{\yup}{\phantom{.}}{\phantom{.}}{\phantom{.}}{\phantom{.}}{\vertexedges}{dotted}
\vblownup{yellow}{0}{0}{\phantom{.}}{\phantom{$s_1$}}{\phantom{$s_2$}}{\phantom{$s_3$}}{\vertexedges}{dotted}
\vblownup{pink}{-\xdown}{\ydown}{\phantom{.}}{\phantom{.}}{\phantom{.}}{\phantom{.}}{\vertexedges}{dotted}
\vblownup{cyan}{\xdown}{\ydown}{\phantom{.}}{\phantom{.}}{\phantom{.}}{\phantom{.}}{\vertexedges}{dotted}

\node at (1,-4) {$\link(\Delta) = \CK^{w}$};

\end{scope}

\end{tikzpicture}
\caption{$\mathcal K$--type simplex: Part of the link and saturation of a $2$--simplex $\Delta$ with vertices $s,v=v(s),w$ (colors correspond to elements of $\vtx$).
}\label{F:link-sat-KL}
\end{figure}
\begin{proof}
It is clear from the definitions that the link of $\Delta$ is as described. Also, any simplex with vertex set of the form $\{t,v(t),w\}$ for some $t \in \kvtx$ with $d_{\tree}(v(t),w)=1$ has the same link as $\Delta$. Therefore, to prove that the saturation is as described we are left to show if a simplex $\Delta'$ has the same link as $\Delta$, then its vertex set is contained in the set we described. Given any vertex $u \in \vtx$ of $\Delta'$, it has to be connected to all $t \in \kvtx$ with $v(t)=w$, implying that either $u=w$ or $d_{\tree}(u,w)=1$, as required for vertices in $\vtx$. Similarly, any vertex $u \in \kvtx$ of $\Delta'$ has to be connected to all $t \in \kvtx$ with $v(t)=w$, implying $d_{\tree}(v(u),w)=1$, and we are done.

 To prove that $\mathcal C(\Delta)$ is naturally quasi-isometric to $\mathcal K^w$, it suffices to show that if $u,t\in\mathcal C(\Delta)$ are connected by an edge in $\CX^{+\CW}$, then they are uniformly close in $\mathcal K^w$ and that, vice versa, if $d_{\mathcal K^w}(u,t)=1$, then $u,t$ are connected by an edge in $\mathcal C(\Delta)$.
 
 First, if $u,t\in\mathcal C(\Delta)$ are connected by an edge in $\CX^{+\CW}$, then there exist $u',t'$ so that $\bar d(M(u,t'),M(u',t))\leq R'$ (see Lemma \ref{lem:connect_not_far}). In particular, $\bar d(M(u),M(t))\leq R'$, which in turn gives a uniform bound on the distance in the path metric of $\bTheta^w$ between $M(u)$ and $M(t)$ because the metrics $\bar{d}$ and the path metric on $\theta^w$ are coarsely equivalent. By Proposition \ref{prop:KL}(\ref{item:lambda_lipschitz}) we must also have a uniform bound on $d_{\mathcal K^w}(u,t)$.
 
 Now suppose that $d_{\mathcal K^w}(u,t)=1$.  We can then deduce from Proposition \ref{prop:KL}(\ref{item:KL_qi}) that $\bar d(M(s',u),M(s',t))\leq K_1^2+K_1\leq 10R$ for any fixed $s'\in \mathcal K^{v(s)}$ (the proposition yields the analogous upper bound in the path metric of $\bTheta^w$, which is a stronger statement). Therefore $u,t$ are connected by an edge in $\CX^{+\CW}$, whence in $\mathcal C(\Delta)$, as required.
\end{proof}

The remaining simplices are not particularly interesting as their links are joins (or points), and hence have diameter at most $2$, but we will still need to verify properties for them.  We define the {\em type} of a simplex in $\CX$ to be the graph isomorphism type of its link.  We describe these simplices with finite diameter links in the next lemma.
Recall the definition of $\link_{\bXi}(w)$ in \eqref{E:Xi link}.

\begin{lemma} \label{L:remaining simplices} The following is a list of all types of non-empty, nonmaximal, simplices $\Delta$ of $\CX$ that are not of $\bXi$--type or $\mathcal K$--type, together with their links.  In each case, the link is a nontrivial join (or a point), and $\mathcal C(\Delta)$ has diameter at most $2$.  

In the table below the simplices $\Delta$ have vertices $u,w \in \vtx$  with $d_{\tree}(u,w) = 1$ and $s,t \in \kvtx$ with $d_{\tree}(v(s),v(t)) = 1$ and $d_{\tree}(v(s),u) =1$.\\
\begin{center}
\begin{tabular}{|c|c|c|}
\hline
$\Delta$ & $\link(\Delta)$ \\
\hline
$\phantom{\frac{\int_1^2}{\int_1^2}}\{u\}\phantom{\frac{\int_1^2}{\int_1^2}}$ & \quad $\CK^u \star \link_\bXi(u)$ \quad \\
\hline
$\phantom{\frac{\int_1^2}{\int_1^2}}\{s\}\phantom{\frac{\int_1^2}{\int_1^2}}$ & \quad $\{v(s)\} \star\link_\bXi(v(s))$ \quad \\
\hline
$\phantom{\frac{\int_1^2}{\int_1^2}}\{u,w\}\phantom{\frac{\int_1^2}{\int_1^2}}$ & $\CK^u \star \CK^w$ \\
\hline
$\phantom{\frac{\int_1^2}{\int_1^2}}\{s,t\}\phantom{\frac{\int_1^2}{\int_1^2}}$ & $\{ v(s),v(t) \}$ \\
\hline
$\phantom{\frac{\int_1^2}{\int_1^2}}\{s,u\}\phantom{\frac{\int_1^2}{\int_1^2}}$  & $\{v(s)\} \star \CK^u$ \\
\hline
$\phantom{\frac{\int_1^2}{\int_1^2}}\{s,t,v(s)\}\phantom{\frac{\int_1^2}{\int_1^2}}$ & $\{v(t)\}$ \\
\hline
\end{tabular}
\end{center}

\medskip

\end{lemma}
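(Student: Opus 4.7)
The plan is to classify all non-empty simplices of $\CX$ by their composition of $\vtx$-- and $\kvtx$--vertices, identify which among them are maximal, $\bXi$--type, or $\mathcal K$--type, and then directly compute the link of each remaining type from the three edge rules defining $\CX$.

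For a simplex $\Delta$, let $V = \Delta\cap \vtx$ and $K = \Delta \cap \kvtx$. The $\vtx$--$\vtx$ edge rule forces $V$ to be a clique in some tree $T_\alpha$; since trees have no triangles, $|V|\le 2$, and $|V|=2$ requires the two vertices to be $d_{\tree}$--adjacent. Similarly, the $\kvtx$--$\kvtx$ edge rule requires $v\colon K\to \vtx$ to be injective with image a clique in some $T_\alpha$, so $|K|\le 2$. Finally, the cross rule forces $d_{\tree}(v(s),u)\le 1$ for each $s\in K$ and $u\in V$, and the no--triangle principle then forces $V\cup v(K)$ to lie in a single $T_\alpha$ as at most an edge. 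Case-splitting on $(|V|,|K|)$: the pair $(0,0)$ gives $\Delta=\emptyset$, while $(2,2)$ recovers the maximal simplices of Lemma~\ref{lem:max_simplex}. The case $(2,1)$ forces $v(s)\in\{u,w\}$ and yields the $\mathcal K$--type simplex of Lemma~\ref{L: KL simplex}, and the sub-case $v(s)=u$ of $(1,1)$ gives the $\bXi$--type of Lemma~\ref{L: Xi simplex}. The remaining cases $(1,0)$, $(0,1)$, $(2,0)$, $(0,2)$, the sub-case $d_{\tree}(v(s),u)=1$ of $(1,1)$, and $(1,2)$ (whose two sub-cases $u\in\{v(s),v(t)\}$ are symmetric under $s\leftrightarrow t$) give precisely rows 1--6 of the table.

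For each row I would compute $\link(\Delta)$ directly. For example, with $\Delta=\{u\}$, a vertex $x$ is adjacent to $u$ in $\CX$ iff either $x=w\in\vtx$ with $d_{\tree}(u,w)=1$, or $x=s\in\kvtx$ with $v(s)=u$, or $x=s\in\kvtx$ with $d_{\tree}(v(s),u)=1$; the first and third contributions are precisely $\link_{\bXi}(u)^{(0)}$ via equation~\eqref{E:Xi link}, and the second is $\mathcal K^u$. The join structure $\mathcal K^u\star \link_{\bXi}(u)$ follows because every $s\in\mathcal K^u$ has $v(s)=u$, so any $w\in\vtx\cap\link_{\bXi}(u)$ satisfies $d_{\tree}(v(s),w)=d_{\tree}(u,w)=1$ (making $s,w$ adjacent by the cross rule) and any $t\in\kvtx\cap\link_{\bXi}(u)$ satisfies $d_{\tree}(v(s),v(t))=d_{\tree}(u,v(t))=1$ (making $s,t$ adjacent by the $\kvtx$--$\kvtx$ rule). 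Saturations are likewise read off from the rule that $\Delta'\sim\Delta$ iff $\link(\Delta')=\link(\Delta)$. The other five rows follow by the same method, with the no--triangle principle used repeatedly to eliminate any additional candidate link vertex.

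For the diameter claim, each listed link is either a single vertex (row 6) or a nontrivial join $A\star B$ with $A,B\ne\emptyset$; in the latter case any two vertices of the link are joined in the $1$--skeleton of $\link(\Delta)$ by a path of length at most $2$ through any vertex of the opposite factor. Since $\mathcal C(\Delta)$ contains these edges (all of which lie in $X^{(1)}\subseteq \duaug{X}{\CW}$), we conclude $\diam(\mathcal C(\Delta))\le 2$. I expect the main difficulty to be purely organizational: ensuring the enumeration of $(|V|,|K|)$--types is exhaustive and that each link computation is complete; once this bookkeeping is under control, each individual verification is an immediate consequence of the edge rules.
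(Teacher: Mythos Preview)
Your proposal is correct and follows the same approach the paper takes; in fact, the paper's own proof simply states ``This is straightforward given the definition of $\CX$ and we leave its verification to the reader,'' so your case-by-case enumeration via $(|V|,|K|)$ and direct link computations from the three edge rules is exactly the intended argument, carried out in more detail than the paper provides. Your diameter bound of $2$ is even slightly sharper than the stated $\le 3$ (and the remark about saturations is extraneous here, since the lemma does not assert them).
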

\begin{proof} This is straightforward given the definition of $\CX$ and we leave its verification to the reader.  Referring to Figure~\ref{fig:blowup}, and comparing with Figures~\ref{F:link-sat-Xi} and \ref{F:link-sat-KL}, may be helpful.
\end{proof}
The next lemma collects a few additional properties we will need.  There are $9$ types of nonempty simplices: maximal, $\bXi$--type, $\CK$--type, and the six types listed in Lemma~\ref{L:remaining simplices}.

\begin{lemma}  \label{L: nice join properties }
The following hold in $\CX$.
\begin{enumerate}[$(a)$]
\item The link of a simplex with a given type cannot be strictly contained in the link of a simplex with the same type.
   \item For all non-maximal simplices $\Delta$ and $\Delta'$ so that there is a simplex $\Omega$ with $\link(\Omega)\subseteq \link(\Delta')\cap \link(\Delta)$ and $diam(\mathcal C (\Omega))>3$, there exists a simplex $\Pi$ in the link of $\Delta'$ with $\link(\Delta'*\Pi)\subseteq \link(\Delta)$ so that for any $\Omega$ as above we have $\link(\Omega)\subseteq \link(\Delta'*\Pi)$.
 \end{enumerate}
\end{lemma}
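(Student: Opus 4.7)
The plan is to dispatch both parts by systematic case analysis using the complete classification of non-maximal simplex types in Lemmas~\ref{L: empty simplex case}--\ref{L:remaining simplices}. The key combinatorial facts that I will repeatedly invoke are: every vertex of each Bass--Serre tree $T_\alpha$ has infinite valence (since by Lemma~\ref{lem:Xi_hyp} the graph $\bXi^v$ is an unbounded quasi-tree, and its vertices are exactly the neighbors of $v$ in $T_\alpha$); any two distinct vertices of a tree share at most one neighbor; and the sets $\CK^v$ are pairwise disjoint subsets of $\CX^{(0)}\smallsetminus\vtx$.

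For part (a), I proceed type-by-type using the tabulated links from Lemmas~\ref{L: Xi simplex}, \ref{L: KL simplex}, and~\ref{L:remaining simplices}. For two simplices of the same type with distinct ``parameters'' (the underlying data in $\vtx$ or $\kvtx$), a strict containment of their links would force every tree-neighbor of one parameter vertex to also be a tree-neighbor of the other; by the tree/valence facts this is impossible, so the parameters coincide and the links are in fact equal. For the $\CK$-type, $\{u,w\}$-type, and $\{s,u\}$-type cases, disjointness of the $\CK^v$'s immediately rules out any containment between distinct parameters without needing the tree argument.

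For part (b), the hypothesis $\diam(\mathcal{C}(\Gamma))>3$ is decisive: it eliminates all types appearing in Lemma~\ref{L:remaining simplices}, whose links are nontrivial joins with two nonempty factors and therefore yield $\diam(\mathcal{C}(\Gamma))\le 2$. Hence $\Gamma$ must be either empty, $\bXi$-type, or $\CK$-type, so that $\link(\Gamma)$ equals $\CX$, some $\link_{\bXi}(v_\Gamma)$, or some $\CK^{w_\Gamma}$, respectively. The rigidity from part (a) then pins down the parameter $v_\Gamma$ or $w_\Gamma$ from the combinatorial data of both $\Delta$ and $\Delta'$, reducing the problem to finitely many type-combinations for $(\Delta,\Delta')$.

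Guided by the type of $\Gamma$, I will explicitly exhibit $\Pi$ for each relevant pair $(\Delta,\Delta')$. In most cases $\Pi$ is either empty or consists of the vertices of $\Delta$ that lie in $\link(\Delta')$. The main obstacle, and the only genuinely subtle point, arises in cases such as $\Delta$ of $\CK$-type $\{s,v(s),w\}$ and $\Delta'=\{u\}$ with $d_\tree(u,w)=1$ and $u\neq w$: here one cannot simply take $\Pi=\{w\}$ (this gives $\link(\Delta'\star\Pi)=\CK^u\star\CK^w\not\subseteq\link(\Delta)$), so $\Pi$ must be \emph{augmented} by an auxiliary vertex $s_1\in\CK^u\subset\link(\Delta')$ and taken to be $\{s_1,w\}$. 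Then $\Delta'\star\Pi=\{u,s_1,w\}$ is itself a $\CK$-type simplex with $\link(\Delta'\star\Pi)=\CK^w=\link(\Delta)$, simultaneously lying inside $\link(\Delta)$ and capturing every admissible $\Gamma$.
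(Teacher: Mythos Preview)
Your approach is correct and would succeed if carried out in full, but it is organized quite differently from the paper's proof.  For part~$(b)$ the paper does not run through type-combinations for $(\Delta,\Delta')$.  Instead it exploits the simplicial retraction $Z\colon\CX\to\bigsqcup_\alpha T_\alpha$: since $Z(\link(\Delta)\cap\link(\Delta'))$ lies in the intersection of the tree-stars of $Z(\Delta)$ and $Z(\Delta')$, and such an intersection in a tree is at most an edge unless $Z(\Delta)=Z(\Delta')$ is a single vertex, the proof splits cleanly into just two cases according to whether some $\bXi$--type $\Gamma$ exists.  In the first case both $\Delta,\Delta'$ are forced into a single fiber $\{v\}\star\CK^v$, leaving only three subcases; in the second the tree-star picture shows there are at most two admissible $w_\Gamma$'s for $\CK$--type $\Gamma$, and these are handled together.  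Your direct type-by-type enumeration is more explicit and perhaps easier to verify locally, but the paper's tree-geometric organization is considerably shorter and explains structurally \emph{why} the case list is so short.

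One small imprecision in your outline: the sentence ``rigidity from part~$(a)$ then pins down the parameter $v_\Gamma$ or $w_\Gamma$'' overstates things.  For $\CK$--type $\Gamma$ the parameter $w_\Gamma$ is \emph{not} uniquely determined by $\Delta$ and $\Delta'$ in general---there can be two choices (corresponding to the two endpoints of the edge where the tree-stars meet), and the chosen $\Pi$ must accommodate both simultaneously.  Your case-by-case method still handles this (it appears, e.g., when $\Delta=\Delta'=\{u,w\}$, where $\Pi=\emptyset$ gives $\link(\Delta'\star\Pi)=\CK^u\star\CK^w$ containing both $\CK^u$ and $\CK^w$), but you should flag it explicitly rather than suggest the parameter is always unique.
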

\begin{proof} Part $(a)$ follows directly from the descriptions of the simplices given in Lemmas~\ref{L: Xi simplex}, \ref{L: KL simplex}, and \ref{L:remaining simplices}, and we leave this to the reader.

Before we prove $(b)$, we suppose $\link(\Delta') \cap \link(\Delta) \neq \emptyset$, and make a few observations.  First, $\Delta'$ and $\Delta$ must project by $Z$ to the same tree: $Z(\Delta'),Z(\Delta) \subset T_\alpha$ for some $\alpha \in \mathcal P$.  Next, note that $Z(\link(\Delta') \cap \link(\Delta))$ is contained in the intersection of the stars in $T_\alpha$ of $Z(\Delta')$ and $Z(\Delta)$.  Moreover, (as in any tree) the intersection of these two stars is contained in a single edge, or else $Z(\Delta') = Z(\Delta) = \{w\} \in T_\alpha^{(0)} \subset \vtx$ is a single point.  In this latter case, by \eqref{E: Z preimage of vertex}, we have
\[ \Delta,\Delta' \subset \{w\} \star \CK^w.\]

Next, note that for any $\CK$--type simplex $\Omega = \{s,v(s),w\}$,  $\link(\Omega) = \CK^w$ by Lemma~\ref{L: KL simplex}, and if $\link(\Omega) \subset \link(\Delta') \cap \link(\Delta)$, then $w$ is in the intersection of the stars of $Z(\Delta')$ and $Z(\Delta)$.  For a $\bXi$--type simplex $\Omega = \{s,v(s)\}$, $\link(\Omega) = \link_{\bXi}(v(s))$ by Lemma~\ref{L: Xi simplex}, and together with Lemma~\ref{L:remaining simplices} and the previous paragraph, we see that $\link(\Omega) \subset \link(\Delta') \cap \link(\Delta)$ if and only if $Z(\Delta') = Z(\Delta) = \{v(s)\}$ in $T_\alpha$. 

With these observations in hand, we proceed to the proof of $(b)$, which divides into two cases.

\smallskip

\noindent {\bf Case 1.} There is a $\bXi$--type simplex $\Omega = \{s,v(s)\}$ with $\link(\Omega) \subset \link(\Delta') \cap \link(\Delta)$.

\smallskip

In this situation, $Z(\Delta') = Z(\Delta) = \{v(s)\}$, and thus $\Delta,\Delta' \subset \{v(s)\} \star \CK^{v(s)}$. 
From Lemmas~\ref{L: Xi simplex} and \ref{L:remaining simplices}, we see that $\link(\Delta') \cap \link(\Delta)$ must be equal to one of $\link(\Omega)$, $\link(\{s\})$, or $\link(\{v(s)\})$.  Inspection of these links shows that $\link(\Omega)$ is the only link of a $\bXi$--type simplex contained in it.  First suppose that $\link(\Delta') \cap \link(\Delta)$ has the form $\link(\Omega)$ or $\link(\{s\})$.  In this situation, we easily find $\Pi \subset \link(\Delta')$ so that $\link(\Omega) = \link(\Delta' \star \Pi)$.  Furthermore, for any $\CK$--type simplex link $\CK^w$ in the intersection, we must have $\CK^w \subset \link(\Omega) = \link(\Delta' \star \Pi)$.  Therefore, the link of any $\bXi$--or $\CK$--type simplex contained in $\link(\Delta') \cap \link(\Delta)$ must be contained in $\link(\Omega) = \link(\Delta' \star \Pi)$, as required.  Now suppose instead that $\link(\Delta') \cap \link(\Delta) = \link(\{v(s)\})$.  By Lemma~\ref{L:remaining simplices}, we see that $\Delta = \Delta' = \{v(s)\}$.  In this case, setting $\Pi = \emptyset$ trivially completes the proof since then $\link(\Delta') \cap \link(\Delta) = \link(\Delta') = \link(\Delta'\star \Pi)$.

\smallskip

\noindent {\bf Case 2.} No link of a $\bXi$--type simplex is contained in $\link(\Delta') \cap \link(\Delta)$.

\smallskip

From the observations above, $Z(\Delta)$ and $Z(\Delta')$ do not consist of the same single point, and hence the stars of $Z(\Delta)$ and $Z(\Delta')$ intersect in either a point or an edge in $T_\alpha$.  Since $Z(\link(\Delta') \cap \link(\Delta))$ is contained in the intersection of these stars, there are at most two $\CK$--type simplices whose links are contained in $\link(\Delta') \cap \link(\Delta)$.  If there are two $\CK$--type simplices $\Omega,\Omega'$ with $\CK^w = \link(\Omega)$ and $\CK^u = \link(\Omega')$ contained in $\link(\Delta') \cap \link(\Delta)$, then observe that
\[ \CK^u,\CK^w \subset \link(\{u,w\}) = \CK^u \star \CK^w \subset \link(\Delta') \cap \link(\Delta).\] 
By inspection of the possible links in Lemmas~\ref{L: Xi simplex}, \ref{L: KL simplex}, and \ref{L:remaining simplices}, it must be that $\Delta'$ is either $\{u\}$, $\{w\}$, or $\{u,w\}$, and so setting $\Pi$ to be $\{w\}$, $\{u\}$, or $\emptyset$, respectively, we are done.  On the other hand, if there is exactly one $\CK$--type simplex $\Omega$ with $\CK^w = \link(\Omega) \subset \link(\Delta') \cap \link(\Delta)$, then again inspecting all possible situations, we can find $\Pi \subset \link(\Delta')$ with $\CK^w =\link(\Omega) = \link(\Delta' \star \Pi)$, and again we are done with this case.  This completes the proof.
\end{proof}

\begin{lemma}\label{lem:retractions}
 There exists $L\geq 1$ so that for every non-maximal simplex, there is an $(L,L)$--coarsely Lipschitz retraction $r_\Delta\colon Y_\Delta\to \pow({\mathcal C(\Delta)})$. In particular, $\mathcal C(\Delta)$ is uniformly quasi-isometrically embedded in $Y_\Delta$.
\end{lemma}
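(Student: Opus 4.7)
The plan is to proceed by case analysis on the type of $\Delta$, following the enumeration in Lemmas \ref{L: empty simplex case}, \ref{L: Xi simplex}, \ref{L: KL simplex}, and \ref{L:remaining simplices}. For the empty simplex I would take $r_\emptyset$ to be the identity, and for the six simplex types of Lemma \ref{L:remaining simplices}, whose links have diameter at most $3$, any constant map to a vertex of $\mathcal C(\Delta)$ is a $(3,3)$-coarsely Lipschitz retraction. All the substance therefore lies in the $\bXi$-- and $\mathcal K$--type simplices, where the retractions will be built from the projections $\xi^v$ and $\Lambda^v$ of Section \ref{S:projections}.

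For a $\mathcal K$--type simplex $\Delta = \{s_0, v(s_0), w\}$, with $\mathcal C(\Delta)$ uniformly quasi-isometric to $\mathcal K^w$ and $\sat(\Delta) = \{w\} \cup \link_\bXi(w)^{(0)}$, I would first associate to each vertex $u \in Y_\Delta^{(0)}$ a bounded geometric realization $\phi(u) \subset \bar\Sigma$: set $\phi(v) := \bTheta^v \cap \bar\Sigma$ for $v \in \vtx$ and $\phi(t) := M(t) \cap \bar\Sigma$ for $t \in \kvtx$, and then define
\[r_\Delta(u) := \Lambda^w(\phi(u)) \subset \mathcal K^w.\]
Because $u \notin \sat(\Delta)$ forces the $\vtx$--vertex associated with $u$ to have $\tree$--distance greater than $1$ from $w$, Proposition \ref{prop:R-projections}(2a) ensures $\diam(r_\Delta(u)) \leq K_4$. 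To check the retraction property on $\mathcal C(\Delta)^{(0)} = \mathcal K^w$, I would argue that for $t \in \mathcal K^w$ the image $\Lambda^w(M(t))$ lies within uniformly bounded distance of $t$ itself, by comparing $\lambda^w \circ \Pi^w$ with $\lambda^w$ on $M(t) \subset \bTheta^w$ via the bounded-length saddle connections of Lemma \ref{L:strip-and-saddle-bound} and the coarse Lipschitzness of $\lambda^w$ from Proposition \ref{prop:KL}(1), combined with the defining inclusion $\lambda^w(M(t)) \subset N_{K_1}(t)$.

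For the coarse Lipschitz bound I would show that adjacent vertices $u_1, u_2 \in Y_\Delta^{(0)}$ have images at bounded Hausdorff distance in $\mathcal K^w$. Two adjacency types arise: if $u_1, u_2$ are joined by an edge of $\mathcal X$, then Lemma \ref{lem:dense} provides a common point in $\phi(u_1) \cap \phi(u_2)$, so $r_\Delta(u_1) \cap r_\Delta(u_2) \neq \emptyset$ and the uniform diameter bound closes the argument; if instead $u_1, u_2$ come from $\mathcal W$-adjacent maximal simplices $\sigma(s_i, t_i)$ (with $u_i$ a vertex of $\sigma(s_i,t_i)$), then Lemma \ref{lem:connect_not_far} gives $\bar d(M(s_1, t_1), M(s_2, t_2)) \leq R'$, and since each $\phi(u_i) \supseteq M(s_i, t_i)$, coarse Lipschitzness of $\Lambda^w$ (Proposition \ref{prop:R-projections}(1)) yields the bound. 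The $\bXi$--type simplex $\Delta = \{s_0, v(s_0)\}$ would be treated in strict parallel using $\xi^{v(s_0)}$, Proposition \ref{prop:R-projections}(2b), and the quasi-isometry $\mathcal C(\Delta) \simeq \bXi^{v(s_0)}$ of Lemma \ref{L: Xi simplex}; on link vertices I would set $r_\Delta(u) = \{u\}$ for $u \in \vtx$ and $r_\Delta(t) = \{v(t)\}$ for $t \in \kvtx$, consistent with the quasi-isometry.

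The principal obstacle is the $\mathcal W$-adjacency case of the Lipschitz estimate: the removal of $\sat(\Delta)$ from $Y_\Delta$ is precisely what rules out ``shortcuts'' in $\mathcal X^{+\mathcal W}$ that would otherwise bypass the quantitative control of Proposition \ref{prop:R-projections}. Once this is in hand, every $Y_\Delta$--edge corresponds to a bounded $\bar E$--distance between associated geometric realizations, so coarse Lipschitzness of the projections propagates through the retraction. The final ``in particular'' assertion is then the standard consequence that a uniformly coarsely Lipschitz retraction forces uniform quasi-isometric embedding of the retract.
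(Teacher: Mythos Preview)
Your approach is correct and essentially identical to the paper's: both proofs dismiss the empty and join-link simplices trivially, then build the retraction for the $\bXi$-- and $\mathcal K$--type simplices from the projections $\xi^v$ and $\Lambda^w$ via Proposition~\ref{prop:R-projections}, checking boundedness of images through part~(2) and the Lipschitz estimate through part~(1) together with the fact that $\mathcal X^{+\mathcal W}$--adjacent vertices have geometric realizations within bounded $\bar d$--distance. One small slip worth noting: your claim that $u\notin\sat(\Delta)$ forces the associated $\vtx$--vertex to have $d_{\tree}$--distance strictly greater than $1$ from $w$ is false for $t\in\mathcal K^w=\mathcal C(\Delta)^{(0)}$ (there $v(t)=w$, distance $0$), so Proposition~\ref{prop:R-projections}(2a) does not apply to those vertices; however, your separate retraction argument for $t\in\mathcal K^w$ already yields the needed diameter bound on $r_\Delta(t)$, so the proof survives unchanged.
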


\begin{proof} By Lemma~\ref{L:remaining simplices}, we only have to consider simplices of $\bXi$--and $\mathcal K$--type. 
 
Consider $\Delta = \{s,v\}$ with $v = v(s)$ of $\bXi$--type first.  Recall from Lemma~\ref{L: Xi simplex} that $\bXi^v$ naturally includes into $\C(\Delta)$ by a quasi-isometry.  Here we will use make use of the map $\xi^v$, whose relevant properties for our current purpose are stated in Proposition \ref{prop:R-projections}. For a vertex $u \in \vtx \smallsetminus \Sat(\Delta)$ (so, $u \neq v$) we define $r_\Delta(u)=\xi^v(\bTheta^{u})$.  For $t \in \kvtx \smallsetminus \Sat(\Delta)$ we define $r_\Delta(t)=\xi^v(\bTheta^{v(t)})$. Notice that the sets $r_\Delta(u)$ are uniformly bounded by Proposition \ref{prop:R-projections} (and Lemma~\ref{L: Xi simplex}).  Also, $r_\Delta$ is coarsely the identity on the vertices of $\link(\Delta)$ in $\vtx$ by Equation~\eqref{E:an obvious but useful fact} and Proposition \ref{prop:R-projections}(2b).  To check that $r_\Delta$ is coarsely Lipschitz it suffices to consider $\CX^{+\CW}$--adjacent vertices of $\vtx$. Notice that vertices $w,w' \in \vtx$ that are adjacent in $\CX^{+\CW}$ have corresponding $\bTheta^w,\bTheta^{w'}$ within $10R$ of each other in $\bar E$. Indeed, $\bTheta^v$ and $\bTheta^w$ actually intersect if $w,w'$ are adjacent in $\CX$, and they contain subsets $M(\cdot)$ within $10R$ of each other if $w,w'$ are contained in $\CW$--adjacent maximal simplices (this is true regardless of which case of the definition \eqref{Eq:Def W-edges} for the edges of $\CW$ applies). The fact that $r_\Delta$ is coarsely Lipschitz now follows from, Proposition~\ref{prop:R-projections}(1), which says that $\xi^v$ is coarsely Lipschitz on $\bar E$.
 
Next, consider $\Delta = \{s,v(s),w\}$ of $\mathcal K$--type.  For a vertex $u \in \vtx \smallsetminus \Sat(\Delta)$ (so, $d_{\tree}(u,w) \geq 2$), define $r_\Delta(u)=\Lambda^w(\bTheta^u)$.  For a vertex $t \in \kvtx \smallsetminus \Sat(\Delta)$, define $r_\Delta(t)=\Lambda^w(M(t))$.  Notice that, by definition of $M(t)$, if $t \in \CK^w$, then $r_\Delta(t)$ lies within Hausdorff distance $K_1$ of $t$.  Also, since $d_{\tree}(u,w) \geq 2$, Proposition~\ref{prop:R-projections}(2a) ensures that the diameter of $\Lambda^w(\bTheta^u)$ is bounded. Since $M(t)\subseteq \bTheta^{v(t)}$, we see that all the sets in the image of $r_\Delta$ are bounded, and also we see that in order to prove that $r_\Delta$ is Lipschitz it suffices to consider vertices of $\kvtx$.  But vertices $s,t \in \kvtx$ that are adjacent in $\CX^{+\CW}$ have corresponding $M(s),M(t)$ within $10R$ of each other in $\bar E$, so the conclusion follows from Proposition~\ref{prop:R-projections}(1), which states that $\Lambda^w$ is coarsely Lipschitz on $\bar E$.
\end{proof}
 
\subsection{Final proof}\label{subsec:final_proof} We now have all the tools necessary for the:

\begin{proof}[Proof of Theorem \ref{thm:E_is_chhs}]  We must verify each of the conditions from Definition~\ref{defn:combinatorial_HHS}.

Item \eqref{item:chhs_flag} (bound on length of $\nest$--chains) follows from Lemma~\ref{L: nice join properties }(a), which implies that any chain $\link(\Delta_1)\subsetneq \dots$ has length bounded by the number of possible types, which is 9.
 
 Let us now discuss item \eqref{item:chhs_delta} of the definition. The descriptions of the $\mathcal C(\Delta)$ from Lemmas~\ref{L: empty simplex case}, \ref{L: Xi simplex}, \ref{L: KL simplex}, \ref{L:remaining simplices}  yields that all $\mathcal C(\Delta)$ are hyperbolic, since each of them is either bounded or uniformly quasi-isometric to one of $\hat{E}$ (which is hyperbolic by Theorem \ref{T:quotient metric on E hat}), some $\bXi^v$ (which is hyperbolic by Lemma \ref{lem:Xi_hyp}), or $\mathbb R$. Moreover, any $\mathcal C(\Delta)$ is (uniformly) quasi-isometrically embedded in $Y_\Delta$ by Lemma \ref{lem:retractions}.

Item \eqref{item:chhs_join} of the definition (common nesting) is precisely Lemma \ref{L: nice join properties }(b).

Finally, we show item \eqref{item:C_0=C} of the definition (fullness of links), which we recall for the convenience of the reader:
 \begin{itemize}
  \item   If $v,w$ are distinct non-adjacent vertices of $\link(\Delta)$, for some simplex $\Delta$ of $\CX$, contained in 
$\CW$--adjacent maximal simplices, then they are contained in $\CW$--adjacent simplices of the form $\Delta\star\Delta'$.
 \end{itemize}

It suffices to consider simplices $\Delta$ of $\bXi$--and $\mathcal K$--type.  Indeed, in all other cases (see Lemma~\ref{L:remaining simplices}), the vertices $v,w$ under consideration are contained in the link of a simplex $\Delta'$ containing $\Delta$ where $\Delta'$ is of $\bXi$--or $\mathcal K$--type (as can be seen by enlarging $\Delta$ until its link is no longer a join; $v$ and $w$ are not $\CX$--adjacent so they are contained in the same ``side'' of any join structure). Hence, once we deal with those cases, we know that there are suitable maximal simplices containing the larger simplex, whence $\Delta$.

  Consider first a simplex $\Delta$ of $\mathcal K$--type with vertices $s,v(s),w$. Consider distinct vertices $t_1,t_2$ (necessarily in $\kvtx$) of $\link(\Delta)$, and suppose that there are vertices $s_1,s_2 \in \kvtx$ so that the maximal simplices $\sigma(s_1,t_1)$ and $\sigma(s_2,t_2)$ are connected in $\CW$. There are two possibilities:
 
 \begin{itemize}
  \item $\bar d(M(s_1,t_1),M(s_2,t_2))\leq 10 R$. In this case, we have $\bar d(M(t_1),M(t_2))\leq 10R$.
    In particular, in view of the second bullet in the definition of the edges of $\CW$, we have that $t_1,t_2$ are contained, respectively, in the $\CW$--connected maximal simplices $\Delta*t_1=\sigma(s,t_1)$ and $\Delta* t_2=\sigma(s,t_2)$.
  \item $s_1=s_2$ and $\bar d(M(t_1),M(t_2))\leq 10R$ (notice that $t_1\neq t_2$ so that the ``symmetric'' case cannot occur). Again, we reach the same conclusion as above.
   \end{itemize}

 We can now consider a simplex $\Delta$ of $\bXi$--type with vertices $s,v(s)$. Consider vertices $x_1,x_2$ of $\link(\Delta)$ that are not $\CX$--adjacent but are contained in $\CW$--adjacent maximal simplices. Furthermore, we can assume that $x_1,x_2$ are not in the link of a simplex of $\mathcal K$--type (the case we just dealt with) which contains $\Delta$, since in that case we already know that there are suitable maximal simplices containing the larger simplex, whence $\Delta$. Then, using the structure of $\link(\Delta)$, we see that there are vertices $s_i,t_i \in \kvtx$ so that:
 \begin{itemize}
  \item  $x_i\in \{t_i,v(t_i)\}$,
  \item $t_i$ and $v(t_i)$ all belong to $\link(\Delta)$, and
    \item $\sigma(s_1,t_1)$ and $\sigma(s_2,t_2)$ are connected in $\CW$.
 \end{itemize}

 In turn, the last bullet splits into two cases:
 
 \begin{itemize}
  \item $\bar d(M(s_1,t_1),M(s_2,t_2))\leq 10 R$. In this we have $\bar d(M(t_1),M(t_2))\leq 10 R$, so that $x_1,x_2$ are contained, respectively, in the $\CW$--connected maximal simplices $\Delta*\{t_1, v(t_1)\}=\sigma(s,t_1)$ and $\Delta*\{t_2, v(t_2)\}=\sigma(s,t_2)$, so this case is fine.
  \item $s_1=s_2$ and $\bar d(M(t_1),M(t_2))\leq 10 R$. But again we reach the same conclusion as before.
 \end{itemize}

 We now also have to check the existence of an action of $\Gamma$ with the required properties. The action is constructed in Lemma \ref{lem:W=barE}, where all properties are checked except finiteness of the number of orbits of links of $\CX$. The finitely many possible types of links are listed in Lemmas \ref{L: empty simplex case} -- \ref{L:remaining simplices}, and for each type of simplex there are only finitely many orbits, so we are done.
\end{proof}

\section{Quasi-isometric rigidity} \label{S:QI rigidity}

In this section, using the HHS structure, we prove a strong form of rigidity for the group $\Gamma$ and the model space $\bar E$.  Recall that $\bar E$ is defined via a particular truncation $\bar D$ of the Teichm\"uller disk $D$ obtained by removing $1$--separated horoballs.  We say that such a truncation $\bar E$ is an {\em allowable truncation} of $E$ if $\Gamma$ acts by isometries on it with cocompact quotient.  Write $\Isom(\Omega)$ and $\QI(\Omega)$ for the isometry group and quasi-isometry group, respectively, of a metric space $\Omega$.   For $\bar E$, we write $\Isomfib(\bar E) \le \Isom(\bar E)$ for the subgroup of isometries that map fibers to fibers.

\begin{qi-theorem}
There is an allowable truncation $\bar E$ of $E$ such that the natural homomorphisms $\Isomfib(\bar E) \to \Isom(\bar E) \to \QI(\bar E) \cong \QI(\Gamma)$ are all isomorphisms, and $\Gamma  \le \Isom(\bar E) \cong \QI(\Gamma)$ has finite index.
\end{qi-theorem}

The proof is divided up into several steps which we outline here before getting into the details.  The first step is to use the HHS structure to identify certain quasi-flats in $\bar E$, and prove that they are coarsely preserved by a quasi-isometry.  The maximal quasi-flats are encoded by the strip bundles in $\bar E$, and using the preservation of quasi-flats, we show that a quasi-isometry further preserves strip bundles, and even sends all strip bundles for strips in any fixed direction to strip bundles in some other fixed direction.  From there we deduce that a quasi-isometry sends fibers $E_X$ within a bounded distance of some other fibers $E_Y$, and in fact induces a quasi-isometry between the fibers.  Fixing attention on $E_0$ and further appealing to the structure of strip bundles, we show that a self quasi-isometry of $\bar E$ induces a special type of quasi-isometry from $E_0$ to itself sending strips in a fixed direction within a uniformly bounded distance of strips in some other fixed direction.  This quasi-isometry is promoted to a piecewise affine biLipschitz map from $E_0$ to itself, which we then show is in fact affine.  This produces 
a homomorphism to the full affine group of $E_0$, $\QI(\bar E) \to \Aff(E_0)$.  Given an affine homeomorphism of $E_0$, we construct an explicit fiber preserving isometry associated to it, which via the inclusions $\Isomfib (\bar E) \to \QI(\bar E)$ serves as a one-sided inverse.  Finally, we prove that the homomorphism  $\QI(\bar E) \to \Aff(E_0)$ is injective, hence the homomorphisms $\Isomfib(\bar E) \to \Isom(\bar E) \to \QI(\bar E) \to \Aff(E_0)$ are all isomorphisms.  The fact that $\Gamma$ has finite index in $\Isomfib(\bar E)$, and hence in $\Isom(\bar E)$, is straightforward using the cocompactness of the action of $\Gamma$ and the singular structure.

\subsection{HHS structure and quasi-flats}
\label{sec:HHS-quasi-flats}

Denote by $\mathfrak S_0$ the set $\mathcal V\times \{0,1\}$. We denote the element $(v,0)$ by $v^{qt}$ (for ``quasi-tree'') and $(v,1)$ by $v^{ql}$ (for ``quasi-line'').

We denote by $\mathcal F$ the set of all strip bundles of $\bar E$, that is, subbundles with fiber a strip and base the horocycle corresponding to the direction of the strip. (Roughly, these are the flats of the peripheral graph manifolds.)

\begin{prop}[Properties of the HHS structure]\label{prop:HHS_properties}
 The HHS structure $(\bar E,\mathfrak S)$ on $\bar E$ coming from Theorem \ref{thm:E_is_chhs} has the following properties, for some $K\geq 1$.
 
 \begin{enumerate}
  \item\label{item:asymphoric} The set of non-$\nest$-maximal $Y\in\mathfrak S$ with $diam(\mathcal C(Y))\geq 3$ is in bijection with $\mathfrak S_0$. Under said bijection:
  \item\label{item:quasi-lines-trees} $\mathcal C(v^{qt})$ is $(K,K)$-quasi-isometric to a quasi-tree with at least two points at infinity, and  $\mathcal C(v^{ql})$ is $(K,K)$--quasi-isometric to a line;
  \item\label{item:trees_orth_to_lines} For all $v\in\mathcal V$, we have $v^{qt}\orth v^{ql}$;
  \item\label{item:adacency_orth_relations} For all adjacent $v,w\in\mathcal V$, we have $w^{ql}\orth v^{ql}$ and $w^{ql}\nest v^{qt}$;
  \item\label{item:everyone_else_transverse} All pairs of elements of $\mathfrak S_0$ that do not fall into the aforementioned cases are transverse;
  \item\label{item:flats_are_flats} For each adjacent $v,w\in\mathcal V$ there is $F\in\mathcal F$ so $\pi_{v^{ql}}(F)$ and $\pi_{w^{ql}}(F)$ are $K$--coarsely dense, and $\pi_Y(F)$ has diameter at most $K$ for all $Y\neq v^{ql},w^{ql}$.
 \end{enumerate}
\end{prop}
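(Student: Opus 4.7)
The plan is to read off the proposition directly from the combinatorial HHS structure $(\mathcal X, \mathcal W)$ built in Theorem~\ref{thm:E_is_chhs}, whose associated HHS structure on $\bar E$ is provided by Theorem~\ref{thm:hhs_links}. The index set $\mathfrak S$ consists of equivalence classes of non-maximal simplices of $\mathcal X$, and most of the structural work needed has already been done in Lemmas~\ref{L: empty simplex case}--\ref{L:remaining simplices} (enumerating simplex types and their links) and in Propositions~\ref{prop:KL}, \ref{prop:R-projections}, and Lemma~\ref{lem:Xi_hyp} (describing the relevant hyperbolic spaces and projections).

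First I would identify the non-$\nest$-maximal classes with large $\mathcal C$-diameter. By the classification in Lemmas~\ref{L: Xi simplex}, \ref{L: KL simplex}, and~\ref{L:remaining simplices}, the simplex types listed in Lemma~\ref{L:remaining simplices} all have links that are non-trivial joins, hence $\mathcal C$-diameter at most $3$. The remaining non-empty, non-maximal simplices are $\bXi$-type and $\mathcal K$-type, parametrized up to equivalence by the vertex $v(s) \in \mathcal V$ for $\bXi$-type (with $\mathcal C \simeq \bXi^v$) and by $w \in \mathcal V$ for $\mathcal K$-type (with $\mathcal C \simeq \mathcal K^w$). This sets up the bijection $v^{qt} \leftrightarrow [\bXi\text{-type}]$ and $v^{ql} \leftrightarrow [\mathcal K\text{-type}]$ of item~(1), with the quasi-isometry types of item~(2) following directly from Lemma~\ref{lem:Xi_hyp} and Proposition~\ref{prop:KL}(1).

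Next I would verify the relations (3)-(5) by unwinding Definitions~\ref{defn:nest} and~\ref{defn:orth} into link inclusions and checking these against the three edge rules defining $\mathcal X$. For (3), choosing representatives $\Delta_{qt} = \{s, v\}$ and $\Delta_{ql} = \{t, u, v\}$ (with $u$ adjacent to $v$ in $T_{\alpha(v)}$), the orthogonality $v^{qt} \orth v^{ql}$ reduces to the fact that every vertex of $\mathcal K^v$ is adjacent in $\mathcal X$ to every vertex of $\link_\bXi(v)$, which is immediate from the edge rules. For (4) and adjacent $v,w$, the nesting $w^{ql} \nest v^{qt}$ is the inclusion $\mathcal K^w \subseteq \link_\bXi(v)$, and the orthogonality $w^{ql} \orth v^{ql}$ follows because every vertex of $\mathcal K^w$ is adjacent in $\mathcal X$ to every vertex of $\mathcal K^v$. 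Item~(5) is then an exhaustion: Lemma~\ref{L: nice join properties }(a) rules out same-type nestings, and the remaining cross-type cases fail both relations by inspection.

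For item (6), I would take $F = \bTheta^v \cap \bTheta^w \in \mathcal F$, the strip bundle between the spines of an adjacent pair. Coarse density of $\pi_{v^{ql}}(F)$ and $\pi_{w^{ql}}(F)$ will follow from the coarse surjectivity in Proposition~\ref{prop:KL}(3), once one observes that the HHS projections $\pi_{v^{ql}}, \pi_{w^{ql}}$ agree with $\Lambda^v, \Lambda^w$ up to bounded error via the retractions $r_\Delta$ of Lemma~\ref{lem:retractions}. Boundedness of $\pi_Y(F)$ for other $Y$ reduces to a short case analysis: for $Y = u^{ql}$ with $u \notin \{v,w\}$ we have $d_\tree(u,v) \geq 2$ or $d_\tree(u,w) \geq 2$ (using that in a tree $u$ cannot be a third neighbor coinciding with neither $v$ nor $w$), so Proposition~\ref{prop:R-projections}(2a) applies to $F \subseteq \bTheta^v$ or $F \subseteq \bTheta^w$; for $Y = u^{qt}$ Proposition~\ref{prop:R-projections}(2b) gives the bound, noting that $\xi^v(\bTheta^w) = \{w\}$ in the edge case $u=v$ by~\eqref{E:an obvious but useful fact}; and for $Y = \emptyset$, the image $\bar P(F) \subseteq \hat E$ is a single edge of $T_\alpha$, bounded by Lemma~\ref{L:strip-and-saddle-bound}. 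I do not expect a serious obstacle: the real content is already packaged into the lemmas of \S\ref{S:projections} and \S\ref{S:combinatorial HHS}, and the only mildly delicate point is the small case analysis in item~(6), which relies crucially on the tree-adjacency structure.
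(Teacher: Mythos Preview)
Your treatment of items (1)--(5) is essentially identical to the paper's: the link classification in Lemmas~\ref{L: Xi simplex}--\ref{L:remaining simplices} pins down the unbounded domains and their hyperbolic models, and the relations follow by direct inspection of the edge rules via Definitions~\ref{defn:nest} and~\ref{defn:orth}.

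For item (6) there is a gap. You want to bound $\pi_Y(F)$ by appealing to Proposition~\ref{prop:R-projections}(2) for $\Lambda^u$ and $\xi^u$, writing that the HHS projections ``agree with $\Lambda^v,\Lambda^w$ up to bounded error via the retractions $r_\Delta$ of Lemma~\ref{lem:retractions}''. But Lemma~\ref{lem:retractions} only produces \emph{a} coarsely Lipschitz retraction $r_\Delta\colon Y_\Delta\to\mathcal C(\Delta)$; it does not show that this retraction coincides with the coarse closest-point projection $p$ of Definition~\ref{defn:projections}, which is what actually defines $\pi_Y$. Two coarsely Lipschitz retractions onto a quasi-isometrically embedded subspace need not be close in general, and the paper explicitly cautions in \S\ref{subsec:outline} that the combinatorial projections ``may be different than the projections to $\mathcal K^v$ and $\bXi^v$''. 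So boundedness of $\Lambda^u(\bTheta^v)$ or $\xi^u(\bTheta^v)$ does not directly yield boundedness of $\pi_{u^{ql}}(F)$ or $\pi_{u^{qt}}(F)$.

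The paper instead works directly with Definition~\ref{defn:projections}. Under the quasi-isometry $\bar E\to\mathcal W$, the strip bundle $F=\bTheta^v\cap\bTheta^w$ corresponds to the maximal simplices $\sigma(s,t)$ with $s\in\mathcal K^v$, $t\in\mathcal K^w$. Coarse density of $\pi_{v^{ql}}(F)$ is then immediate, since each such $\sigma(s,t)$ already contains the vertex $s\in\mathcal K^v=\mathcal C(v^{ql})$, which $p$ fixes. For boundedness one checks case-by-case that for $[\Delta]\neq v^{ql},w^{ql}$ the set $(\{v,w\}\cup\mathcal K^v\cup\mathcal K^w)\cap Y_\Delta$ has bounded diameter in $Y_\Delta$ (e.g.\ because either $\Sat(\Delta)$ misses $\mathcal K^v\cup\mathcal K^w$, or at least one of $v,w$ survives in $Y_\Delta$ and every other surviving vertex is $\mathcal X$--adjacent to it); coarse Lipschitzness of $p$ then bounds $\pi_{[\Delta]}(F)$. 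Your tree-adjacency case split is the right skeleton, but it should be carried out inside $Y_\Delta$ rather than routed through $\Lambda^u$ and $\xi^u$.
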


\begin{proof}
The second paragraph of the proof of Theorem~\ref{thm:E_is_chhs} implies $\C(v^{qt})$ is quasi-isometric to the quasi-tree ${\bf \Xi}^v$ (with at least two points at infinity by Lemma~\ref{lem:Xi_hyp}) and $\C(v^{ql})$ is quasi-isometric to the quasi-line $\mathcal K^v$, and that these are the only non-maximal elements of diameter at least $3$.  This proves (\ref{item:asymphoric}) and (\ref{item:quasi-lines-trees}).
In view of the combinatorial description of orthogonality and nesting from Definition \ref{defn:orth}, properties (\ref{item:trees_orth_to_lines})-(\ref{item:everyone_else_transverse}) boil down to combinatorial properties of $\mathcal X$ that are straightforward to check. For example, regarding property (\ref{item:trees_orth_to_lines}) note that two (equivalence classes of) simplices are orthogonal if their links form a join. The links of the simplices corresponding to $v^{qt}$ and $v^{ql}$ are $\link_{\bXi}(v)$ and $\mathcal K^v$ (see Lemmas \ref{L: Xi simplex} and \ref{L: KL simplex}), which indeed form a join.

 Regarding property (\ref{item:flats_are_flats}), first of all the projections in the HHS structure on $\bar E$ are obtained composing the quasi-isometry $\bar E\to \mathcal W$ from Lemma \ref{lem:W=barE} and the projections defined in Definition \ref{defn:projections} (roughly, those are closest-point projections in the complement of saturations).
 
 The required strip bundle is the intersection $\bTheta^v\cap\bTheta^w$, which under the quasi-isometry of Lemma \ref{lem:W=barE} corresponds to the set of all maximal simplices of $\mathcal W$ of the form $\sigma(s,t)$ for $s\in \mathcal K^v$, $t\in \mathcal K^w$. In view of the description of the $\pi_Y$ from Definition \ref{defn:projections}, the coarse density claim follows since the union of the simplices described above contains the links of the simplices corresponding to $v^{ql}$ and $w^{ql}$, which are $\mathcal K^v$ and $\mathcal K^w$.
 
 Regarding the boundedness claim, it can be checked case-by-case that the set of simplices described above gives a bounded set of $Y_{\Delta}$ for $[\Delta]\neq v^{ql},w^{ql}$ (for example, note that said set is bounded if the saturation of $\Delta$ does not intersect $\mathcal K^v\cup\mathcal K^w$, or if it does not contain $v$ or $w$).
  This implies boundedness of the projections since the projections are coarsely Lipschitz; this follows from Theorem \ref{thm:E_is_chhs} since the projection maps of an HHS are required to be coarsely Lipschitz.
 \end{proof}

From now on we identify $\mathfrak S_0$ with the set of all $Y\in\mathfrak S$ with $\diam(\mathcal C(Y))\geq 3$ as in Proposition~\ref{prop:HHS_properties}.
Notice that the maximal number of pairwise orthogonal elements of $\mathfrak S_0$ is 2. Therefore, a \emph{complete support set} as in \cite[Definition 5.1]{BHS:quasiflats} is just a pair of orthogonal elements of $\mathfrak S_0$. 

Let $\mathcal H$ be the set of pairs $(Y,p)$ where $Y\in\mathfrak S$ and $p\in\partial C(Y)$ with $Y=v^{ql}$ for some $v\in\mathcal V$. We say that two such pairs $(Y,p)$ and $(W,q)$ are orthogonal if $Y$ and $W$ are.
Any element $\sigma=(Y,p)\in\mathcal H$ comes with a quasi-geodesic ray $h_\sigma$ in $\bar E$, as in \cite[Definition 5.3]{BHS:quasiflats}, so that $\pi_Y\circ h_\sigma$ is a quasi-geodesic in $\mathcal C(Y)$ and $\pi_W(h_\sigma)$ is bounded for all $W\neq Y$. 

We recall that given subsets $A$ and $B$ of a metric space $X$, we say that the subset $C$ of $X$ is the coarse intersection of $A$ and $B$ if for every sufficiently large $R$ we have that $N_R(A)\cap N_R(B)$ lies within finite Hausdorff distance of $C$. If the coarse intersection of two subsets exists, then it is well-defined up to finite Hausdorff distance.

\begin{lemma}\label{lem:map_on_hinges}
 Let $\phi\colon\bar E\to\bar E$ be a quasi-isometry. Then there is a bijection $\phi_{\mathcal H}\colon\mathcal H\to\mathcal H$ preserving orthogonality and so that $d_{\mathrm{Haus}}(\phi(h_\sigma),h_{\phi_{\mathcal H}(\sigma)})<\infty$.
\end{lemma}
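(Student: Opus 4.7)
The plan is to derive this lemma essentially as a direct application of the main quasi-flats theorem from \cite{BHS:quasiflats} to the HHS structure on $\bar E$ provided by Theorem~\ref{thm:E_is_chhs}, whose relevant features are summarized in Proposition~\ref{prop:HHS_properties}. First I would observe that, by items (\ref{item:trees_orth_to_lines})--(\ref{item:everyone_else_transverse}) of that proposition, the maximal size of a pairwise orthogonal subset of $\mathfrak S_0$ is $2$, so top-dimensional quasi-flats of $\bar E$ are $2$--dimensional and the standard maximal orthants are precisely the coarse products $h_{\sigma_1}\times h_{\sigma_2}$ indexed by pairs of orthogonal elements $(\sigma_1,\sigma_2)\in\mathcal H\times\mathcal H$. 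Item (\ref{item:flats_are_flats}) realizes each such orthant concretely as a strip bundle in $\mathcal F$, and item (\ref{item:quasi-lines-trees}) ensures that the hyperbolic pieces $\mathcal C(v^{qt}), \mathcal C(v^{ql})$ satisfy the unboundedness hypotheses needed by the quasi-flats machinery.

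Next, I would invoke the quasi-flats theorem of \cite{BHS:quasiflats}, which says that a quasi-isometry $\phi$ coarsely permutes the standard orthants: each $\phi(h_\sigma)$ lies within finite Hausdorff distance of some $h_{\sigma'}$. To see that $\sigma'$ is uniquely determined by $\sigma$, I would argue that two distinct hinges $h_\tau,h_{\tau'}$ have infinite Hausdorff distance, since they either project to distinct points at infinity in a common $\mathcal C(Y)$ or project unboundedly into distinct $\mathcal C(Y),\mathcal C(Y')$ on which the other hinge has bounded projection. This defines the map $\phi_{\mathcal H}\colon\mathcal H\to\mathcal H$, and applying the same argument to $\phi^{-1}$ produces a two-sided inverse, so $\phi_{\mathcal H}$ is a bijection.

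Finally, to check preservation of orthogonality, suppose $\sigma_1\orth\sigma_2$ in $\mathcal H$. Then the $2$--dimensional standard orthant $F=h_{\sigma_1}\times h_{\sigma_2}$ (realized up to finite Hausdorff distance by a strip bundle via item (\ref{item:flats_are_flats})) is a top-dimensional quasi-flat, and its image $\phi(F)$ is again one. By the quasi-flats theorem, $\phi(F)$ lies within finite Hausdorff distance of a finite union of standard orthants. The rays $\phi(h_{\sigma_i})$ lie inside $\phi(F)$ and coarsely coincide with $h_{\phi_{\mathcal H}(\sigma_i)}$, and since they escape to infinity in independent directions they cannot be housed inside distinct orthants of the covering union; they must both lie in a single standard orthant, and so $\phi_{\mathcal H}(\sigma_1)\orth\phi_{\mathcal H}(\sigma_2)$.

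The main obstacle is the bookkeeping needed to verify that the HHS structure here fits the hypothesis framework of \cite{BHS:quasiflats}; once that is settled (using items (\ref{item:asymphoric})--(\ref{item:flats_are_flats}) of Proposition~\ref{prop:HHS_properties}), the result is essentially a packaging of the statement that quasi-flats are coarsely preserved. The genuine content we will exploit in the subsequent sections is not the existence of $\phi_{\mathcal H}$ per se, but rather its compatibility with the strip bundles in $\mathcal F$, which is what allows us to promote this combinatorial bijection to geometric information about the fibers of $\bar E$.
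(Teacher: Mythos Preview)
There is a genuine gap. You invoke ``the quasi-flats theorem of \cite{BHS:quasiflats}'' as if it directly produces a permutation of hinge rays $h_\sigma$, but the relevant statement is \cite[Theorem~5.7]{BHS:quasiflats}, and that theorem requires three standing assumptions. Assumption~1 and Assumption~3 do hold here (by items (\ref{item:asymphoric}),(\ref{item:quasi-lines-trees}) and (\ref{item:trees_orth_to_lines})--(\ref{item:everyone_else_transverse}) of Proposition~\ref{prop:HHS_properties} respectively), but Assumption~2 fails: it requires each relevant domain to arise as the intersection of two complete support sets, and while $v^{ql}$ does (it lies in both $\{v^{ql},v^{qt}\}$ and $\{v^{ql},w^{ql}\}$ for any adjacent $w$), the domain $v^{qt}$ lies in only the single complete support set $\{v^{ql},v^{qt}\}$. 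The paper's proof does not apply Theorem~5.7 as a black box; instead it opens up that proof, locates the two places where Assumption~2 is used, and argues that both uses can be circumvented \emph{provided one restricts to the quasi-line hinges $\mathcal H$}. This is exactly why the lemma is stated for $\mathcal H$ rather than for the full hinge set $\mathcal H'$.

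This also affects two of your intermediate claims. First, your assertion that the standard maximal orthants are indexed by orthogonal pairs in $\mathcal H\times\mathcal H$ is not correct: the orthogonal pairs $\{v^{ql},v^{qt}\}$ are complete support sets too, so there are orthants with a quasi-tree factor. Second, and more importantly, a priori $\phi$ could send a quasi-line hinge ray to a quasi-tree hinge ray; ruling this out is precisely the content of the argument that $h_{(v^{qt},p)}$ can never arise as a coarse intersection of two standard orthants (the failure of Assumption~2 for $v^{qt}$ is what forces this), whereas each $h_{(v^{ql},p)}$ can. Without this step you do not know the image of your map lies in $\mathcal H$, and your orthogonality argument, which already presupposes both images are in $\mathcal H$ and asserts without justification that two diverging rays in the $\phi$-image of an orthant must land in a single orthant of the covering union, does not go through.
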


\begin{proof}
Let $\mathcal H'$ be the set of pairs $(Y,p)$ where $Y\in\mathfrak S$ and $p\in\partial C(Y)$, without the restriction on $Y$.

 The lemma with $\mathcal H'$ replacing $\mathcal H$ would follow directly from \cite[Theorem 5.7]{BHS:quasiflats}, except that the HHS structure on $\bar E$ does not satisfy one of the 3 required assumptions, namely Assumption 2 (while it does satisfy Assumption 1 by parts (\ref{item:asymphoric}) and (\ref{item:quasi-lines-trees}) of Proposition \ref{prop:HHS_properties}, and it also satisfies Assumption 3 since there are no 3 pairwise orthogonal elements of $\mathfrak S_0$, by parts (\ref{item:trees_orth_to_lines})-(\ref{item:everyone_else_transverse}) of Proposition \ref{prop:HHS_properties}).
 
 Inspecting the proof of \cite[Theorem 5.7]{BHS:quasiflats}, we see that Assumption 2 is used in two places.
 
 The first one is to define the map $\phi_{\mathcal H}$ on a certain pair $\sigma=(Y,p)\in\mathcal H'$. The argument applies verbatim if $Y$ satisfies Assumption 2, that is, if and only if $Y$ is the intersection of 2 complete support sets. This is the case if $Y=v^{ql}$ for some $v\in\mathcal H$, that is, if $\sigma\in\mathcal H$. Therefore, one can use that argument to define a map $\phi_{\mathcal H}:\mathcal H\to\mathcal H'$. What is more, the image of $\phi_{\mathcal H}$ needs to be contained in $\mathcal H$. This can be seen from the fact that $h_{\phi_{\mathcal H}(\sigma)}$ for $\sigma\in\mathcal H$ arises as a coarse intersection of \emph{standard orthants}, which are, essentially, products of rays $h_\sigma$, see \cite[Definition 4.1]{BHS:quasiflats} for the precise definition. Notice that \cite[Lemma 4.11]{BHS:quasiflats} says, roughly, that coarse intersections of standard orthants are the expected sub-products. Hence, the failure of Assumption 2 for $Y = v^{qt}$ implies that $h_{(Y,p)}$ cannot be a coarse intersection of standard orthants, and therefore $\phi_{\mathcal H}(\sigma)$ for $\sigma\in\mathcal H$ also needs to lie in $\mathcal H$.

The second place where Assumption 2 is mentioned in \cite[Theorem 5.7]{BHS:quasiflats} is the proof  that $\phi_{\mathcal H}$ preserves orthogonality. There the assumption is used to say that certain quasi-geodesic rays are of the form $h_{\sigma}$. Such quasi-geodesic rays arise as coarse intersections of standard orthants, so, as mentioned above, they need to be of the form $h_\sigma$ for $\sigma\in\mathcal H$, hence Assumption 2 is not actually needed there.

Thus, the arguments in the proof of \cite[Theorem 5.7]{BHS:quasiflats} give the lemma.
\end{proof}

\begin{lemma}\label{lem:flats_to_flats}
For every $K$ there exists $C$ so that the following holds. Let $\phi\colon\bar E\to\bar E$ be a $(K,K)$--quasi-isometry. Then there is a bijection $\phi_{\mathcal F}\colon\mathcal F\to\mathcal F$ so that $d_{\mathrm{Haus}}(\phi(F),\phi_{\mathcal F}(F))\leq C$ for all $F\in\mathcal F$.
\end{lemma}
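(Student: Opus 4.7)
The proof proceeds by routing strip bundles through their asymptotic ``hinge data'' and using that $F$ and $\phi(F)$ are each coarsely determined by this data. First, I would set up a natural correspondence between $\mathcal F$ and a distinguished family of orthogonal pairs in $\mathcal H$. By Proposition~\ref{prop:HHS_properties}(\ref{item:adacency_orth_relations})-(\ref{item:everyone_else_transverse}), the only orthogonal pairs $\{Y,W\} \subset \mathfrak S_0$ with \emph{both} $Y$ and $W$ quasi-lines have the form $\{v^{ql}, w^{ql}\}$ with $v,w$ adjacent in $\mathcal V$; each such pair determines an $F \in \mathcal F$ (namely $\bTheta^v \cap \bTheta^w$), characterized up to bounded Hausdorff distance by the projection conditions in Proposition~\ref{prop:HHS_properties}(\ref{item:flats_are_flats}), and $F$ carries a coarse product structure with factors quasi-isometric to $\mathcal C(v^{ql})$ and $\mathcal C(w^{ql})$.

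Next, since each $\mathcal C(v^{ql})$ is quasi-isometric to $\mathbb R$ and so has exactly two points at infinity, $F$ determines a four-element subset
\[
\mathcal H(F) = \{(v^{ql},p_1), (v^{ql},p_2), (w^{ql},q_1), (w^{ql},q_2)\} \subset \mathcal H,
\]
where $\{p_1,p_2\} = \partial \mathcal C(v^{ql})$ and $\{q_1,q_2\} = \partial \mathcal C(w^{ql})$. The four ``mixed'' orthogonal pairs yield four standard orthants $\mathcal O_{ij}$ in the sense of \cite[Definition~4.1]{BHS:quasiflats}, and by \cite[Lemma~4.11]{BHS:quasiflats} together with the coarse product structure of $F$, the union $\bigcup_{i,j} \mathcal O_{ij}$ lies within uniformly bounded Hausdorff distance of $F$.

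Now I would apply Lemma~\ref{lem:map_on_hinges} to the $(K,K)$--quasi-isometry $\phi$. Since $\phi_{\mathcal H}$ preserves orthogonality and sends quasi-line type hinges to quasi-line type hinges (the quasi-line condition is determined by $\mathcal C(Y)$ having two points at infinity, which is preserved since $h_\sigma$ rays are), the image $\phi_{\mathcal H}(\mathcal H(F))$ is again the hinge set $\mathcal H(F')$ of a strip bundle $F'$ corresponding to some adjacent pair $v', w' \in \mathcal V$. Define $\phi_{\mathcal F}(F) := F'$. Applying the same construction to a quasi-inverse of $\phi$ shows that $\phi_{\mathcal F}$ is a bijection.

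The final and main step is the Hausdorff bound. Each standard orthant $\mathcal O_{ij} \subset F$ is coarsely determined by its two bounding rays $h_{(v^{ql},p_i)}$ and $h_{(w^{ql},q_j)}$ (it is coarsely their ``coarse product''), and by Lemma~\ref{lem:map_on_hinges} each such ray is sent by $\phi$ within uniformly bounded Hausdorff distance of the corresponding ray of $\mathcal O'_{ij} \subset F'$. Using standard coarse-product arguments from \cite[\S4]{BHS:quasiflats} (in particular that standard orthants are coarsely determined by, and recover uniformly, their defining hinge data), one deduces $d_{\mathrm{Haus}}(\phi(\mathcal O_{ij}), \mathcal O'_{ij}) \le C'$ for $C' = C'(K)$. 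Since $F$ and $F'$ are each coarsely equal to the union of their four orthants with uniform constants, this gives $d_{\mathrm{Haus}}(\phi(F), F') \le C$, as required. The main obstacle is this last step: verifying that coarse control over the four boundary rays upgrades to uniform coarse control over the entire $2$--dimensional strip bundle, with constants depending only on $K$. This is where the machinery of \cite{BHS:quasiflats} on quasi-flats and standard orthants must be invoked carefully, since hinges and orthants are intrinsically defined only up to finite Hausdorff distance.
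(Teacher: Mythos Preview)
Your outline has the right overall shape, but there is a genuine gap at the point where you assert that $\phi_{\mathcal H}(\mathcal H(F))$ is again of the form $\mathcal H(F')$. The orthogonality relations \emph{internal} to $\mathcal H(F)$ do not suffice to force this. Concretely: write $\phi_{\mathcal H}((v^{ql},p_i)) = (a_i^{ql},\cdot)$ and $\phi_{\mathcal H}((w^{ql},q_j)) = (b_j^{ql},\cdot)$. Orthogonality preservation gives only that each $a_i$ is adjacent to each $b_j$, and (using that $\phi_{\mathcal H}^{-1}$ also preserves orthogonality) that $a_1,a_2$ are not adjacent and $b_1,b_2$ are not adjacent. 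This is consistent with the configuration $a_1\neq a_2$, $b_1=b_2=b$, with $a_1,a_2$ two distinct neighbours of $b$ at tree-distance~$2$. In that case the four images do not constitute $\mathcal H(F')$ for any strip bundle~$F'$, so your definition of $\phi_{\mathcal F}$ breaks down.

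The paper closes this gap by proving a stronger, vertex-level statement first: for each $v\in\mathcal V$ there exists $w\in\mathcal V$ with $\phi_{\mathcal H}(\{(v^{ql},p^+),(v^{ql},p^-)\}) = \{(w^{ql},q^+),(w^{ql},q^-)\}$. The trick is to bring in \emph{two} distinct neighbours $u_1,u_2$ of $v$ and observe that $(v^{ql},p^{\pm})$ are the \emph{only} elements of $\mathcal H$ orthogonal to all four hinges $(u_i^{ql},r_i^{\pm})$; since a quasi-line has only two boundary points, the four images of the $(u_i^{ql},r_i^{\pm})$ are based at at least two distinct (non-adjacent) vertices, and in a tree such vertices have at most one common neighbour, pinning down $w$. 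Once this is established, the paper does not reconstruct the orthant-by-orthant Hausdorff estimate by hand as you propose; instead it invokes \cite[Lemma~5.9]{BHS:quasiflats} directly (noting that the failure of Assumption~2 there is immaterial, since Lemma~\ref{lem:map_on_hinges} already supplies the requisite map on hinges). That black box absorbs exactly the uniformity issue you flag in your final paragraph, namely upgrading the merely-finite Hausdorff bounds of Lemma~\ref{lem:map_on_hinges} to a uniform constant $C=C(K)$ on the level of standard flats.
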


\begin{proof}
 Let $p^{\pm}$ be the two points at infinity of $\mathcal C(v^{ql})$ for some $v\in\mathcal V$. We claim that there exists $w\in\mathcal V$ so that, for $q^{\pm}$ the points at infinity of $\mathcal C(w^{ql})$, we have $\phi_{\mathcal H}( (v^{ql},p^{\pm}) )=(w^{ql},q^{\pm})$, up to relabeling. We use that $\phi_{\mathcal H}$ preserves orthogonality to show this. Let $u_1,u_2\in\mathcal V$ be distinct and adjacent to $v$, and let $r^\pm_1,r^\pm_2$ be the points at infinity of $\mathcal C(u_1^{ql})$, $\mathcal C(u_2^{ql})$. Then $(v^{ql},p^\pm)$ are the only elements of $\mathcal H$ that are orthogonal to all the $(u_i^{ql},r_i^\pm)$. Since $\phi_{\mathcal H}$ preserves orthogonality, we see that $\phi_{\mathcal H}((v^{ql},p^\pm))$ are both orthogonal to the same 4 distinct elements of $\mathcal H$ with the property that no pair of them is orthogonal. This is easily seen to imply that $\phi_{\mathcal H}((v^{ql},p^\pm))$ must be of the form $(w^{ql},q^{\pm})$, since said $4$ elements need to be associated to at least 2 distinct vertices of $\mathcal V$. This shows the claim.
 
 In view of the claim, we see that \cite[Lemma 5.9]{BHS:quasiflats} applies. (We note that Assumption 2 in said Lemma is only needed to have the map from \cite[Theorem 5.7]{BHS:quasiflats}, but our map from Lemma \ref{lem:map_on_hinges} has the same defining properties, just with a smaller domain and range.)
 
 The \emph{standard flats} in \cite[Lemma 5.9]{BHS:quasiflats} coarsely coincide with the elements of $\mathcal F$ in view of Proposition \ref{prop:HHS_properties}\eqref{item:flats_are_flats} (compare with \cite[Definition 4.1]{BHS:quasiflats}) so the lemma follows.
\end{proof}

Denote by $\mathcal S$ the collection of all strips in $E_0$, and for $A\in\mathcal S$ denote by $\alpha(A)\in\mathcal P$ the direction of $A$. Similarly, for $F\in\mathcal F$ we denote $\alpha(F)\in\mathcal P$ the direction of the strip defining $F$. 

\begin{prop}  \label{prop:strip_bijection}
 Given $K \geq 1$, there exists $C \geq 0$ so that if $\phi \colon \bar E \to \bar E$ is a $(K,K)$--quasi-isometry, then for all $X \in \bar D$, there exists $Y \in \bar D$ so that the Hausdorff distance between $\phi(E_X)$ and $E_Y$ is at most $C$.  In particular, $d_{\mathrm{Haus}}(E_0,\phi(E_0))<\infty$. 
  Moreover, there are bijections
$\phi_{\mathcal P}:\mathcal P\to\mathcal P$
 and 
 $\phi_{\mathcal S}:\mathcal S\to\mathcal S$
 so that:
 \begin{enumerate}
  \item $d_{\mathrm{Haus}}(\phi(\partial \mathcal B_\alpha), \partial \mathcal B_{\phi_{\mathcal P}(\alpha)})\leq C$ for each $\alpha\in\mathcal P$.
  \item $\alpha(\phi_{\mathcal S}(A))=\phi_{\mathcal P}(\alpha(A))$ and $d_{\mathrm{Haus}}(\phi(A), \phi_{\mathcal S}(A))<\infty$ for all $A\in \mathcal{S}$,
 \end{enumerate}
\end{prop}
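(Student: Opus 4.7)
The plan is to bootstrap from the bijection $\phi_{\mathcal F}\colon\mathcal F \to \mathcal F$ supplied by Lemma~\ref{lem:flats_to_flats} and to use coarse intersection patterns among strip bundles to successively distill finer structure: directions, horoball boundaries, fibers, and individual strips.

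First I would construct $\phi_{\mathcal P}$ by analyzing when two strip bundles have infinite coarse intersection. If $\alpha\neq\beta$, then any $F\subset\partial\mathcal B_\alpha$ and $F'\subset\partial\mathcal B_\beta$ are disjoint, since the $1$--separated assumption on $\{B_\alpha\}$ forces the horoball boundaries themselves to be disjoint. On the other hand, two strip bundles $F_A, F_B$ in the same direction $\alpha$ whose strips $A, B\subset E_{X_\alpha}$ share a spine $\theta^v_{X_\alpha}$ intersect in the full spine bundle $\theta^v$, yielding an infinite-diameter intersection; and since the adjacency graph of strips in $E_{X_\alpha}$ is connected, any two bundles in direction $\alpha$ may be joined by a chain of such infinite intersections. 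Hence the equivalence relation on $\mathcal F$ generated by ``$F\cap F'$ has infinite diameter'' has equivalence classes exactly in bijection with $\mathcal P$. Since $\phi_{\mathcal F}$ preserves coarse intersections up to a bounded Hausdorff error, it respects this equivalence and yields the bijection $\phi_{\mathcal P}$ with $\alpha(\phi_{\mathcal F}(F)) = \phi_{\mathcal P}(\alpha(F))$.

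For assertion (1), I would use that each $\partial\mathcal B_\alpha$ is contained in a uniformly bounded neighborhood of the union $\bigcup_{F\subset\partial\mathcal B_\alpha} F$. Indeed, in each fiber $E_X$ with $X \in \partial B_\alpha$, the strips in direction $\alpha$ cover the complement of the spines $\theta^v_X$, whose saddle connections have uniformly bounded length by Lemma~\ref{L:strip-and-saddle-bound}. Applying $\phi$ term by term and combining $\phi_{\mathcal F}$ with $\phi_{\mathcal P}$ then gives $d_{\mathrm{Haus}}(\phi(\partial\mathcal B_\alpha), \partial\mathcal B_{\phi_{\mathcal P}(\alpha)}) \leq C$ for a suitable $C = C(K)$.

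For the fiber and strip assertions, the key point is that each strip bundle $F$ is coarsely a $2$--dimensional quasi-flat with two distinguished coarse axes: the strip direction (individual strips $A_Y = F\cap E_Y$) and the horocycle direction (sweeping over $\partial B_\alpha$). These can be distinguished intrinsically via cone-sharing intersections: two strip bundles $F_A, F_C$ whose strips share only a cone point $p$ meet in the single horocycle through $p$, which is a quasi-line in the horocycle direction of $F$. Hence $\phi$ must preserve the horocycle and strip direction families inside each bundle. Consequently each strip $A_X = F_A \cap E_X$ for $X \in \partial B_\alpha$ lies within uniformly bounded Hausdorff distance of a specific strip $A'_{Z(F_A,X)}$ in $\phi_{\mathcal F}(F_A)$, for some parameter $Z(F_A,X) \in \partial B_{\phi_{\mathcal P}(\alpha)}$. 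To show $Z(F_A, X)$ is (coarsely) independent of $F_A$, I would use that any two adjacent strips $A, B\subset E_X$ share the spine $\theta^v_X$, whose $\phi$--image then lies within uniformly bounded Hausdorff distance of both $A'_{Z(F_A,X)}$ and $B'_{Z(F_B,X)}$ inside the common spine bundle $\phi_{\mathcal F}(F_A)\cap \phi_{\mathcal F}(F_B) = \theta^{v'}$; since distinct parameter values produce parallel spines inside $\theta^{v'}$ separated by a distance proportional to $|Z(F_A,X) - Z(F_B,X)|$ in $\partial B_{\phi_{\mathcal P}(\alpha)}$, the existence of a common quasi-line in both Hausdorff neighborhoods forces the parameter separation to be uniformly bounded. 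Choosing $Y(X)$ in this bounded range then furnishes a $Y(X)\in \partial B_{\phi_{\mathcal P}(\alpha)}$ with $d_{\mathrm{Haus}}(\phi(E_X), E_{Y(X)}) \leq C$ for all $X\in \partial B_\alpha$. For interior $X \in \bar D$, I would exploit that $\bar D/G$ is compact to choose $X'\in\partial B_\alpha$ within uniformly bounded $\bar\rho$--distance of $X$, then use $d_{\mathrm{Haus}}(E_X, E_{X'}) \leq \bar\rho(X, X')$, which holds because each horizontal disk $D_x$ is isometrically embedded in $\bar E$. Finally $\phi_{\mathcal S}$ is obtained by transferring the image strip back to $E_0$ via the affine identification $f_{0, Y(X_0)}$, with bijectivity coming from the analogous construction applied to $\phi^{-1}$. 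The main obstacle will be the consistency argument for $Z(F,X)$, which requires careful bookkeeping of the bounded Hausdorff constants and a clear picture of how individual strips sit inside each strip bundle relative to its boundary spine bundles.
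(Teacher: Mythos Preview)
Your construction of $\phi_{\mathcal P}$ via the equivalence relation on $\mathcal F$ generated by unbounded intersection is essentially the paper's argument, though be careful to work with \emph{coarse} intersections throughout: empty actual intersection does not imply bounded coarse intersection, so your appeal to $1$--separation is not enough by itself. The paper handles the case $\alpha(F_1)\neq\alpha(F_2)$ by observing that both $F_i$ lie in $\partial\mathcal B_{\alpha(F_i)}$, whose coarse intersection is a fiber $E_X$, and that inside $E_X$ the two $F_i$ become strips in distinct directions, hence with bounded coarse intersection. Assertion~(1) then follows just as you indicate.

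Where your proposal diverges sharply from the paper is in the treatment of fibers. You propose to work \emph{inside} each strip bundle, distinguishing the horocycle and strip axes via cone-point intersections, and then to run a consistency argument across adjacent strips to pin down a single target parameter $Y(X)$. This could perhaps be made to work, but it is considerably more delicate than necessary, and the step you flag as the ``main obstacle'' genuinely is one: you would need to know, for instance, that when $F_A$ and $F_C$ share only a cone point, the coarse intersection of $\phi_{\mathcal F}(F_A)$ and $\phi_{\mathcal F}(F_C)$ is again a single horocycle rather than a line bundle, which amounts to controlling how $\phi_{\mathcal F}$ acts on the adjacency combinatorics of $T_\alpha$.

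The paper bypasses all of this with a single observation: fibers are \emph{quantitatively} the coarse intersections of pairs of horoball boundaries. That is, there exist a function $f$ and a threshold $t_0$ such that for every $X\in\bar D$ and every $t\ge t_0$ one can find distinct $\alpha,\beta\in\mathcal P$ with $N_t(\partial\mathcal B_\alpha)\cap N_t(\partial\mathcal B_\beta)$ within Hausdorff distance $f(t)$ of $E_X$, and conversely any nonempty such intersection lies within $f(t)$ of some fiber. This is immediate from the corresponding fact in $\bar D$ about horocycles, pulled back along $\bar\pi$. Once~(1) holds with a uniform $C$, fiber preservation is then automatic and uniform in $K$, with no need to look inside strip bundles at all. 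Likewise, $\phi_{\mathcal S}$ is defined directly from $\phi_{\mathcal F}$ via the natural bijection $\mathcal S\leftrightarrow\mathcal F$ (a strip $A\subset E_0$ corresponds to its bundle $F_A$), and the Hausdorff bound in~(2) follows because $A$ is the coarse intersection of $F_A$ with $E_0$.
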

\begin{proof}
First, note that fibers are quantitatively coarse intersections of the sets $\partial \mathcal B_\alpha$, in the sense that exists a function $f\colon \mathbb R\to \mathbb R$ and $t_0\geq 0$ such that
\begin{itemize}
 \item for any $X\in\bar D$ and any $t\geq t_0$ there are two distinct $\partial \mathcal B_\alpha$ whose $t$--neighborhoods intersect in a set within Hausdorff distance  $f(t)$ of $E_X$;
 \item for $t \ge t_0$, if the $t$--neighborhoods of two distinct $\partial \mathcal B_\alpha$ intersect, then this intersection lies within Hausdorff distance $f(t)$ of a fiber.
\end{itemize}
This follows via the bundle-map $\pi\colon \bar E\to \bar D$ and the corresponding relationship between neighborhoods of distinct horocycles $\partial B_\alpha$  in $\bar D$.

We next make three preliminary observations.  
Firstly, for each $\alpha\in \mathcal P$ the set $\partial \mathcal B_\alpha$ is the union  of all  $F\in \mathcal{F}$ with $\alpha(F) = \alpha$.
Secondly, if $F_1,F_2\in\mathcal F$ have $\alpha(F_1)\neq \alpha(F_2)$ then the coarse intersection of $F_1$ and $F_2$ is bounded. Indeed, $F_i$ is contained in $\partial \mathcal B_{\alpha(F_i)}$, and the coarse intersection of these $\partial \mathcal B_{\alpha(F_i)}$ is some (or really, any) fiber $E_X$.  Since the coarse intersection of $F_i$ with $E_X$ is a strip in the corresponding direction, and strips in different directions have bounded coarse intersection, the claim follows.
Thirdly, observe that $F_1,F_2\in\mathcal F$ have $\alpha(F_1)=\alpha(F_2)$ if and only if there is a chain of elements in $\mathcal F$ from $F_1$ to $F_2$ so that consecutive elements have unbounded coarse intersection. The ``if'' part follows from the previous observation, while the ``only if'' follows from the fact that elements of $\mathcal F$ corresponding to adjacent edges of some $T_\alpha$ have unbounded coarse intersection.

In view of all this and Lemma~\ref{lem:flats_to_flats}, we see that for each $\alpha$ there exists a (necessarily unique) $\phi_{\mathcal P}(\alpha)\in \mathcal{P}$ so that $\phi(\partial \mathcal B_\alpha)$ and $\partial \mathcal B_{\phi_{\mathcal P}(\alpha)}$ have finite Hausdorff distance. In fact, the distance is uniformly bounded by the constant $C$, depending only on $K$, coming from Lemma~\ref{lem:flats_to_flats}.
This is how we define $\phi_{\mathcal P}$. 

Now for any $X\in \bar D$, we may choose $\alpha_1,\alpha_2\in \mathcal P$ so that the fiber $E_X$ has Hausdorff distance at most $f(t_0)$ from the intersection of the $t_0$--neighborhoods of $\partial \mathcal B_{\alpha_i}$. Thus there is some uniform $t'_0 \ge t_0$, again depending only on $K$, so that $\phi(E_X)$ has Hausdorff distance at most $t_0'$ from the intersection of the $t_0'$--neighborhoods of $\partial \mathcal B_{\phi_{\mathcal P}(\alpha_i)}$; further, as mentioned above, this intersection of $t_0'$--neighborhoods has  Hausdorff distance at most $f(t'_0)$ to some fiber $E_Y$, as claimed.

Finally, we define $\phi_{\mathcal S}$ via the bijection $\mathcal F\leftrightarrow \mathcal S$ between strip bundles and strips in $E_0$.  That is, if  $A\in\mathcal S$ corresponds to $F\in\mathcal F$, then $\phi_{\mathcal S}(A)$ is the strip corresponding to $\phi_{\mathcal F}(F)$. 
Since $A$ is the coarse intersection of $F$ with $E_0$, the desired properties for $\phi_{\mathcal S}$ then follow from the facts that $\alpha_{\phi_{\mathcal F}(F)}=\phi_{\mathcal P}(\alpha(F))$ and that $\phi_{\mathcal F}(F)$ lies within finite Hausdorff distance of $\phi(F)$.
\end{proof}

\subsection{From $\QI(\bar E)$ to $\QI(E_0)$}
\label{sec:QI_of_barE_to_QI_of_E_0}

The next step is to construct a homomorphism $\QI(\bar E) \to \QI(E_0)$ by associating a quasi-isometry of $E_0$ to each quasi-isometry of $\bar E$ (see Lemma~\ref{L:QI to QI fiber}).
This step requires some preliminaries which we now explain.

To distinguish between two relevant notions of properness, we will call a map $f\colon X\to Y$ between metric spaces \emph{topologically} proper if it is continuous and preimages of compact sets are compact, and \emph{metrically} proper if there exist diverging functions $\rho_-,\rho_+:\mathbb R_{\geq0}\to \mathbb R_{\geq 0}$ (which we will call properness functions) such that for all $x,y\in X$ we have
\[\rho_{-}(d_X(x,y))\leq d_Y(f(x),f(y))\leq \rho_{+}(d_X(x,y)).\]
(Both types of maps are just referred to as ``proper'' in the appropriate contexts, but neither notion implies the other.)

For $R>0$ and $X\in \bar D$, we endow $N_R(E_X)$ with the restriction of the metric of $\bar E$, while $E_X$ is endowed with its path metric. Then the restriction of $f_X\colon \bar E\to E_X$ to $N_R(E_X)$ is metrically proper. Indeed, $f_X$ is topologically proper and equivariant with respect to a group acting cocompactly. Note that the properness functions here can be taken independently of the fiber $X$ (once we fix $R$) because there is also a cocompact action on $\bar D$. We also note the following lemma.

\begin{lemma}\label{lem:proper_to_qi}
 A metrically proper coarsely surjective map between geodesic metric spaces is a quasi-isometry. Moreover, the quasi-isometry constants depend only on the properness functions and the coarse surjectivity constant.
\end{lemma}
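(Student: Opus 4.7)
The plan is to upgrade the weak (possibly nonlinear) properness bounds into the linear bounds needed for a quasi-isometry, by exploiting the geodesic structure on both $X$ and $Y$ to chop distances into unit-sized pieces on which the properness functions give uniform control.

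For the upper bound, I would fix $x,y\in X$ with $d_X(x,y)=D$, choose a geodesic from $x$ to $y$ in $X$, and sample points $x=x_0,x_1,\dots,x_n=y$ along it with $d_X(x_i,x_{i+1})\le 1$ and $n\le D+1$. Applying $\rho_+$ on each step and summing by the triangle inequality gives $d_Y(f(x),f(y))\le (D+1)\rho_+(1)$, which is an upper bound of the form $K\,d_X(x,y)+L$ with $K,L$ depending only on $\rho_+$.

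For the lower bound, which is the step where coarse surjectivity and the divergence of $\rho_-$ really do the work, I would reverse the roles: fix $x,y\in X$ with $d_Y(f(x),f(y))=D'$, take a geodesic from $f(x)$ to $f(y)$ in $Y$, and sample $f(x)=y_0,\dots,y_m=f(y)$ with $d_Y(y_i,y_{i+1})\le 1$ and $m\le D'+1$. Using coarse surjectivity, pick $z_i\in X$ with $d_Y(f(z_i),y_i)\le C$ for each $i$, taking $z_0=x$ and $z_m=y$. Then $d_Y(f(z_i),f(z_{i+1}))\le 2C+1$, so the lower properness estimate yields $d_X(z_i,z_{i+1})\le R$, where $R:=\sup\{t\ge 0:\rho_-(t)\le 2C+1\}$ is finite since $\rho_-$ diverges. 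Summing over the chain gives $d_X(x,y)\le mR\le R\,d_Y(f(x),f(y))+R$, which is the desired lower bound with constants depending only on $\rho_-$ and $C$.

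Combining both bounds with the hypothesized coarse surjectivity produces a quasi-isometry with constants expressible in terms of $\rho_+(1)$, the inverse-threshold $R$ of $\rho_-$ at the scale $2C+1$, and $C$ itself — exactly the dependence claimed. The only subtle point (and the main obstacle, though mild) is making sure $R$ is well-defined and finite: this is exactly where the hypothesis that $\rho_-$ \emph{diverges} is used, as opposed to just being nondecreasing. No other ingredients are needed beyond the triangle inequality and the geodesic assumption on $X$ and $Y$.
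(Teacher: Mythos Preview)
Your proof is correct and follows essentially the same approach as the paper's sketch: subdivide geodesics in $X$ to get the coarse Lipschitz upper bound, and for the lower bound subdivide geodesics in $Y$ and pull back via coarse surjectivity, using divergence of $\rho_-$ to control the steps. The paper packages the second step as constructing a metrically proper coarse inverse and showing it too is coarsely Lipschitz, but the content is identical to your direct chain argument.
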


This follows from standard arguments. First, a metrically proper map from a geodesic metric space is coarsely Lipschitz (the proof involves subdividing geodesics into segments of length at most 1, each of which has bounded image). Also, coarse surjectivity allows one to construct a quasi-inverse of the map, which is furthermore metrically proper. As above, the quasi-inverse is coarsely Lipschitz, and we conclude since a coarsely Lipschitz map with a coarsely Lipschitz quasi-inverse is a quasi-isometry.

Given any quasi-isometry $\phi \colon \bar E \to \bar E$ and $X \in \bar D$, define $\nu_\phi^X \colon E_X \to E_X$ to be $\nu_\phi^X = f_X \circ \phi|_{E_X}$.  In the case of the base fiber $X = X_0$ we denote this $\nu_\phi = \nu_\phi^{X_0}$.  When $\phi$ is understood, we also write $\nu^X = \nu_\phi^X$ and $\nu = \nu_\phi$.

\begin{lemma} \label{L:fiber to fiber qi} For any $(K,K)$--quasi-isometry $\phi \colon \bar E \to \bar E$ and $X \in \bar D$, the map $\nu_\phi^X\colon E_X\to E_X$ is a $(K',K')$--quasi-isometry, where $K'$ depends only on $K$ and $d_{\mathrm{Haus}}(E_X,\phi(E_X))$.  
Furthermore, for any $A \in \mathcal S$, $d_{\mathrm{Haus}}(\nu_\phi^X(A),\phi_{\mathcal S}(A))<\infty$. 
\end{lemma}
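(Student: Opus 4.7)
The plan is to establish that $\nu_\phi^X$ is metrically proper and coarsely surjective, then invoke Lemma~\ref{lem:proper_to_qi} to conclude it is a quasi-isometry with constants depending only on the metric properness and coarse surjectivity data. Write $R = d_{\mathrm{Haus}}(E_X, \phi(E_X))$, so $\phi(E_X) \subset N_R(E_X)$. The key observation I would use is that $f_X$ restricted to the bounded tubular neighborhood $N_R(E_X)$ is a metrically proper map onto $E_X$, with properness functions depending only on $R$ (and not on which fiber $E_X$ we choose). This uniformity follows from the fact that $f_X$ is continuous and topologically proper together with the cocompact action of $\Gamma$ preserving the fibration $\pi\colon \bar E \to \bar D$, as noted in the paragraph preceding Lemma~\ref{lem:proper_to_qi}.

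Given this, since $\phi|_{E_X}\colon E_X \to N_R(E_X)$ is a $(K,K)$-quasi-isometric embedding, its composition with $f_X|_{N_R(E_X)}$ is metrically proper, with properness functions depending only on $K$ and $R$. Coarse surjectivity is straightforward from the Hausdorff estimate: for any $x\in E_X$, pick $z\in \phi(E_X)$ with $d(z,x) \le R$ and write $z = \phi(y)$ for $y\in E_X$; then $\nu_\phi^X(y) = f_X(z)$ is within $f_X$-Lipschitz-controlled distance of $f_X(x) = x$ in $E_X$. Applying Lemma~\ref{lem:proper_to_qi} therefore gives a $(K',K')$-quasi-isometry with $K'$ depending only on $K$ and $R$, as required.

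For the ``furthermore'' claim, Proposition~\ref{prop:strip_bijection}(2) yields $d_{\mathrm{Haus}}(\phi(A), \phi_{\mathcal S}(A)) < \infty$. Since $A$ is (naturally identified with) a strip in $E_X$, we have $\phi(A) \subset \phi(E_X) \subset N_R(E_X)$, so the bounded-Hausdorff neighbor $\phi_{\mathcal S}(A)$ also lies in a (slightly larger) bounded neighborhood of $E_X$. Applying $f_X$, which is uniformly Lipschitz on any such bounded neighborhood, carries both sets close to their images in $E_X$ with Hausdorff distance between them still finite. Under the natural identification of the strips $A$ and $\phi_{\mathcal S}(A)$ with their affine images in $E_X$, this is exactly the claim $d_{\mathrm{Haus}}(\nu_\phi^X(A), \phi_{\mathcal S}(A)) < \infty$.

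The main technical obstacle I anticipate is the uniformity of the properness and Lipschitz estimates for $f_X$ on bounded neighborhoods of fibers—that is, showing the constants depend only on the size of the neighborhood and not on $X$. This is exactly where the cocompact $\Gamma$-action on $\bar E$ enters, and should be essentially a Milnor--Schwarz-style argument carried out using the bundle structure and the induced cocompact action on $\bar D$.
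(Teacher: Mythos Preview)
Your proof is correct and follows the same overall strategy as the paper: establish that $\nu_\phi^X$ is metrically proper (as a composition of $\phi|_{E_X}$ and $f_X|_{N_R(E_X)}$), then coarsely surjective, then invoke Lemma~\ref{lem:proper_to_qi}. The treatment of the ``furthermore'' claim is also essentially the same.

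There is one genuine difference worth noting. For coarse surjectivity you argue directly from the Hausdorff bound: given $x\in E_X$ you choose $z\in\phi(E_X)$ with $\bar d(z,x)\le R$, write $z=\phi(y)$, and observe that $\nu_\phi^X(y)=f_X(z)$ lies within $\rho_+(R)$ of $x$ in the fiber metric. This is a clean elementary argument. The paper instead invokes \cite[Theorem~3.8]{KL:qi_cohopf}, which says that any metrically proper self-map of a space uniformly quasi-isometric to $\mathbb H^2$ is automatically coarsely surjective, with constants depending only on the properness functions. Your route avoids this external citation at no cost; the Kapovich--Leeb result is more powerful (it would apply even without knowing $\phi(E_X)$ is Hausdorff close to $E_X$), but here the Hausdorff estimate from Proposition~\ref{prop:strip_bijection} is already available and suffices.
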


\begin{proof}
First note that the restriction of $\phi$ to $E_X$ is metrically proper, since the path metric on $E_X$ and the restricted metric from $\bar E$ are coarsely equivalent (that is, the identity on $E_X$ is a metrically proper map between these metric spaces). Next let $R = d_{\mathrm{Haus}}(E_X,\phi(E_X))$, which is finite by Proposition~\ref{prop:strip_bijection}, and note that the restriction $f_X\vert_{N_R(E_X)} \colon N_R(E_X)\to E_X$ is also metrically proper. 
Therefore the composition $\nu_\phi^X = (f_X\vert_{N_R(E_X)})\circ(\phi\vert_{E_X})$ is metrically proper and, moreover, the properness functions depend only on $K,R$ and not on the fiber $E_X$. 

By \cite[Theorem 3.8]{KL:qi_cohopf} and the fact that $E_X$ is uniformly quasi-isometric to $\mathbb H^2$, any metrically proper map of $E_X$ to itself is coarsely surjective and, moreover, the coarse surjectivity constant depends only on the properness functions. Therefore $\nu_\phi^X$ is coarsely surjective and a uniform quasi-isometry by Lemma \ref{lem:proper_to_qi}.

Regarding the claim about $A$, this follows from Proposition \ref{prop:strip_bijection}(\ref{item:quasi-lines-trees}) and the fact that $f_X$ moves each point of $\phi(A)\subseteq N_R(E_X)$ at most $R$ away.
\end{proof}

\begin{lemma} \label{L:QI to QI fiber}
The assignment $\phi \mapsto \nu_\phi$, for any quasi-isometry $\phi \colon \bar E \to \bar E$, gives a well-defined homomorphism $\mathcal A_0\colon \QI(\bar E) \to \QI(E_0)$.
\end{lemma}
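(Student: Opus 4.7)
The plan is to verify two properties in turn: that $\phi \mapsto \nu_\phi$ descends to a well-defined map on equivalence classes in $\QI(\bar E)$, and that this map respects composition up to bounded sup-distance on $E_0$. Both reduce to a locally Lipschitz property of the fiber projection $f_0 \colon \bar E \to E_0$: for any $X \in \bar D$ and any $R, M > 0$, there is a constant $L = L(\rho(X_0,X), R, M)$ such that whenever $p_1, p_2 \in N_R(E_X)$ with $\bar d(p_1, p_2) \le M$, one has $d_{E_0}(f_0(p_1), f_0(p_2)) \le L \cdot \bar d(p_1,p_2)$. This reflects the product structure $\bar E \cong \bar D \times E_0$ together with the $e^{\rho(X_0,X)}$-biLipschitz character of the Teichm\"uller maps $f_{X,0}$: any $\bar E$-geodesic of length at most $M$ between such $p_1, p_2$ stays in a bounded $\bar D$-neighborhood of $X$, so the fiberwise component of its velocity contributes to the $f_0$-image arc-length by a uniformly controlled exponential factor.

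For well-definedness, suppose $\phi, \phi' \in \QI(\bar E)$ are at sup-distance $D$. For any $x \in E_0$, Proposition~\ref{prop:strip_bijection} applied to $\phi$ supplies a fiber $E_Z$ and a uniform constant $C$ with $\phi(x) \in N_C(E_Z)$; the sup-distance hypothesis then places $\phi'(x)$ in $N_{C+D}(E_Z)$ at $\bar d$-distance at most $D$ from $\phi(x)$. The locally Lipschitz observation bounds $d_{E_0}(\nu_\phi(x), \nu_{\phi'}(x))$ uniformly in $x$, so $\nu_\phi$ and $\nu_{\phi'}$ represent the same element of $\QI(E_0)$.

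For the homomorphism property, fix $\phi, \psi \in \QI(\bar E)$ and let $E_{Y_0}$ be the fiber at Hausdorff distance at most some $C(\psi)$ from $\psi(E_0)$, supplied by Proposition~\ref{prop:strip_bijection}. Given $x \in E_0$, set $y = \psi(x)$ and $y' = f_0(y) \in E_0$, so the task reduces to bounding $d_{E_0}(f_0(\phi(y)), f_0(\phi(y')))$ uniformly in $x$. The points $y$ and $y'$ share the same $E_0$-coordinate and thus both lie on the isometrically embedded disk $D_{y'}$, so $\bar d(y, y') = \rho(X_0, \pi(y))$; since Proposition~\ref{prop:strip_bijection} bounds $\rho(\pi(y), Y_0) \le C(\psi)$, this distance is at most $\rho(X_0, Y_0) + C(\psi)$, a constant depending only on $\psi$. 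The $(K,K)$-quasi-isometry $\phi$ then keeps $\phi(y)$ and $\phi(y')$ at uniformly bounded $\bar d$-distance, and since $\phi(y') \in \phi(E_0)$ already lies in the bounded neighborhood of a single fixed fiber $E_W$ provided by Proposition~\ref{prop:strip_bijection} applied to $\phi$, the point $\phi(y)$ lies in a (slightly larger) bounded neighborhood of the \emph{same} $E_W$. A final application of the locally Lipschitz property of $f_0$ to these two points in a bounded neighborhood of $E_W$ yields the required uniform bound.

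The main point to watch carefully---more a matter of bookkeeping than a genuine obstacle---is that although $\phi(y)$ \emph{a priori} is only controlled to lie near a potentially distant fiber (namely the one associated by Proposition~\ref{prop:strip_bijection} to $E_{Y_0}$ under $\phi$), the uniform bound on $\bar d(\phi(y), \phi(y'))$ lets us contain both $\phi(y)$ and $\phi(y')$ in a single bounded neighborhood of one fixed fiber $E_W$, where the locally Lipschitz estimate for $f_0$ applies with a uniform constant. Once this is in place, the proof is a direct computation.
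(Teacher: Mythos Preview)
Your proof is correct and follows essentially the same approach as the paper: both reduce well-definedness and the homomorphism property to a Lipschitz-type estimate for $f_0$ on bounded neighborhoods of a fiber, combined with Proposition~\ref{prop:strip_bijection}. The paper's version is marginally slicker in that it isolates the single inequality $d(\phi(x),\nu_\phi(x)) \le d_{\mathrm{Haus}}(\phi(E_0),E_0)$ and then works directly with neighborhoods of $E_0$ itself (invoking the $e^r$--Lipschitz behavior of $f_0$ on $N_r(E_0)$ and the metric properness of $E_0 \hookrightarrow \bar E$), whereas you track auxiliary fibers $E_Z$, $E_W$ and phrase the Lipschitz estimate relative to those---but this is purely a presentational difference.
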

\begin{proof} Given any quasi-isometry $\phi \colon \bar E \to \bar E$ and $x \in E_0$ we have
\[ d(\phi(x),\nu_\phi(x)) = d(\phi(x),f_0(\phi(x)) \leq d_{\mathrm{Haus}}(\phi(E_0),E_0) \]
The right hand side is finite by Proposition~\ref{prop:strip_bijection}, so the left hand side is bounded, independent of $x$.
From this, the triangle inequality, and the uniform metric properness of the inclusion of $E_0$ into $\bar E$, it easily follows that if $\phi$ and $\phi'$ are bounded distance, then so are $\nu_{\phi}$ and $\nu_{\phi'}$.  Therefore the assignment $\phi\mapsto \nu_\phi$ descends to a well-defined function $\mathcal A_0 \colon \QI(\bar E) \to \QI(E_0)$.

To see that $\mathcal A_0$ is a homomorphism, suppose $\phi,\phi'$ are $(K,K)$--quasi-isometries of $\bar E$.  Then from the inequality above, for all $x \in E_0$ we have
\[ d(\phi' \circ \phi(x),\phi' \circ \nu_\phi(x)) \leq K d(\phi(x),\nu_\phi(x)) + K \leq K d_{\mathrm{Haus}}(\phi(E_0),E_0) + K.\] 
The left-hand side is thus uniformly bounded, independent of $x$.  From this, the triangle inequality, and Proposition~\ref{prop:strip_bijection}, it follows that $d_{\mathrm{Haus}}(\phi' \circ \phi(E_0),E_0)$ and $d_{\mathrm{Haus}}(\phi'\circ \nu_\phi(E_0),E_0)$ are bounded by some constant $r > 0$.  Then for all $x \in E_0$, 
\[ d(\nu_{\phi' \circ \phi}(x),\nu_{\phi'} \circ \nu_{\phi}(x)) = d(f_0(\phi' \circ \phi(x)),f_0(\phi' \circ \nu_\phi(x)))\leq e^rd(\phi' \circ \phi(x),\phi' \circ \nu_\phi(x)). \]
Combining this with the previous inequality, we see that the quantity on the right, and hence the left, is uniformly bounded above, independent of $x$. Therefore $\nu_{\phi' \circ \phi}$ and $\nu_{\phi'} \circ \nu_{\phi}$ are bounded distance apart and $\mathcal A_0$ is a homomorphism.
\end{proof}

\subsection{From quasi-isometries to affine homeomorphisms}
\label{sec:qis_to_affine}

The flat metric $q$ on $E_0$ determines an associated affine group $\Aff(E_0)$, and we observe that if $\phi \in \Gamma$ is an element of the extension group (which is an isometry of $\bar E$, and so also a quasi-isometry), we have $\nu_\phi \in \Aff(E_0)$.  The next step in the proof of rigidity is the following. 

\begin{proposition} \label{P:qi to affine} For any quasi-isometry $\phi \colon \bar E \to \bar E$, the quasi-isometry $\nu_\phi$ is uniformly close to a unique element $\nu_\phi^a \in \Aff(E_0)$.
\end{proposition}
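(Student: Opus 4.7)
The plan is to first establish that $\nu_\phi$ coarsely respects the strip decomposition of $E_0$ in the periodic directions, and then invoke existing affine rigidity theorems for translation surfaces with lattice Veech groups. Combining Proposition~\ref{prop:strip_bijection} with Lemma~\ref{L:fiber to fiber qi}, the bijection $\phi_\mathcal{S}\colon \mathcal{S} \to \mathcal{S}$ on strips and the induced bijection $\phi_\mathcal{P}\colon \mathcal{P} \to \mathcal{P}$ on periodic directions satisfy $d_{\mathrm{Haus}}(\nu_\phi(A), \phi_\mathcal{S}(A)) \leq C'$ uniformly for all $A \in \mathcal{S}$, with $C'$ depending only on the quasi-isometry constants of $\phi$. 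Thus $\nu_\phi$ carries each strip in direction $\alpha \in \mathcal{P}$ within uniformly bounded Hausdorff distance of some strip in direction $\phi_\mathcal{P}(\alpha)$, and consequently coarsely permutes cone points (which lie in intersections of boundaries of strips in distinct periodic directions).

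The existence of $\nu_\phi^a$ then follows from the affine rigidity results for lattice Veech translation surfaces developed in \cite{BankLein, DELS}. Roughly, these results show that any quasi-isometry of $E_0$ coarsely permuting the strip decomposition in every periodic direction (via some bijection $\phi_\mathcal{P}$ of $\mathcal{P}$) is uniformly close to an element of $\Aff(E_0)$, whose derivative realizes the action of $\phi_\mathcal{P}$ on $\mathbb{P}^1(q)$. Here one uses that $\mathcal{P}$ is a dense orbit of the Veech group $G$, so that $\phi_\mathcal{P}$ extends uniquely to an element of $\PGL_2(\mathbb{R})$ once it is shown to preserve enough projective structure (such as cross-ratios of periodic directions), and that the affine map is then determined up to translation by its derivative together with its action on a single cone point, with the translation pinned down by $\nu_\phi$. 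This is where the main technical work occurs: extracting a genuinely affine map from a strip-respecting quasi-isometry is where the flat geometry of the translation surface is essential, and I expect this step to be the principal obstacle.

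For uniqueness, suppose $a_1, a_2 \in \Aff(E_0)$ are both uniformly close to $\nu_\phi$. Then $a = a_1 \circ a_2^{-1}$ is an affine self-map $x \mapsto Mx + b$ of the Euclidean plane $E_0$ at bounded distance from the identity. The uniform bound $\sup_x |Mx + b - x| < \infty$ forces $M = I$, so $a$ is a translation by a bounded vector $b$. Since every element of $\Aff(E_0)$ permutes the discrete set of cone points, the translation $a$ must preserve this set; the discreteness of the cone point set together with the absence of nontrivial bounded translational symmetries of $E_0$ (which follows from cocompactness of the $\Gamma$--action on $\bar E$ and the fact that the cone points form a $\pi_1 S$--invariant lattice with no additional translational periods) then forces $b = 0$, so $a_1 = a_2$.
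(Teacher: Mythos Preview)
Your setup is correct: you have the strip bijection $\phi_{\mathcal S}$ and direction bijection $\phi_{\mathcal P}$ from Proposition~\ref{prop:strip_bijection} and Lemma~\ref{L:fiber to fiber qi}, and this is indeed the starting point. However, the existence argument has a genuine gap, which you yourself flag as ``the principal obstacle.'' The results of \cite{BankLein,DELS} do not provide a black-box statement of the form ``a quasi-isometry of $E_0$ coarsely permuting strips is close to an affine map.'' What \cite{BankLein} supplies is a bijection $\phi_{\Sigma_0}$ of cone points compatible with $\partial\nu_*$, and what \cite{DELS} supplies (the Current Support Theorem) applies to biLipschitz maps between \emph{closed} flat surfaces. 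Bridging these requires real work that your proposal omits: the paper first builds a piecewise-affine biLipschitz homeomorphism $\nu_\phi^a$ by extending $\phi_{\Sigma_0}$ affinely over a triangulation (Lemma~\ref{L:affine on triangles}), then shows that conjugating the $\pi_1 S$--action by $\nu_\phi^a$ yields an action by isometries (Lemma~\ref{L:conj isom action}, via a direction-preservation argument forcing each conjugate to be a similarity, then ruling out nontrivial scaling), and only then descends to closed surfaces where \cite{DELS} applies. None of these three steps is present in your sketch, and each is essential.

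Two smaller issues. First, your claim that $d_{\mathrm{Haus}}(\nu_\phi(A),\phi_{\mathcal S}(A))\le C'$ \emph{uniformly} over all strips $A$ is not what Lemma~\ref{L:fiber to fiber qi} provides; it only gives finiteness for each $A$. Fortunately uniformity is not needed. Second, your uniqueness argument treats $E_0$ as $\mathbb{R}^2$ with a global formula $x\mapsto Mx+b$, but $E_0$ has cone points of angle $>2\pi$ and no global affine chart. The correct argument is that an affine self-map at bounded distance from the identity induces the identity on $\partial E_0$, hence fixes every strip setwise (distinct strips have distinct endpoint pairs); this forces the derivative to fix every periodic direction, so $Da = \lambda I$, and then $\lambda = 1$ and the residual ``translation'' must be parallel to every periodic direction, hence zero. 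The paper compresses this to the single line ``no two distinct affine maps are a bounded distance apart,'' which is the cleaner statement to prove directly.
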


The proof of the proposition will take place over the remainder of this subsection.  Before getting to the proof, however, we note a useful corollary.  Two quasi-isometries $\phi_1,\phi_2$ that are a bounded distance apart have $\nu_{\phi_1}$ and $\nu_{\phi_2}$ a bounded distance apart, and so by the uniqueness $\nu_{\phi_1}^a = \nu_{\phi_2}^a$.  Thus we have the following.

\begin{corollary}
\label{cor:defn_of_mathcal_A}
The map $[\phi] \mapsto \nu_\phi^a$ defines a homomorphism $\mathcal A \colon \mathrm{QI}(\bar E) \to \Aff(E_0)$. Moreover, the homomorphism $\mathcal A_0 \colon \QI(\bar E) \to \QI(E_0)$ from Lemma~\ref{L:QI to QI fiber} factors as the composition of $\mathcal A$  with the natural inclusion $\Aff(E_0) \to \mathrm{QI}(E_0)$.
\end{corollary}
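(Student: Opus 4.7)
The plan is to assemble three separate observations: (i) that the assignment $\phi\mapsto \nu_\phi^a$ descends to a well-defined function on $\QI(\bar E)$, (ii) that this function respects composition, and (iii) that under the natural inclusion $\Aff(E_0)\hookrightarrow \QI(E_0)$ it recovers the homomorphism $\mathcal{A}_0$ of Lemma~\ref{L:QI to QI fiber}. The main tool throughout will be the uniqueness clause of Proposition~\ref{P:qi to affine}: an affine representative of a given quasi-isometry $\nu$ of $E_0$ is uniquely determined by the requirement of being at bounded distance from $\nu$.

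For well-definedness, the discussion immediately preceding the corollary already records the key fact: if $\phi_1,\phi_2\colon \bar E\to \bar E$ are at bounded distance, then the argument in the proof of Lemma~\ref{L:QI to QI fiber} shows $\nu_{\phi_1}$ and $\nu_{\phi_2}$ are at bounded distance in $E_0$, so $\nu_{\phi_1}^a$ and $\nu_{\phi_2}^a$ are both affine maps at bounded distance from the common quasi-isometry class $[\nu_{\phi_1}]=[\nu_{\phi_2}]$. By the uniqueness in Proposition~\ref{P:qi to affine}, $\nu_{\phi_1}^a = \nu_{\phi_2}^a$, so the rule $[\phi]\mapsto \nu_\phi^a$ gives a well-defined map $\mathcal{A}\colon \QI(\bar E)\to \Aff(E_0)$.

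For the homomorphism property, I would fix quasi-isometries $\phi,\phi'\colon \bar E\to \bar E$ and show that $\nu_\phi^a \circ \nu_{\phi'}^a$ is at bounded distance from $\nu_{\phi\circ\phi'}$ in $E_0$; uniqueness will then force $\nu_{\phi\circ\phi'}^a = \nu_\phi^a\circ \nu_{\phi'}^a$. To produce the bounded-distance estimate, I would proceed in two steps. First, Lemma~\ref{L:QI to QI fiber} already gives that $\nu_{\phi\circ\phi'}$ is at bounded distance from $\nu_\phi\circ \nu_{\phi'}$. Second, since $\nu_{\phi'}$ is at bounded distance from $\nu_{\phi'}^a$ by Proposition~\ref{P:qi to affine} and $\nu_\phi$ is a quasi-isometry (hence coarsely Lipschitz), the composition $\nu_\phi\circ \nu_{\phi'}$ is at bounded distance from $\nu_\phi\circ \nu_{\phi'}^a$, which in turn is at bounded distance from $\nu_\phi^a\circ \nu_{\phi'}^a$ because $\nu_\phi$ is at bounded distance from $\nu_\phi^a$ uniformly on $E_0$. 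Chaining these three bounded-distance estimates with the triangle inequality yields the required bound, and uniqueness closes the argument.

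Finally, the factoring claim is essentially tautological given how $\mathcal{A}$ was built: $\mathcal{A}_0([\phi])=[\nu_\phi]$ by definition in Lemma~\ref{L:QI to QI fiber}, while Proposition~\ref{P:qi to affine} asserts precisely that $\nu_\phi^a\in \Aff(E_0)$ is at bounded distance from $\nu_\phi$, so $[\nu_\phi^a]=[\nu_\phi]$ in $\QI(E_0)$; this is exactly the statement that $\mathcal{A}_0$ is the composition of $\mathcal{A}$ with the natural inclusion $\Aff(E_0)\hookrightarrow \QI(E_0)$. I do not anticipate any real obstacle in this proof; the hard work was accomplished in Proposition~\ref{P:qi to affine}, and the present corollary is a routine bookkeeping consequence of uniqueness of affine approximation combined with the already-established homomorphism property of $\mathcal{A}_0$.
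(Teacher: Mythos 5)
Your proposal is correct and matches the paper's (largely implicit) argument: the paper justifies well-definedness by exactly your bounded-distance-plus-uniqueness remark, and the homomorphism and factoring claims follow, as you spell out, from the already-proved homomorphism property of $\mathcal A_0$ (Lemma~\ref{L:QI to QI fiber}) together with the uniqueness clause of Proposition~\ref{P:qi to affine}. No gaps; your write-up simply makes explicit the bookkeeping the paper leaves to the reader.
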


Fix a triangulation $\mathfrak t$ of $X_0$ so that the vertex set is the set of cone points and all triangles are Euclidean triangles (that is, they are images of triangles by maps that are locally isometric and injective on the interior; see e.g.~\cite[Lemma~3.4]{DDLSI}).  Moreover, we assume that {\em all} saddle connections in some direction $\alpha_0$ appear as edges of the triangulation; see Figure~\ref{F:triangulation to prove affine}.  Lift $\mathfrak t$ to a triangulation $\widetilde{\mathfrak t}$ of $E_0$.  By assumption, all saddle connections in $E_0$ in direction $\alpha_0$ are edges of $\widetilde{\mathfrak t}$, and the complement of the union of this subset is a union of all (interiors of) strips in direction $\alpha_0$.

\begin{center}
\begin{figure}[htb]
\begin{tikzpicture}[scale = .7]
\draw[ultra thick, color=blue] (0,0) -- (4,0) (0,2) -- (4,2) (0,4) -- (2,4);
\draw[ultra thick, color=green!80!black] (0,2) -- (0,0) (0,0) -- (2,2) (2,0) -- (2,2) (2,2) -- (4,0) (4,0) -- (4,2) (0,2) -- (0,4) (0,4) -- (2,2) (2,2) -- (2,4);
\draw[fill] (0,0) circle (.08);
\draw[fill] (4,0) circle (.08);
\draw[fill] (4,2) circle (.08);
\draw[fill] (2,2) circle (.08);
\draw[fill] (2,4) circle (.08);
\draw[fill] (0,4) circle (.08);
\draw[fill] (0,2) circle (.08);
\draw[fill] (2,0) circle (.08);
\node[left] at (0,1) {$1$};
\node[left] at (0,3) {$2$};
\node[right] at (4,1) {$1$};
\node[right] at (2,3) {$2$};
\node[above] at (1,4) {$3$};
\node[below] at (1,0) {$3$};
\node[below] at (3,0) {$4$};
\node[above] at (3,2) {$4$};
\begin{scope}[cm={1,0,0,2,(0,-2)}]
\draw[thick, color=green!80!black] (7.2,1.95) -- (7.5,2.1) -- (8,1.85) -- (8.7,2.2) -- (9.5,1.75) -- (10.5,2.25) -- (11.3,1.8) -- (12,2.15) -- (12.5,1.9) -- (12.8,2.05) (8.7,1.4) -- (9.5,1.35) (8.7,2.85) -- (9.5,2.25) (9.5,2.65) -- (10.5,2.25) (10.5,2.65) -- (11.3,2.6) (9.5,1.75) -- (10.5,1.35) (10.5,1.75) -- (11.3,1.15);
\draw[thick, color=green!80!black] (7.2,2.05) -- (7.2,1.95) (7.5,2.1) -- (7.5,1.9) (8,2.15) -- (8,1.85) (8.7,2.2) -- (8.7,1.8) (9.5,2.65) -- (9.5,1.4) (10.5,2.65) -- (10.5,1.4) (11.3,2.2) -- (11.3,1.8) (12,2.15) -- (12,1.85) (12.5,2.1) -- (12.5,1.9) (12.8,2.05) -- (12.8,1.95);
\draw[thick, color=blue] (7,2) -- (7.2,1.95) -- (7.5,1.9) -- (8,1.85) -- (8.7,1.8) -- (9.5,1.75) -- (10.5,1.75) -- (11.3,1.8) -- (12,1.85) -- (12.5,1.9) -- (12.8,1.95) -- (13,2) (9.5,1.75) -- (8.7,1.4) (10.5,1.75) -- (11.3,1.4) (8.7,1.15) -- (9.5,1.35) -- (10.5,1.35) -- (11.3,1.15);
\draw[fill] (7.2,1.95) circle (.02);
\draw[fill] (7.5,1.9) circle (.03);
\draw[fill] (8,1.85) circle (.04);
\draw[fill] (8.7,1.8) circle (.05);
\draw[fill] (9.5,1.75) circle (.05);
\draw[fill] (10.5,1.75) circle (.05);
\draw[fill] (11.3,1.8) circle (.05);
\draw[fill] (12,1.85) circle (.04);
\draw[fill] (12.5,1.9) circle (.03);
\draw[fill] (12.8,1.95) circle (.02);
\draw[fill] (9.5,1.35) circle (.05);
\draw[fill] (10.5,1.35) circle (.05);
\draw[thick, color=blue] (7,2) -- (7.2,2.05) -- (7.5,2.1) -- (8,2.15) -- (8.7,2.2) -- (9.5,2.25) -- (10.5,2.25) -- (11.3,2.2) -- (12,2.15) -- (12.5,2.1) -- (12.8,2.05) -- (13,2) (9.5,2.25) -- (8.7,2.6)  (10.5,2.25) -- (11.3,2.6) (8.7,2.85) -- (9.5,2.65) -- (10.5,2.65) -- (11.3,2.85);
;
\draw[fill] (7.2,2.05) circle (.02);
\draw[fill] (7.5,2.1) circle (.03);
\draw[fill] (8,2.15) circle (.04);
\draw[fill] (8.7,2.2) circle (.05);
\draw[fill] (9.5,2.25) circle (.05);
\draw[fill] (10.5,2.25) circle (.05);
\draw[fill] (11.3,2.2) circle (.05);
\draw[fill] (12,2.15) circle (.04);
\draw[fill] (12.5,2.1) circle (.03);
\draw[fill] (12.8,2.05) circle (.02);
\draw[fill] (9.5,2.65) circle (.05);
\draw[fill] (10.5,2.65) circle (.05);
\end{scope}
\draw[thin,opacity=.3] (10,2) circle (3);
\end{tikzpicture}
\caption{The surface from Figure \ref{F:twisting and retraction} with a triangulation $\mathfrak t$ on the left.  The edges of $\mathfrak t$ consist of all saddle connections in the horizontal direction (blue) together with some saddle connections (green), each contained in horizontal cylinder.  A piece of the lifted triangulation $\tilde{\mathfrak t}$ in the universal cover.} \label{F:triangulation to prove affine}
\end{figure}
\end{center}

\begin{lemma} \label{L:affine on triangles} Given a quasi-isometry $\phi$, there is a biLipschitz homeomorphism $\nu_\phi^a \colon E_0 \to E_0$ a bounded distance from $\nu_\phi$ so that $\nu_\phi^a$ restricts to an affine map on each triangle of $\widetilde{\mathfrak t}$.  Moreover, if an edge $\delta$ of $\widetilde{\mathfrak t}$ has direction $\alpha \in \mathcal P$, then $\nu_\phi^a(\delta)$ has direction $\phi_{\CP}(\alpha)$.
\end{lemma}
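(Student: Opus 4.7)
The plan is to build $\nu_\phi^a$ in three stages: first on cone points (vertices of $\widetilde{\mathfrak t}$), then on saddle-connection edges by choosing images to be saddle connections in the expected direction, and finally by affine extension on each triangle. At each stage one uses the strip-to-strip data provided by Proposition~\ref{prop:strip_bijection} together with the fiberwise quasi-isometry from Lemma~\ref{L:fiber to fiber qi}.

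Step 1 (cone points). Each cone point $p$ lies on the boundary of strips $A_1,A_2$ in two distinct periodic directions $\alpha_1\ne \alpha_2$. By Proposition~\ref{prop:strip_bijection}(2) combined with Lemma~\ref{L:fiber to fiber qi}, $\nu_\phi(A_i)$ has bounded Hausdorff distance from a strip $\phi_{\mathcal S}(A_i)$ in direction $\phi_{\mathcal P}(\alpha_i)$; since these two directions are distinct, their strips intersect in a bounded parallelogram containing $\nu_\phi(p)$ up to bounded error. Since $E_0$ has uniformly bounded cone-point density, we may define $\nu_\phi^a(p)$ to be a cone point within uniformly bounded distance of $\nu_\phi(p)$.

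Step 2 (edges) is the key technical step. Let $\delta$ be an edge of $\widetilde{\mathfrak t}$ with endpoints $p,q$ and direction $\alpha$. Then $\delta$ lies on the common boundary of two strips $A,A'$ in direction $\alpha$, and by Proposition~\ref{prop:strip_bijection}(2) the images $\phi_{\mathcal S}(A),\phi_{\mathcal S}(A')$ are strips in direction $\phi_{\mathcal P}(\alpha)$ whose common boundary is a bi-infinite concatenation of saddle connections. The cone points along $\partial A\cap \partial A'$ and along $\partial\phi_{\mathcal S}(A)\cap\partial\phi_{\mathcal S}(A')$ both form discrete sets with upper and lower bounds on the gap sizes by Lemma~\ref{L:strip-and-saddle-bound}. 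The restriction of $\nu_\phi$ to a neighborhood of $\partial A\cap \partial A'$ is a quasi-isometric embedding into a neighborhood of $\partial\phi_{\mathcal S}(A)\cap\partial\phi_{\mathcal S}(A')$, and hence respects the linear order of cone points up to a uniformly bounded error. Choosing $\nu_\phi^a$ coherently (e.g.\ always picking the closest cone point on the boundary of the corresponding intersection parallelogram) ensures that consecutive cone points of $\partial A\cap\partial A'$ are sent to consecutive cone points of the image. Thus $\nu_\phi^a(p)$ and $\nu_\phi^a(q)$ are endpoints of a saddle connection $\delta'\subset\partial\phi_{\mathcal S}(A)\cap\partial\phi_{\mathcal S}(A')$ of direction $\phi_{\mathcal P}(\alpha)$, and we define $\nu_\phi^a$ on $\delta$ to be its linear parametrization onto $\delta'$.

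Step 3 (affine extension). For each triangle $T$ with vertices $p_1,p_2,p_3$, the three image edges $\nu_\phi^a(\delta_{ij})$ have pairwise distinct directions $\phi_{\mathcal P}(\alpha_{ij})$ by the injectivity of $\phi_{\mathcal P}$ (the $\alpha_{ij}$ are distinct since $T$ is non-degenerate), so the vertices $\nu_\phi^a(p_i)$ span a non-degenerate triangle $T'$, and we define $\nu_\phi^a$ on $T$ to be the unique affine map $T\to T'$. Step 4 (verification): by construction $\nu_\phi^a$ is well-defined and continuous; it is bounded distance from $\nu_\phi$ on vertices by choice, and on triangles by finiteness of $\pi_1 S$--orbits of triangles of $\widetilde{\mathfrak t}$ and uniform boundedness of their diameters. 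It is biLipschitz because its affine derivatives on each triangle lie in a bounded set of invertible matrices (boundedness from proximity to the quasi-isometry $\nu_\phi$ on finitely many triangle shapes, and invertibility from Step 3). Applying the same construction to a quasi-inverse of $\phi$ and composing yields a bounded-distance identity, so $\nu_\phi^a$ is a homeomorphism.

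The main obstacle is the global combinatorial consistency of Step 2: the cone-point assignment made in Step 1 must simultaneously respect adjacency of cone points along \emph{every} strip boundary passing through $p$. I expect this to be handled by the uniform quantitative control provided by Lemma~\ref{L:strip-and-saddle-bound}, together with the quasi-isometry properties of $\nu_\phi$, which together ensure that closest-cone-point choices on intersection parallelograms are coherent across all directions.
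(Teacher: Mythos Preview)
Your three-stage outline (cone points $\to$ edges $\to$ triangles) matches the paper's, but the construction of the cone-point bijection and the proof that it respects strip boundaries have a genuine gap that you yourself flag at the end without resolving. In Step~1 you choose, for each cone point $p$, merely \emph{some} cone point close to $\nu_\phi(p)$. There is no reason this choice lies on $\partial \phi_{\mathcal S}(A)$ for \emph{every} strip $A$ containing $p$; yet Step~2 requires exactly this, since the image of an edge $\delta\subset\partial A\cap\partial A'$ must be a saddle connection on $\partial\phi_{\mathcal S}(A)\cap\partial\phi_{\mathcal S}(A')$ with endpoints $\nu_\phi^a(p),\nu_\phi^a(q)$. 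Your suggested fix (pick the closest cone point on the relevant boundary) makes the choice depend on the particular pair $A,A'$, so different directions through $p$ may yield different images. A second issue is the consecutive-to-consecutive claim: a quasi-isometry of a line with marked points need not send nearest marked points to nearest marked points once the additive constant exceeds the gap size, so metric proximity alone does not give the order-preserving bijection you need. (Incidentally, Lemma~\ref{L:strip-and-saddle-bound} bounds saddle connections only in fibers over the horocycles $\partial B_\alpha$, not in $E_0$, so it is the wrong reference here.)

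The paper sidesteps both problems by working at infinity. The quasi-isometry $\nu_\phi$ induces a homeomorphism $\partial\nu$ of the Gromov boundary $S^1_\infty$, hence a map $\partial\nu_*$ on pairs of ideal endpoints that (by Proposition~\ref{prop:strip_bijection}) sends strip endpoint-pairs to strip endpoint-pairs and therefore preserves the set $\mathcal G^*$. One then invokes \cite[Proposition~4.1]{BankLein} (see also \cite[Proposition~11]{DELS}): this produces a \emph{canonical} bijection $\phi_{\Sigma_0}$ of cone points with the property that whenever a geodesic with endpoints in $\mathcal G^*$ contains $x$, the image geodesic contains $\phi_{\Sigma_0}(x)$. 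That single property gives coherence across all strips simultaneously, and one checks $\phi_{\Sigma_0}$ is uniformly close to $\nu_\phi$ on cone points. Order preservation along each strip boundary then comes for free: the cone points on $\partial A$ are detected by the transverse strips meeting $A$, and the homeomorphism $\partial\nu$ preserves the cyclic order of their endpoints on $S^1_\infty$. With these ingredients the affine extension to edges and triangles, and the biLipschitz verification (finitely many domain and image triangle types), proceed essentially as you describe.
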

We will later prove that  $\nu_\phi^a$ is in fact {\em globally} affine, justifying the notation.
\begin{proof}
Given $\nu=\nu_\phi \colon E_0 \to E_0$, let $\partial \nu \colon S^1_\infty \to S^1_\infty$ be the restriction of the extension to the Gromov boundary $S^1_\infty$ of $E_0$.  
(Recall that, since the flat metric $(X_0,q)$ on $S$ is biLipschitz to a hyperbolic metric, its universal cover $E_0$ is quasi-isometric to the standard hyperbolic plane and is, in particular, Gromov hyperbolic.)
The space $\mathcal G$ of (unordered) pairs of distinct points in $S^1_\infty$ is precisely the space of endpoint-pairs at infinity of unoriented biinfinite geodesics (up to the equivalence relation of having finite Hausdorff distance).   The map $\partial \nu$ induces a map $\partial \nu_* \colon \mathcal G \to \mathcal G$.

Let $\mathcal G^* \subset \mathcal G$ be the closure of the set of endpoint-pairs at infinity of non-singular geodesics (i.e.~geodesics that miss every cone point).  Observe that all geodesics in a given strip have the same pair of endpoints, and any geodesic with that pair of endpoints is contained in the strip.  Given a strip, we are therefore justified in referring to the {\em pair of endpoints of the strip}.  

It follows from the description of geodesics with endpoints in $\mathcal G^*$ (see \cite[Proposition~2.4]{BankLein}) together with the {\em Veech Dichotomy} (see e.g.~\cite{Masur-Tabachnikov}), that for any $\{\xi,\zeta\} \in \mathcal G^*$, either $\{\xi,\zeta\}$ are the endpoints of a strip, or endpoints of a geodesic meeting at most one cone point.

According to Proposition~\ref{prop:strip_bijection}, for any strip $A \in \mathcal S$, the strip $\phi_{\mathcal S}(A)$ has finite Hausdorff distance to $\phi(A)$, and hence it also has finite Hausdorff distance to $\nu(A)$.  Since $\phi_{\mathcal S}$ is a bijection, this means that the homeomorphism $\partial \nu_*$ sends the dense subset of $\mathcal G^*$ consisting of endpoint of strips onto itself, hence $\nu_*(\mathcal G^*) = \mathcal G^*$.
From this and \cite[Proposition 4.1]{BankLein} (see also \cite[Proposition~11]{DELS}), it follows that there is a bijection
\[ \phi_{\Sigma_0} \colon \Sigma_0 \to \Sigma_0 \] 
from the set of cone points $\Sigma_0$ of $E_0$ to itself with the following property.  If $\gamma \subset E_0$ is a geodesic or strip containing $x \in \Sigma_0$ with endpoints $\{ \xi,\zeta \} \in \mathcal G^*$, then $\partial \nu_*(\{\xi,\zeta\})$ are the endpoints of a geodesic containing $\phi_{\Sigma_0}(x)$.   Given $x \in \Sigma_0$ consider any two geodesics $\gamma_1$ and $\gamma_2$ with endpoints in $\mathcal G^*$ (not necessarily contained in strips) passing through $x$ making an angle at least $\pi/2$ with each other.  We note that $\nu(x)$ is contained in $\nu(\gamma_1)$ and $\nu(\gamma_2)$, and is thus some uniform distance $r > 0$ to both of their geodesic representatives.  Since $\gamma_1$ and $\gamma_2$ meet at angle at least $\pi/2$, the $r$--neighborhoods of the geodesic representatives of $\nu(\gamma_1)$ and $\nu(\gamma_2)$ intersect in a uniformly bounded diameter set, which contains $\phi_{\Sigma_0}(x)$.  Therefore, $\phi_{\Sigma_0}(x)$ is uniformly close to $\nu(x)$, for all $x \in \Sigma_0$.

From the properties of $\phi_{\Sigma_0}$ described above, we see that if $x \in \Sigma_0$ is contained in a strip $A$, then $\phi_{\Sigma_0}(x)$ is contained in the strip $\phi_{\mathcal S}(A)$.
For any saddle connection $\delta$ in some direction $\alpha \in \CP$ between a pair of points $x,y \in \Sigma_0$, there is a unique pair of strips $A,A_0$, also in direction $\alpha$, that contain $\delta$; see Figure~\ref{F:strips, saddle connections, and orders}.  Since $\phi_{\Sigma_0}(x),\phi_{\Sigma_0}(y)$ are contained in $\phi_{\mathcal S}(A)$ and $\phi_{\mathcal S}(A_0)$, it follows that there is a unique saddle connection with endpoints $\phi_{\Sigma_0}(x),\phi_{\Sigma_0}(y)$.  For any strip $A$ the saddle connections whose union makes up one of its boundary components is determined by a collection of strips meeting $A$ in the given saddle connections.  These saddle connections are ordered along this side and thus so are the corresponding strips $\ldots,A_{-1},A_0,A_1,\ldots$.  The endpoints of the strip $A$ and strips $A_n$ appear in a particular order; see Figure~\ref{F:strips, saddle connections, and orders}.   Considering the cyclic ordering of the endpoints of these strips (and those of $A$) on $S^1_\infty$, and the fact that $\partial \nu$ is a homeomorphism,  it follows that  $\phi_{\Sigma_0}$ maps the {\em ordered} set of cone points along each boundary component of the strip $A$ by an order preserving (or reversing) bijection to the ordered set of cone points along the boundary components of $\phi_{\mathcal S}(A)$.

\begin{figure}[htb]
\begin{tikzpicture}[scale = .9]
\filldraw[thin, color=green, opacity=.4] (7,2) -- (7.2,2.1) -- (7.5,2.25) -- (8,2.4) -- (8.7,2.5) -- (9.5,2.6) -- (10.5,2.6) -- (11.3,2.5) -- (12,2.4) -- (12.5,2.25) -- (12.8,2.1) -- (13,2) -- (7,2);
\filldraw[thin, color=blue, opacity=.3] (10.5,2) -- (10.7,1.5) -- (11.15,0) -- (11.3,-.7) -- (10.7,.7) -- (10.4,1.3) -- (9.6,1.3) -- (8.7,-.7) -- (8.85,0) -- (9.3,1.5) -- (9.5,2) -- (10.5,2);
\filldraw[thin, color=red, opacity=.3] (12.5,.32) -- (11.2,1.6) -- (10.9,1.6) -- (11.8,-.4) -- (10.7,1.5) -- (10.5,2) -- (11.3,2) -- (12.5,.32);
\filldraw[thin, color=yellow, opacity=.3] (7.5,.32) -- (8.8,1.6) -- (9.1,1.6) -- (8.2,-.4) -- (9.5,2) -- (8.7,2) -- (7.5,.32);
\draw (7,2) -- (7.2,2.1) -- (7.5,2.25) -- (8,2.4) -- (8.7,2.5) -- (9.5,2.6) -- (10.5,2.6) -- (11.3,2.5) -- (12,2.4) -- (12.5,2.25) -- (12.8,2.1) -- (13,2);
\draw (7,2) -- (13,2);
\draw (9.5,2) -- (9.3,1.5) -- (8.85,0) -- (8.7,-.7);
\draw (10.5,2) -- (10.7,1.5) -- (11.15,0) -- (11.3,-.7);
\draw (8.7,-.7) -- (9.3,.7) -- (9.6,1.3) -- (10.4,1.3) -- (10.7,.7) -- (11.3,-.7);
\draw (9.5,2) -- (8.2,-.4);
\draw (8.7,2) -- (7.5,.32);
\draw (7.5,.32) -- (8.8,1.6) -- (9.1,1.6) -- (8.2,-.4);
\draw (12.5,.32) -- (11.2,1.6) -- (10.9,1.6) -- (11.8,-.4);
\draw (11.8,-.4) -- (10.7,1.5);
\draw (12.5,.32) -- (11.3,2);
\node at (10.15,1.55) {$A_0$};
\node at (10,2.4) {$A$};
\node at (9,1.8) {\small $A_{\, \,1}$};
\draw[thin] (9.03,1.73) -- (9.1,1.73);
\node at (11,1.8) {\small $A_1$};
\node at (9.9,1.85) {\small $\delta$};
\node at (9.5,2.2) {\small $x$};
\node at (10.5,2.2) {\small $y$};
\draw[fill] (9.5,2) circle (.04);
\draw[fill] (10.5,2) circle (.04);
\draw[fill] (8.7,2) circle (.03);
\draw[fill] (11.3,2) circle (.03);
\draw[thin,opacity=.3] (10,2) circle (3);
\end{tikzpicture}
\caption{Strips $A$ and $A_0$ determine the saddle connection $\delta$ connecting points $x,y$ in the universal cover.  The ordered set of saddle connections along one side of the strip $A$ are determined by an ordered set of strips $\ldots,A_{-1},A_0,A_1,\ldots$, and the endpoints of all of these strips have a cyclic ordering around $S^1_\infty$ as indicated.} \label{F:strips, saddle connections, and orders}
\end{figure}

We can now extend the map $\phi_{\Sigma_0}$ to a map $\nu_\phi^a \colon E_0 \to E_0$ using $\widetilde{\mathfrak t}$ as follows.  First, recall that any edge of $\widetilde{\mathfrak t}$ is a saddle connection $\delta$ connecting two points $x,y \in \Sigma_0$.  By the previous paragraph, there is a saddle connection $\delta'$ connecting $\phi_{\Sigma_0}(x)$ and $\phi_{\Sigma_0}(y)$, and we define $\nu_\phi^a$ on $\delta$ so that it maps $\delta$ by an affine map to $\delta'$ extending $\phi_{\Sigma_0}$ on the endpoints.  This defines $\nu_\phi^a$ on the $1$--skeleton, $\widetilde{\mathfrak t}^1$, and since $\phi_{\Sigma_0}$ is a bounded distance from $\nu|_{\Sigma_0}$, it follows that $\nu_\phi^a|_{\widetilde{\mathfrak t}^1}$ is a bounded distance from $\nu|_{\widetilde{\mathfrak t}^1}$.

By our assumptions on $\widetilde{\mathfrak t}$, there is a subset of the edges of $\widetilde{\mathfrak t}$ whose union is precisely the union of boundaries of all strips in direction $\alpha_0$.  The order preserving (or reversing) property described above for the cone points along the boundary of a strip, together with Proposition~\ref{prop:strip_bijection}, implies that for any boundary component of any strip $A$ in direction $\alpha_0$, $\nu_\phi^a$ restricted to its boundary components is a homeomorphism onto the boundary components of $\phi_{\mathcal S}(A)$.  Furthermore, since the sides of any triangle of $\widetilde{\mathfrak t}$ are contained in such a strip $A$, the $\nu_\phi^a$--image of the sides are contained in $\phi_{\mathcal S}(A)$.
We can now extend $\nu_\phi^a$ over the triangles by the unique affine map extending the map on their sides.  

Since disjoint strips map to disjoint strips, the map $\nu_\phi^a$ is a homeomorphism.  By construction, any edge in direction $\alpha$ is sent to an edge in direction $\phi_{\CP}(\alpha)$.  Since $\widetilde{\mathfrak t}$ projects to $\mathfrak t$, there are only finitely many directions that the sides of a triangle can lie in and so finitely many isometry types of triangles.  Each of these finitely many isometry types maps by an affine map to only finitely many types of triangles in the image (because the direction of the images of sides are determined by $\phi_{\CP}$), and therefore these affine maps are uniformly biLipschitz.   Therefore, $\nu_\phi^a$ is biLipschitz, completing the proof.
\end{proof}

To show that $\nu_\phi^a$ is affine, we analyze the effect of using it to conjugate the action of $\pi_1S$ on $E_0$.
\begin{lemma} \label{L:conj isom action} The action of $\pi_1S$ on $E_0$ obtained by conjugating the isometric action by $\nu_\phi^a$ is again an isometric action.
\end{lemma}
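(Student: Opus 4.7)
The plan is to first show that $\tilde\gamma := \nu_\phi^a\circ \gamma\circ (\nu_\phi^a)^{-1}$ lies at uniformly bounded distance from an isometry of $E_0$, and then to exploit the piecewise-affine structure of $\tilde\gamma$ together with direction-preservation from Lemma~\ref{L:affine on triangles} to upgrade ``bounded distance from an isometry'' to ``equal to an isometry''.

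For the first step, I would exploit the normality of $\pi_1 S \lhd \Gamma$ together with Proposition~\ref{prop:strip_bijection}. For $\gamma\in \pi_1 S$, the self-quasi-isometry $\phi\circ \gamma\circ \phi^{-1}$ of $\bar E$ coarsely preserves every fiber $E_X$: indeed $\phi$ sends each $E_X$ within bounded distance of some fiber, the isometry $\gamma$ fixes each fiber setwise, and $\phi^{-1}$ returns the image within bounded distance of $E_X$. Using the geometric action of $\Gamma$ on $\bar E$ and the intrinsic characterization of $\pi_1 S$ as the fiber-stabilizing subgroup, one expects $\phi\circ \gamma\circ \phi^{-1}$ to be at bounded Hausdorff distance from a unique element $\gamma^\phi\in \pi_1 S$, yielding a homomorphism $\gamma\mapsto \gamma^\phi$. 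Projecting via $f_0$ and using that $\nu_\phi^a$ is at bounded distance from $\nu_\phi = f_0\circ \phi|_{E_0}$, together with the fact that $f_{0,X}$ intertwines the $\pi_1 S$--action on different fibers (since $\pi_1 S$ fixes each point of $D$), one then concludes that $\tilde\gamma$ is at uniformly bounded distance from the isometry $\gamma^\phi|_{E_0}$ of $E_0$.

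For the second step, I would argue that the piecewise-affine biLipschitz homeomorphism $\tilde\gamma$, being at bounded distance from the isometry $\gamma^\phi|_{E_0}$ whose linear part is some fixed $A\in O(2)$, must have every triangle-linear-part $M_T$ equal to $A$. If $M_T\ne A$ for some triangle $T$ of the image triangulation $\nu_\phi^a(\widetilde{\mathfrak t})$ on which $\tilde\gamma$ is affine, then continuity of $\tilde\gamma$ across shared triangle edges (whose directions are preserved by the linear parts thanks to Lemma~\ref{L:affine on triangles}) forces mismatches in the translation constants that accumulate linearly as one traverses a chain of triangles escaping to infinity, eventually violating the uniform bound $d(\tilde\gamma(x),\gamma^\phi(x))\le C$. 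Hence $M_T = A$ on every triangle, a single global translation constant ties everything together by continuity, and $\tilde\gamma = \gamma^\phi|_{E_0}\in \Isom(E_0)$.

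The main obstacle is the first step, namely intrinsically recognizing $\pi_1 S$ inside the QI-group of $\bar E$ and pinning down a unique element of $\pi_1 S$ at bounded distance from $\phi\circ \gamma\circ \phi^{-1}$. An alternative route, which sidesteps this recognition problem, would be to apply Tukia-type quasi-isometric rigidity directly to the cocompact Fuchsian quasi-action of $\pi_1 S$ on $E_0$ induced by conjugation by $\nu_\phi$ (recalling from Lemma~\ref{L:fiber to fiber qi} that $E_0$ is quasi-isometric to $\mathbb H^2$): classical results for such quasi-actions produce an isometric action at bounded distance, and the piecewise-affine rigidity argument of the second step then applies without modification.
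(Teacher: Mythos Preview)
Your approach is more circuitous than needed, and step~1 has real gaps in both versions. The paper's argument is entirely local and bypasses any appeal to a nearby isometry.

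The key observation you are missing is purely linear-algebraic. The restriction of $\tilde\gamma = \nu_\phi^a \circ g \circ (\nu_\phi^a)^{-1}$ to each image triangle $\nu_\phi^a(\tau)$ is affine, and by Lemma~\ref{L:affine on triangles} together with the fact that $g \in \pi_1 S$ preserves all directions, it sends each of the three side-directions $\alpha_i' = \phi_{\mathcal P}(\alpha_i)$ back to itself. An affine map of the plane whose linear part fixes three distinct projective directions is a Euclidean similarity. Thus $\tilde\gamma$ is a similarity on every triangle; adjacent triangles share an edge, forcing the scaling factors to match, so $\tilde\gamma$ is a \emph{global} similarity. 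Finally, a similarity with scale $\lambda \ne 1$ would have some forward orbit that is Cauchy (geometric series) yet escapes every compact set, contradicting completeness of $q$; hence $\lambda = 1$ and $\tilde\gamma$ is an isometry. No step~1 is required.

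Regarding your step~1: version~(a) asks that a self-quasi-isometry of $\bar E$ coarsely preserving every fiber lie at bounded distance from an element of $\pi_1 S$. That is essentially the endgame of the QI-rigidity theorem itself, so invoking it here is circular, or at least demands substantial independent work you have not supplied. Version~(b) via Tukia does not yield what you claim either: Tukia produces a single conjugating quasi-isometry $h$ with $h\,\tilde\gamma\,h^{-1}$ isometric for every $\gamma$, but this does \emph{not} imply each $\tilde\gamma$ is at bounded distance from an isometry --- conjugate any isometric action by a quasi-isometry far from every isometry for a counterexample. Your step~2 then rests on this unestablished step~1, and the accumulation argument you sketch there would in any case need the three-direction similarity observation above to control the linear parts $M_T$ across triangle chains.
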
 

Before proving the lemma, we use it to prove the proposition.

\begin{proof}[Proof of Proposition~\ref{P:qi to affine}] By Lemma~\ref{L:conj isom action}, $\Lambda = \nu_\phi^a \pi_1S (\nu_\phi^a)^{-1}$ acts by isometries, and $\nu_\phi^a$ descends to a homeomorphism $\mu_\phi^a \colon S\to E_0/\Lambda$ and is biLipschitz with respect to descent to $S$ and $E_0/\Lambda$ of $q$.  Since $\nu_\phi^a$ and $\nu$ are a bounded distance, they have the same boundary maps.  Since $\partial (\nu_\phi^a)_* = \partial \nu_*$ maps $\mathcal G^*$ to $\mathcal G^*$, the {\em Current Support Theorem} of \cite{DELS}  (and its proof) implies that the descent of $\mu_\phi^a \colon (S,q) \to (E_0/\Lambda,q)$ is affine. Therefore $\nu_\phi^a$ is an affine map which is a bounded distance from $\nu = \nu_\phi$, as required.

Uniqueness follows from the fact that no two distinct affine maps are a bounded distance apart.
\end{proof}

\begin{proof}[Proof of Lemma~\ref{L:conj isom action}] We need to show that for all $g \in \pi_1S$, the map
\[ \nu_\phi^a \circ g \circ (\nu_\phi^a)^{-1} \colon E_0 \to E_0\]
is an isometry.  For this, fix a triangle $\tau$ of $\widetilde{\mathfrak t}$ and consider the restriction to $\nu_\phi^a(\tau)$.  Let $\alpha_1,\alpha_2,\alpha_3 \in \CP$ be the directions of the sides.  Setting $\alpha_i' = \phi_{\CP}(\alpha_i)$, for $i = 1,2,3$, Lemma~\ref{L:affine on triangles} implies that the directions of the sides of $\nu_\phi^a(\tau)$ are $\alpha_1',\alpha_2',\alpha_3'$.  
The action of $\pi_1S$ on $E_0$ is by isometries, but it also preserves parallelism (i.e.~each element induces the identity on $\mathbb P^1(q)$).  Therefore, for any $g \in \pi_1S$, the directions of the sides of $g(\tau)$ are also $\alpha_1,\alpha_2,\alpha_3$, and by Lemma~\ref{L:affine on triangles} again, it follows that the sides of $\nu_\phi^a(g(\tau))$ are $\alpha_1',\alpha_2',\alpha_3'$.

For any $g \in \pi_1S$, since $\nu_\phi^a$ is affine on $\tau$, the composition $\nu_\phi^a \circ g \circ (\nu_\phi^a)^{-1}$ is also affine on $\nu_\phi^a(\tau)$.  On the other hand, it also preserves the directions of the sides, $\alpha_1',\alpha_2',\alpha_3'$.  Therefore, the restriction of $\nu_\phi^a \circ g \circ (\nu_\phi^a)^{-1}$ is a Euclidean similarity.   Triangles of $\nu_\phi^a(\widetilde{\mathfrak t})$ that share a side are scaled by the same factor by the similarity $\nu_\phi^a \circ g \circ (\nu_\phi^a)^{-1}$ in each triangle (since this is the scaling factor on the shared side).  Therefore, the similarities agree along edges, and hence $\nu_\phi^a\circ  g\circ (h_\phi^a)^{-1}$ defines a {\em global} similarity of $E_0$.

So, the action of $\pi_1S$ on $E_0$ obtained by conjugating by $\nu_\phi^a$ is an action by similarities.  To see that the action is by isometries, suppose that for some element $g \in \pi_1S$, the similarity $g_0 = \nu_\phi^a \circ g \circ (\nu_\phi^a)^{-1}$ scales the metric some number $\lambda \neq 1$.  Taking the inverse if necessary, we can assume $\lambda < 1$.  Fix any $x \in E_0$ and observe that $d_q(g_0(x),g_0^2(x)) = \lambda d_q(x,g_0(x))$, where $d_q$ is the distance function on $E_0$ determined by $q$.  Iterating this, it follows that
\[ d_q(x,g_0^n(x)) \leq \! \sum_{k=1}^n d_q(g_0^{k-1}(x),g_0^k(x)) \! = \!\! \sum_{k=1}^n \lambda^{k-1} d_q(x,g_0(x)) \leq \! d_q(x,g_0(x))\sum_{k=1}^\infty \lambda^{k-1}  .  \]
Since the right-hand side is a convergent geometric series, it follows that $\{g_0^n(x)\}_{n=1}^\infty$ is a Cauchy sequence.  On the other hand, this sequence exits every compact set (since $g_0$ is an infinite order element of $\pi_1S$), and since $q$ is a complete metric on $E_0$, thus we obtain a contradiction.  Therefore, the conjugation action of $\pi_1S$ is by isometries.
\end{proof}

\subsection{Injectivity of $\mathcal A$.} 
\label{sec:injectivity_of_A}
Our next goal is to prove that $\mathcal A \colon \QI(\bar E) \to \Aff(E_0)$, the homomorphism from Corollary~\ref{cor:defn_of_mathcal_A}, is injective.

In preparation, it will be useful to have the following general fact about quasiisometries of hyperbolic spaces, whose proof we sketch for convenience of the reader:

\begin{lemma} \label{lem:hyper fact}
 For each $K,C,\delta$ there exists $R$ so that the following holds. Suppose that $Z$ is $\delta$--hyperbolic and that each $z\in Z$ lies within $\delta$ of all three sides of a nondegenerate ideal geodesic triangle. Let $f\colon Z\to Z$ be a $(K,C)$-quasi-isometry that lies within finite distance of the identity. Then $f$ lies within distance $R$ of the identity.
\end{lemma}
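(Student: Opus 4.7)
The plan is to use the action of $f$ on the Gromov boundary together with the hypothesis on ideal triangles to localize the displacement of $f$.

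First, since $f$ is a quasi-isometry at finite distance from the identity, it induces the identity map $\partial f = \mathrm{id}$ on the Gromov boundary $\partial Z$. Indeed, two quasi-isometries at finite distance induce the same boundary map, and the identity on $Z$ induces the identity on $\partial Z$.

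Next, fix an arbitrary point $z\in Z$. By hypothesis, there exist three distinct points $\xi_1,\xi_2,\xi_3\in \partial Z$ and geodesic sides $\gamma_{ij}=[\xi_i,\xi_j]$ forming a nondegenerate ideal triangle with $d(z,\gamma_{ij})\le \delta$ for each pair. Applying $f$ to each side $\gamma_{ij}$ yields a $(K,C)$--quasi-geodesic with the same endpoints $\xi_i,\xi_j$ at infinity (since $\partial f$ fixes these points). By the Morse stability lemma for quasi-geodesics in a $\delta$--hyperbolic space, there is a constant $M=M(K,C,\delta)$ so that $f(\gamma_{ij})$ lies in the $M$--neighborhood of $\gamma_{ij}$.

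From $d(z,\gamma_{ij})\le \delta$ and the fact that $f$ is $(K,C)$--coarsely Lipschitz, we deduce $d(f(z),f(\gamma_{ij}))\le K\delta+C$, and combining with the previous step,
\[
d(f(z),\gamma_{ij})\le K\delta+C+M \quad\text{for each pair } i\ne j.
\]
Thus both $z$ and $f(z)$ lie in the ``center'' of the ideal triangle, meaning the set of points within a controlled distance of all three sides. It is a standard fact in $\delta$--hyperbolic geometry that the set of points lying within distance $r$ of all three sides of an ideal triangle has diameter bounded by a function of $r$ and $\delta$ only (this follows from the thin-triangles condition together with the fact that horoballs based at the vertices eventually separate from the opposite side). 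Applying this with $r=\max\{\delta, K\delta+C+M\}$ yields a bound $R=R(K,C,\delta)$ with $d(z,f(z))\le R$.

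The main (but routine) obstacle is verifying that the center of an ideal triangle has uniformly bounded diameter, which uses only the standard thin-triangles property of $\delta$--hyperbolic spaces together with coarse convexity of neighborhoods of geodesics. Since $z$ was arbitrary, we conclude $f$ is within distance $R$ of the identity.
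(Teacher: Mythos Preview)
Your proof is correct and follows essentially the same argument as the paper: use that $\partial f$ is the identity, apply the Morse lemma so that $f(\gamma_{ij})$ stays close to $\gamma_{ij}$, conclude that both $z$ and $f(z)$ lie in the uniformly bounded ``center'' of the ideal triangle, and read off the displacement bound. The only cosmetic difference is that the paper bundles the coarse-Lipschitz estimate $K\delta+C$ and the Morse constant into a single constant $\kappa$, whereas you track them separately.
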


\begin{proof}
Since $f$ is within bounded distance of the identity, its extension $\partial f\colon \partial Z \to \partial Z$ is the identity.  Hence if $z \in Z$ and $\Delta$ is an ideal geodesic triangle as in the statement, then $f(\Delta)$ is a $(K,C)$-quasigeodesic ideal triangle with the same endpoints as $\Delta$.  By the Morse lemma, there is a constant $\kappa = \kappa(K,C,\delta) > 0$ such that $f(z)$ lies within $\kappa$ of the three quasi-geodesic sides of $f(\Delta)$, and these sides in turn lie  within $\kappa$ of the sides of $\Delta$.  
Thus $z$ and $f(z)$ both lie within $2\kappa+\delta$ of all three sides of $\Delta$.
Since the set of points within $2\kappa+\delta$ of all three sides of a nondegenerate geodesic triangle in a $\delta$--hyperbolic space has diameter bounded in terms of $\kappa$ and $\delta$,  we see that $d_Z(z,f(z))$ is bounded solely in terms of $\delta, K,C$, as required.
\end{proof}

With this fact in hand, we can now prove:

\begin{proposition}\label{prop:identity_on_fibers}
Let $\phi\colon \bar E\to \bar E$ be a quasi-isometry with $\nu_\phi^a = id_{E_0}$.
There is a constant $C' = C'(\phi)>0$ such that for all $x \in \bar E$, $d(x,\phi(x)) \leq C'$.  Consequently, $\mathcal A \colon \QI(\bar E) \to \Aff(E_0)$ is injective.
\end{proposition}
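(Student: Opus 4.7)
The plan is to upgrade the hypothesis $\nu_\phi^a = id_{E_0}$ to a global uniform bound on $\phi$ through a sequence of progressively stronger coarse-preservation statements, culminating in two applications of Lemma~\ref{lem:hyper fact}. Throughout I will use that $\nu_\phi^a = id_{E_0}$, combined with Proposition~\ref{P:qi to affine}, implies $\nu_\phi$ lies within bounded distance of $id_{E_0}$ on $E_0$.

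My first step is to show that the bijections $\phi_{\mathcal S}$ and $\phi_{\CP}$ from Proposition~\ref{prop:strip_bijection} are the \emph{identity}. For any strip $A\subset E_0$, Proposition~\ref{prop:strip_bijection}(2) gives that $\phi(A)$ lies within finite Hausdorff distance of $\phi_{\mathcal S}(A)\subset E_0$ in $\bar E$. Since $f_0$ is the identity on $E_0$ and moves points of $\phi(A)\subset N_C(E_0)$ by at most $C$ in $\bar E$, applying $f_0$ yields $\nu_\phi(A) = f_0(\phi(A))$ within bounded distance of $\phi_{\mathcal S}(A)$ in $E_0$. But $\nu_\phi(A)$ is also within bounded Hausdorff distance of $A$ in $E_0$, so $A$ and $\phi_{\mathcal S}(A)$ have bounded mutual Hausdorff distance inside $E_0$. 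Since distinct strips have infinite mutual Hausdorff distance, this forces $\phi_{\mathcal S}(A) = A$. Consequently $\phi_{\CP} = id$, and via the correspondence $\mathcal S \leftrightarrow \mathcal F$ the strip-bundle map $\phi_{\mathcal F}$ is likewise the identity.

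Next I would show the induced quasi-isometry $\phi_{\bar D}\colon \bar D \to \bar D$ (well-defined up to bounded error via Proposition~\ref{prop:strip_bijection}) is uniformly close to $id_{\bar D}$. Proposition~\ref{prop:strip_bijection}(1) together with $\phi_{\CP} = id$ gives that $\phi_{\bar D}$ coarsely fixes every horocycle $\partial B_\alpha$, hence fixes every parabolic point $\alpha \in \CP$ at infinity of $\bar D$. As $G$ is a lattice, $\CP$ is dense in the boundary, so the boundary extension $\partial \phi_{\bar D}$ is the identity. The proof of Lemma~\ref{lem:hyper fact} uses its ``bounded distance from identity'' hypothesis only to deduce this identity boundary extension, so the same Morse-lemma argument applies to the hyperbolic space $\bar D$ and yields that $\phi_{\bar D}$ lies within a constant $R = R(K)$ of $id_{\bar D}$. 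Consequently $d_{\mathrm{Haus}}(\phi(E_X), E_X) \leq R$ for every $X \in \bar D$, and Lemma~\ref{L:fiber to fiber qi} shows each $\nu_\phi^X\colon E_X \to E_X$ is a uniform quasi-isometry.

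The key remaining step is a second, fiberwise application of Lemma~\ref{lem:hyper fact}. Using $\phi_{\mathcal F} = id$ together with $d_{\mathrm{Haus}}(\phi(E_X), E_X) \leq R$, I would verify that each strip $A\subset E_X$---realized as the coarse intersection $F \cap E_X$ for a unique $F \in \mathcal F$---is coarsely preserved by $\nu_\phi^X$, so $\nu_\phi^X$ fixes each pair of strip endpoints in $\partial E_X$. The $\pi_1S$--action on $E_X$ is cocompact and preserves the set of such endpoints, which is therefore dense in $\partial E_X$, forcing $\partial \nu_\phi^X = id$. Applying Lemma~\ref{lem:hyper fact} to the Gromov hyperbolic space $E_X$ then gives $d_{E_X}(\nu_\phi^X(y), y) \leq R'$ for all $y \in E_X$, with $R'$ depending only on $K$ and $R$. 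Finally, for any $x \in \bar E$ with $X = \pi(x)$, the points $\phi(x)$ and $\nu_\phi^X(x) = f_X(\phi(x))$ lie on the common horizontal disk $D_{\phi(x)}$ at $\bar E$--distance $\rho(\pi(\phi(x)), X) \leq R$, so
\[
d(x,\phi(x)) \leq d(x,\nu_\phi^X(x)) + d(\nu_\phi^X(x),\phi(x)) \leq R' + R =: C'.
\]
Injectivity of $\mathcal A$ is then immediate: if $\mathcal A([\phi]) = id$ then $\nu_\phi^a = id$, so $\phi$ is uniformly close to the identity and $[\phi] = 1$ in $\QI(\bar E)$. The main obstacle will be the careful verification that $\nu_\phi^X$ coarsely preserves each individual strip of $E_X$; while intuitively clear from the bundle structure, making this precise requires combining the ambient Hausdorff bounds of Proposition~\ref{prop:strip_bijection} with uniform biLipschitz control of the affine fiber-to-fiber maps $f_{X,Y}$ for $\rho(X,Y)$ bounded.
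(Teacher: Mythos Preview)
Your overall strategy---show $\phi_{\mathcal S}$ and $\phi_{\CP}$ are the identity, deduce that each fiber $E_X$ is coarsely preserved, then apply Lemma~\ref{lem:hyper fact} fiberwise---matches the paper's. However, two of your intermediate steps differ from the paper's route, and one of them contains a gap.

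In your second step you pass to an induced map $\phi_{\bar D}$ on $\bar D$ and assert that coarsely fixing each horocycle $\partial B_\alpha$ fixes the parabolic point $\alpha\in\CP$ ``at infinity of $\bar D$.'' But $\alpha$ is a point of $\partial D$, not of the Gromov boundary $\partial\bar D$ of the truncated space: in the path metric $\bar\rho$, each $\partial B_\alpha$ is a bi-infinite quasi-geodesic with \emph{two} endpoints in $\partial\bar D$, neither identified with $\alpha$. Your density claim (``$\CP$ is dense in the boundary'') therefore refers to the wrong boundary. This is repairable---one can argue instead that the horocycle endpoints are dense in $\partial\bar D$, or extend $\phi_{\bar D}$ over the removed horoballs to a quasi-isometry of $D$ and run the argument there---but the paper avoids the issue entirely: working directly in $\bar E$, it observes that since every $\partial\mathcal B_\alpha$ is coarsely fixed and every fiber $E_X$ is a (quantitative) coarse intersection of two such subsets, $\phi(E_X)$ is uniformly close to $E_X$.

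Your third step is correct in spirit but needlessly hard. You propose to show $\partial\nu_\phi^X=\mathrm{id}$ by verifying that $\nu_\phi^X$ coarsely preserves each individual strip of $E_X$, and you rightly flag this verification as the main obstacle. The paper's argument here is much simpler: since $E_X$ lies at \emph{finite} (though not uniformly bounded in $X$) Hausdorff distance from $E_0$ in $\bar E$, and $\phi|_{E_0}$ is already within finite distance of the inclusion, a short triangle-inequality computation shows $\nu_\phi^X$ lies within finite distance of the identity on $E_X$. That is exactly the hypothesis of Lemma~\ref{lem:hyper fact}, whose output bound depends only on the quasi-isometry constants of $\nu_\phi^X$ and the hyperbolicity constant of $E_X$---both uniform in $X$---and not on how large that finite distance is. No strip analysis is needed.
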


\begin{proof}

We first claim that for any $X \in \bar D$, $\phi(E_X)$ lies within the $C''$--neighborhood of $E_X$, where $C'' = C''(\phi)>0$.  To see this, observe that since $\nu_\phi^a$ is the identity and is bounded distance from $\nu_\phi$, it follows that $\phi|_{E_0}$ is within bounded distance of the inclusion of $E_0$ in $\bar E$. Proposition \ref{prop:strip_bijection} then implies that $d_{\mathrm{Haus}}(A,\phi_S(A))<+\infty$ for each strip $A\in\mathcal S$. Since strips that lie within finite Hausdorff distance coincide, we have $\phi_S(A)=A$.  Combining this fact with Proposition \ref{prop:strip_bijection} it follows that  $\phi_{\mathcal P}(\alpha(A))=\alpha(\phi_{\mathcal S}(A))=\alpha(A)$. 
Hence for each $\alpha$, we have that $\phi(\partial \mathcal B_\alpha)$ lies within Hausdorff distance $C$ of $\partial \mathcal B_{\alpha}=\partial \mathcal B_{\phi_{\mathcal P}(\alpha)}$, for $C$ as in Proposition \ref{prop:strip_bijection}.

Now let $X \in \bar D$ be any point and choose distinct $\alpha, \alpha' \in \mathcal P$ so that $X$ is contained in the coarse intersection of $\partial B_{\alpha}$ and $\partial B_{\alpha'}$, implying that $E_X$ lies in the coarse intersection of $\partial \mathcal B_{\alpha}$ and $\partial \mathcal B_{\alpha'}$.  By the coarse preservation of the $\partial \mathcal B_{\alpha}$ in the previous paragraph, the coarse intersection of $\phi(\partial \mathcal B_{\alpha})$ and $\phi(\partial \mathcal B_{\alpha'})$ is within Hausdorff distance $C$ of the coarse intersection of $\partial \mathcal B_{\alpha}$ and $\partial \mathcal B_{\alpha'}$, and hence $E_X$ and $\phi(E_X)$ are within uniform Hausdorff distance.  This proves the claim.

Since $f_X\colon\bar E\to E_X$ is $e^{C''}$-bi-Lipschitz when restricted to fibers in the $C''$-neighborhood of $E_X$, the claim implies that $\nu_\phi^X = f_X\circ \phi\colon E_X \to E_X$ is a quasi-isometry  with constants depending only on $\phi$ and not $X$.  Moreover, since each $E_X$ lies within finite (but not necessarily bounded) Hausdorff distance of $E_0$, the fact that $\phi|_{E_0}$ lies within finite distance of the inclusion $E_0 \hookrightarrow \bar{E}$ implies that $\nu_\phi^X$ lies within finite distance of the identity $E_X \to E_X$.  Since each $E_X$ is uniformly quasiisometric to $\mathbb H^2$, it follows that $\nu_\phi^X\colon E_X \to E_X$ satisfies the assumptions of Lemma \ref{lem:hyper fact}.  We conclude that $\nu_\phi^X$ is within uniformly bounded distance of the identity for each $X\in \bar D$.  Since $d(\nu_\phi^X(x),\phi(x)) \leq C''$, it follows that $d(x,\phi(x))$ is uniformly bounded, independent of $x$. This proves the first statement of the proposition.

If $\mathcal A(\phi)$ is the identity for some $\phi \in \QI(\bar E)$, then by the first part of the proposition, $\phi$ is a bounded distance from the identity.  Therefore, $\phi$ and the identity represent the same class, and $\mathcal A$ is injective.  This completes the proof.
\end{proof}

\subsection{From affine homeomorphisms to isometries}
\label{sec:affine_to_isometry}

Next we will choose a particular allowable truncation and construct a homomorphism $\Aff(E_0) \to \Isom_{\fib}(\bar E)$, that we will eventually show is an isomorphism. We first construct such a homomorphism to the fiber-preserving isometry group {\em of the space $E$}, which avoids the issue of choosing the truncation.

\begin{lemma}
\label{lem:affine_to_isometry_of_E}
For any $\nu \in \Aff(E_0)$, there exists a isometry $\phi = \phi_\nu \in \Isomfib(E)$ such that $f_0 \circ \phi_\nu\vert_{E_0} = \nu$.
Moreover, this assignment $\nu \mapsto \phi_\nu$ defines an injective homomorphism $\Aff(E_0) \to \Isomfib(E)$.
\end{lemma}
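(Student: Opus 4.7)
The plan is to lift each $\nu \in \Aff(E_0)$ to a bundle automorphism of $E$ by combining $\nu$ on fibers with the action of its derivative on the base. Since the half-translation structure on $E_0$ gives a well-defined derivative $A = D\nu$ modulo $\{\pm I\}$, the element $A$ acts isometrically on the Teichm\"uller disk $D \cong \mathbb H^2$ via the natural $\SL_2(\mathbb R)$-action on the orbit of $(X_0, q_{X_0})$. Using the product structure $E \cong D \times E_0$ determined by $(\pi, f_0)$, I would define
$$\phi_\nu(X, w) = (A \cdot X,\ \nu(w)),$$
or equivalently on each fiber, $\phi_\nu|_{E_X} = f_{A \cdot X, X_0} \circ \nu \circ f_{X_0, X} \colon E_X \to E_{A \cdot X}$.

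With this formula, $\phi_\nu$ is manifestly a homeomorphism covering the $A$-action on $D$ and mapping fibers to fibers. For $z \in E_0$ one has $\phi_\nu(z) = f_{A \cdot X_0, X_0}(\nu(z))$, so $f_0(\phi_\nu(z)) = f_{X_0, A \cdot X_0}\circ f_{A \cdot X_0, X_0}(\nu(z)) = \nu(z)$, establishing $f_0 \circ \phi_\nu|_{E_0} = \nu$. The composition identity $\phi_{\nu_1} \circ \phi_{\nu_2}(X, w) = (A_1 A_2 \cdot X, \nu_1(\nu_2(w))) = \phi_{\nu_1 \circ \nu_2}(X, w)$ shows the assignment is a homomorphism, while $\phi_\nu = \mathrm{id}_E$ forces $\nu = f_0 \circ \phi_\nu|_{E_0} = \mathrm{id}_{E_0}$, giving injectivity.

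The main obstacle is verifying that $\phi_\nu$ is an isometry of the singular Riemannian metric $d$ on $E$. For this I would appeal to the locally homogeneous four-dimensional $\SAff$-geometric structure on $E \smallsetminus \Sigma$ established in \cite[\S5]{DDLSI}: the isometry group of this geometry contains both the natural bundle lifts of the $\SL_2(\mathbb R)$-action on $D$ and the fiberwise ``translations,'' i.e.\ affine self-maps of fibers with trivial derivative (including the deck $\pi_1 S$-action) which preserve the flat metric on each fiber and fit together consistently under the Teichm\"uller identifications $f_{Y,X}$. Choosing any decomposition $\nu = \tilde\nu_A \circ \tau$ with $\tilde\nu_A$ an affine map of derivative $A$ and $\tau \in \ker(D)$, my formula factors as $\phi_\nu = \phi_{\tilde\nu_A} \circ \phi_\tau$: the first factor realizes the $\SL_2$-bundle lift of $A$ and is an isometry by $\SAff$-homogeneity, while $\phi_\tau$ acts fiberwise by the same translation $\tau$ and is an isometry since $\tau$ preserves the flat structure on each fiber. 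Consequently $\phi_\nu \in \Isomfib(E)$. Extension across the singular locus $\Sigma$ is automatic because $\phi_\nu$ permutes cone points and is an isometry on the dense complement $E \smallsetminus \Sigma$.
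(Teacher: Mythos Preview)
Your construction and the verification of the formal properties (the formula $f_0\circ\phi_\nu|_{E_0}=\nu$, the homomorphism identity, and injectivity) are exactly as in the paper. The only divergence is in how you justify that $\phi_\nu$ is an isometry: the paper does this by a direct elementary check, showing that $\phi_\nu$ restricted to each horizontal disk $D_x$ covers the isometry $\Phi$ of $D$, and that on each fiber $E_Y$ the composition $f_{\Phi(Y),X}\circ\phi_0\circ f_{X_0,Y}$ is an isometry because the $e^{\pm t}$ stretch factors of the Teichm\"uller maps cancel against those introduced by the derivative $d\nu$. Your factorization $\phi_\nu=\phi_{\tilde\nu_A}\circ\phi_\tau$ and appeal to the $\SAff$ model geometry from \cite[\S5]{DDLSI} is a legitimate alternative, and conceptually cleaner, but it is less self-contained: it requires knowing that the isometry group of that Thurston-type model really does contain both the fiberwise translations and the $\SL_2(\mathbb R)$ bundle lifts, which amounts to the same cancellation computation packaged differently. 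One small point: you phrase everything in terms of $\SL_2(\mathbb R)$, but $\nu$ may reverse orientation, so the derivative lives in $\PGL_2(\mathbb R)$ (as the paper notes); your argument goes through unchanged once you allow $A$ to be orientation-reversing, but you should say so.
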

\begin{proof}

Recall from \S\ref{S:flat metrics Veech groups} that the projective tangent space at any non-cone point of $E_0$ is denoted $\mathbb P^1(q)$ and is canonically identified with $\partial D$. The derivative of $\nu\colon E_0\to E_0$ (which may reverse orientations) is a well-defined projective transformation $d\nu\in\PGL(\mathbb P^1(q))$ which, using the preferred coordinates on $q = q_0$ with distinguished vertical and horizontal directions, we canonically identify with $\PGL_2(\mathbb R)$. The Teichm\"uller disk $D$ is the orbit of $q$ under the $\SL_2(\mathbb R)$ action and is identified with $\mathbb H^2 = \SL_2(\mathbb R)/SO(2)$ (see e.g.~\cite[\S2.8]{DDLSI}). As $\PGL_2(\mathbb R)$ acts isometrically on $\mathbb H^2$, we thus obtain an isometry $\Phi= d\nu\colon D\to D$ whose induced map $\partial \Phi$ of the circle at infinity $\partial D$ agrees with the derivative $d \nu$ under the canonical identification $\partial D\cong \mathbb P^1(q)$. In particular, setting $X = \Phi(X_0)$, the geodesic ray in $D$ emanating from $X_0$ and asymptotic to $\xi\in \mathbb P^1(q)$ is sent to the geodesic ray emanating from $X$ asymptotic to $d\nu(\xi)$. 

We claim the map $\phi_0 = f_{X, X_0}\circ \nu\colon E_0 \to E_X$ is an isometry of fibers. Indeed, any pair $\xi,\xi^\perp\in \mathbb P^1(q)$ of orthogonal directions on $E_0$ are the endpoints of a geodesic $\rho$ in $D$ containing $X_0$. 
Since $\Phi$ is an isometry with $\partial \Phi = d \nu$, we have that $X = \Phi(X_0)$ lies on the geodesic $\Phi(\rho)$ from $d\nu(\xi^\perp)$ to $d\nu(\xi)$; that is, $d\nu(\xi),d\nu(\xi^\perp)$ are orthogonal on $X$.
But since $\mathbb P^1(q)$ and $\mathbb P^1(q_X)$ are canonically identified by the Teichm\"uller map $f_{X, X_0}$ (see e.g.~\cite[\S2.8]{DDLSI}), this means $\phi_0$ is an affine map whose derivative $d \phi_0 = d\nu$ preserves orthogonality of lines; hence $\phi_0$ is an isometry as claimed. 

Now we define $\phi = \phi_\nu \colon E \to E$ by the formula:
\[ \phi(x) = f_{\Phi(\pi(x)),X_0} \circ \nu \circ f_0(x). \]
In words, this maps the fiber over a point $Y$ to the fiber  over $\Phi(Y)$, and the horizontal disk $D_x$, for $x \in E_0$, to $D_{\nu(x)}$.  The restriction $\phi|_{D_x} \colon D_x \to D_{\nu(x)}$ is an isometry since it covers $\Phi$.  To prove that $\phi$ is an isometry, it therefore suffices to show that $\phi|_{E_Y} \colon E_Y \to E_{\Phi(Y)}$ is an isometry for any $Y \in D$.

Fix any $Y\in D$. For $Y = X_0$, we have already seen that $\phi\vert_{E_0}$ is the isometry $\phi_0\colon E_0\to E_X$. If $Y\ne X$, there exist unique orthogonal directions $\alpha,\alpha^\perp \in \mathbb P(q)$ and $t > 0$, so that $X_0$ and $Y$ both lie on the the geodesic from $\alpha^\perp$ to $\alpha$ in $D$ and  $Y$ lies distance $t$ from $X_0$ in the direction of $\alpha$. This means that $f_{Y,X_0} \colon E_0 \to E_Y$ contracts in direction $\alpha$ by $e^{-t}$ and stretches in direction $\alpha^\perp$ by $e^t$. The image $\Phi(Y)$ lies along the geodesic from $d\phi_0(\alpha^\perp)$ to $d\phi_0(\alpha)$ at distance $t$ from $\Phi(X_0) = X$;  therefore $f_{\Phi(Y),X} \colon E_X \to E_{\Phi(Y)}$ contracts by $e^{-t}$ in direction $d\phi_0(\alpha)$ and stretches by $e^t$ in direction $d \phi_0(\alpha^\perp)$.  The restriction $\phi|_Y \colon Y \to \Phi(Y)$ is given by $f_{\Phi(Y),X} \circ \phi_0 \circ f_{E_0,Y}$. Since $\phi_0$ sends $\alpha\mapsto d\phi_0(\alpha)$ and $\alpha^\perp\mapsto d\phi_0(\alpha^\perp)$,  the description above shows that $\phi|_Y$ is an isometry.  Therefore $\phi$ is an isometry, as required.

To see that $\nu\mapsto \phi_\nu$ is a homomorphism, note that by construction $\Phi_\nu$ is the unique isometry of $D$ for which $\partial \Phi_\nu = d \nu$. Thus the chain rule implies  $\Phi_{\nu\circ g} =\Phi_\nu\circ \Phi_g$ is the unique isometry whose action on $\partial D$ agrees with $d(\nu\circ g) = d\nu \circ dg$. For any $x\in E$ we have $\pi(\phi_g(x))= \Phi_g(\pi(x))$ and hence by construction
\begin{align*}
\phi_{\nu}\circ \phi_g (x)&= f_{\Phi_\nu(\pi(\phi_g(x))),X_0}\circ \nu \circ f_0 \big(f_{\Phi_g(x),X_0}\circ g \circ f_0(x)\big)\\
&= f_{\Phi_\nu(\Phi_g(\pi(x))),X_0}\circ \nu \circ f_{X_0,\Phi_g(x)}\circ f_{\Phi_g(x),X_0}\circ g \circ f_0(x))\\
&= f_{\Phi_{\nu\circ g}(\pi(x)),X_0}\circ (\nu \circ g) \circ f_0(x)
= \phi_{\nu\circ g}(x)
\end{align*}
as needed. Finally, if $\phi_\nu = \mathrm{id}_E$ then clearly $X = \Phi(X_0) = X_0$. Since  $\phi_0 =  \phi_\nu\vert_{E_0}$ by construction, we conclude that 
\[\mathrm{id}_{E_0} = \phi_\nu\vert_{E_0} = \phi_0 = f_{X_0, X_0}\circ \nu = \nu.\]
Hence $\nu$ is the identity affine map, showing that $\nu\mapsto \phi_\nu$ is injective.
\end{proof}

\begin{lemma}
\label{lem:Gamma_finite_index}
The subgroup $\Gamma < \Isom_{\fib}(E)$ has finite index.
\end{lemma}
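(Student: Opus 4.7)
The plan is to use Lemma~\ref{lem:affine_to_isometry_of_E} to identify $\Isom_{\fib}(E)$ with $\Aff(E_0)$, and then show that $\Gamma$ has finite index in $\Aff(E_0)$.

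For the identification, I would show that the injection $\nu \mapsto \phi_\nu$ of Lemma~\ref{lem:affine_to_isometry_of_E} is in fact surjective. Given $\phi \in \Isom_{\fib}(E)$ inducing an isometry $\Phi$ of $D$ with $X := \Phi(X_0)$, the restriction $\phi|_{E_0}\colon E_0 \to E_X$ is an isometry of flat surfaces, so $\nu := f_{X_0,X} \circ \phi|_{E_0}$ is affine on $E_0$. Using that the Teichm\"uller map preserves foliations in each direction (and hence acts as the identity under the canonical identification $\mathbb{P}^1(q_X) \cong \mathbb{P}^1(q)$), the projective derivative $d\nu$ coincides with the boundary action $\partial \Phi$; hence $\Phi_\nu = \Phi$, and $\phi_\nu$ has the same base action as $\phi$ and also agrees with $\phi$ on $E_0$ by construction. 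A rigidity argument for fiber-preserving isometries of the singular Riemannian metric on $E$ --- using the orthogonal splitting $T_xE = T_xE_0 \oplus T_xD_x$ at non-cone points --- then yields $\phi = \phi_\nu$, establishing $\Aff(E_0) \cong \Isom_{\fib}(E)$.

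For the index count, note that the inclusion $\Gamma \hookrightarrow \Aff(E_0)$ (compatible with the embedding $\Gamma \hookrightarrow \Isom_{\fib}(E)$) is organized by the short exact sequence $1 \to \pi_1 S \to \Gamma \to G \to 1$: the subgroup $\pi_1 S \lhd \Gamma$ acts on $E_0$ as deck transformations of the universal cover $E_0 \to S$, hence by orientation-preserving affine isometries, while any lift of $g \in G$ acts as an affine self-map of $E_0$ whose derivative realizes $g$ in $\PGL_2(\mathbb{R})$. The quotient $\Aff(E_0)/\pi_1 S$ is naturally the full affine group $\Aff(S, q_{X_0})$ of the compact half-translation surface $(S, q_{X_0})$, which fits into the exact sequence $1 \to \mathrm{Aut}(S, q_{X_0}) \to \Aff(S, q_{X_0}) \to G_0 \to 1$, with $\mathrm{Aut}(S, q_{X_0})$ the finite isometry group of $(S,q_{X_0})$ and $G_0$ the full Veech group of $q$. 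Since $G \subseteq G_0$ is a finite-index inclusion of lattices in $\PGL_2(\mathbb{R})$, we conclude
$$[\Aff(E_0) : \Gamma] \;\le\; |\mathrm{Aut}(S, q_{X_0})| \cdot [G_0 : G] \;<\; \infty,$$
completing the proof.

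The main obstacle will be the rigidity step in the first paragraph: showing that a fiber-preserving isometry of $E$ is uniquely determined by its restriction to $E_0$ together with its induced action on the base $D$. This requires careful bookkeeping of the canonical identifications $\mathbb{P}^1(q_X) \cong \partial D$ across fibers, ensuring that derivatives along the horizontal disks $D_x$ match the hyperbolic boundary action determined by the projective derivative $d\nu$.
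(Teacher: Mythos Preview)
Your approach is correct in outline but takes a considerably longer route than the paper. The paper's proof is a three-line volume argument: citing \cite[Proposition~5.5]{DDLSI}, the group $\Isom_{\fib}(E)$ acts properly discontinuously on $E$, so $E/\Isom_{\fib}(E)$ is an orbifold of positive Riemannian volume; since $E/\Gamma$ has finite volume (because $D/G$ has finite hyperbolic area and every fiber $E_X/\pi_1 S$ has the same finite flat area), the index $[\Isom_{\fib}(E):\Gamma]$, being the ratio of these volumes, is finite. No identification with $\Aff(E_0)$ is needed at this stage.

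Your argument, by contrast, front-loads the isomorphism $\Isom_{\fib}(E)\cong\Aff(E_0)$, which the paper only establishes later (essentially Lemma~\ref{lem:Psi_is_the_inverse_of_A}), and then computes the index via the structure of $\Aff(E_0)$. This buys you an explicit bound $[\Aff(E_0):\Gamma]\le |\mathrm{Aut}(S,q_{X_0})|\cdot[G_0:G]$, which the volume argument does not give. Two points deserve care, however. First, the rigidity step you flag (a fiber-preserving isometry is determined by its base action and its restriction to $E_0$) is exactly what the paper extracts from \cite{DDLSI} in the proofs of Lemmas~\ref{lem:allowable_trunc} and~\ref{lem:Psi_is_the_inverse_of_A}; you would need to invoke the same facts. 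Second, your assertion that $\Aff(E_0)/\pi_1 S$ \emph{is} the group $\Aff(S,q_{X_0})$ presumes $\pi_1 S\trianglelefteq\Aff(E_0)$, i.e.\ that every affine self-map of the universal cover descends to $S$; this is not automatic. It is, however, inessential: letting $N=\ker(D\colon\Aff(E_0)\to\PGL_2(\mathbb R))$, one has $N$ normal, $[N:\pi_1 S]<\infty$ (since $N/\pi_1 S$ embeds in the finite isometry group of the compact flat surface $S$), and the image $G_0$ of $D$ discrete (as it preserves the discrete set of saddle-connection periods); then $[\Aff(E_0):\Gamma]=[\Aff(E_0):\Gamma N]\cdot[\Gamma N:\Gamma]=[G_0:G]\cdot[N:\pi_1 S]<\infty$ without ever needing $\pi_1 S$ to be normal.
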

\begin{proof} 

By \cite[Proposition~5.5]{DDLSI}, $\Isomfib(E)$ acts properly discontinuously on $E$. Therefore $E / \Isom_\fib(E)$ is a topological orbifold with well-defined, positive Riemannian volume. The index of $\Gamma$ in $\Isom_\fib(E)$ is  the degree of the orbifold cover $E/\Gamma\to E/\Isom_\fib(E)$ and equals the ratio of the respective volumes. As $E/\Gamma$ has finite volume, since the quotient $D/G$ has finite area and the fibers $E_X / \pi_1 S$ all have equal, finite area, we conclude that $\Gamma$ indeed has finite index.
\end{proof}

\begin{lemma}
\label{lem:allowable_trunc}
There is an allowable truncation $\bar E$  that is $\Isom_\fib(E)$--invariant and for which restricting to $\bar E$ induces an injection $\Isom_\fib(E) \to \Isom_\fib(\bar E)$.
\end{lemma}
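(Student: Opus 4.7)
The plan is to construct $\Isom_\fib(E)$--invariant horoballs that are $1$--separated, take the resulting truncation $\bar E$, and verify both that $\Gamma$ acts cocompactly on $\bar E$ and that the restriction map is injective. To begin, any $\phi \in \Isom_\fib(E)$ maps fibers to fibers, hence preserves the vertical distribution and its orthogonal complement on $E \setminus \Sigma$, so it sends horizontal disks to horizontal disks. Using the fact that $\pi|_{D_x}\colon D_x \to D$ is an isometry for every $x$, this induces a well-defined homomorphism $\pi_*\colon \Isom_\fib(E) \to \Isom(D)$. The image $H := \pi_*(\Isom_\fib(E))$ contains $G = \pi_*(\Gamma)$ as a subgroup of finite index by Lemma~\ref{lem:Gamma_finite_index}, so $H$ is itself a lattice in $\Isom(D)$ acting on $\CP$ with finitely many orbits.

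Next I would build an $H$--invariant, $1$--separated family of horoballs. Choosing orbit representatives $\alpha_1, \ldots, \alpha_k \in \CP$ for the $H$--action and any horoballs $B_{\alpha_i}$ based at them, we extend equivariantly via $B_{h(\alpha_i)} = h(B_{\alpha_i})$; this is well-defined since each parabolic stabilizer $\stab_H(\alpha_i)$ fixes any horoball based at $\alpha_i$. Uniformly shrinking all horoballs (by pushing every horocycle the same hyperbolic distance toward its parabolic point) preserves $H$--invariance and, once the shrinking is large enough, makes the collection $1$--separated. Setting $\bar D = D \setminus \bigcup_\alpha \mathrm{int}(B_\alpha)$ and $\bar E = \pi^{-1}(\bar D)$ then gives an $\Isom_\fib(E)$--invariant truncation. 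For allowability, note that $\bar D / H$ is compact, being the complement of finitely many open cusp neighborhoods in the finite-area hyperbolic orbifold $D/H$. The quotient $\bar E / \Isom_\fib(E)$ fibers over $\bar D / H$ with compact $S$--orbifold fibers, hence is compact; and since $\Gamma$ has finite index in $\Isom_\fib(E)$, the quotient $\bar E / \Gamma$ is compact as well. Thus $\bar E$ is an allowable truncation.

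Finally, to show injectivity of the restriction $\Isom_\fib(E) \to \Isom_\fib(\bar E)$, suppose $\phi \in \Isom_\fib(E)$ fixes every point of $\bar E$. Since the horoballs $B_\alpha$ are $1$--separated, $\bar D$ has nonempty interior in $D$, so $\bar E$ contains a nonempty open subset of $E$, which meets $E \setminus \Sigma$ in a nonempty open set $U$. By \S\ref{sec:setup}, $E \setminus \Sigma$ is a connected Riemannian manifold carrying a locally homogeneous four-dimensional Thurston-type metric; any isometry of such a manifold that is the identity on a nonempty open set is globally the identity (one propagates the identity across overlapping normal neighborhoods via the exponential map). Hence $\phi$ is the identity on $E \setminus \Sigma$, and by continuity on all of $E$. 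The main obstacle, producing $1$--separated horoballs invariant under the full isometry group, is the only subtle point and is handled by the fact that $\Isom_\fib(E)$ acts as a lattice on $D$; the injectivity and allowability are then essentially formal.
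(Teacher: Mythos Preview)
Your proof is correct and follows essentially the same route as the paper: both project $\Isom_\fib(E)$ to a lattice in $\Isom(D)$ containing $G$ with finite index, choose invariant $1$--separated horoballs, and take the resulting truncation. The only notable difference is in the injectivity step: the paper cites \cite[Corollary~5.6]{DDLSI}, while you give a self-contained argument using that an isometry of the connected Riemannian manifold $E\setminus\Sigma$ which is the identity on a nonempty open set must be globally the identity; your argument is sound (since $\Sigma$ has codimension two, $E\setminus\Sigma$ is connected) and arguably more elementary.
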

\begin{remark}
Every fiber-preserving isometry of $\bar E$ uniquely extends to one of $E$ (e.g.~by following the proof of Lemma~\ref{lem:affine_to_isometry_of_E}) and thus $\Isom_\fib(E)\to \Isom_\fib(\bar E)$ is in fact an isomorphism.
\end{remark}

\begin{proof}
There is a natural map $\Isom_\fib(E)\to \Isom(D)$ that sends $\Gamma$ onto $G$. Hence, by the previous lemma, the image $G^*$ of $\Isom_\fib(E)$ under this map contains $G$ with finite index. Therefore $G^*$ acts properly discontinuously on $D$ and we may choose a collection $\{B_\alpha\}_{\alpha\in \mathcal{P}}$ of $1$--separated horoballs as in \S\ref{S:flat metrics Veech groups} that is $G^*$--invariant. If $\bar{E}$ denotes the corresponding truncation of $E$, it follows that every element of $\Isom_\fib(E)$ preserves $\bar{E}$. The map $\Isomfib(E)\to \Isomfib(\bar E)$ given by restricting $\phi\mapsto \phi\vert_{\bar E}$ is injective by \cite[Corollary~5.6]{DDLSI} since if $\phi\vert_{\bar E}$ is the identity, then $\phi$ must be the identity on each Teichm\"uller disk $D_x$ and fiber $E_X\subset \bar E$.
\end{proof}

Choosing such an allowable truncation $\bar{E}$, Lemmas \ref{lem:affine_to_isometry_of_E} and \ref{lem:allowable_trunc} now give an injective homomorphism $\Psi\colon \Aff(E_0)\to \Isom_\fib(\bar E)$ given by $\Psi(\nu) = \phi_\nu\vert_{\bar E}$.

\begin{lemma}
\label{lem:A_is_the_inverse_of_Psi}
For any $\nu \in \Aff(E_0)$, $\mathcal A(\Psi(\nu)) = \nu$, where we have identified $\Psi(\nu)$ with its image in $\QI(\bar E)$ from the homomorphism $\Isom_\fib(\bar E) \to \QI(\bar E)$.
\end{lemma}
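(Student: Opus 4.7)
The proof plan is essentially to unwind the definitions and use the uniqueness clause of Proposition~\ref{P:qi to affine}. The key observation is that the map $\mathcal A$ is built by first forming $\nu_\phi = f_0 \circ \phi|_{E_0}$ and then approximating it by a unique affine map, while by construction $\Psi(\nu) = \phi_\nu|_{\bar E}$ satisfies $f_0 \circ \phi_\nu|_{E_0} = \nu$ exactly (Lemma~\ref{lem:affine_to_isometry_of_E}), so the approximating affine map is $\nu$ itself.

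More precisely, I would proceed as follows. Write $\phi = \Psi(\nu) = \phi_\nu|_{\bar E}$, viewed as an element of $\QI(\bar E)$ via the inclusion $\Isom_\fib(\bar E) \to \QI(\bar E)$. By the definition of $\mathcal A$ in Corollary~\ref{cor:defn_of_mathcal_A}, $\mathcal A(\phi)$ is the unique affine homeomorphism $\nu_\phi^a \in \Aff(E_0)$ lying within bounded distance of $\nu_\phi = f_0 \circ \phi|_{E_0}$. Since $E_0 \subset \bar E$, the restriction $\phi|_{E_0}$ equals $\phi_\nu|_{E_0}$, and Lemma~\ref{lem:affine_to_isometry_of_E} guarantees that $f_0 \circ \phi_\nu|_{E_0} = \nu$. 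Thus $\nu_\phi = \nu$ is already an affine map.

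The affine map $\nu$ therefore lies at distance zero from itself, so $\nu$ is an affine representative of $\mathcal A(\phi)$. Appealing to the uniqueness assertion in Proposition~\ref{P:qi to affine} (no two distinct affine maps of $E_0$ lie within bounded distance of each other), we conclude $\nu_\phi^a = \nu$, i.e.\ $\mathcal A(\Psi(\nu)) = \nu$, as required. There is essentially no obstacle here: the lemma is a formal consequence of the constructions of $\Psi$ and $\mathcal A$ together with the uniqueness of affine approximants, and will take only a few lines to write up.
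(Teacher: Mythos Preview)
Your proposal is correct and follows essentially the same argument as the paper: both observe that $\mathcal A$ returns the unique affine map close to $f_0\circ\phi_\nu|_{E_0}$, invoke Lemma~\ref{lem:affine_to_isometry_of_E} to see this composition already equals $\nu$, and conclude by uniqueness. The paper's write-up is slightly terser but the content is identical.
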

\begin{proof} 
The construction of $\mathcal A$ in Corollary~\ref{cor:defn_of_mathcal_A}
sends the (quasi-)isometry $\Psi(\nu) = \phi_\nu\vert_{\bar E}\colon\bar{E}\to \bar{E}$ to the the unique affine homeomorphism of $E_0$ that is uniformly close to the map $f_0\circ \phi_\nu\vert_{E_0}\colon E_0\to E_0$. 
But by the construction of $\phi_\nu$ in Lemma~\ref{lem:affine_to_isometry_of_E}, $f_0\circ \phi_\nu\vert_{E_0}$ is affine itself and equal to $\nu$. Thus evidently $\mathcal{A}(\Psi(\nu)) = \nu$ as claimed.
\end{proof}

\begin{lemma}
\label{lem:Psi_is_the_inverse_of_A}
For any $\phi\in \Isom_\fib(\bar E) = \Isom_\fib(E)$, we have $\Psi(\mathcal{A}(\phi)) = \phi$. In particular, the natural maps $\Isomfib(\bar E) \to \Isom(\bar E)\to \QI(\bar E)$ are both injective.
\end{lemma}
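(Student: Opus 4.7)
The strategy is to reduce $\Psi(\mathcal A(\phi))=\phi$ to showing that a certain fiber-preserving isometry of $\bar E$ within bounded distance of the identity must equal the identity. Given $\phi\in\Isomfib(\bar E)$, set
\[\psi:=\phi^{-1}\circ \Psi(\mathcal A(\phi)) \in \Isomfib(\bar E).\]
Since $\mathcal A$ is a homomorphism (Corollary~\ref{cor:defn_of_mathcal_A}) and $\mathcal A\circ\Psi=\mathrm{id}$ by Lemma~\ref{lem:A_is_the_inverse_of_Psi}, we obtain $\mathcal A(\psi)=\mathcal A(\phi)^{-1}\cdot\mathcal A(\phi)=\mathrm{id}_{E_0}$. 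Applying Proposition~\ref{prop:identity_on_fibers} then yields a constant $C'>0$ with $\bar d(x,\psi(x))\le C'$ for every $x\in\bar E$.

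Next I exploit the fibered structure to upgrade this bounded-distance estimate to $\psi=\mathrm{id}$. The $1$-Lipschitz projection $\bar\pi\colon \bar E\to\bar D$ intertwines $\psi$ with an isometry $\Psi_{\mathrm{base}}\in\Isom(\bar D)$ satisfying $d_{\bar D}(X,\Psi_{\mathrm{base}}(X))\le C'$ for all $X\in\bar D$. This extends uniquely to an isometry of the full Teichm\"uller disk $D\cong\mathbb H^2$ (see the remark following Lemma~\ref{lem:allowable_trunc}) within bounded distance of the identity, and hyperbolic rigidity forces $\Psi_{\mathrm{base}}=\mathrm{id}_D$. Consequently $\psi$ preserves each fiber setwise; in particular $\psi\vert_{E_0}\colon E_0\to E_0$ is an isometry, hence affine. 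Since $f_0\vert_{E_0}=\mathrm{id}$, we have $\nu_\psi=f_0\circ\psi\vert_{E_0}=\psi\vert_{E_0}$, which is already affine and therefore coincides with its own unique affine approximation $\mathcal A(\psi)=\mathrm{id}_{E_0}$. Hence $\psi\vert_{E_0}=\mathrm{id}_{E_0}$.

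It remains to propagate this identity from $E_0$ to all of $E$ via the horizontal Teichm\"uller disks $D_x$, $x\in E_0$. Since $\psi$ is an isometry of the total space, it preserves the orthogonal splitting of tangent spaces into vertical and horizontal subspaces and hence permutes the horizontal disks. With $\psi\vert_{E_0}=\mathrm{id}$, each disk $D_x$ for $x\in E_0$ is mapped to $D_{\psi(x)}=D_x$. The restriction $\psi\vert_{D_x}\colon D_x\to D_x$ then covers $\Psi_{\mathrm{base}}=\mathrm{id}_D$ via the isometric projection $\pi\vert_{D_x}\colon D_x\to D$; since $D_x$ meets each fiber in exactly one point, this forces $\psi\vert_{D_x}=\mathrm{id}_{D_x}$. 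As every $y\in E$ lies on some such disk (namely $y\in D_{f_0(y)}$), we conclude $\psi=\mathrm{id}$, proving $\Psi(\mathcal A(\phi))=\phi$.

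For the final assertion, observe that the identity $\Psi\circ \mathcal A\vert_{\Isomfib(\bar E)}=\mathrm{id}$ just established, combined with $\mathcal A\circ\Psi=\mathrm{id}$ from Lemma~\ref{lem:A_is_the_inverse_of_Psi}, realizes $\Psi$ and $\mathcal A\vert_{\Isomfib(\bar E)}$ as mutually inverse bijections. In particular the composition $\Isomfib(\bar E)\to\Isom(\bar E)\to\QI(\bar E)$ is injective, and hence so are both constituent maps. The most delicate step in the argument above is the rigidity step extending $\Psi_{\mathrm{base}}$ from $\bar D$ to an isometry of $\mathbb H^2$ at bounded distance from the identity, after which standard hyperbolic rigidity closes out the argument.
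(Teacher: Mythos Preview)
Your proof of the main identity $\Psi(\mathcal A(\phi))=\phi$ is correct but takes a genuinely different route from the paper. The paper computes directly: it identifies $\nu=\mathcal A(\phi)=f_0\circ\phi|_{E_0}$ (already affine), observes that the base isometry $\Phi$ used in constructing $\Psi(\nu)$ is exactly the descent of $\phi$ to $D$, and then checks $\Psi(\nu)|_{E_Y}=\phi|_{E_Y}$ fiber by fiber via the commutation $\phi|_{E_X}\circ f_{X,Y}=f_{\Phi(X),\Phi(Y)}\circ\phi|_{E_Y}$ and the explicit formula for $\Psi$. Your approach instead reduces to a rigidity statement: the fiber-preserving isometry $\psi=\phi^{-1}\Psi(\mathcal A(\phi))$ has $\mathcal A(\psi)=\mathrm{id}$, hence (Proposition~\ref{prop:identity_on_fibers}) lies at bounded distance from the identity, and you then force $\psi=\mathrm{id}$ by first trivializing the base map and then propagating along the horizontal disks $D_x$. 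The paper's computation is shorter and self-contained; your argument is more conceptual but relies on extra structural facts (integrability of the horizontal distribution with leaves the $D_x$, and rigidity of $\mathbb H^2$--isometries). One imprecision worth fixing: your extended base isometry is only known to have bounded displacement on $\bar D\subset D$, not on all of $D$; the correct rigidity step is that an isometry of $\mathbb H^2$ with bounded displacement on $\bar D$ fixes every non-parabolic boundary point (since $\bar D$ contains rays to all such points), hence is the identity.

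There is, however, a genuine gap in your final paragraph. From injectivity of the composition $\Isomfib(\bar E)\to\Isom(\bar E)\to\QI(\bar E)$ you may only conclude that the \emph{first} arrow is injective; in general, injectivity of $g\circ f$ says nothing about $g$ beyond its restriction to the image of $f$. To deduce that $\Isom(\bar E)\to\QI(\bar E)$ is injective you need the additional input $\Isomfib(\bar E)=\Isom(\bar E)$, which the paper invokes from \cite[Corollary~5.4]{DDLSI}. Your argument, as written, only shows that a \emph{fiber-preserving} isometry at bounded distance from the identity is trivial, so this citation (or an independent argument that every isometry of $\bar E$ is fiber-preserving) is required to complete the last assertion.
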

\begin{proof}
By construction $\nu = \nu_\phi^a = \mathcal{A}(\phi)$ is the unique affine homeomorphism bounded distance from $f_0\circ \phi\vert_{E_0}$. As this map is itself affine, we have $\nu = f_0\circ \phi\vert_{E_0}$. The isometry $\Phi\colon D\to D$ in the construction of $\Psi(\nu)$ is then just the descent of $\phi$ to $D$. Further, for any $X,Y\in D$ we have $\phi\vert_X\circ f_{X,Y} = f_{\Phi(X),\Phi(Y)}\circ \phi\vert_{E_Y}$, since if $X$ lies at distance $t > 0$ from $Y$ along the geodesic from $\alpha^\perp$ to $\alpha$, then both maps send $(\alpha^\perp,\alpha)\mapsto (\partial \Phi(\alpha^\perp),\partial \Phi(\alpha))$ while contracting the first by $e^{-t}$ and expanding the second by $e^{t}$, hence they are the same affine map $E_Y\to E_{\Phi(X)}$. It follows that the restriction  $\Psi(\nu)\vert_{E_Y}\colon E_Y\to E_{\Phi(Y)}$ is then the composition
\begin{align*}
\Psi(\nu)\vert_{E_Y} &= f_{\Phi(Y), X_0}\circ (f_0\circ \phi\vert_{E_0})\circ f_0\vert_{E_Y}
= f_{\Phi(Y), \Phi(X_0)} \circ \phi\vert_{E_0}\circ f_{X_0,Y}\\
&= f_{\Phi(Y), \Phi(X_0)}\circ f_{\Phi(X_0),\Phi(Y)}\circ \phi\vert_{E_Y}
= \phi\vert_{E_Y}.
\end{align*}
Since this holds for each $Y$, we conclude $\Psi(\nu) = \phi$ as claimed. 

It follows that $\Isomfib(\bar E) \to \QI(\bar E)$ is injective, since if $[\phi]$ is the identity in $\QI(\bar E)$, meaning $\phi$ is finite distance from the identity, then $\mathcal A(\phi) = \mathcal A([\phi])$ and consequently $\phi = \Psi(\mathcal A(\phi))$ are both the identity. Finally, $\Isom(\bar E)\to \QI(\bar E)$ is injective since we have $\Isomfib(\bar E) = \Isom(\bar E)$ by \cite[Corollary~5.4]{DDLSI}.
\end{proof}

\subsection{Rigidity} 
\label{sec:finish_proof_of_rigidity}

We are now ready to complete the proof of Theorem~\ref{thm:qirigid intro}:

\begin{proof}[Proof of Theorem~\ref{thm:qirigid intro}]
By Lemma~\ref{lem:A_is_the_inverse_of_Psi}, the composition
\[\Aff(E_0) \stackrel{\Psi}{\to} \Isom_\fib(\bar E)\to \Isom(\bar E) \to \QI(\bar E)\cong \QI(\Gamma) \stackrel{\mathcal A}{\to} \Aff(E_0)\]
is the identity. Hence the first map $\Psi$ is injective, and the remaining maps are injective by Lemma~\ref{lem:Psi_is_the_inverse_of_A}
and Proposition~\ref{prop:identity_on_fibers} . It follows that each map above is an isomorphism, as claimed. The fact that $\Gamma$ has finite index in $\Isom(\bar E)\cong \QI(\Gamma)$ thus follows from Lemma~\ref{lem:Gamma_finite_index}.
\end{proof}

Standard techniques (see, for example \cite[\S10.4]{Schwartz-qi_classification}) now imply the following:

\begin{corollary} \label{C:qi rigidity classical}
If $H$ is any finitely generated group quasi-isometric to $\Gamma$, then $H$ and $\Gamma$ are weakly commensurable, meaning $H$ has a finite normal subgroup $N$ so that $H/N$ and $\Gamma$ contain finite index subgroups that are isomorphic.
\end{corollary}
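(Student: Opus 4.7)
The plan is to run the standard Mostow–Schwartz style ``quasi-action straightening'' argument, using Theorem~\ref{thm:qirigid intro} as the critical input.

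\smallskip
\noindent\textbf{Step 1: Build a quasi-action of $H$ on $\bar E$.} Fix any quasi-isometry $q\colon H\to \Gamma$, with coarse inverse $\bar q$; composing with a quasi-isometry $\Gamma\to \bar E$ (coming, e.g., from the orbit map for the proper cocompact $\Gamma$--action on $\bar E$) we obtain a quasi-isometry $Q\colon H\to \bar E$ with coarse inverse $\bar Q$. The left translation action of $H$ on itself is by isometries, so conjugating via $Q$ gives a quasi-action of $H$ on $\bar E$, i.e.\ a map $\rho\colon H\to \QI(\bar E)$ sending $h$ to the class of $x\mapsto Q(h\cdot \bar Q(x))$. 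One checks this is a genuine homomorphism and that the quasi-action is cobounded and uniformly metrically proper, the latter because the left self-action of $H$ is proper.

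\smallskip
\noindent\textbf{Step 2: Promote to a genuine isometric action.} By Theorem~\ref{thm:qirigid intro}, $\QI(\bar E)\cong \Isom(\bar E)$, so $\rho$ becomes a homomorphism $\rho\colon H\to \Isom(\bar E)$. The key point is that the abstract $H$--action on $\bar E$ via $\rho$ inherits properness and coboundedness from the quasi-action (each $\rho(h)$ is an isometry at bounded distance from the quasi-isometry $x\mapsto Q(h\bar Q(x))$, and these bounds are uniform in $h$ because the left self-action of $H$ is by isometries). The kernel $N=\ker\rho$ consists of those $h\in H$ whose quasi-action on $\bar E$ is within bounded distance of the identity; unwinding, such $h$ must satisfy $d_H(h,1)\leq C$ for some uniform $C$, so $N$ is a finite normal subgroup of $H$. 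Hence $H/N$ embeds as a subgroup of $\Isom(\bar E)$ acting properly and cocompactly on $\bar E$.

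\smallskip
\noindent\textbf{Step 3: Conclude commensurability inside $\Isom(\bar E)$.} By Theorem~\ref{thm:qirigid intro}, $\Gamma$ sits inside $\Isom(\bar E)$ with finite index, and by Step 2 the image $H/N\le \Isom(\bar E)$ also has finite index (since $\Isom(\bar E)$ acts properly and $H/N$ acts properly and cocompactly on $\bar E$, both subgroups are uniform lattices in the locally compact group $\Isom(\bar E)$, which by Theorem~\ref{thm:qirigid intro} is discrete-up-to-finite-covolume). Therefore $\Gamma\cap(H/N)$ has finite index in both $\Gamma$ and $H/N$, providing the required isomorphic finite-index subgroups and establishing weak commensurability.

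\smallskip
The main obstacle is Step 2: one needs to verify carefully that the quasi-action of $H$ on $\bar E$ does straighten to a genuine isometric action with the right proper/cobounded properties, and that the kernel is finite. Given that Theorem~\ref{thm:qirigid intro} already identifies $\QI(\bar E)$ with the isometry group of a proper geodesic space on which $\Gamma$ is cocompact, this is essentially bookkeeping: each quasi-isometry class contains a unique isometry representative, so the quasi-action is literally realized by isometries, and the finiteness of $N$ follows from the proper discontinuity of $\Isom(\bar E)$ on $\bar E$ (see \cite[Proposition~5.5]{DDLSI}). Once Step 2 is in place, Step 3 is formal.
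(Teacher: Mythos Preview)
Your overall strategy---conjugate the left action of $H$ through a quasi-isometry to get a homomorphism $\rho\colon H\to \QI(\bar E)\cong \Isom(\bar E)$, check that $\ker\rho$ is finite and that $\rho(H)$ has finite index---is exactly what the paper does. But there is a genuine gap at the point you flag as ``essentially bookkeeping.''

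The issue is uniformity. You assert that ``each $\rho(h)$ is an isometry at bounded distance from the quasi-isometry $x\mapsto Q(h\bar Q(x))$, and these bounds are uniform in $h$ because the left self-action of $H$ is by isometries.'' The left self-action being by isometries gives only that the quasi-isometry \emph{constants} of the maps $\mathcal{B}(h):=Q\circ L_h\circ \bar Q$ are uniform in $h$. It does not follow from Theorem~\ref{thm:qirigid intro} alone that a $(K,K)$--quasi-isometry representing the trivial class in $\QI(\bar E)$ is within a distance of the identity that is bounded \emph{in terms of $K$ only}. This is a separate statement, and the paper proves it as Lemma~\ref{lem:qi-uniformly_close_to_id}: for every $K$ there is $R'$ so that any $(K,K)$--quasi-isometry of $\bar E$ within finite distance of the identity is within $R'$ of the identity. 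The proof is not formal; it uses Proposition~\ref{prop:strip_bijection} to control how $\phi$ permutes fibers, then applies the hyperbolic-space fact Lemma~\ref{lem:hyper fact} twice---once to the induced map on the base $\bar D$ and once to each fiber $E_X$---to extract the uniform bound.

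Without this lemma, both conclusions of your Step~2 fail: if $h\in\ker\rho$, you only know $\mathcal{B}(h)$ is at \emph{some} finite distance from the identity, which does not bound $d_H(h,1)$; and coboundedness of the isometric $\rho(H)$--action likewise needs the uniform bound to transfer coboundedness from the quasi-action. So you should either invoke Lemma~\ref{lem:qi-uniformly_close_to_id} explicitly or reproduce its argument.
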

This proof requires one more lemma.
\begin{lemma}
\label{lem:qi-uniformly_close_to_id}
 For every $K$ there exists $R'$ such that if $\phi\colon\bar E\to\bar E$ is a $(K,K)$-quasi-isometry that lies within finite distance of the identity, then $\phi$ lies within distance $R'$ of the identity, meaning $d(x,\phi(x))\le R'$ for all $x\in \bar E$.
\end{lemma}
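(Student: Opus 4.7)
\textbf{Proof plan for Lemma~\ref{lem:qi-uniformly_close_to_id}.} The plan is to revisit the proof of Proposition~\ref{prop:identity_on_fibers} and verify that the constant $C'$ obtained there in fact depends only on $K$ and not on the individual quasi-isometry $\phi$. To begin, since $\phi$ is within finite distance of the identity, so is $\nu_\phi = f_0\circ \phi|_{E_0}$; by the uniqueness clause in Proposition~\ref{P:qi to affine} we then have $\nu_\phi^a = \mathrm{id}_{E_0}$. Hence Proposition~\ref{prop:identity_on_fibers} applies and the task reduces to making its argument quantitative in $K$.

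The first step is to observe that for each $\alpha \in \mathcal P$, the Hausdorff distance between $\phi(\partial \mathcal B_\alpha)$ and $\partial \mathcal B_\alpha$ is at most the constant $C = C(K)$ supplied by Proposition~\ref{prop:strip_bijection}. Indeed, $\phi_{\mathcal S}$ and $\phi_{\mathcal P}$ act as the identity under our hypothesis (strips at finite Hausdorff distance coincide), so no finite-but-unbounded distances enter at this stage. Next, the fact that each fiber $E_X$ appears as a $t_0$-neighborhood coarse intersection of two suitable $\partial \mathcal B_\alpha, \partial \mathcal B_{\alpha'}$ with quantitative control $f(t_0)$ depending only on the intrinsic geometry of $\bar E$ (not on $\phi$) gives a uniform constant $C'' = C''(K)$ such that $\phi(E_X)$ lies within Hausdorff distance $C''$ of $E_X$ for every $X \in \bar D$.

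With (ii) in hand, the induced fiber map $\nu_\phi^X = f_X\circ \phi|_{E_X} \colon E_X \to E_X$ is a $(K',K')$-quasi-isometry with $K' = K'(K)$ (using the $e^{C''}$-bi-Lipschitz property of $f_X$ on the $C''$-neighborhood of $E_X$), and it lies within finite distance of $\mathrm{id}_{E_X}$ because $\phi$ does. Since each $E_X$ is uniformly quasi-isometric to $\mathbb H^2$, and so satisfies the triangle hypothesis of Lemma~\ref{lem:hyper fact} with uniform $\delta$, that lemma gives a uniform constant $R = R(K)$ such that $\nu_\phi^X$ lies within distance $R$ of the identity, independently of $X$ and of $\phi$. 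Combined with $d(\nu_\phi^X(x), \phi(x)) \leq C''$ for $x \in E_X$, this yields $d(x, \phi(x)) \leq R + C''$ for all $x \in \bar E$, and we may take $R' = R(K) + C''(K)$.

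The only step requiring real care is verifying the uniformity of $C''$ in step (ii): one must confirm that the ``function $f$'' governing how fibers arise as coarse intersections of horospherical boundaries is an intrinsic feature of the bundle $\bar E \to \bar D$ (ultimately coming from the Lipschitz bundle projection $\pi$ and the geometry of $\bar D$), so that enlarging the neighborhood by the uniform constant $C(K)$ from Proposition~\ref{prop:strip_bijection} produces a uniform enlargement of the fiber-neighborhood. Once this uniformity is made explicit, the remaining steps are immediate applications of Proposition~\ref{prop:strip_bijection} and Lemma~\ref{lem:hyper fact}, each of which already has constants depending only on $K$ and $\delta$.
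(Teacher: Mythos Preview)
Your proof is correct but takes a somewhat different route than the paper. The paper establishes the uniform bound on $d_{\mathrm{Haus}}(\phi(E_X), E_X)$ by first defining an induced quasi-isometry $\bar\phi\colon \bar D \to \bar D$ (sending $X$ to the point $Y$ for which $\phi(E_X)$ is uniformly close to $E_Y$, as supplied by Proposition~\ref{prop:strip_bijection}) and then applying Lemma~\ref{lem:hyper fact} to the hyperbolic space $\bar D$; this forces $\bar\phi$ to be uniformly close to the identity and hence $Y$ uniformly close to $X$. You instead exploit the fact that $\phi_{\mathcal P} = \mathrm{id}$ directly: since each $\phi(\partial \mathcal B_\alpha)$ lies within the uniform constant $C(K)$ of $\partial \mathcal B_\alpha$ \emph{itself} (not merely some $\partial \mathcal B_\beta$), the coarse-intersection description of fibers from the proof of Proposition~\ref{prop:strip_bijection} pins $\phi(E_X)$ near $E_X$ with uniform control. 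Both arguments then finish identically by applying Lemma~\ref{lem:hyper fact} fiberwise to $\nu_\phi^X$. The paper's approach is more uniform in spirit (the same lemma applied once to the base and once to each fiber), while yours trades one invocation of Lemma~\ref{lem:hyper fact} for a more hands-on chase through the function $f$ and the enlarged radius $t_0'$; your final paragraph correctly identifies this as the only delicate point, and the required uniformity does follow once one notes that the $t_0'$-intersection in $\bar D$ has diameter bounded purely in terms of $t_0'$.
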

\begin{proof}
First  define a map $\bar \phi\colon\bar D\to \bar D$ by setting $\bar \phi(X)=Y$, where $Y$ is the point provided by Proposition~\ref{prop:strip_bijection} such that $d_{\mathrm{Haus}}(\phi(E_X), E_Y) \le C$. Since $d_{\bar D}(X,Y)=d_{\bar E}(E_X,E_Y)$ for all $X,Y$ in $\bar D$, we see that $\bar \phi$ is a quasi-isometry with constants depending only on $K$. It also lies within finite distance of the identity, as it inherits this property from $\phi$; thus applying Lemma \ref{lem:hyper fact} to $Z = \bar D$ implies that $\bar \phi$ lies within uniformly finite distance of the identity. That is, there exists $R$ depending only on $K$ so that $d_{\mathrm{Haus}}(E_X, \Phi(E_X))\le R$ for all $X\in \bar D$.  Hence, Lemma \ref{L:fiber to fiber qi} implies that for each map $\nu_\phi^X=f_X\circ \phi|_{E_X}$ is a $(K',K')$--quasi-isometry for some $K'$ depending only on $K$.  Again by Lemma \ref{lem:hyper fact}, this time with $Z=E_X$, we see that each $\nu_\phi^X$ moves points uniformly bounded distance, and therefore $\phi\vert_{E_X}$ lies within uniform distance of the inclusion of $E_X$ in $\bar E$. Since this holds for all $X$, we have that $\phi$ lies within uniform distance of the identity, as required.
\end{proof}

\begin{proof}[Proof of Corollary~\ref{C:qi rigidity classical}]
If $H$ is quasi-isometric to $\Gamma$, there is a quasi-isometry $\mu\colon H\to \bar E$  with a quasi-inverse $\mu^{-1}\colon \bar E\to H$. Left multiplication by $h\in H$ gives an isometry $L_h\colon H\to H$. In this way, for each $h\in H$ we obtain a quasi-isometry $\mathcal{B}(h) = \mu\circ L_h \circ \mu^{-1}$ of $\bar E$ with \emph{uniformly bounded constants}. Let us also set $\mathcal{B}'(h)= \Psi(\mathcal{A}(\mathcal{B}(h)))\in \Isom_\fib(\bar E) = \Isom(\bar E)$, which is the unique isometry of $\bar E$ at finite distance from $\mathcal{B}(h)$. 
Since the quasi-isometry constants of $\mathcal{B}(h)$ are uniform, depending only on $\mu$, it follows from Lemma~\ref{lem:qi-uniformly_close_to_id} that there is a constant $R'$ so that $d(\mathcal{B}(h)(x), \mathcal{B}'(h)(x))\le R'$ for all $x\in \bar E$ and $h\in H$.

We now claim the homomorphism $\mathcal{B}'\colon H\to \Isom(\bar E)$ has finite kernel and cokernel. Indeed, if $\mathcal{B}'(h) = \mathrm{Id}_{\bar E}$ 
the above implies $\mathcal{B}(h)$ moves $\mu(e)$ (and in fact all points) distance at most $R'$. But this means $L_h$ moves the identity $e\in H$ uniformly bounded distance, and there are only finitely many such elements of $H$. 
To prove $\mathcal{B}'$ has finite cokernel it suffices,  as in Lemma~\ref{lem:Gamma_finite_index}, to show $\bar E / \mathcal{B}'(H)$ has finite volume or, better yet, finite diameter. For this, given $x,y\in \bar E$ we must find $h$ so that $d(\mathcal{B}'(h)(x),y)$ is uniformly bounded. This is equivalent to bounding $d(\mathcal{B}(h)(x),y) = d(\mu(h\cdot \mu^{-1}(x)),y)$, which is coarsely  $d_H(h\cdot \mu^{-1}(x), \mu^{-1}(y))$. Since $H$ acts transitively on itself, this is clearly possible.

We now see that $H/\ker(\mathcal{B}')$ and $\Gamma$ are both realized as finite index subgroups of $\Isom(\bar E)$ and hence that their intersection has finite index in both.
\end{proof}

\subsection{An alternative proof of quasi-isometric rigidity} \label{S:Mosher proof}

Here we sketch another proof of Theorem~\ref{T:qir simple}, following an approach described by Mosher in \cite{Mosher:Problems}; we refer the reader to that paper for a more detailed discussion.

First, we require some additional definitions.  Consider the maximal orbifold quotient $S \to \mathcal O$ so that $G$ descends to a group $G_{\mathcal O} < \Mod(\mathcal O)$ in the mapping class group of the orbifold $\mathcal O$; that is, $G$ consists of lifts of elements of $G_{\mathcal O}$.  Let $\frak C < \Mod(\mathcal O)$ be the {\em relative commensurator of $G_{\mathcal O}$ in $\Mod(\mathcal O)$}, which consists of the elements $g \in \Mod(\mathcal O)$ so that $g G_{\mathcal O} g^{-1} \cap G_{\mathcal O}$ has finite index in both $g G_{\mathcal O} g^{-1}$ and $G_{\mathcal O}$. (In fact, we must allow for orientation reversing mapping classes, but continue to denote this group $\Mod$ for simplicity.).  Finally, we let $\Gamma_{\frak C}$ denote the $\pi_1^{\rm{orb}}\mathcal O$--extension of $\frak C$.  Since $S \to \mathcal O$ is a finite sheeted cover, $\Gamma_{G_{\mathcal O}}$ contains $\Gamma$ with finite index, and so is quasi-isometric to it, and thus $\QI(\Gamma_{G_{\mathcal O}}) \cong \QI(\Gamma)$.  It is also not hard to see that there is a natural injection from $\Gamma_{\frak C} \to \QI(\Gamma_{G_{\mathcal O}}) \cong \QI(\Gamma)$.

We can now state Mosher's key reduction of quasi-isometric rigidity from \cite{Mosher:Problems}.

\begin{theorem}[Mosher] \label{T:Mosher's QI rigidity reduction} The homomorphism $\Gamma_{\frak C} \to \QI(\Gamma)$ is an isomorphism.
\end{theorem}

Very briefly, the proof of this theorem divides into two key steps.  First, $\Gamma_{\frak C}$ naturally maps not just to $\QI(\Gamma_{G_{\mathcal O}})$, but to the subgroup of coarsely fiber preserving quasi-isometries, $\QI_{\fib}(\Gamma_{G_{\mathcal O}}) < \QI(\Gamma_{G_{\mathcal O}})$.  In \cite{Mosher-fiber}, Mosher proves that this is in fact an isomorphism, $\Gamma_{\frak C} \cong \QI_{\fib}(\Gamma_{G_{\mathcal O}})$, whenever $G$ contains a pseudo-Anosov.  Second, a general result of Farb and Mosher \cite[Theorem 7.7(2)]{FarbMosher-abelian}, proved using coarse algebraic-topology, ensures that $\QI_{\fib}(\Gamma_{G_{\mathcal O}})$ has finite index in $\QI(\Gamma_{G_{\mathcal O}})$ when $G$ is further assumed to be virtually free.

With Theorem~\ref{T:Mosher's QI rigidity reduction} in hand, we see that proving quasi-isometric rigidity of $\Gamma$ reduces to proving that $\Gamma_{G_{\mathcal O}}$ in $\Gamma_{\frak C}$ has finite index (which is precisely what \cite[Problem~5.4]{Mosher:Problems} asks).   Equivalently, this reduces to the following.
\begin{lemma} The subgroup $G_{\mathcal O} < \frak C$ has finite index.
\end{lemma}
\begin{proof} Observe that the defining quadratic differential $q$ for $G$ descends to a quadratic differential $q_{\mathcal O}$ on $\mathcal O$ (with at worst simple poles at the orbifold points).  To see this, note that any pseudo-Anosov element $f \in G$ descends to a pseudo-Anosov element $f_0 \in \mathcal O$ (i.e.~$f$ is a lift of $f_0$).  The stable/unstable foliations for this pseudo-Anosov element $f$ are vertical/horizontal for (an affine deformation of) $q$, and these descend to stable/unstable foliations for $f_0$ which are thus vertical/horizontal for $q_{\mathcal O}$; thus, $q$ is the pull back of $q_{\mathcal O}$.  Since the fixed points of pseudo-Anosov elements of $G_{\mathcal O}$ are dense in $\mathbb P(q_{\mathbb O})$, it follows that $\frak C$ is contained in the stabilizer of $\mathbb P(q_{\mathbb O})$.  By \cite{masur:EG}, for example, $\mathbb P(q_{\mathcal O})$ determines the associated Teichm\"uller disk $\mathbb H_{q_{\mathcal O}}$ and thus the stabilizer of $\mathbb P(q_{\mathcal O})$ is the stabilizer of $\mathbb H_{q_{\mathcal O}}$, and is therefore the maximal Veech group defined by $q_{\mathcal O}$.  Since $G$ is a lattice Veech group, so is $G_{\mathcal O}$.  Therefore, the inclusion of $G_{\mathcal O}$ into this maximal Veech group of $q_{\mathcal O}$ must have finite index. Since $\frak C$ is a subgroup of this Veech group, $G_{\mathcal O} < \frak C$ is finite index as well.
\end{proof}


\begin{remark}Our proof here is similar to the proof for Kleinian groups that the relative commensurator of a (non-lattice, Zariski dense) geometrically finite Kleinian group is equal to the stabilizer of its limit set.  In fact, from \cite[Theorem~2.1]{KL:survey}, the limit set of $G_{\mathcal O}$ in $\PML(\mathcal O)$ is precisely $\mathbb P(q_{\mathcal O})$.
\end{remark}


\bibliographystyle{alpha}
\bibliography{main}

\end{document}